\setlist{nosep}
\DeclareMathAlphabet{\mathpzc}{OT1}{pzc}{m}{it} 
\newcommand\mathscr[1]{\scalebox{1.1}{$\mathpzc{#1}$}}
\newcommand{\proofstep}[1]{\scalebox{.85}{ #1 }}
\definecolor{darkblue}{rgb}{0.05,0.25,0.65}
\definecolor{greenii}{RGB}{20,140,10}
\definecolor{lightgray}{rgb}{0.9,0.9,0.9}
\definecolor{orangeii}{RGB}{200,100,5}
\newcounter{sqindex}
\DeclareRobustCommand{\coprod}{\mathop{\text{\fakecoprod}}}
\newcommand{\fakecoprod}{%
  \sbox0{$\prod$}%
  \smash{\raisebox{\dimexpr.9625\depth-\dp0}{\scalebox{1}[-1]{$\prod$}}}%
  \vphantom{$\prod$}%
}
\newcommand{\underlying}{\mathrm{undrlng}}
\newcommand{\Sets}{
  \mathrm{Set}
}
\newcommand{\simplicial}{
  \Delta
}
\newcommand{\Groups}{
  \mathrm{Grp}
}
\newcommand{\Abelian}{
  \mathrm{Ab}
}
\newcommand{\AbelianGroups}{
  \Abelian\Groups
}
\newcommand{\NormalizedChains}{N_\bullet}
\newcommand{\ConnectiveChainComplexes}{\mathrm{Ch}^{\geq 0}}
\newcommand{\Actions}[1]{
  {#1}\,\mathrm{Act}
}
\newcommand{\HomotopyQuotient}[2]{
  #1 \!\sslash\! #2
}
\newcommand{\Groupoids}{
  \mathrm{Grpd}
}
\newcommand{\InfinityGroupoids}{
  \Groupoids_\infty
}
\newcommand{\Toposes}{
  \mathrm{Topos}
}
\newcommand{\Topos}{
  \mathbf{H}
}
\newcommand{\Maps}[3]{
  \mathrm{Map}
  #1(
    #2
    ,\,
    #3
  #1)
}
\newcommand{\Homs}[3]{
  \mathrm{Hom}
  #1(
    #2
    ,\,
    #3
  #1)
}
\newcommand{\infinityToposes}{
  \Toposes_\infty
}
\newcommand{\InfinityToposes}{
  \infinityToposes
}
\newcommand{\Categories}{
  \mathrm{Cat}
}
\newcommand{\Functors}{
  \mathrm{Fnctr}
}
\newcommand{\Enriched}{
  \mathrm{Enr}
}
\newcommand{\EnrichedCategories}[1]{
  {#1}\Enriched\Categories
}
\newcommand{\SimplicialCategories}{
  \Categories_{\simplicial}
}
\newcommand{\SimplicialFunctors}{
  \Functors_{\simplicial}
}
\newcommand{\Cofibrations}{\mathrm{Cof}}
\newcommand{\Fibrations}{\mathrm{Fib}}
\newcommand{\WeakEquivalences}{\mathrm{W}}
\newcommand{\ModelCategories}{
  \mathrm{Mdl}\Categories
}
\newcommand{\Localization}[1]{
  \mathrm{Loc}_{{}_{#1}}
}
\newcommand{\HomotopyCategory}{
  \mathrm{Ho}
}
\newcommand{\CombinatorialModelCategories}{
  \mathrm{Comb}\ModelCategories
}
\newcommand{\ProperCombinatorialSimplicialModelCategories}{
  \CombinatorialModelCategories_{\simplicial, \mathrm{prop} }
}
\newcommand{\SimplicialSets}{
  \simplicial\Sets
}
\newcommand{\SimplicialGroups}{
  \Groups(\SimplicialSets)
}
\newcommand{\Integers}{\mathbb{Z}}
\newcommand{\Cartesian}{
  \mathrm{Cart}
}
\newcommand{\CartesianSpaces}{
  \Cartesian\Spaces
}
\newcommand{\topological}{
  \mathrm{Top}
}
\newcommand{\Spaces}{
  \mathrm{Spc}
}
\newcommand{\TopologicalSpaces}{
  \topological\Spaces
}
\newcommand{\Discrete}{
  \mathrm{Disc}
}
\newcommand{\Shape}{
  \mathrm{Shp}
}
\newcommand{\shape}{
  \raisebox{1pt}{\rm\textesh}
}
\newcommand{\CohesiveCircle}{S^1_{\mathrm{coh}}}
\newcommand{\ExtendedInertia}{\Lambda_{\CohesiveCircle}}
\newcommand{\Smooth}{\mathrm{Smth}}
\newcommand{\Manifolds}{\mathrm{Mnfld}}
\newcommand{\SmoothManifolds}{
  \Smooth\Manifolds
}
\newcommand{\Diffeological}{
  \mathrm{Dfflg}
}
\newcommand{\DiffeologicalSpaces}{
  \Diffeological\Spaces
}
\newcommand{\ContinuousDiffeology}{
  \mathrm{Cdfflg}
}
\newcommand{\Sheaves}{
  \mathrm{Sh}
}
\newcommand{\Presheaves}{
  \mathrm{P}\Sheaves
}
\newcommand{\SimplicialPresheaves}{
  \simplicial\Presheaves
}
\newcommand{\InfinitySheaves}{
  \Sheaves_{\infty}
}
\newcommand{\infinitySheaves}{
  \InfinitySheaves
}
\newcommand{\SmoothInfinityGroupoids}{
  \Smooth\InfinityGroupoids
}
\newcommand{\SmoothInfinityStacks}{\SmoothInfinityGroupoids}
\newcommand{\VectorSpaces}{
    \mathrm{VectorSpaces}
      _{\scalebox{.5}{$\mathbb{R}$}}
}
\newcommand{\mapsup}{\rotatebox[origin=c]{90}{$\mapsto$}}
\newcommand{\mapsdown}{\rotatebox[origin=c]{-90}{$\mapsto$}}
\def\acts{\raisebox{1.4pt}{\;\rotatebox[origin=c]{90}{$\curvearrowright$}}\hspace{.5pt}}
\DeclareRobustCommand{\rchi}{{\mathpalette\irchi\relax}}
\newcommand{\irchi}[2]{\raisebox{\depth}{$#1\chi$}} 
\newif\if@sup
\newtoks\@sups
\def\append@sup#1{\edef\act{\noexpand\@sups={\the\@sups #1}}\act}%
\def\reset@sup{\@supfalse\@sups={}}%
\def\mk@scripts#1#2{\if #2/ \if@sup ^{\the\@sups}\fi \else%
  \ifx #1_ \if@sup ^{\the\@sups}\reset@sup \fi {}_{#2}%
  \else \append@sup#2 \@suptrue \fi%
  \expandafter\mk@scripts\fi}
\def\tensor#1#2{\reset@sup#1\mk@scripts#2_/}
\def\multiscripts#1#2#3{\reset@sup{}\mk@scripts#1_/#2%
  \reset@sup\mk@scripts#3_/}
\newbox\slashbox \setbox\slashbox=\hbox{$/$}
\def\itex@pslash#1{\setbox\@tempboxa=\hbox{$#1$}
  \@tempdima=0.5\wd\slashbox \advance\@tempdima 0.5\wd\@tempboxa
  \copy\slashbox \kern-\@tempdima \box\@tempboxa}
\def\slash{\protect\itex@pslash}
\def\clap#1{\hbox to 0pt{\hss#1\hss}}
\def\mathllap{\mathpalette\mathllapinternal}
\def\mathrlap{\mathpalette\mathrlapinternal}
\def\mathclap{\mathpalette\mathclapinternal}
\def\mathllapinternal#1#2{\llap{$\mathsurround=0pt#1{#2}$}}
\def\mathrlapinternal#1#2{\rlap{$\mathsurround=0pt#1{#2}$}}
\def\mathclapinternal#1#2{\clap{$\mathsurround=0pt#1{#2}$}}
\let\oldroot\root
\def\root#1#2{\oldroot #1 \of{#2}}
\renewcommand{\sqrt}[2][]{\oldroot #1 \of{#2}}
\DeclareSymbolFont{symbolsC}{U}{txsyc}{m}{n}
\DeclareSymbolFont{stmry}{U}{stmry}{m}{n}
\DeclareFontFamily{OMX}{MnSymbolE}{}
\DeclareSymbolFont{mnomx}{OMX}{MnSymbolE}{m}{n}
\DeclareFontShape{OMX}{MnSymbolE}{m}{n}{
    <-6>  MnSymbolE5
   <6-7>  MnSymbolE6
   <7-8>  MnSymbolE7
   <8-9>  MnSymbolE8
   <9-10> MnSymbolE9
  <10-12> MnSymbolE10
  <12->   MnSymbolE12}{}
\theoremstyle{italics}
\newtheorem{theorem}{Theorem}[section]
\newtheorem{lemma}[theorem]{Lemma}
\newtheorem{proposition}[theorem]{Proposition}
\newtheorem{fact}[theorem]{Fact}
\theoremstyle{definition}
\newtheorem{definition}[theorem]{Definition}
\newtheorem{notation}[theorem]{Notation}
\newtheorem{example}[theorem]{Example}
\newtheorem{remark}[theorem]{Remark}
\renewcommand{\emph}{\textit}
\def\smooth{\rotatebox[origin=c]{70}{$\subset$}}
\def\orbisingular{\rotatebox[origin=c]{70}{$\prec$}}
\begin{document}

\title{Cyclification of Orbifolds}
\author{
  Hisham Sati${}^1$
  ,\quad
  Urs Schreiber${}^1$
}

\maketitle

\begin{abstract}
   Inertia orbifolds homotopy-quotiented by rotation of
  {\it geometric} loops play a fundamental role
  not only in ordinary cyclic cohomology,
  but more recently
  in constructions of
  equivariant Tate-elliptic cohomology
  and
  generally of transchromatic characters on generalized cohomology theories.
  Nevertheless, existing discussion of such {\it cyclified stacks}
  has been relying on ad-hoc component presentations
  with intransparent and unverified stacky homotopy type.

  \smallskip
  Following our previous formulation of
  transgression of cohomological charges (``double dimensional reduction''),
  we explain how cyclification of $\infty$-stacks
  is a fundamental and elementary base-change construction
  over moduli stacks in
  cohesive higher topos theory (cohesive homotopy type theory).
  We prove that Ganter/Huan's extended inertia groupoid
  used to define equivariant quasi-elliptic cohomology is indeed a model
  for this intrinsically defined cyclification of orbifolds,
  and we show that cyclification
  implements transgression in group cohomology in general, and hence in particular the transgression
  of degree-4 twists of
  equivariant Tate-elliptic cohomology to degree-3 twists of
  orbifold K-theory on the cyclified orbifold.

  \smallskip
  As an application, we show that the universal shifted integral 4-class of
  equivariant 4-Cohomotopy theory on ADE-orbifolds induces the Platonic
  4-twist of ADE-equivariant Tate-elliptic cohomology; and we close by explaining how this should relate to elliptic M5-brane genera, under our previously formulated {\it Hypothesis H}.
\end{abstract}

\medskip

\tableofcontents

\vfill

\hrule
\vspace{5pt}

{
\footnotesize
\noindent
\def\arraystretch{1}
\tabcolsep=0pt
\begin{tabular}{ll}
${}^1$
&
Mathematics, Division of Science; and
\\
&
Center for Quantum and Topological Systems (CQTS),
\\
&
NYUAD Research Institute,
\\
&
New York University Abu Dhabi, UAE.
\end{tabular}

\vspace{.2cm}

\noindent
The authors acknowledge the support by {\it Tamkeen} under the NYU Abu Dhabi Research Institute grant {\tt CG008}.
}

\newpage

\section{Introduction and Overview}
\label{IntroductionAndOverview}

\noindent
{\bf Classical cyclic loop spaces.}
Topological spaces of free loops (e.g. \cite{ChataurOancea15}) in a given topological space $\mathcal{X}$, but homotopy-quotiented  by the rigid rotation action of the topological circle group $\CohesiveCircle$ on itself

\vspace{.1cm}
$$
  \overset{
    \mathclap{
      \raisebox{4pt}{
        \scalebox{.7}{
        \color{purple}
        \bf
        \def\arraystretch{.8}
        \begin{tabular}{c}
          cyclic
          \\
          loop space
        \end{tabular}
        }
      }
    }
  }{
    \mathrm{Cyc}(\mathcal{X})
  }
  \;:=\;
  \HomotopyQuotient
   {
    \underset{
      \mathclap{
        \scalebox{.7}{
          \color{darkblue}
          \bf
          \def\arraystretch{.8}
          \begin{tabular}{c}
            free
            loop space
          \end{tabular}
        }
      }
    }{
    \underbrace{
    \Maps{\big}
      {
        \overset{
          \mathclap{
            \hspace{7pt}
            \rotatebox{49}{
              \rlap{
              \hspace{-19pt}
              \scalebox{.65}{
                \color{orangeii}
                \bf
                \def\arraystretch{.75}
                \begin{tabular}{c}
                  circle
                \end{tabular}
              }
              }
            }
          }
        }{
          \CohesiveCircle
        }
      }
      {
        \overset{
          \mathclap{
            \hspace{8pt}
            \rotatebox{49}{
              \rlap{
              \hspace{-24pt}
              \scalebox{.65}{
                \color{orangeii}
                \bf
                \def\arraystretch{.75}
                \begin{tabular}{c}
                  topological
                  \\
                  space
                \end{tabular}
              }
              }
            }
          }
        }{
          \mathcal{X}
        }
      }
      }
      }
   }
   \;
    {
        \underset{
          \mathclap{
            \rotatebox{31}{
              \llap{
              \hspace{-24pt}
              \scalebox{.65}{
                \color{greenii}
                \bf
                \def\arraystretch{.75}
                \begin{tabular}{c}
                  homotopy quotient
                  \\
                  by loop rotations
                \end{tabular}
              }
              \hspace{-14pt}
              }
            }
          }
        }{
         \CohesiveCircle
        }
   }
$$
\vspace{.6cm}

\noindent
have a long tradition in the study of the elliptic cohomology (see eg. \cite{Rezk15}) of $\mathcal{X}$, at least in the extreme but still surprisingly
rich limit of restricting to those loops which are effectively constant, and traditionally perceived through their $\CohesiveCircle$-equivariant K-cohomology
(\cite[\S 5]{KitchlooMorava04}, review in \cite[\S 6.2]{Dove19}).

\smallskip
In the generality where $\mathcal{X} = \HomotopyQuotient{X}{G}$ is itself a homotopy quotient space, this goes back to \cite{Witten1988}, where a topological
point-set model for $\Maps{\big}{\CohesiveCircle}{\HomotopyQuotient{X}{G}}$ is called a ``twisted loop space'' of $X$. There this is thought of as a model
for the configuration space of closed strings propagating on the orbifold $\HomotopyQuotient{X}{G}$ (whose non-trivial orbifold transition functions are
traditionally called their ``twisted sectors'' \cite[p. 3]{DixonHarveyVafaWitten85}, whence Witten's terminology, see also e.g.  \cite{Stapleton13}).
Accordingly, some authors call $\mathrm{Cyc}(\mathcal{X})$ the {\it string space of $\mathcal{X}$} \cite[\S 4.8.1]{Chataur05}\cite[p. 1]{BoekstedtOttosen05}.

\smallskip
However, this terminology is neither widely adopted nor quite appropriate, since not all free loop spaces arise as configuration spaces of strings -- not even
in string theory; and even when they do one is going to be interested in their twisted cohomology in addition to, and in a sense different from, the string's
``twisted sectors''. But {\it Jones' theorem} (\cite[Thm. A]{Jones87},
review in \cite[Cor. 7.3.14]{Loday92}\cite[\S 3,4]{Loday15}) shows that the ordinary cohomology of $\mathrm{Cyc}(X)$ for a simply-connected topological space $X$ is its {\it cyclic cohomology} --- which is meaningful and standard mathematical terminology. Therefore we call $\mathrm{Cyc}(\mathcal{X})$ the {\it cyclification}
of $\mathcal{X}$ (following \cite[\S 3]{FSS16TDuality}\cite[\S 2.2]{BSS18}), also in line with the modern terminology of {\it cyclotomic spectra} \cite{BlumbergMandell15}\cite{NikolausScholze18}, obtained from approximating the free loop $\CohesiveCircle$-space $\Maps{}{\CohesiveCircle}{X}$ by
its suspension spectrum.

\medskip

\noindent
{\bf Cyclic inertia orbifolds?}
In any case, when $G \acts \, X$ carries the geometric structure of a $G$-manifold (as it certainly does already in the motivating examples from string theory),
one should expect a more fine-grained incarnation of  $\mathrm{Cyc}(\HomotopyQuotient{X}{G})$, lifting it from plain homotopy theory to the geometric homotopy
theory (homotopy topos theory \cite{Lurie09HTT}\cite{Rezk10}) of {\it orbifolds} regarded (cf. \cite{Lerman10}\cite{SS20OrbifoldCohomology}) as {\it stacks}
(useful background references for our purposes are \cite{Hollander08}\cite{Jardine15}),  specifically topological stacks or differentiable stacks
(cf., e.g., \cite{Carchedi11}).
Concretely, one should expect to make sense of the cyclification of $\HomotopyQuotient{X}{G}$ regarded as a smooth orbifold, so that the orbifold K-theory of
the {\it cyclic loop stack} $\mathrm{Cyc}(\HomotopyQuotient{X}{G})$ would reflect the properly $G$-equivariant elliptic cohomology of $X$.

\smallskip
Finally, one should expect that such a cyclified orbifold may canonically be restricted to its ``essentially constant loops'', which in themselves ought to
constitute the familiar ``inertia stack'' (recalled in \cref{TheGRHInertiaOrbifold} below)
$\Lambda (\HomotopyQuotient{X}{G}) \,=\, \Maps{}{\mathbf{B}\mathbb{Z}}{\HomotopyQuotient{X}{G}}$ of the orbifold. In conclusion then, one should expect
that the ``essentially constant''-cyclification of an orbifold should be a homotopy $\CohesiveCircle$-quotient of the inertia orbifold embedded inside
the cyclified orbifold:
\vspace{-2mm}
\begin{equation}
  \label{CyclicStackInIntroduction}
  \begin{tikzcd}
  \HomotopyQuotient
  {
  \overset{
    \mathclap{
      \raisebox{3pt}{
        \scalebox{.7}{
          \color{darkblue}
          \bf
          inertia orbifold
        }
      }
    }
  }{
  \overbrace{
  \Maps{\big}
    {
      \mathbf{B}\mathbb{Z}
    }
    {
      \HomotopyQuotient
        { X }
        { G }
    }
  }
  }
  }{
    \underset{
      \mathclap{
        \raisebox{-7pt}{
          \scalebox{.7}{
            \color{orangeii}
            \bf
            \def\arraystretch{.7}
            \begin{tabular}{c}
              stacky
              \\
              $\CohesiveCircle$-quotient
            \end{tabular}
          }
        }
      }
    }{
      \CohesiveCircle
    }
  }
\quad   \ar[
    rrr,
    hook,
    "{ \scalebox{.55}{
        \color{greenii}
        \bf
        include as essentially
      } }",
    "{ \scalebox{.55}{
        \color{greenii}
        \bf
         constant loops
    } }"{swap}
  ]
  &&\phantom{AA}&
  \quad
  \HomotopyQuotient
  {
  \overset{
    \mathclap{
      \raisebox{+4pt}{
        \scalebox{.7}{
          \color{darkblue}
          \bf
          smooth loop stack
        }
      }
    }
  }{
  \overbrace{
  \Maps{\big}
    {
      \CohesiveCircle
    }
    {
      \HomotopyQuotient
        { X }
        { G }
    }
    }
  }
  }{
    \underset{
      \mathclap{
        \raisebox{-7pt}{
          \scalebox{.7}{
            \color{orangeii}
            \bf
            \def\arraystretch{.7}
            \begin{tabular}{c}
              stacky
              \\
              $\CohesiveCircle$-quotient
            \end{tabular}
          }
        }
      }
    }{
      \CohesiveCircle
    }
  }
  \quad =:\;
  \overset{
    \mathclap{
      \raisebox{4pt}{
        \scalebox{.7}{
          \color{darkblue}
          \bf
          cyclified orbifold
        }
      }
    }
  }{
    \mathrm{Cyc}(\HomotopyQuotient{X}{G})
  }
  \,.
  \end{tikzcd}
\end{equation}
\vspace{-.3cm}

\vspace{-2mm}
For a long time it had been unclear how to bear out these expectations, nor had they been approached with tools from geometric homotopy theory. After crucial proposals in  \cite[Def. 2.3 \& 3.1]{Ganter07StringyPower}\cite[Def. 2.6]{Ganter13}, and following advice by C. Rezk, a candidate component model for the expected stack was finally given in \cite[Def. 2.5, 2.9]{Huan18QuasiEllipticI} -- see \eqref{SkeletonOfHuanInertiaStackOfGoodOrbifold} below -- and justified by demonstrating that it does support a satisfactory notion of ``quasi-elliptic'' cohomology (further discussed in \cite{Huan18QuasiTheories}\cite{HuanSpong20TwistedQuasiEllipic}\cite{HuanYoung22}).

\medskip

What has been left open is a proof that the  component presentation \eqref{SkeletonOfHuanInertiaStackOfGoodOrbifold} from \cite{Huan18QuasiEllipticI} does present the abstract stacky construction \eqref{CyclicStackInIntroduction}, hence does satisfy expected abstract properties. This is what we prove here -- in Thm. \ref{AbstractCharacterizationOfHuanInertiaOrbifold}.

\medskip

\noindent
{\bf Embedding into cohesive homotopy theory.}\label{EmbeddingIntoCohesiveHomotopyTheory}
We approach this issue by embedding the situation into the ``extremely convenient'' (in the technical sense going back to \cite{Steenrod67}) higher topos of smooth $\infty$-groupoids ($\infty$-stacks over a site of smooth manifolds), laid out\footnote{The articles \cite{SS20OrbifoldCohomology}\cite{SS21EPB} go further to the ``singular-cohesive'' homotopy theory of equivariant smooth stacks, ultimately needed for their properly equivariant cohomology theory. For brevity, here we do not dwell on this further step, but our results immediately make cyclified orbifolds available in this context of proper equivariant homotopy theory.} in \cite{SS20OrbifoldCohomology}\cite{SS21EPB} (a reference list for our notations is given below in Ntn. \ref{BasicNotation}).

\vspace{.1cm}
\noindent
\begin{equation}
\label{SequenceOfEmbeddingsOfAmbientCategories}
\hspace{-20pt}
\mathclap{
\raisebox{-1.7cm}{
\includegraphics[width=\textwidth]{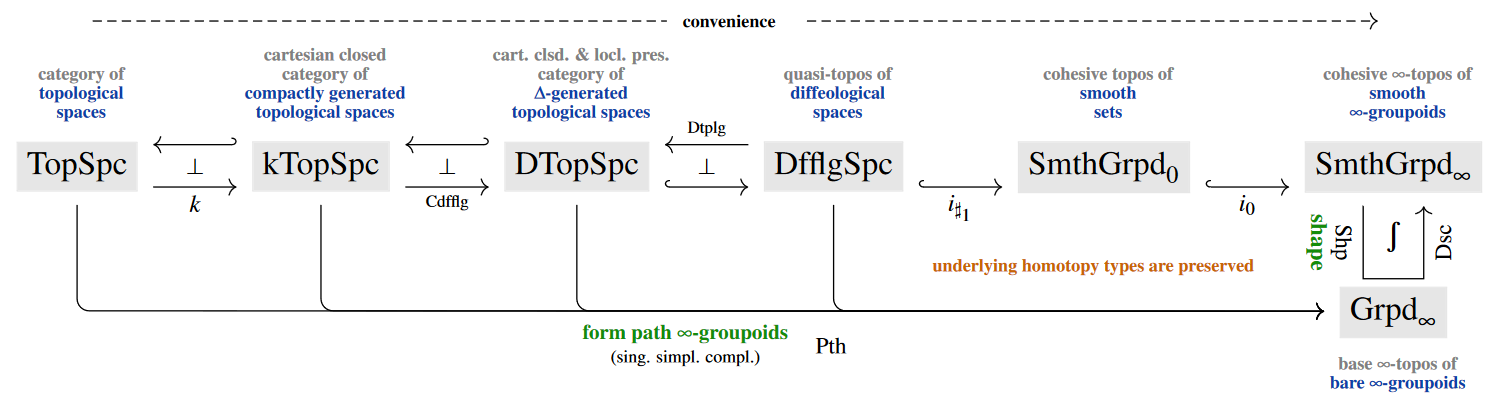}}
}
\end{equation}

As indicated by this homotopy-commutative diagram of $\infty$-categories (reproduced from \cite{SS21EPB}, where the relevant
proofs are given in \S3.3.1), each stage in this sequence of ever more ``convenient'' categories of spaces comes with its own
notion of underlying homotopy types. All these notions are compatible and culminate in the operation
of sending a smooth $\infty$-groupoid $\mathcal{X}$ to its {\it pure shape} $\shape \mathcal{X}$ (cf. \cite[fn 1]{SS21EPB}),
which generalizes (\cite{BBP19}\cite[pp. 144]{SS21EPB}) the traditional construction of {\it singular simplicial complexes} (thought of as higher path $\infty$-groupoids) from topological spaces to smooth higher stacks:

\vspace{1cm}
\begin{equation}
  \label{ShapeAsPathInfinityGroupoid}
    \underset{
    \mathclap{ \phantom{aaa}
      \raisebox{-4pt}{
        \def\arraystretch{.8}
        \scalebox{.6}{
          \color{greenii}
          \bf
          \begin{tabular}{c}
            shape unit
            \\
            transformation
          \end{tabular}
          \hspace{25pt}
          \phantom{AA}
          \rlap{
          \color{greenii}
          \bf
          \begin{tabular}{c}
            includes
            \\
            constant paths
            \rlap{
              \;\;
              \color{black}
              \normalfont
              into
            }
          \end{tabular}
          }
        }
      }
    }
  }{
    \mbox{
      \color{purple}
      $\eta^{\scalebox{.8}{$\shape$}}_{\mathcal{X}}$
    }
  }
  \qquad :\;\;
  \mathcal{X}
  \xrightarrow{\phantom{--------}}
  \quad
  \overset{
    \mathclap{
      \raisebox{2pt}{
        \rotatebox{50}{
          \rlap{
            \hspace{-9pt}
            \scalebox{.7}{
            \color{orangeii}
            \bf
            pure shape of
            }
          }
        }
      }
    }
  }{
    \shape
  }
  \;
  \overset{
    \mathclap{
      \raisebox{2pt}{
        \rotatebox{50}{
          \rlap{
            \hspace{-9pt}
            \scalebox{.7}{
            \color{darkblue}
            \bf
            smooth type
            }
          }
        }
      }
    }
  }{
    \mathcal{X}
  }
  \overset{
    \mathclap{
      \raisebox{2pt}{
        \rotatebox{50}{
          \rlap{
            \hspace{-9pt}
            \scalebox{.7}{
            \color{black}
            is equivalently
            }
          }
        }
      }
    }
  }{
    \qquad \simeq \qquad
  }
  \overset{
    \mathclap{
      \raisebox{2pt}{
        \rotatebox{50}{
          \rlap{
            \hspace{-9pt}
            \scalebox{.7}{
            amalgamation of
            }
          }
        }
      }
    }
  }{
  \underset{\underset{n : \mathbb{N}}{\longrightarrow}}{\lim}
  }
  \;
  \underset{
    \mathclap{
      \scalebox{.7}{
        \color{darkblue}
        \bf
        $n$-dim. paths in $\mathcal{X}$
      }
    }
  }{
  \underbrace{
  \overset{
    \mathclap{
      \raisebox{2pt}{
        \rotatebox{50}{
          \rlap{
            \hspace{-9pt}
            \scalebox{.7}{
            \color{darkblue}
            \bf
            smooth images of
            }
          }
        }
      }
    }
  }{
    \phantom{\mathrm{Maps}}
  }
  \hspace{-24pt}
  \Maps{\big}
    {
  \overset{
    \mathclap{
      \raisebox{2pt}{
        \rotatebox{50}{
          \rlap{
            \hspace{-9pt}
            \scalebox{.7}{
            \color{darkblue}
            \bf
            smooth simplices in
            }
          }
        }
      }
    }
  }{
      \Delta^n_{\mathrm{smth}}
  }
    }
    {
  \overset{
    \mathclap{
      \raisebox{2pt}{
        \rotatebox{50}{
          \rlap{
            \hspace{-9pt}
            \scalebox{.7}{
            \color{darkblue}
            \bf
            the smooth type
            }
          }
        }
      }
    }
  }{
      \mathcal{X}
  }
    }
  }
  }
\end{equation}
\vspace{.2cm}

One of the simplest non-trivial examples of the shape operation is already the most important one for the purpose of cyclification:
Namely, the shape of the smooth circle is equivalent to the groupoid with a single object $\ast$ (witnessing that the circle is connected) which has $\mathbb{Z}$-worth of automorphisms (witnessing the winding number of paths starting and ending at this point, see Lem. \ref{ShapeUnitOfTheSmoothCircle}):
\begin{equation}
  \label{CircleShapeUnitInIntroduction}
  \begin{tikzcd}[
    column sep=40pt
  ]
  \mathbb{R}^1_{\mathrm{smth}}
  \ar[r]
  \ar[
    d,
    ->>
  ]
  \ar[
    dr,
    phantom,
    "{
      \scalebox{.7}{(pb)}
    }"
  ]
  &
  \ast
  \ar[
    d,
    ->>
  ]
  \\[-7pt]
  \mathllap{
    \scalebox{.7}{
      \color{darkblue}
      \bf
      \def\arraystretch{.8}
      \begin{tabular}{c}
        the geometric
        \\
        circle
      \end{tabular}
    }
  }
  \CohesiveCircle
  \ar[
    r,
    "{
      \color{purple}
       \eta^{\scalebox{.6}{$\shape$}}_{\CohesiveCircle}
    }"{swap}]
  &
  \shape
  \,
  \CohesiveCircle
  \ar[r, phantom, "{ \simeq }"]
  &[-35pt]
  \HomotopyQuotient
    { \ast }
    { \mathbb{Z} }
  \;\simeq\;
  \mathbf{B}\mathbb{Z}
  \mathrlap{
    \scalebox{.7}{
      \color{darkblue}
      \bf
      \def\arraystretch{.8}
      \begin{tabular}{c}
        and its
        \\
        pure shape
      \end{tabular}
    }
  }
  \end{tikzcd}
\end{equation}
On general abstract grounds \eqref{TheComparisonMorphism}, this immediately induces the desired structure \eqref{CyclicStackInIntroduction}, by the following homotopy pullback construction (which we explain below in \cref{CyclicInertiaInfinityGroupoids}):
\begin{equation}
  \label{ConstructingExtendedInertiaInIntroduction}
  \begin{tikzcd}
    \HomotopyQuotient
    {
      \Maps{}
        { \mathbf{B} \mathbb{Z} }
        { \mathcal{X} }
    }
    {
      \mathbf{B}\mathbb{Z}
    }
    \ar[d, ->>]
    \ar[from=r]
    \ar[
      dr,
      phantom,
      "{
        \scalebox{.6}{
          (pb)
        }
      }"
    ]
    &
    \HomotopyQuotient
    {
      \Maps{}
        { \mathbf{B} \mathbb{Z} }
        { \mathcal{X} }
    }
    {
      \CohesiveCircle
    }
    \ar[d, ->>]
    \ar[
      r,
      dashed,
      "{
        \scalebox{.6}{
          \color{greenii}
          \bf
          \def\arraystretch{.75}
          \begin{tabular}{c}
            include as
            \\
            essentially constant loops
          \end{tabular}
        }
      }",
      "{
        \HomotopyQuotient
        {
        \Maps{\big}
        {
          {
          \color{purple}
          \eta
            ^{\scalebox{.6}{$\shape$}}
            _{\CohesiveCircle}
          }
        }
        {\mathcal{X}}
        }
        { \CohesiveCircle }
      }"{swap, yshift=-3pt}
    ]
    &[+80pt]
    \HomotopyQuotient
    {
      \Maps{}
        { \CohesiveCircle }
        { \mathcal{X} }
    }
    {
      \CohesiveCircle
    }
    \ar[d, ->>]
    \\
    \mathbf{B}^2 \mathbb{Z}
    \ar[
      from=r,
      "{
        \mathbf{B}
        \color{purple}
        \eta
          ^{\scalebox{.6}{$\shape$}}
          _{\CohesiveCircle}
      }"
    ]
    &
    \mathbf{B}\CohesiveCircle
    \ar[r, Rightarrow, -]
    &
    \mathbf{B}\CohesiveCircle
  \end{tikzcd}
\end{equation}

As the terms colored in purple in \eqref{ConstructingExtendedInertiaInIntroduction} are meant to highlight, the problem \eqref{CyclicStackInIntroduction} of forming $\CohesiveCircle$-cyclic inertia orbifolds is abstractly solved by the shape unit \eqref{CircleShapeUnitInIntroduction}
provided by cohesive homotopy theory: this is what knows about ``restriction to essentially constant loops'' in a way compatible with their $\CohesiveCircle$-action.

\medskip

With this good abstract conception of cyclification of orbifolds in hand, and having recovered from it the existing component constructions via Thm. \ref{AbstractCharacterizationOfHuanInertiaOrbifold}, we discover a wealth of interesting induced phenomena:

\medskip

\noindent
{\bf Transgression as cyclification of cocycles.} One should expect that any kind  of cohomology of orbifolds $\HomotopyQuotient{X}{G}$ ``transgresses to'' (i.e.: functorially induces subject to degree shifts) cohomology of the cyclification -- generalizing the familiar notion of transgression in group cohomology, which is the special case where $X = \ast$. In particular, for the purpose of twisted elliptic cohomology one imagines transgressing integral 4-cocycles on an orbifold to integral 3-cocycles on its cyclification, which there serve as twists for topological K-theory.
Again, some component formulas have been considered (\cite[Def. 4.1]{AdemRuanZhang07}\cite[\S 1.3.3]{Willerton08}, recalled as Def. \ref{TransgressionFormulaForGroupCocycles} below), but a general abstract formulation seems to have been missing.

\medskip

However, we may observe that any decent generalized cohomology theory on orbifolds will be represented by some (equivariant) moduli stack $\mathcal{A}$ (cf. \cite[\S II]{FSS20CharacterMap}\cite[p. 6 \& \S 5]{SS20OrbifoldCohomology}\cite[\S 4.3]{SS21EPB}), in that its cocycles are maps of (equivariant) smooth $\infty$-stacks from the orbifold to a moduli $\infty$-stack:
\begin{equation}
  \label{StackyCocycleInIntroduction}
  \begin{tikzcd}[
    column sep=50pt
  ]
    \overset{
      \mathclap{
        \raisebox{6pt}{
          \scalebox{.7}{
            \color{darkblue}
            \bf
            \def\arraystretch{.8}
            \begin{tabular}{c}
              differentiable  stack
              \\
              / orbifold
              \\
            \end{tabular}
          }
        }
      }
    }{
      \HomotopyQuotient{X}{G}
    }
\phantom{AA}
        \ar[
      rr,
      bend left=25,
      "{
        F
      }",
      "{
        \scalebox{.7}{
          \color{greenii}
          \bf
          \def\arraystretch{.8}
          \begin{tabular}{c}
            cocycle in
            \\
            $G$-equivariant
          \scalebox{1.3}{$\mathcal{A}$}-cohomology
          \end{tabular}
        }
      }"{yshift=7pt},
      "{  }"{swap, name=s}
    ]
    \ar[
      rr,
      bend right=25,
      " F' "{swap},
      "{
        \scalebox{.7}{
          \color{greenii}
          \bf
          cohomologous cocycle
        }
      }"{swap, yshift=-10pt},
      "{\ }"{name=t}
    ]
    \ar[
      from=s,
      to=t,
      shorten=-2pt,
      Rightarrow,
      "{
        \scalebox{.7}{
          \color{orangeii}
          \bf
          \def\arraystretch{.8}
          \begin{tabular}{c}
            homotopy/
            \\
            coboundary
          \end{tabular}
        }
      }"{description}
    ]
      &{\phantom{AAAA}}& \phantom{AA}
    \overset{
      \mathclap{
        \raisebox{6pt}{
          \scalebox{.7}{
            \color{darkblue}
            \bf
            \def\arraystretch{.8}
            \begin{tabular}{c}
              differential
              \\
              moduli
              \\
              $\infty$-stack
            \end{tabular}
          }
        }
      }
    }{
      \mathcal{A}
    }
  \end{tikzcd}
\end{equation}
For example, in the simple case of ordinary cohomology in some degree $n \in \mathbb{N}$ with coefficients in an abelian group $A$, the moduli stack is the $n$-fold delooping stack $\mathbf{B}^n A$ (we assume $A$ to be discrete just for the purpose of exposition):
\begin{equation}
  \label{OrdinaryCohomologyInIntroduction}
  \overset{
    \mathclap{
    \raisebox{3pt}{
      \scalebox{.7}{
        \color{darkblue}
        \bf
        \def\arraystretch{.8}
        \begin{tabular}{c}
          ordinary
          \\
          equivariant cohomology
        \end{tabular}
      }
    }
    }
  }{
  H^n
  (
    \HomotopyQuotient{X}{G}
    ;\,
    A
  )
  }
  \quad
  \simeq
  \;\;
  \pi_0
  \Big\{
    \HomotopyQuotient{X}{G}
    \xrightarrow{\;\; F \;\;}
    \mathbf{B}^n A
  \Big\}
  \,.
\end{equation}

But with cyclification of orbifolds identified as the abstract stacky construction \eqref{CyclicStackInIntroduction}, it immediately extends to an $\infty$-functor on all $\infty$-stacks. This allows to readily define/construct the cyclic transgression of any generalized orbifold cocycles \eqref{StackyCocycleInIntroduction} simply as the image formed under this $\infty$-functor:
\begin{equation}
  \label{CyclificationOfStackyCocycleInIntroduction}
  \begin{tikzcd}[
    column sep=50pt
  ]
    \overset{
      \mathclap{
        \raisebox{6pt}{
          \scalebox{.7}{
            \color{darkblue}
            \bf
            \def\arraystretch{.8}
            \begin{tabular}{c}
              cyclified
              \\
              orbifold
              \\
            \end{tabular}
          }
        }
      }
    }{
      \mathrm{Cyc}\big(
        \HomotopyQuotient{X}{G}
      \big)
    }
    \ar[
      rr,
      bend left=15pt,
      "{
        \mathrm{Cyc}(F)
      }",
      "{
        \scalebox{.7}{
          \color{greenii}
          \bf
          \begin{tabular}{c}
            cyclified cocycle
            \\
            in \scalebox{1.3}{$\mathrm{Cyc}(\mathcal{A})$}-cohomology
          \end{tabular}
        }
      }"{yshift=10pt},
      "{\ }"{swap, name=s}
    ]
    \ar[
      rr,
      bend right=15pt,
      "{ \mathrm{Cyc}(F') }"{swap},
      "{\  }"{name=t}
    ]
    \ar[
      from=s,
      to=t,
      Rightarrow,
      "{\sim}"{sloped}
    ]
    &{\phantom{AAA}}&
    \overset{
      \mathclap{
        \raisebox{6pt}{
          \scalebox{.7}{
            \color{darkblue}
            \bf
            \def\arraystretch{.8}
            \begin{tabular}{c}
              cyclified
              \\
              moduli
              $\infty$-stack
            \end{tabular}
          }
        }
      }
    }{
      \mathrm{Cyc}\big(
        \mathcal{A}
      \big)
    }
  \end{tikzcd}
\end{equation}

Moreover, in the example \eqref{OrdinaryCohomologyInIntroduction} of ordinary cohomology with coefficients
in a torsion-free abelian group $A$ (such as the integers $\mathbb{Z}$),
one finds a retraction of the cyclification of the classifying stack in degree $n+1$ onto that in degree $n$, which has the interpretation of ``integrating cocycles along loops'':
\vspace{-.4cm}
\begin{equation}
  \label{CyclificationOfDeloopingOfAbelian}
  \def\arraystretch{1}
  \begin{array}{l}
    A : \mathrm{AbGrp},
    \\
    n : \mathbb{N}_{\geq 2}
  \end{array}
  \hspace{15pt}
  \vdash
  \hspace{17pt}
  \begin{tikzcd}[
   column sep=45pt
  ]
     \mathbf{B}^{n} A
     \ar[
       r,
       "{
         (\mathrm{id}, 0)
       }"
     ]
     \ar[
       rrr,
       Rightarrow,
       -,
       rounded corners,
       to path={
            ([yshift=-00pt]\tikztostart.south)
         -- ([yshift=-12pt]\tikztostart.south)
         -- ([yshift=-12pt]\tikztotarget.south)
         -- ([yshift=-00pt]\tikztotarget.south)
       }
     ]
     &
     \mathbf{B}^n A
       \,\times\,
     \mathbf{B}^{n+1} A
     \ar[
       r, ->>
     ]
     &[-23pt]
     \overset{
       \mathrm{Cyc}(
         \mathbf{B}^{n+1}A
       )
     }{
     \overbrace{
       \HomotopyQuotient{
         \big(
           \mathbf{B}^{n}A
             \times
           \mathbf{B}^{n+1} A
         \big)
     }{\CohesiveCircle}
     }
     }
     \ar[
       r,
       "{
         \overset{
           \mathclap{
             \raisebox{3pt}{
               \scalebox{.7}{
                 \color{orangeii}
                 \bf
                 \def\arraystretch{.8}
                 \begin{tabular}{c}
                   loop
                   \\
                   integration
                 \end{tabular}
               }
             }
           }
         }{
           \scalebox{1.2}{$\int_{\CohesiveCircle}$}
         }
       }"
     ]
     &
     \mathbf{B}^{n} A
  \end{tikzcd}
  \,.
\end{equation}
\vspace{.1cm}

This may nicely be seen, due to the assumption that $A$ is torsion-free, with tools from rational homotopy theory
(comprehensive review and further pointers are given in \cite[\S 3.2]{FSS20CharacterMap}): Using the general formula
for Sullivan models (see particularly \cite{Menichi15}) of cyclic loop spaces from \cite[Thm. A]{VPB85} (described
in our context in \cite[Prop. 3.2]{FSS16SphereValued}\cite[Rem. 3.1]{FSS16TDuality}, and iterated and extended to the
nilpotent case in \cite{SatiVoronov21}), we have the following evident
corresponding retraction of rational Sullivan dg-algebras (showing the polynomial generators, with degrees in subscript,
and their differential relations):
\vspace{-3mm}
\begin{equation}
  \label{SullivanModelForCycRetraction}
  \hspace{-20pt}
  \begin{tikzcd}[
    row sep=-5pt,
    column sep=14pt
  ]
    \scalebox{.7}{$
      \def\arraystretch{.8}
      \begin{tabular}{c}
        Retraction of
        cyclification of
        \\
        torsion-free
        classifying space
        \\
        onto its based loop space
      \end{tabular}
    $}
    &[-10pt]
    \mathbf{B}^n \mathbb{Z}
    \ar[r]
    &
    \mathbf{B}^n \mathbb{Z}
    \times
    \mathbf{B}^{n+1} \mathbb{Z}
    \ar[r]
    &
    \mathrm{Cyc}\big(
     \mathbf{B}^{n+1} \mathbb{Z}
    \big)
    \ar[
      r,
      "{ \int_{\CohesiveCircle} }"
    ]
    &
    \mathbf{B}^n \mathbb{Z}\;.
    \\
    \scalebox{.7}{$
      \def\arraystretch{.8}
      \begin{tabular}{c}
        witnessed by
        the respective
        \\
        Sullivan model
        dgc-algebras
      \end{tabular}
    $}
    &
    \scalebox{.8}{$
    \left(
    \hspace{-4pt}
    \begin{array}{l}
      \mathrm{d}
      \;
      c_{n} \;=\; 0
    \end{array}
    \hspace{-4pt}
    \right)
    $}
    \ar[from=r]
    &
  \scalebox{.7}{$  \left(
    \hspace{-4pt}
    \begin{array}{l}
      \mathrm{d}
      \;
      c_{n+1} \;=\; 0
      \\
      \mathrm{d}
      \;
      c_{n} \;\;\;\;\;=\; 0
    \end{array}
    \hspace{-4pt}
    \right)
    $}
    \ar[from=r]
    &
    \scalebox{.8}{$
    \left(
    \hspace{-4pt}
    \begin{array}{l}
      \mathrm{d}
      \;
      c_{n+1} \;=\;
      c_n \wedge \omega_2
      \\
      \mathrm{d}
      \;
      c_{n} \;\;\;\;\;=\; 0
      \\
      \mathrm{d}
      \;
      \omega_{2} \;\;\;\;=\; 0
    \end{array}
    \hspace{-4pt}
    \right)
    $}
    \ar[from=r]
    &
  \scalebox{.7}{$  \left(
    \hspace{-4pt}
    \begin{array}{l}
      \mathrm{d}
      \;
      c_{n} \;=\; 0
    \end{array}
    \hspace{-4pt}
    \right)
    $}
  \end{tikzcd}
\end{equation}

\noindent
Now under this retraction, the cyclification operation
\eqref{CyclificationOfStackyCocycleInIntroduction}
reproduces
the traditional transgression formula in discrete group cohomology (recalled as Def. \ref{TransgressionFormulaForGroupCocycles} below)
and shows that it descends from the free loop stack to the cyclification:
\vspace{-4mm}
\begin{equation}
  \label{TransgressionMapViaCyclification}
  \begin{tikzcd}[
    column sep=30pt
  ]
    &[-20pt]
    \mathllap{
      G : \mathrm{Grp}(\Sets),
      \;\;\;\;\;
    }
    \overset{
      \mathclap{
        \raisebox{4pt}{
          \scalebox{.7}{
            \color{darkblue}
            \bf
            \def\arraystretch{.8}
            \begin{tabular}{c}
              $G$-orbi-
              \\
              singularity
            \end{tabular}
          }
        }
      }
    }{
      \HomotopyQuotient{\ast}{G}
    }
    \ar[
      rr,
      "{ F }",
      "{
        \scalebox{.7}{
          \color{greenii}
          \bf
          cocycle in group cohomology
        }
      }"{swap}
    ]
    &{\phantom{AAAAA}}&
    \mathbf{B}^{n+1} A
    \mathrlap{
      \hspace{30pt}
      \vdash
    }
    &[-30pt]
    \\
    \overset{
      \mathrlap{
        \scalebox{.7}{
        \raisebox{4pt}{
          \color{darkblue}
          \bf
          inertia orbifold
        }
        }
      }
    }{
      \Lambda \mathbf{B}G
    }
    \ar[r, phantom, "="]
    &[-20pt]
    \Maps{\big}
      {\CohesiveCircle}
      {\HomotopyQuotient{\ast}{G}}
    \ar[d, - >>]
    \ar[
      rr,
      "{
        \Maps{}
          { \CohesiveCircle }
          { F }
      }",
      "{
        \scalebox{.7}{
          \color{greenii}
          \bf
          looped cocycle
        }
      }"{swap}
      "{
        \scalebox{.7}{
          \color{greenii}
          \bf
        }
      }"{swap}
    ]
    &{\phantom{AAAAA}}&
    \Maps{\big}{\CohesiveCircle}{\mathbf{B}^n A}
    \ar[r, phantom,  "\simeq"]
    \ar[d, - >>]
    &[-20pt]
    \mathbf{B}^{n}A
    \times
    \mathbf{B}^{n+1} A
    \ar[r, ->> , "{ \mathrm{pr}_1 }"]
    \ar[d, - >>]
    &[-5pt]
    \mathbf{B}^{n}A
    \ar[
      from=dl,
      "{
        \scalebox{1.2}{$\int_{\CohesiveCircle}$}
        \mathrlap{
          \hspace{-12pt}
          \scalebox{.7}{
            \begin{tabular}{c}
              \color{orangeii}
              \bf
              loop integration
              \\              \eqref{CyclificationOfDeloopingOfAbelian}
            \end{tabular}
          }
        }
      }"{swap, pos=.52, xshift=-5pt}
    ]
    \ar[
      from=lllll,
      crossing over,
      rounded corners,
      to path={
           ([yshift=-00pt]\tikztostart.south)
        -- ([yshift=-13pt]\tikztostart.south)
        -- node[yshift=-7pt, pos=.375] {
             \scalebox{.7}{
               \color{greenii}
               \bf
               transgressed cocycle
             }
           }
           ([yshift=-13pt]\tikztotarget.south)
        -- ([yshift=-00pt]\tikztotarget.south)
      }
    ]
    \\[+10pt]
    &
    \underset{
      \mathclap{
        \raisebox{-3pt}{
        \scalebox{.7}{
          \color{darkblue}
          \bf
          \def\arraystretch{.8}
          \begin{tabular}{c}
            cyclified
            orbifold
          \end{tabular}
        }
        }
      }
    }{
      \mathrm{Cyc}(\HomotopyQuotient{\ast}{G})
    }
    \ar[
      rr,
      "{
        \mathrm{Cyc}(c)
      }",
      "{
        \scalebox{.7}{
          \color{greenii}
          \bf
          cyclified cocycle
        }
      }"{swap}
    ]
    &&
    \underset{
      \mathrlap{
        \raisebox{-3pt}{
        \scalebox{.7}{
          \color{darkblue}
          \bf
          \def\arraystretch{.8}
          \begin{tabular}{c}
            cyclified
            coefficient
            $\infty$-stack
          \end{tabular}
        }
        }
      }
    }{
      \mathrm{Cyc}\big(
        \mathbf{B}^{n+1} A
      \big)
    }
    \ar[
      r,
      phantom,
      "\simeq"
    ]
    &
    \HomotopyQuotient
    {
    \big(
      \mathbf{B}^{n}A
      \times \mathbf{B}^{n+1} A
    \big)
    }
    { \CohesiveCircle }
  \end{tikzcd}
\end{equation}
 This is Thm. \ref{TransgressionInGroupCohomologyViaLooping} below, proving a suggestion in \cite[\S 1.3.3]{Willerton08}.
 But notice that the construction \eqref{TransgressionMapViaCyclification} works for $\HomotopyQuotient{\ast}{G} \,=\, \mathbf{B}G$ replaced by any $\infty$-stack $\mathcal{X}$ and hence defines transgression in this generality.
 In the special case when  $\mathcal{X}$ is at most an orbifold, transgression is considered in the literature as pullback in cohomology along the evaluation map $\CohesiveCircle \times \Maps{\big}{\CohesiveCircle}{\mathcal{X}} \xrightarrow{\mathrm{ev}} \mathcal{X}$
 \eqref{EvaluationMap}
 followed by suitable fiber integration over the $\CohesiveCircle$-factor (e.g. \cite[p. 2]{Luperciouribe06Holonomy}, following analogous discussion for manifolds \cite[\S 3.5]{Brylinski93}). The proof of Thm. \ref{TransgressionInGroupCohomologyViaLooping} brings out that this is what \eqref{TransgressionMapViaCyclification} reduces to in these cases; see around \eqref{EvaluationPrecomposedWithEilenbergZilber} below.

\medskip

Therefore, cyclification subsumes transgression in broad generality, but it retains more information.
We next see that this extra information is such as to recover the original cocycle:

\medskip

\noindent
{\bf Fiber integration via cyclification.}
In giving higher topos-theoretic meaning to the cyclification-construction of orbifolds, a web of further structure surrounding the construction becomes manifest, related to the topic of higher transformation groups and higher principal bundles (cf. \cite[\S 2.2]{SS20OrbifoldCohomology}\cite[\S 3.2.3]{SS21EPB}):

\medskip

First (this is the content of \cref{CyclicInertiaOrbifolds} below), cyclification immediately generalizes from the circle group $\CohesiveCircle$ to any group $\infty$-stack
$\mathcal{T} \,\in\, \Groups\big( \SmoothInfinityStacks \big)$, since the mapping stacks of the form $\Maps{}{\mathcal{T}}{\mathcal{X}}$ carry a canonical $\mathcal{T}$-action by precomposition with the multiplication action of $\mathcal{T}$ on itself. Moreover, the general theory of principal $\infty$-bundles shows that the resulting homotopy quotient projection is a $\mathcal{T}$-principal bundle which is classified by the homotopy quotient of the terminal map $\Maps{}{\mathcal{T}}{\mathcal{X}}$, in that we have homotopy pullback squares of this form:
\begin{equation}
  \begin{tikzcd}[column sep=large]
    \mathcal{T}
    \ar[r]
    \ar[d]
    \ar[
      dr,
      phantom,
      "{
        \scalebox{.7}{(pb)}
      }"
    ]
    &
    \Maps{}{\mathcal{T}}{\mathcal{X}}
    \ar[d, ->>]
    \ar[r]
    \ar[
      dr,
      phantom,
      "{
        \scalebox{.7}{ (pb) }
      }"
    ]
    &
    \ast
    \ar[d]
    \\
    \ast
    \ar[
      r,
      "{  }"
    ]
    &
    \underset{
      \mathrm{Cyc}_{\mathcal{T}}
    }{
    \underbrace{
    \HomotopyQuotient
      {
        \Maps{}{\mathcal{T}}{\mathcal{X}}
      }
      { \mathcal{T} }
      }
      }
      \ar[
        r,
        "{
          c_1
        }"{swap}
      ]
      &
      \underset{
        \mathbf{B}\mathcal{T}
      }{
      \underbrace{
        \HomotopyQuotient
        {
          \ast
        }
        { \mathcal{T} }
      }
      }
  \end{tikzcd}
\end{equation}
In fact, these $\mathcal{T}$-principal bundles over $\mathcal{T}$-cyclic stacks have a fundamental universal property \eqref{CyclicicationAsRightDerivedBaseChange}: their construction is {\it right adjoint} (in the $\infty$-category theoretic sense of, e.g., \cite[Def. 1.1.2]{RiehlVerity16}) \ of sending a $\mathcal{T}$-principal bundle to its total space (i.e.: total $\infty$-stack):
\begin{equation}
  \label{ExtCycAdjunctionInIntroduction}
  \begin{tikzcd}[column sep=large]
    \SmoothInfinityGroupoids
    \ar[
      from=rr,
      shift right=5pt,
      "{
        \scalebox{.7}{
          \color{greenii}
          \bf
          total space
        }
      }"{swap}
    ]
    \ar[
      rr,
      shift right=6pt ,
      "{
        \mathllap{
        \mathcal{X}
        }
        \;\;\;\mapsto\;\;\;
        \mathrlap{
        \big(
          \Maps{}{\mathcal{T}}{\mathcal{X}}
          \,\to\,
          \HomotopyQuotient
            { \Maps{}{\mathcal{T}}{\mathcal{X}} }
            { \mathcal{T} }
        \big)
        }
      }"{swap}
    ]
    \ar[
      rr,
      phantom,
      "{
        \scalebox{.7}{$\bot$}
      }"
    ]
    \ar[
      rrrr,
      rounded corners,
      to path={
           ([yshift=-00pt]\tikztostart.south)
        -- ([yshift=-30pt]\tikztostart.south)
        -- node[yshift=+7pt, pos=.48] {
             \scalebox{.7}{$
               \mathcal{X} \;\; \mapsto \;\;
                 \overset{
                   \scalebox{.8}{
                     \rotatebox{90}{$\;\,\mathclap{=}$}
                   }
                 }{
                   \mathrm{Cyc}_{\mathcal{T}}(\mathcal{X})
                 }
               $}
           }
           node[yshift=-8pt, pos=.48] {
             \scalebox{.7}{
               \color{greenii}
               \bf
               \scalebox{1.1}{$\mathcal{T}$}-cyclification
             }
           }
           ([yshift=-30pt]\tikztotarget.south)
        -- ([yshift=-00pt]\tikztotarget.south)
      }
    ]
    &&
    \mathcal{T}\mathrm{PrncplBundl}
    \big(
      \SmoothInfinityGroupoids
    \big)
    \ar[
      rr,
      "{
        \scalebox{.7}{
          \color{greenii}
          \bf
          base space
        }
      }"
    ]
    &&
    \SmoothInfinityGroupoids
  \end{tikzcd}
\end{equation}

This adjunction means that if a domain $\mathcal{X}$ is an $\mathcal{T}$-principal bundle over a base $\mathcal{Y} \,\simeq\, \HomotopyQuotient{\mathcal{X}}{\mathcal{T}}$ (as such classified by a cocycle $\mathcal{Y} \xrightarrow{\;} \mathbf{B}\mathcal{T})$, then its $\mathcal{A}$-cohomology \eqref{StackyCocycleInIntroduction} is equivalently the cohomology of $\mathcal{Y}$ with coefficients in $\mathrm{Cyc}(\mathcal{A})$ subject to identification of the underlying $\mathcal{T}$-bundles on both sides:
\begin{equation}
  \label{ReductionAndOxidation}
  \begin{tikzcd}[
    row sep=7pt, column sep=huge
  ]
    \mathcal{T}
    \ar[r]
    &[-13pt]
    \overset{
      \mathclap{
        \raisebox{6pt}{
      \scalebox{.7}{
      \color{darkblue}
      \bf
      \def\arraystretch{.85}
      \begin{tabular}{c}
        $\mathcal{T}$-principal
        \\
        bundle
      \end{tabular}
      }
        }
      }
    }{
      \mathcal{X}
    }
    \ar[
      rr,
      "{
        F
      }"{pos=.46},
      "{
        \scalebox{.65}{
          \color{greenii}
          \bf
          cocycle on total space
        }
      }"{swap}
    ]
    \ar[dd]
    &\phantom{AAAAAAAA}&
    \overset{
      \mathclap{
        \raisebox{3pt}{
        \scalebox{.7}{
          \color{darkblue}
          \bf
          \def\arraystretch{.9}
          \begin{tabular}{c}
            generalized cohomology
            \\
            coefficients
          \end{tabular}
        }
      }
      }
    }{
      \mathcal{A}
    }
    \ar[
      dd,
      <->,
      gray,
      dashed,
      start anchor={[yshift=-2pt]},
      end anchor={[xshift=-9pt, yshift=-3pt]},
      shorten <=-14pt,
      shorten >=-9pt,
      bend left=80,
      shift left=20pt,
      "{
        \phantom{\vert^{\vert}}
      }"{description},
      "{
        \hspace{-10pt}
        \scalebox{.7}{
          \it oxidation
        }
      }"{pos=.3},
      "{
        \hspace{-6pt}
        \scalebox{.7}{
          \it reduction
        }
      }"{pos=.7},
    ]
    \\
    &
    {}
    \ar[
      rr,
      phantom,
      "{ \rotatebox{90}{\scalebox{.9}{$\leftrightarrow$}} }"
    ]
    && {}
    \\[-6pt]
    &
    \mathcal{Y}
    \ar[
      rr,
      "{
        \scalebox{.7}{
          \color{greenii}
          \bf
          \scalebox{1.2}{$\mathrm{Cyc}$}-adjoint cocycle on base
        }
      }"{yshift=2pt},
      "{
        \widetilde{ F }
      }"{description},
      "{}"{swap, name=s}
    ]
    \ar[
      dr,
      "{
        \vdash \mathcal{X}
      }"{sloped, swap, pos=.5},
      "{
      }"{name=t, pos=1}
    ]
    &&
    \mathrm{Cyc}_{\mathcal{T}}(\mathcal{A})
    \ar[
      dl,
      "{
        c_1
      }"{}
    ]
    \\
    &
    &
    \mathbf{B}\mathcal{T}
    \ar[
      from=s,
      to=t,
      Rightarrow,
      "{ \sim }"{sloped, swap, pos=.35}
    ]
  \end{tikzcd}
\end{equation}

In the case \eqref{CyclificationOfDeloopingOfAbelian} of ordinary cohomology of $\CohesiveCircle$-bundles, this yields the fiber integration of cocycles:
\begin{equation}
  \label{DoubleDimReductionOfCharges}
  \begin{tikzcd}
    \CohesiveCircle
    \ar[r]
    &[-5pt]
    \overset{
      \mathclap{
        \raisebox{6pt}{
          \scalebox{.7}{
            \color{darkblue}
            \bf
            \def\arraystretch{.85}
            \begin{tabular}{c}
              $\CohesiveCircle$-principal
              \\
              bundle
            \end{tabular}
          }
        }
      }
    }{
      \mathcal{X}
    }
    \ar[dd]
    \ar[
      rr,
      "{ F }",
      "{
        \scalebox{.7}{
          \color{greenii}
          \bf
          \scalebox{1.2}{$(n+1)$}-cocycle
        }
      }"{swap}
    ]
    &&
    \overset{
      \mathclap{
        \raisebox{4pt}{
          \scalebox{.7}{
          \color{darkblue}
          \bf
          \def\arraystretch{.8}
          \begin{tabular}{c}
            ordinary cohomology
            \\
            coefficients
          \end{tabular}
        }
        }
      }
    }{
      \mathbf{B}^{n+1} A
    }
    &
    {}
    \ar[
      dd,
      gray,
      dashed,
      start anchor={[yshift=-2pt]},
      end anchor={[xshift=-9pt]},
      shorten <=-14pt,
      shorten >=-9pt,
      bend left=80,
      shift left=20pt,
      "{
        \hspace{-14pt}
        \scalebox{.7}{
          \def\arraystretch{.8}
          \it
          \begin{tabular}{c}
            fiber integration /
            \\
            double dim-reduction
          \end{tabular}
        }
      }"{pos=.6},
    ]
    \\[-12pt]
    \\[-12pt]
    &
    \mathcal{Y}
    \ar[
      rrr,
      rounded corners,
      to path={
           ([yshift=+00pt]\tikztostart.south)
        -- ([yshift=-10pt]\tikztostart.south)
        -- node[yshift=-6pt]{
             \scalebox{.7}{
               \color{greenii}
               \bf
               \scalebox{1.2}{$n$}-cocycle
             }
           }
           ([yshift=-10pt]\tikztotarget.south)
        -- ([yshift=-00pt]\tikztotarget.south)
      }
    ]
    \ar[
      rr,
      "{ \widetilde F }"
    ]
    &\phantom{AAAA}&
    \mathrm{Cyc}
    \big(
      \mathbf{B}^{n+1} A
    \big)
    \ar[
      r,
      "{
        \int_{\CohesiveCircle}
      }"
    ]
    &
    \mathbf{B}^n A
  \end{tikzcd}
\end{equation}

\noindent
{\bf Double dimensional reduction.}
If in \eqref{DoubleDimReductionOfCharges} we think of
\begin{itemize}[leftmargin=.5cm]
\item $\mathcal{X}$ as a spacetime-orbifold (e.g. as in \cite{DixonHarveyVafaWitten85}\cite{Acharya99}\cite{SS19Tad})

\item $\mathbf{B}^{n+1} A$ as the coefficients of charges of some solitonic physical objects (``branes'') in higher generalization (cf. \cite[(2)]{FSS20CharacterMap})
of Dirac's classical magnetic charge quantization (reviewed e.g. in \cite[\S 2]{Alvarez85}),
\item
$\mathcal{T} = \CohesiveCircle$ as the ``Kaluza-Klein compactification''-space (see \cite{Duff94} for traditional pointers and \cite{Alfonsi20} for discussion in our higher differential geometric contex)
\end{itemize}
then
 \eqref{ReductionAndOxidation} captures the ``double dimensional reduction'' \cite{DuffHoweInamiStelle87} of these brane charges (\cite[\S ]{FSS16TDuality}\cite[\S 4]{SuperBraneLectures} \cite{BSS18}), namely their descent to charges of lower-dimensional branes in the lower-dimensional base spacetime:
\begin{equation}
  \label{ReductionOfFourCohomotopyCharges}
  \hspace{-3mm}
  \begin{tikzcd}[
    column sep=40pt
  ]
    \CohesiveCircle
    \ar[r]
    &[-25pt] \quad
    \overset{
      \mathclap{
        \raisebox{5pt}{
      \scalebox{.6}{
        \color{darkblue}
        \bf
        \def\arraystretch{.8}
        \begin{tabular}{c}
          1+(d+1)-dimensional
          \\
          $S^1$-fibered
          \\
          spacetime orbifold
        \end{tabular}
      }
        }
      }
    }{
      \mathcal{X}
    }
    \ar[
      dd,
      shorten >=-5pt
    ]
    \quad
    \ar[
      rr,
      "{ F }",
      "{
        \scalebox{.6}{
          \color{greenii}
          \bf
          \def\arraystretch{.8}
          \begin{tabular}{c}
            magnetic
            \scalebox{1.3}{$p$}-brane charge
            \\
            coupling to electric
            \scalebox{1.3}{$d-p-2$}-branes
          \end{tabular}
        }
      }"{swap}
    ]
    &
    &
    \mathbf{B}^{ d - p   } \mathbb{Z}
    &[-10pt]
    &[-38pt]
    \scalebox{.7}{
      e.g.
      \def\arraystretch{.9}
      \tabcolsep=3pt
      \begin{tabular}{rll}
        $1 + (d+1) = 11$ & M-theory spacetime
        \\
        $ p = 5$ & M5-brane
        \\
        $ d - p - 2 = 2$ & M2-brane
      \end{tabular}
    }
    \\[-12pt]
    \\[-12pt]
    &
    \mathllap{
      \scalebox{.6}{
        \color{darkblue}
        \bf
        \def\arraystretch{.8}
        \begin{tabular}{c}
          \color{black}
          \normalfont
          ``KK-compactified''
          \\
          $1+d$-dimensional
          \\
          spacetime orbifold
        \end{tabular}
      }
    }
    \mathcal{Y}
    \ar[
      rr,
      "{
        \mathrm{Cyc}(F)
      }"
    ]
    \ar[
      rrr,
      rounded corners,
      to path={
           ([yshift=+10pt]\tikztostart.south)
        -- ([yshift=-2pt]\tikztostart.south)
        -- node[yshift=-10pt] {
              \scalebox{.6}{
                \color{greenii}
                \bf
                \def\arraystretch{.8}
                \begin{tabular}{c}
                  magnetic $p$-brane charge
                  \\
                  coupling to electric $d-p-3$-branes
                \end{tabular}
              }
           }
           ([yshift=-12pt]\tikztotarget.south)
        -- ([yshift=-00pt]\tikztotarget.south)
      }
    ]
    &&
    \mathrm{Cyc}\big(
      \mathbf{B}^{d-p}
      \mathbb{Z}
    \big)
    \ar[
      r,
      "{ \int_{\CohesiveCircle} }"
    ]
    &
    \mathbf{B}^{d-p-1}
    \mathbb{Z}
    &
    \scalebox{.7}{
      e.g.
      \def\arraystretch{.9}
      \tabcolsep=3pt
      \begin{tabular}{rll}
        $1 + d = 10$ & type IIA spacetime
        \\
        $ p = 5$ & NS5-brane
        \\
        $ d - p - 3 = 1$ & NS1-branes (string)
      \end{tabular}
    }
  \end{tikzcd}
\end{equation}
Here on the right, we are indicating the example of the double dimensional reduction of M2-brane charge to NS1-brane/string charge (see \cite[\S 4]{MathaiSati04}) for which we consider a more refined model further below, around \eqref{MBraneChargeViaHypothesisH}.
In a different but related context, a
dimensional reduction procedure for supersymmetric Euclidean field theories over an orbifold
is proposed  in \cite{Stoffel}.

\medskip

It is expected that if such dimensional reduction is carried out with due care, then it does not lose information and may be reversed (``oxidized'', e.g. \cite{LPSS95}). This is exactly what we find here, formalized by the cyclification adjunction with its hom-isomorphism \eqref{ReductionAndOxidation}. Notice that this means (i.e.: proves) that brane charges on spacetimes which are iterated principal fiber bundles may be reduced all the way down, possibly all the way to the point, by iterated cyclification, without losing information; see \cite{SatiVoronov21}.

\medskip

\noindent
{\bf Generalized T-duality.}
Notice the subtle distinction between the bare cyclification $\mathrm{Cyc}_{\mathcal{T}}(\mathcal{A}) \,\in\, \SmoothInfinityStacks$ and its incarnation as the base of an $\mathcal{T}$-principal bundle \eqref{ShortExactSequenceOfCohomologyOfBorelConstructionOfADEGroupActingOn4Sphere}, $\mathrm{Cyc}(\mathcal{A}) \,\in\,(\SmoothInfinityStacks)_{/\mathbf{B}\mathcal{T}}$. The reduction/oxidation-equivalence \eqref{ReductionAndOxidation} says that with this $\mathcal{T}$-bundle data retained, maps from a base space $\mathcal{Y}$ into the cyclification of $\mathcal{A}$ fully recover the $\mathcal{A}$-valued cocycles on $\mathcal{X}$.
But it may happen that two in-equivalent moduli stacks $\mathcal{A} \;\neq\;  \mathcal{B}$ have equivalent bare $\mathcal{T}$-cyclifications, $\mathrm{Cyc}_{\mathcal{T}}(\mathcal{A}) \,\simeq\,\mathrm{Cyc}_{\mathcal{T}}(\mathcal{B}) \,\in\, \SmoothInfinityGroupoids$.

\smallskip
In this case the two maps $c_1^A, c_1^B$ to $\mathbf{B}\mathcal{T}$ are necessarily in-equivalent, so that $\mathrm{Cyc}(\mathcal{A})$-valued cocycles on $\mathcal{Y}$ have two different interpretations as cocycles $F^A$, $F^B$ on different $\mathcal{T}$-bundles $\mathcal{X}_A, \mathcal{X}_B$. In such a situation, while pairs of cocycles $F^A$, $F^B$ are in general different (not even of the same type), they are essentially equivalent after reduction along the $\mathcal{T}$-fibers. As such it makes sense to call them {\it $\mathcal{T}$-dual} to each other:
\vspace{-.2cm}
\begin{equation}
  \label{GeneralizedTDuality}
  \scalebox{.8}{$
  \def\arraystretch{1.2}
  \begin{array}{l}
    \mathcal{A},
    \mathcal{B}
    : \SmoothInfinityStacks
    \\
    \mathcal{T}
    :
    \Groups(\SmoothInfinityStacks)
    \\
    \mathrm{Cyc}_{\mathcal{T}}(\mathcal{A})
    \simeq
    \mathrm{Cyc}_{\mathcal{T}}(\mathcal{B})
    \\
    \\
    \mathcal{Y} : \SmoothInfinityStacks
    \\
    \widetilde F
    :
    \mathcal{Y} \to
    \mathrm{Cyc}_{\mathcal{T}}
    (\mathcal{A}/\mathcal{B})
    \
  \end{array}
  $}
  \hspace{12pt}
  \vdash
  \hspace{-20pt}
  \begin{tikzcd}[]
    &
    {\color{gray} \ast}
    \ar[ddr, gray]
    \ar[
      dddd,
      phantom,
      "{
        \scalebox{.7}{
          \color{gray}
          (pb)
        }
      }"{pos=.6}
    ]
    \\[-7pt]\
    \fcolorbox{lightgray}{white}{
      \hspace{-2pt}
      $\mathcal{X}_A$
    }
    \ar[dddr]
    \ar[ur, gray]
    \ar[
      rrr,
      crossing over,
      "{
        \raisebox{2pt}{
          \scalebox{1.2}{$F^A$}
        }
      }"{description, pos=.2}
    ]
    &[-20pt]
    &[+13pt]
    &[-13pt]
    \mathcal{A}
    &[-40pt]
    \scalebox{.7}{
      \color{orangeii}
      \bf
      \def\arraystretch{.8}
      \begin{tabular}{c}
        \scalebox{1.2}{$\mathcal{A}$}-cocycle on
        \\
        $\mathcal{T}$-fibration of $c_1^A$
      \end{tabular}
    }
    \ar[
      dddddd,
      <->,
      gray,
      rounded corners,
      to path={
            ([xshift=+0pt]\tikztostart.east)
         -- ([xshift=+42pt]\tikztostart.east)
         -- node[xshift=13pt, rotate=-90] {
              \color{purple}
              \bf
              \def\arraystretch{.8}
              \begin{tabular}{c}
                generalized
                \\
                $\mathcal{T}$-duality
              \end{tabular}
            }
            ([xshift=+42pt]\tikztotarget.east)
         -- ([xshift=+0pt]\tikztotarget.east)
      }
    ]
    \\[-10pt]
    &
    &
    \mathbf{B}\mathcal{T}
    &
    \\[-20pt]
    &
    &&
    \mathrm{Cyc}_{\mathcal{T}}(\mathcal{A})
    \ar[
      ul,
      shorten <=-3pt,
      shorten >=-3pt,
      "{
        c^A_1
      }"{swap}
    ]
    \ar[
      dd,
      phantom,
      shift right=10pt,
      "{
        \simeq
      }"{rotate=-90, pos=.8}
    ]
    \ar[
      dd,
      phantom,
      shift right=18pt,
      "{
        \left\{
          \def\arraystretch{1.32}
          \begin{array}{c}
            \phantom{A}
            \\
            \phantom{A}
          \end{array}
        \right.
      }"{}
    ]
    \\[-30pt]
    &
    \mathcal{Y}
    \ar[
      rr,
      shift right=1pt,
      "{
        \scalebox{1.2}{$\widetilde{ F } $}
      }"{description, pos=.2},
      shorten >=28pt
    ]
    \ar[
      uur,
      dashed
    ]
    \ar[
      ddr,
      dashed
    ]
    &&
    {}
    &[-30pt]
    \scalebox{.7}{
      \begin{tabular}{c}
        \color{darkblue}
        \bf
        \def\arraystretch{.8}
        \begin{tabular}{c}
          single Cyc-cocycle
          \\
          on base space
        \end{tabular}
      \end{tabular}
    }
    \ar[
      uuu,
      <->,
      gray,
      shorten=5pt,
      "{
        \scalebox{.7}{
          \def\arraystretch{.8}
          \begin{tabular}{c}
            reduction/oxidation
            \\
            with respect to \scalebox{1.3}{$c_1^A$}
          \end{tabular}
        }
      }"{swap}
    ]
    \ar[
      ddd,
      <->,
      gray,
      shorten=5pt,
      "{
        \scalebox{.7}{
          \def\arraystretch{.8}
          \begin{tabular}{c}
            reduction/oxidation
            \\
            with respect to \scalebox{1.3}{$c_1^B$}
          \end{tabular}
        }
      }"
    ]
    \\[-30pt]
    &
    &&
    \mathrm{Cyc}_{\mathcal{T}}(\mathcal{B})
    \ar[
      dl,
      shorten <=-5pt,
      shorten >=0pt,
      "{
        c^B_1
      }"{}
    ]
    \\[-20pt]
    &
    &
    \mathbf{B}\mathcal{T}
    \ar[from=ddl, gray]
    &
    &
    \\[-10pt]
    \fcolorbox{lightgray}{white}{
      \hspace{-2pt}$
      \mathcal{X}_B$
    }
    \ar[uuur]
    \ar[dr, gray]
    \ar[
      rrr,
      crossing over,
      "{
        \raisebox{2pt}{
          \scalebox{1.2}{$F^B$}
        }
      }"{pos=.2, description}
    ]
    &
    &&
    \mathcal{B}
    &
    \scalebox{.7}{
      \color{orangeii}
      \bf
      \def\arraystretch{.8}
      \begin{tabular}{c}
        \scalebox{1.2}{$\mathcal{B}$}-cocycle on
        \\
        $\mathcal{T}$-fibration of $c_1^B$
      \end{tabular}
    }
    \\[-7pt]
    &
    {\color{gray} \ast}
    \ar[
      uuuu,
      phantom,
      "{
        \scalebox{.7}{
          \color{gray}
          (pb)
        }
      }"{pos=.6}
    ]
  \end{tikzcd}
\end{equation}

In the approximation of (super-)rational homotopy theory\footnote{Here {\it super homotopy theory} refers to the $\infty$-topos not just over the site of smooth manifolds, as considered in \eqref{SequenceOfEmbeddingsOfAmbientCategories}, but further embedded into that over super-manifolds \cite{SupergeometryLectures}\cite{Giotopoulos23}, where (super-)rational homotopy theory is modeled by super-dgc-algebras \cite[\S 3.2]{HSS18} known in the supergravity literature as ``FDA''s (see \cite{FSS15WZW}). All our discussion here immediately passes to that context, but for brevity we shall not further dwell on this point here.},
this notion of $\mathcal{T}$-duality has been shown (\cite{FSS16TDuality}, reviewed in \cite[\S 9]{FSS19RationalStructure}) to reproduce the expected notion of (topological) T-duality from string theory: Here $\mathcal{A} = \big(\HomotopyQuotient{ \mathrm{KU} }{ \mathbf{B}^2\mathbb{Z} }\big)^{\mathbb{Q}}$ and $\mathcal{B} = \big(\HomotopyQuotient{ \Sigma\mathrm{KU} }{ \mathbf{B}^2\mathbb{Z} }\big)^{\mathbb{Q}}$ are the rationalizations of the twisted complex K-theory spectra in degree 0 and in degree 1, respectively. The full lift of this situation beyond the rational approximation remains to be discussed elsewhere, but we may readily spell out the comparatively simple but crucial sector of the twisting by passing along the projection
\vspace{-4mm}
$$
  \begin{tikzcd}[
    row sep=-1pt
  ]
  \HomotopyQuotient
    { \mathrm{KU}_0 }
    { \mathbf{B}^2 \mathbb{Z} }
  \ar[rr]
  &&
  \HomotopyQuotient
    { \ast }
    { \mathbf{B}^2 \mathbb{Z}  }
   \,\simeq\,
   \mathbf{B}^3 \mathbb{Z}
  \\
  \mathrm{Cyc}
  \big(
    \HomotopyQuotient
      { \mathrm{KU}_0 }
      { \mathbf{B}^2 \mathbb{Z} }
  \big)
  \ar[rr]
  &&
  \mathrm{Cyc}
  (
    \mathbf{B}^3 \mathbb{Z}
  )
  \end{tikzcd}
$$

\vspace{-2mm}
\noindent (i.e., focusing on the the ``B-field'' while ignoring the ``RR-field'' for the moment): From \eqref{SullivanModelForCycRetraction}
and standard facts about homotopy (co-)fibers of maps of Sullivan models (e.g. \cite[Prop. 3.5]{FSS15WZW}) one finds that the
cyclification of $\mathbf{B}^3 \mathbb{Z}$ is the delooping of the shape of the {\it T-duality 2-group} \cite[\S 3.2.1]{FSS12CS}
\cite[Def. 7.1]{FSS16TDuality}, defined to be the homotopy fiber of the cup product on degree-2 cohomology:
\begin{equation}
  \label{TDualityTwoGroup}
  \begin{tikzcd}[
    row sep=-3pt,
    column sep=20pt
  ]
    \scalebox{.7}{
      \begin{tabular}{c}
        cyclification of deg=3 cohomology is
        \\
        classifying space for T-duality pairs
      \end{tabular}
    }
    &[-20pt]
    \mathrm{Cyc}\big(
      \mathbf{B}^3 \mathbb{Z}
    \big)
    \ar[
      rr,
      "{ \mathrm{fib}(\cup) }"
    ]
    &&
    \mathbf{B}^2 \mathbb{Z}
      \times
    \mathbf{B}^2 \mathbb{Z}
    \ar[
      rr,
      "{ \cup }"
    ]
    &&
    \mathbf{B}^4 \mathbb{Z}
    \\
    \scalebox{.7}{
      \begin{tabular}{c}
        witnessed by homotopy (co)fiber
        \\
        of cup product on deg=2
      \end{tabular}
    }
    &
  \scalebox{.7}{$  \left(
    \hspace{-4pt}
    \begin{array}{l}
      \mathrm{d}
      \;
      c_{3} \;\;=\; c_2 \wedge \omega_2
      \\
      \mathrm{d}
      \;
      c_{2} \;\;=\; 0
      \\
      \mathrm{d}
      \;
      \omega_2 \;=\; 0
    \end{array}
    \hspace{-4pt}
    \right)
    $}
    \ar[
      from=rr,
      "{ \mathrm{hocofib} }"{swap}
    ]
    &&
\scalebox{.7}{$    \left(
    \hspace{-4pt}
    \begin{array}{l}
      \mathrm{d}
      \;
      c_{2} \;=\; 0
      \\
      \mathrm{d}
      \;
      \omega_2 \;=\; 0
    \end{array}
    \hspace{-4pt}
    \right)
    $}
    \ar[
      from=rr,
      "{
   \scalebox{.7}{$     c_2 \wedge \omega_2
        \;\mapsfrom\;
        c_4$}
      }"{swap}
    ]
    &&
  \scalebox{.7}{$  \left(
    \hspace{-4pt}
    \begin{array}{l}
      \mathrm{d}
      \;
      c_{4} \;=\; 0
    \end{array}
    \hspace{-4pt}
    \right)\;.
    $}
  \end{tikzcd}
\end{equation}
This homotopy fiber is known to be the classifying space for topological T-duality pairs (\cite[Thm. 2.17]{BunkeSchick05} \cite[\S 6.2]{Rosenberg09}\cite[Rem. 7.2]{FSS16TDuality}); and from \eqref{GeneralizedTDuality}
we transparently see how this comes about:
Given a $\mathrm{Cyc}\big(\mathbf{B}^3 \mathbb{Z}\big)$-valued cocycle $\phi$ on a base orbifold $\mathcal{Y}$, then by iterative application of the pasting law (Fact \ref{PastingLaw}), we may extract the following pasting diagram of homotopy cartesian squares which makes appear, on the left of the diagram, two total spaces $\mathcal{X}_{A/B}$ carrying ``gerbes'' $\mathcal{G}_{A/B}$ (here incarnated as $\mathbf{B}^2 \mathbb{Z}$-principal $\infty$-bundles) subject to the constraint that their pullback to the correspondence orbifold $\mathcal{X}_A \times_{\mathcal{Y}} \mathcal{X_B}$ are both equivalent to the ``Poincar{\'e} gerbe'' $\mathcal{G}$:
$$
  \begin{tikzcd}[
    column sep=15pt
  ]
   &
    &
    \mathcal{G}
    \ar[d]
    \ar[rrr]
    &&&
    \ast
    \ar[d]
    \\
    &
    &[-8pt]
    \mathcal{X}_A
      \times_{\mathcal{Y}}
    \mathcal{X}_B
    \ar[ddddl]
    \ar[rrr]
    \ar[
      ddr,
      gray
    ]
    &[-8pt]
    &
    &[-20pt]
    \mathbf{B}^3 \mathbb{Z}
    \ar[rrr]
    \ar[
      ddr,
      gray,
      "{ (0, \mathrm{id}) }"{description}
    ]
    &[-20pt]
    &
    &[-14pt]
    \ast
    \ar[
      ddr
    ]
    &[-14pt]
    \\
    \\[-20pt]
    &
    &&
    \color{gray}
    \mathcal{X}_B
    \ar[
      rrr,
      gray,
      "{
        (
        \vdash\, \mathcal{G}_B
        ,\,
        \cdots
        )
      }"{pos=.2, description}
    ]
    \ar[
      dddl,
      gray
    ]
    &&&
    \color{gray}
    \mathbf{B}^3 \mathbb{Z}
      \times
    \mathbb{B}^2 \mathbb{Z}
    \ar[
      rrr, gray
    ]
    \ar[
      dddl,
      gray
    ]
    &&&
    \mathbf{B}^2 \mathbb{Z}
     \ar[
      dddl,
      "{ (0, \mathrm{id}) }"{description}
    ]
    \\[-17pt]
    \\[-17pt]
    &
    \mathcal{X}_A
    \ar[dr]
    \ar[
      rrr,
      crossing over,
      "{
        \big(
        \vdash\, \mathcal{G}_A
        ,\,
        \cdots
        \big)
      }"{pos=.3, description}
    ]
    &&&
    \mathbf{B}^3 \mathbb{Z}
    \times
    \mathbf{B}^2\mathbb{Z}
    \ar[
      from=uuuur,
      crossing over,
      "{ (\mathrm{id}, 0) }"{description}
    ]
    \ar[dr]
    \ar[
      rrr,
      crossing over
    ]
    &&&
    \mathbf{B}^2\mathbb{Z}
    \ar[
      dr,
      "{ (\mathrm{id},0) }"{description}
    ]
    \ar[
      from=uuuur,
      crossing over
    ]
    \\
    &
    &
    \color{purple}
    \mathcal{Y}
    \ar[
      rrr,
      purple,
      "{ \phi }"{description}
    ]
    &&&
    \color{purple}
    \mathrm{Cyc}(\mathbf{B}^3\mathbb{Z})
    \ar[d]
    \ar[
      rrr,
      "{ (c_1^A,\, c_1^B) }"{description}
    ]
    &&&
    \mathbf{B}^2\mathbb{Z}
    \times
    \mathbf{B}^2\mathbb{Z}
    \ar[
      d,
      "{ \cup }"{description}
    ]
    \\
    &
    &
    &&&
    \ast
    \ar[
      rrr
    ]
    &&&
    \mathbf{B}^4 \mathbb{Z}
  \end{tikzcd}
$$
the basic axiomatics \cite{BunkeSchick05} of ``topological T-duality'' (going back to \cite{BEM04}, good review in \cite[\S 1]{Waldorf22})
in the familiar situation and generalizes it to orbifolds and higher stacks.

\medskip

By way of further outlook, we close this overview by indicating the following concrete application, which deserves to be discussed in more detail elsewhere:

\medskip

\noindent
{\bf Application to brane physics.}
While much of the interest in elliptic cohomology goes back to the suggestion \cite{Witten1988}
that the partition function of the heterotic string is an elliptic genus of the string's target spacetime $\mathcal{X}$, realized via $\CohesiveCircle$-equivariant K-theory of its free loop space $\Maps{}{\CohesiveCircle}{\mathcal{X}}$, only a couple of authors (\cite{KrizSati04}\cite{KrizSati05}) have investigated the idea that elliptic cohomology might relate to the widely expected hypothesis that the topological K-theory of $\mathcal{X}$ itself measures the brane charges in (not heterotic but) type I/II string theories (a comprehensive list of references for this classical hypothesis may be found in \cite[\S 1]{BSS18}).

\smallskip
One circumstantial hint of what may really be going on comes the fact that the elliptic genus of the heterotic string is expected (at physics level of rigour) to really just be a special case of the general notion of elliptic genera of toroidally compactified super-branes, notably of the M5-brane \cite{GSY07}\cite{GaiottoYin07}\cite{AHHKRW15} \cite{GukovPeiPutrovVafa21}. This leads us to ask whether elliptic cohomology measures aspects of M-brane charges?

\smallskip
One coherent answer to what may be going on is suggested by {\it Hypothesis H} \cite{FSS19TwistedCohomotopy}\cite{SS19Tad}\cite{SS21MF}: This postulates that the charges of branes in M-theory are measured in tangentially $\mathrm{Sp}(2) \,\simeq\, \mathrm{Spin}(5)$-twisted equivariant unstable 4-Cohomotopy (see \cite{FSS20CharacterMap} for details), i.e. in the non-abelian cohomology theory whose plain moduli stack is the 4-sphere homotopy quotiented by its canonical $\mathrm{Spin}(5) \to \mathrm{SO}(5)$-action, with the ordinary cohomology charge shown in \eqref{ReductionOfFourCohomotopyCharges} being only a subtle integral characteristic class $\widetilde{\Gamma}_4$ (\cite[Lem. 3.12]{FSS19TwistedCohomotopy}\cite[(7)]{FSS22TwistorialGreenSchwarz}\cite[(4)]{FSSTwistedStringStruc21})
of this twisted non-abelian cohomology theory:
\vspace{-.4cm}
\begin{equation}
  \label{MBraneChargeViaHypothesisH}
  \begin{tikzcd}[
    column sep=35pt
  ]
    \mathllap{
      \scalebox{.7}{
        \color{darkblue}
        \bf
        \def\arraystretch{.8}
        \begin{tabular}{c}
          11d spacetime
          \\
          orbifold
        \end{tabular}
      }
    }
    \mathcal{X}
    \ar[
      ddr,
      dashed,
      "{ \tau }"{description},
      "{
        \scalebox{.7}{
          \scalebox{1.2}{$\mathrm{Sp}(2)$}-structure
        }
      }"{sloped, swap, yshift=0pt, pos=.43}
    ]
    \ar[
      rr,
      dashed,
      "{
        \scalebox{.7}{
          \def\arraystretch{.8}
          \color{darkblue}
          \bf
          \begin{tabular}{c}
            cocycle in tangentially
            \\
            twisted 4-Cohomotopy
          \end{tabular}
        }
      }",
      "{
        \scalebox{.7}{
          \def\arraystretch{.8}
          \begin{tabular}{c}
            \color{greenii}
            \bf
            {\color{black}=} M-brane charges
            \\
            (by {\it Hypothesis H})
          \end{tabular}
        }
      }"{swap}
    ]
    \ar[
      dddr,
      bend right=30,
      "{
        \vdash
        \,
        \mathrm{Fr}(\mathcal{X})
      }"{description, sloped },
      "{
        \scalebox{.7}{
          \color{greenii}
          \bf
          frame bundle
        }
      }"{swap, yshift=-4pt, sloped}
    ]
    \ar[
      drrrr,
      rounded corners,
      gray,
      to path={
         -- ([yshift=13pt]\tikztostart.north)
         -- node[yshift=5pt, xshift=3.2cm]{
              \scalebox{.7}{
                \color{gray}
                pure M5-brane charge
              }
         }
            ([yshift=31pt]\tikztotarget.north)
         -- ([yshift=00pt]\tikztotarget.north)
      }
    ]
    &
    &
    \HomotopyQuotient
      { S^4 }
      { \mathrm{Sp}(2) }
    \ar[ddl]
    \ar[
      drr,
      "{
        \widetilde \Gamma_4
        \,:=\,
        \tfrac{1}{2}\rchi_4
        +
        \tfrac{1}{4}p_1
      }"{description, sloped},
      "{
        \scalebox{.7}{
          \color{greenii}
          \bf
          \def\arraystretch{.8}
          \begin{tabular}{c}
            universal 4-class
            \\
            extracts \scalebox{1.3}{$G_4$}-flux
          \end{tabular}
        }
      }"{swap, yshift=-6pt, sloped}
    ]
    &
    \\[-25pt]
    &&
    &
    \phantom{AAAAAAA}
    &
    \mathbf{B}^4 \mathbb{Z}
    \\[-5pt]
    &
    \mathbf{B}\mathrm{Sp}(2)
    \ar[d]
    \\
    &
    \mathbf{B}\mathrm{Spin}(8+n)
  \end{tikzcd}
\end{equation}
\vspace{-.4cm}

If $\mathcal{X}$ is an $S^1$-principal bundle over a 10d spacetime orbifold $\mathcal{Y}$, as usually assumed (starting with \cite{DuffHoweInamiStelle87}\cite[p. 10]{Witten95}, see \cite[p. 2]{MathaiSati04}), then the $\mathrm{Cyc}$-adjunction says
\eqref{ExtCycAdjunctionInIntroduction}
that such charges on $\mathcal{X}$ are equivalently twisted equivariant $\mathrm{Cyc}(S^4)$-valued charges on $\mathscr{Y}$. Curiously, these cyclic 4-sphere coefficients are close to the twisted K-theory expected for D-brane charge (\cite[Ex. 3.3]{FSS16SphereValued}\cite[Ex. 2.47]{BSS18}):
\vspace{-2mm}
$$
  \begin{tikzcd}
    \overset{
      \mathclap{
        \raisebox{5pt}{
          \scalebox{.7}{
          \color{darkblue}
          \bf
          \def\arraystretch{.8}
          \begin{tabular}{c}
            rational 6-truncation of
            \\
            cyclified 4-sphere
          \end{tabular}
        }
        }
      }
    }{
    \big[
    \mathrm{Cyc}
    (
      S^4
    )
    \big]_{6}^{\mathbb{Q}}
    }
    \quad
    \ar[
      r,
      phantom,
      "\simeq"
    ]
    &
    \overset{
      \mathclap{
        \raisebox{5pt}{
          \scalebox{.7}{
          \color{darkblue}
          \bf
          \def\arraystretch{.8}
          \begin{tabular}{c}
            rational 6-truncation of
            \\
            twisted K-theory spectrum
          \end{tabular}
        }
        }
      }
    }{
    \big[
    \big(
      \HomotopyQuotient
        { \mathrm{Fred}^{0}_{\mathbb{C}} }
        { \mathrm{PU}(\mathscr{H}) }
    \big)
    \big]_{6}^{\mathbb{Q}}
    }\;.
  \end{tikzcd}
$$

\vspace{.1cm}

This suggests that the pullback of the {\bf twisted ADE-equivariant quasi-elliptic cohomology of the 4-sphere} along twisted Cohomotopy cocycles
produces good observables on M-brane charge in a Tate-elliptic enhancement of D-brane charge in twisted equivariant K-theory (for which our notation follows \cite[Ex. 4.5.4]{SS21EPB}):

\vspace{-.3cm}
\begin{equation}
  \label{QuasiEllitpicCohomologyOfFourSphere}
  \hspace{9mm}
  \begin{tikzcd}[
    column sep=60pt,
    row sep=0pt
  ]
    &
    \HomotopyQuotient
      {
        \mathbb{R}^4_{\mathrm{cpt}}
      }
      { G }
    \ar[
      dddr,
      end anchor={[yshift=+2pt]},
      "{
        \scalebox{.7}{
          platonic 2-gerbe
        }
      }"{sloped, description},
      "{
        \scalebox{.7}{
          (Prop. \ref{UniversalShiftedIntegral4FluxRestrictsToGeneratorOnADESingularity},
          Rem. \ref{BackgroundIntegral4FluxAtFlatADESingularity})
        }
      }"{pos=.5, sloped, yshift=3pt}
    ]
    \ar[
      dd,
      shorten=-3pt
    ]
    \ar[
      rr,
      lightgray,
      "{
        \scalebox{.7}{
          \color{lightgray}
          \def\arraystretch{.8}
          \begin{tabular}{c}
            hypothetical
            cocycle in
          \end{tabular}
        }
      }",
      "{
        \scalebox{.7}{
          \color{lightgray}
          twisted equivariant
          elliptic cohom.
        }
      }"{swap},
    ]
    &&
    \color{lightgray}
    \HomotopyQuotient
      { \mathrm{Ell} }
      { \mathbf{B}^2 \mathrm{U}(1) }
    \ar[
      dddl,
      lightgray
    ]
    \\[-4pt]
    \mathllap{
      \scalebox{.7}{
        \color{darkblue}
        \bf
        \def\arraystretch{.8}
        \begin{tabular}{c}
          M-theory
          \\
          circle bundle
        \end{tabular}
      }
      S^1
      \xrightarrow{\;}
      \;
    }
    \mathcal{X}
    \ar[
      dr,
      "{
        \scalebox{.7}{
          \begin{tabular}{c}
            \color{greenii}
            \bf
            M-brane charges
            \\
            (by \it Hypothesis H)
          \end{tabular}
        }
      }"{swap, sloped, yshift=-2pt}
    ]
    \ar[
      dddd
    ]
    \ar[
      ur,
      dashed,
      "{
        \scalebox{.7}{
          \def\arraystretch{.8}
          \begin{tabular}{c}
            consider: vicinity of
            \\
            ADE-singularity
          \end{tabular}
        }
      }"{sloped, yshift=+2pt}
    ]
    \ar[
      urrr,
      rounded corners,
      lightgray,
        to path={
           -- ([xshift=2pt, yshift=38pt]\tikztostart.east)
           --
           node[yshift=6pt, xshift=1.5cm]{
             \scalebox{.7}{
               M-brane charge
               hypothetically
               seen in
               full elliptic cohomology
             }
           }
           ([yshift=+10pt]\tikztotarget.north)
           -- ([yshift=-00pt]\tikztotarget.north)
        }
    ]
    &
    &
    &
    {}
    \\[-6pt]
    &
    \HomotopyQuotient
      { S^4 }
      { \mathrm{Sp}(2) }
    \ar[
      dr,
      "{
        \widetilde \Gamma_4
      }"{description},
      "{
        \scalebox{.65}{
          \def\arraystretch{.8}
          \color{greenii}
          \bf
          \begin{tabular}{c}
            universal shifted 4-class
            \\
            extracts \scalebox{1.3}{$G_4$}-flux
          \end{tabular}
        }
      }"{swap, yshift=-6pt,sloped}
    ]
    \\[+10pt]
    &&
    \mathbf{B}^4
    \,
    \mathbb{Z}
    \\[+40pt]
    &
    \overset{
      \mathclap{
      \raisebox{2pt}{
        \color{darkblue}
        \scalebox{.7}{
          GRH's ext. inertia
        }
      }
      }
    }{
    \Lambda_{\CohesiveCircle}
    \big(
      \HomotopyQuotient
        { S^4 }
        { \mathrm{G} }
    \big)
    }
    \ar[
      dd,
      shorten <=-3pt,
      shorten >=-5pt,
    ]
    \ar[
      rr,
      shorten <=120pt
    ]
    \ar[
      rr,
      line width=2pt,
      densely dashed,
      -,
      shorten >=2pt,
      gray,
      "{
        \scalebox{.7}{
          \color{black}
          (canonical??)
          \color{orangeii}
          cocycle on 4-sphere in
        }
      }",
      "{
        \scalebox{.7}{
          \color{orangeii}
          twisted equivariant quasi-elliptic
          cohom.
        }
      }"{swap}
    ]
    &&
    \HomotopyQuotient
      { \mathrm{Fred}^0_{\mathbb{C}} }
      { \mathrm{PU}(\mathscr{H}) }
    \ar[
      dddl,
      "{
        \scalebox{.65}{
          \color{gray}
          \def\arraystretch{.8}
          \begin{tabular}{c}
            local coefficient bundle
            \\
            for twisted equivariant K-theory
          \end{tabular}
        }
      }"{sloped, swap}
    ]
    \\[-6pt]
    \mathllap{
      \scalebox{.7}{
        \color{darkblue}
        \bf
        \def\arraystretch{.8}
        \begin{tabular}{c}
          type II
          \\
          spacetime orbifold
        \end{tabular}
      }
    }
    \mathcal{Y}
    \ar[
      dr,
      "{
        \scalebox{.7}{
          \def\arraystretch{.8}
          \begin{tabular}{c}
            \color{greenii}
            \bf
            dim-reduced
            \\
            \color{greenii}
            \bf
            brane charges
            \\
            \;\;\;\;
            (by \scalebox{1.3}{$\mathrm{Cyc}$}-adjunction)
          \end{tabular}
        }
      }"{swap, sloped, yshift=-2pt}
    ]
    \ar[
      ur,
      dashed,
      "{
        \scalebox{.7}{
          \def\arraystretch{.8}
          \begin{tabular}{c}
            assume:
            \\
            small M-circle
          \end{tabular}
        }
      }"{sloped, yshift=2pt}
    ]
    \ar[
      urrr,
      rounded corners,
      gray,
        to path={
           -- ([xshift=10pt, yshift=38pt]\tikztostart.east)
           --
           node[yshift=6pt, xshift=2cm]{
             \scalebox{.7}{
               M-brane charge seen in
               quasi-elliptic cohomology
             }
           }
           ([yshift=+10pt]\tikztotarget.north)
           -- ([yshift=-00pt]\tikztotarget.north)
        }
    ]
    \\[-8pt]
    &
    \mathrm{Cyc}
    \big(
      \HomotopyQuotient
        { S^4 }
        { \mathrm{Sp}(2) }
    \big)^{\phantom{A}}
    \ar[
      dr,
      "{
        \int_{S^1}
        \hspace{-1pt}
        \mathrm{Cyc}(\widetilde \Gamma_4)
      }"{pos=.5, sloped},
      "{
        \scalebox{.7}{
          \color{greenii}
          \bf
          extract \scalebox{1.3}{$H_3$}-flux
        }
      }"{swap, sloped, yshift=-2pt}
    ]
    \\[-0pt]
    &
    &
    \mathbf{B}^3 \mathbb{Z}
  \end{tikzcd}
\end{equation}

We compute the relevant twisting 4-class below in \cref{CharacteristicFourClassesOfEquivariantFourCohomotopy}, cf. Rem. \ref{ConclusionOnMBraneChargeTwist}.
In order to further analyze \eqref{QuasiEllitpicCohomologyOfFourSphere}, one will need to first compute the equivariant quasi-elliptic cohomology of representation 4-spheres of finite subgroups of $\mathrm{SU}(2)$ twisted by the resulting transgressed 3-class $\int_{S^1} \mathrm{Cyc}\big( \widetilde \Gamma_4\big)$.
Of course, this is just one of the ``twisted equivariant homotopy-groups'' (rather: ``-modules'', due to the non-trivial twist) of quasi-elliptic cohomology, which are bound to be of fundamental interest in their own right.
We leave their computation to the quasi-elliptic community.

\medskip

\noindent
{\bf Outline.}

\noindent
In \cref{CyclicInertiaOrbifolds} we prove that the abstract cyclification construction \eqref{ConstructingExtendedInertiaInIntroduction} recovers GRH's component model (Thm. \ref{AbstractCharacterizationOfHuanInertiaOrbifold}).

\noindent
In \cref{TransgressionViaCyclification} we prove that the abstract transgression operation \eqref{TransgressionMapViaCyclification} recovers traditional component formulas (Thm. \ref{TransgressionInGroupCohomologyViaLooping}).

\noindent
In \cref{CharacteristicFourClassesOfEquivariantFourCohomotopy} we compute the integral 4-class (transgressing to a 3-class) to be used in \eqref{QuasiEllitpicCohomologyOfFourSphere} for measuring M5-brane genera.

\noindent
In \cref{TechnicalMaterial}
we compile some technical background in simplicial \& geometric homotopy theory, for reference.

\newpage

\section{Cyclic inertia orbifolds}
\label{CyclicInertiaOrbifolds}

\noindent
 In \cref{TheGRHInertiaOrbifold}
we recall the ``extended'' inertia orbifolds due to Ganter, Rezk \& Huan (GRH);

\noindent
in \cref{CyclicInertiaInfinityGroupoids} we present a general abstract theory of
 cyclic intertia $\infty$-groupoids;

\noindent
in \cref{ReproducingTheGRHIntertiaOrbifold} we prove that the GRH construction models the abstract definition.

\subsection{GRH's extended intertia orbifold}
\label{TheGRHInertiaOrbifold}

\noindent
{\bf Smooth loop and inertia stacks.}
Given an orbifold or, more generally, any
smooth $\infty$-grouoid $\mathcal{X} \,\in\, \SmoothInfinityGroupoids$
(see Nota. \ref{BasicNotation})
it is well-known (albeit not always stated in the following model-independent stack-theoretic manner\footnote{Regarding $\CohesiveCircle$ as an object of $\SmoothInfinityGroupoids$ ensures that stacky maps out of it are modeled by simplicial maps out of any good open cover (cf. \cite[Ex. 3.3.41]{SS21EPB}), which is what takes care of component constructions such as in \cite[Def. 3.1.1]{LupercioUribe02LoopGroupoids} }) that:

\noindent
{\bf (i)} its {\it smooth loop stack} (\cite[\S 3]{LupercioUribe02LoopGroupoids}\cite[\S 5]{BGNX07StackyStringTopology}) is the mapping stack
out of the
smooth circle $\CohesiveCircle \in \SmoothManifolds$ $\xhookrightarrow{\;y\;} \SmoothInfinityGroupoids$:
\begin{equation}
  \label{FreeLoopStack}
  \mathllap{
    \raisebox{2pt}{
      \tiny
      \color{darkblue}
      \bf
       smooth loop stack
     }
    \;\;
  }
  \mathcal{L}\mathcal{X}
  \;\coloneqq\;
  \Maps{\big}
    { \CohesiveCircle }
    { \mathcal{X} }
  \;\;\;
  \in
  \;
  \SmoothInfinityGroupoids
\end{equation}

\noindent
{\bf (ii)}
its {\it inertia stack} (e.g. \cite[\S 4]{LupercioUribe04})
is the mapping stack out of the delooping groupoid of the integers
$\mathbf{B} \mathbb{Z}
  \,\in\,
 \InfinityGroupoids
  \xrightarrow{\Discrete}
  \SmoothInfinityGroupoids
$:

\vspace{-.5cm}
\begin{equation}
  \label{InertiaStack}
  \mathllap{
    \raisebox{2pt}{
      \tiny
      \color{darkblue}
      \bf
      inertia stack
    }
    \;\;
  }
  \Lambda\mathcal{X}
  \;\coloneqq\;
  \Maps{\big}
    { \mathbf{B}\mathbb{Z} }
    { \mathcal{X} }
  \;\;\;
  \in
  \;
  \SmoothInfinityGroupoids
  \,.
\end{equation}

\begin{remark}[Components of inertia]
In the special case of {\it good orbifolds},
or, more generally, of good diffeological orbispaces \cite[\S 4]{SS20OrbifoldCohomology},
  \vspace{-2mm}
\begin{equation}
  \label{GoodOrbifold}
  \mbox{
    $\mathcal{X}$
    is good
  }
  \;\;\;\;
  \Leftrightarrow
  \;\;\;\;
  \underset{
  \scalebox{.7}{$
    {G \in \Groups(\Sets)}
    \atop
    {G \scalebox{.7}{$\acts$}\, X
    \;\in\; \Actions{G}(\DiffeologicalSpaces)}
  $}
  }{\exists}
  \mathcal{X} \,\simeq\, X \!\sslash\! G
  \,,
\end{equation}
\noindent
the inertia stack is readily found (and well-known) to be equivalent to
a disjoint union over the conjugacy classes $[g] \in G/_{{}_{\!\mathrm{ad}}} G$ of
the corresponding fixed loci
$X^g \subset X$ by their residual
action of the centralizer subgroups $C_g \subset G$,
as follows \footnote{If $G \acts \, X$ is a proper action on a smooth manifold, then the fixed loci $X^g$ are themselves
smooth manifolds, so that the inertia stack is again a good orbifold.
But the equivalence \eqref{SkeletonOfInertiaStackOfGoodOrbifold} holds more generally,
as shown, for
$\mathcal{X} \simeq X \!\sslash\! G$ any {\it good diffeological orbi-space},
where $X$ may be any {\it diffeological space} with smooth $G$-action,
faithfully subsuming finite-dimensional smooth manifolds as well as infinite-dimensional
Fr{\'e}chet-manifolds. This is a convenient generalization,
as the smooth loop stack construction \eqref{FreeLoopStack}
restricts to an endo-functor (2-functor) on diffeological orbispaces,
where it may hence be iterated.}

\vspace{-.5cm}
\begin{equation}
  \label{SkeletonOfInertiaStackOfGoodOrbifold}
  G \acts \, X \,\in\, \Actions{G}\big(\DiffeologicalSpaces\big)
  \;\;
  \mbox{and}
  \;\;
  \mathcal{X} \,\simeq\, X \!\sslash\! G
  {\phantom{AAAA}}
  \Rightarrow
  {\phantom{AAAA}}
  \overset{
    \mathclap{
    \raisebox{4pt}{
      \tiny
      \color{darkblue}
      \bf
      inertia orbifold
    }
    }
  }{
    \Lambda \mathcal{X}
  }
  \quad \simeq\;
  \overset{
    \mathclap{
    \raisebox{4pt}{
      \tiny
      \color{darkblue}
      \bf
      skeletal presentation
    }
    }
  }{
  \underset{
    [g] \in G/_{{}_{\!\mathrm{ad}}} G
  }{\coprod}
  X^g \!\sslash\! C_g
  }
  \,.
\end{equation}
\end{remark}

\begin{remark}[Cohesion knows about essentially constant loops]
The difference and the relation between these two
constructions  \eqref{FreeLoopStack} and \eqref{InertiaStack}
is brought out by the shape modality:
The shape \eqref{ShapeModality}
of the cohesive circle is (Lemma \ref{ShapeUnitOfTheSmoothCircle})
equivalently the delooping groupoid of the
integers
  \vspace{-2mm}
$$
  \shape \CohesiveCircle \;\simeq\; \mathbf{B}\mathbb{Z}
  \;\;\;\;
  \in
  \;
  \SmoothInfinityGroupoids
$$

  \vspace{-2mm}
\noindent
so that we may understand the inertia stack \eqref{InertiaStack}
as being a loop stack itself, but for loops that are just the
{\it bare shape}
of a smooth circle:

\vspace{-.5cm}
\begin{equation}
  \label{InertiaStackToSmoothLooStackAsShapeUnitPullback}
  \begin{tikzcd}[column sep=large]
    \overset{
      \mathclap{
      \raisebox{4pt}{
        \tiny
        \color{darkblue}
        \bf
        \begin{tabular}{c}
          inertia stack
        \end{tabular}
      }
      }
    }{
      \Lambda \mathcal{X}
    }
    \ar[r,phantom,"\simeq"]
      \ar[
        rrrr,
        rounded corners,
        to path={
           -- ([yshift=-8pt]\tikztostart.south)
           --node[below]{
               \mbox{
                 \tiny
                 \color{greenii}
                 \bf
                 inclusion of cohesively constant loops
               }
             } ([yshift=-8pt]\tikztotarget.south)
           -- (\tikztotarget.south)}
      ]
    &
    \overset{
     \mathclap{
     \raisebox{4pt}{
       \tiny
       \color{darkblue}
       \bf
       $\shape \CohesiveCircle$-loop stack
     }
     }
    }{
      \Maps{\big}
        { \shape \CohesiveCircle }
        { \mathcal{X} }
    }
    \ar[
      rr,
      "{ \scalebox{0.7}{$
        \Maps{\small}
          { \eta_{\mathcal{X}}^{\scalebox{.5}{\shape}} }
          { \CohesiveCircle }
     $}
      }"{description},
      "{
        \mbox{
          \tiny
          \color{greenii}
          \bf
          shape unit
        }
      }"{above, xshift=-8pt, yshift=6pt}
    ]
    &&
    \overset{
      \mathclap{
      \raisebox{4pt}{
        \tiny
        \color{darkblue}
        \bf
        $\CohesiveCircle$-loop stack
      }
      }
    }{
    \Maps{\big}
      { \CohesiveCircle }
      { \mathcal{X} }
    \big]
    }
    \ar[r,phantom,"\simeq"]
    &
    \overset{
      \mathclap{
      \raisebox{3pt}{
        \tiny
        \color{darkblue}
        \bf
        smooth loop stack
      }
      }
    }{
      \mathcal{L}\mathcal{X}
    }.
  \end{tikzcd}
\end{equation}
It is this cohesive relation that formalizes the traditional notion that the inertia stack is the restriction of the smooth loop stack to the essentially constant loops: The ``bare shape'' of a loop cannot transverse any non-constant path in a manifold, but it can still jump between the ``twisted sectors'' to which the point belongs that it is constantly sitting on.

\end{remark}


\noindent
{\bf Cyclic loop spaces and cyclic homology.}
Closely related to \eqref{InertiaStackToSmoothLooStackAsShapeUnitPullback} is the fact
that for
$\mathcal{X} = \shape(\mathrm{X})$ the shape \eqref{ShapeModality} of a topological space
$\mathrm{X} \in \TopologicalSpaces \xhookrightarrow{\ContinuousDiffeology} \SmoothInfinityGroupoids$
\cite[Ex. 3.18]{SS20OrbifoldCohomology}
its inertia $\infty$-stack \eqref{InertiaStack}
is the shape
(by the {\it smooth Oka principle}, cf. \cite[p. 7]{SS21EPB})
of the topological free loop space of $\mathrm{X}$
(e.g. \cite{ChataurOancea15}):
\vspace{-2mm}
$$
  \Lambda \, \shape \, \mathrm{X}
  \;\simeq\;
  \shape \, \mathrm{Maps}(\CohesiveCircle, \mathrm{X})
  \,.
$$

\vspace{-2mm}
\noindent
In this context it is a familiar idea
(\cite[Thm. A]{Jones87},
review in \cite[Cor. 7.3.14]{Loday92}\cite[\S 3,4]{Loday15})
that associated to the free topological loop
space is what we call its {\it cyclic loop space} \cite[\S 3]{FSS16TDuality}\cite[\S 2.2]{BSS18},
namely the homotopy quotient (Borel construction)
by the circle action which rigidly rotates the loops:
\vspace{-3mm}
\begin{align*}
  \overset{
    \mathclap{
    \raisebox{4pt}{
      \tiny
      \color{darkblue}
      \bf
      cyclic topological loop space
    }
    }
  }{
  \mathrm{Cyc}(\mathrm{X})
  \;\coloneqq\;
  \mathrm{Maps}
  \big(
    \CohesiveCircle,
    \,
    \mathrm{X}
  \big)
  }
  \!\sslash\!
  \CohesiveCircle
&  \;\;
    \simeq
  \;\;
  \Big(
  \mathrm{Maps}
  \big(
    \CohesiveCircle,
    \,
    \mathrm{X}
  \overset{
    \mathclap{
    \raisebox{4pt}{
      \tiny
      \color{darkblue}
      \bf
      Borel construction
    }
    }
  }{
  \big)
  \times
  E \CohesiveCircle
  \Big) / \CohesiveCircle
  }
  \,,
 \\
  H^\bullet
  \big(
    \mathrm{Cyc}(\mathrm{X})
  \big)
  &\;\simeq\;
  \overset{
   \mathclap{
    \raisebox{4pt}{
      \tiny
      \color{darkblue}
      \bf
      cyclic homology
    }
    }
  }{
    \mathrm{HC}_{\bullet}
    \big(
      C^\bullet(\mathrm{X})
    \big)
  }
  \,,
\end{align*}

\vspace{-3mm}
\noindent
whose ordinary cohomology
(for any commutative coefficient ring $R$)
is the cyclic homology of (the graded $R$-algebra of $R$-chains of) $\mathrm{X}$.

\medskip

\noindent
{\bf Cyclic loop stacks.}
Similarly, it is clear that the loop stack \eqref{FreeLoopStack} carries a canonical
smooth action of the circle group $\CohesiveCircle \simeq \mathrm{U}(1)$ by rotation of loops
(component constructions are given in \cite[\S 3.6]{LupercioUribe02LoopGroupoids}, the general abstract construction follows by \eqref{CyclicicationAsRightDerivedBaseChange} below),
so that, in view of \eqref{InertiaStackToSmoothLooStackAsShapeUnitPullback},
we may consider the further homotopy quotient by this action,
which we will denote as
\vspace{-2mm}
\begin{equation}
  \label{CyclifiedSmoothLoopStack}
  \mathllap{
    \mbox{
      \tiny
      \color{darkblue}
      \bf
      \begin{tabular}{c}
        cyclified stack
        \\
        wrt smooth circle
      \end{tabular}
    }
  }
    \mathrm{Cyc}_{\CohesiveCircle}
    (\mathcal{X})
  \;\coloneqq\;
  \Maps{\big}
    { \CohesiveCircle }
    { \mathcal{X} }
  \!\sslash\!
  \CohesiveCircle
  \;\;\;
  \in
  \;
  \SmoothInfinityGroupoids
  \,.
\end{equation}

\vspace{-2mm}
\noindent
This fundamental construction has not received much attention
(it is alluded to in \cite[\S 2.1]{Ganter07StringyPower}\cite[p. 2]{Stapleton13})
until the recent introduction
of  modified orbifold loop groupoids
in \cite{Huan18QuasiEllipticI}
(denoted ``$Loop^{\mathrm{ext}}$'' in Def. 2.5, 2.9 there)
which we may
understand as plausible models for the homotopy quotient in
\eqref{CyclifiedSmoothLoopStack}.
Restricting these ad-hoc models to cohesively constant loops
leads to a definition (\cite[Def. 2.14]{Huan18QuasiEllipticI}
in slight variation of \cite[Def. 2.3]{Ganter07StringyPower},
review in \cite[p. 62]{Dove19}\cite[Def. 2.1]{HuanSpong20TwistedQuasiEllipic})
of a variation of the inertia orbifold \eqref{SkeletonOfInertiaStackOfGoodOrbifold},
as follows:
\vspace{-2mm}
\begin{equation}
  \label{SkeletonOfHuanInertiaStackOfGoodOrbifold}
  \overset{
    \mathclap{
    \raisebox{4pt}{
      \tiny
      \color{darkblue}
      \bf
      GRH inertia orbifold
    }
    }
  }{
    \ExtendedInertia
    \mathcal{X}
  }
  \quad \coloneqq\;
  \underset{
    [g] \in G/_{{}_{\mathrm{ad}}} G
  }{\coprod}
  X^g \!\sslash\! \Lambda_g
  \,,
  \;\;\;\;\;\;\;\;\;\;
  \proofstep{
     where}\footnote{The quotient group $\Lambda_g$ in \eqref{SkeletonOfHuanInertiaStackOfGoodOrbifold}
    may be motivated
    (as explained in \cite[(6.5)]{Dove19})
    as that which implements
    the ``rotation condition''
    proposed in \cite[Def. 2.6]{Ganter13}.
  }
  \;\;
    \Lambda_g
    \;\coloneqq\;
    \frac{
      C_g \times \mathbb{R}
    }{
      \langle (g^{-1},1) \rangle
    }
    \;
  \,.
\end{equation}

\vspace{-2mm}
\noindent
In \cite{Huan18QuasiEllipticI}
and followups (\cite{Huan18QuasiTheories}\cite{HuanSpong20TwistedQuasiEllipic}),
this definition is justified
{\it a posteriori}
by the  fact that
a completion of the orbifold K-theory of GRH's inertia orbifold \eqref{SkeletonOfHuanInertiaStackOfGoodOrbifold}
yields a good model
of the equivariant elliptic cohomology at the Tate curve of the original
orbifold:
\vspace{-2mm}
\begin{equation}
  \label{QuasiEllipticCohomology}
  \overset{
    \mathclap{
    \raisebox{4pt}{
      \tiny
      \color{darkblue}
      \bf
      \begin{tabular}{c}
        orbifold K-theory
      \end{tabular}
    }
    }
  }{
    \mathrm{KU}_{\mathrm{orb}}
  }
  \big(
    \underset{
      \mathclap{
      \raisebox{-2pt}{
        \color{orangeii}
        \bf \tiny
        \begin{tabular}{c}
          GRH's extended
          \\
          inertia orbifold
        \end{tabular}
      }
      }
    }{
      \Lambda_{\CohesiveCircle} \mathcal{X}
    }
  \big)
    \otimes_{\mathbb{Z}[q,q^{-1}]}
  \mathbb{Z}((q))
  \;\;
  \;\simeq
  \qquad \quad
  \overset{
    \mathclap{
    \mbox{
      \tiny
      \color{darkblue}
      \bf
      \begin{tabular}{c}
        orbifold Tate-elliptic cohomology
      \end{tabular}
    }
    }
  }{
    \mbox{Ell}^{\mathrm{Tate}}_{\mathrm{orb}}
  }
    (
    \mathcal{X}
    ).
\end{equation}

\subsection{Cyclic intertia $\infty$-Groupoids}
\label{CyclicInertiaInfinityGroupoids}

\noindent
{\bf A general theory of stacky transformation $\infty$-groups.}
Our starting point is the observation
\cite{NSS12a}\cite[\S 2.2]{SS20OrbifoldCohomology}
that the general theory of
{\it transformation groups},
i.e. of geometric (topological-, Lie-, ...) groups acting as symmetries of geometric spaces
(\cite{Bredon72}\cite{tomDieck79}\cite{tomDieck87}),
not only has a good generalization to groupal $\infty$-stacks,
but becomes conceptually more transparent in this generalization,
when regarded systematically topic in {\it higher topos theory}
(\cite{ToenVezzosi05}\cite{Lurie09HTT}\cite{Rezk10}).
Namely, \cite[Prop. 0.2.1]{SS21EPB}:

\smallskip
\noindent\
{\bf (i)} for $\mathcal{T} \in \Groups(\Topos)$ a group
object in any $\infty$-topos $\Topos$ (Ntn. \ref{NotationInfinityToposes}),
the $\infty$-category of $\mathcal{T}$-actions on $\infty$-stacks is
equivalent to the {\it slice $\infty$-topos} over the $\mathcal{T}$-moduli $\infty$-stack
$\mathbf{B}\mathcal{T}$ (\cite[Prop. 3.2.76]{SS21EPB}, going back to \cite[\S 4.1]{NSS12a} and \cite{DDK80}, see also Prop. \ref{QuillenEquivalenceBetweenBorelModelStructureAndSliceOverClassifyingComplex} below),

\noindent
{\bf (ii)} for $\mathcal{T}_1 \xrightarrow{\;\phi\;} \mathcal{T}_2$ any homomorphism
of such $\infty$-groups, the constructions of {\it restricted}
and of {\it (co-)induced} $\infty$-actions
along $\phi$ are equivalently given by the base change adjoint triple \eqref{BaseChange}
along $\phi$ (\cite[Ex. 3.2.78]{SS21EPB}):

\vspace{-2mm}
\begin{equation}
\label{TransformationGroupsAsSliceObjects}
\begin{tikzcd}[column sep=75pt]
  \mathllap{
    \mbox{
      \tiny
      \color{darkblue}
      \bf
      \begin{tabular}{c}
        $\infty$-actions of
        \\
        $\mathcal{T}_1$ on $\infty$-stacks
      \end{tabular}
    }
  }
  \Actions{\mathcal{T}_1}(\Topos)
  \ar[
    rr,
    shift left=14pt,
    "
      \mbox{
        \tiny
        \color{greenii}
        \bf
        induced $\infty$-action
      }
    "
  ]
  \ar[
    rr,
    shift right=14pt,
    "
      \mbox{
        \tiny
        \color{greenii}
        \bf
        co-induced $\infty$-action
      }
    "{below}
  ]
    \ar[
      dd,
      "\sim"{sloped, description},
      "{
        \def\arraystretch{.7}
        \begin{array}{c}
          \mathcal{T}_1 \acts \,\mathcal{X}
          \\
          \mapsdown
          \\
          \mathcal{X} \!\sslash\! \mathcal{T}_1
        \end{array}
      }"{left},
      "
        \rotatebox{-90}{
          \tiny
          \color{greenii}
          \bf
          homotopy quotient
        }
      "{right, xshift=2pt}
    ]
  &{\phantom{AAAAA}}&
  \Actions{\mathcal{T}_2}(\Topos)
  \mathrlap{
    \mbox{
      \tiny
      \color{darkblue}
      \bf
      \begin{tabular}{c}
        $\infty$-actions of
        \\
        $\mathcal{T}_1$ on $\infty$-stacks
      \end{tabular}
    }
  }
  \ar[
    ll,
    "
      \mbox{
        \tiny
        \color{greenii}
        \bf
        restricted $\infty$-action
      }
    "{description}
  ]
  \ar[
    ll,
    phantom,
    shift left=9pt,
    "\scalebox{.5}{$\bot$}"
  ]
  \ar[
    ll,
    phantom,
    shift right=9pt,
    "\scalebox{.5}{$\bot$}"
  ]
    \ar[
      dd,
      "\sim"{sloped, description},
      "{
        \def\arraystretch{.7}
        \begin{array}{c}
          \mathcal{T}_2 \acts \,\mathcal{X}
          \\
          \mapsdown
          \\
          \mathcal{X} \!\sslash\! \mathcal{T}_2
        \end{array}
      }"{right},
      "
        \rotatebox{-90}{
          \tiny
          \color{greenii}
          \bf
          homotopy quotient
        }
      "{left, xshift=-2pt}
    ]
  \\
  {\phantom{A}}
  \\
  \mathllap{
    \mbox{
      \tiny
      \color{darkblue}
      \bf
      \begin{tabular}{c}
        $\infty$-stacks over
        \\
        $\mathcal{T}_1$-moduli
      \end{tabular}
    }
  }
  \Topos_{/\mathbf{B}\mathcal{T}_1}
  \ar[
    rr,
    shift left=16pt,
    "{
      (\mathbf{B}\phi)_!
    }"{description},
    "
      \mbox{
        \tiny
        \color{greenii}
        \bf
        left base change
      }
    "{above, yshift=+4pt}
  ]
  \ar[
    rr,
    shift right=16pt,
    "{
      (\mathbf{B}\phi)_\ast
    }"{description},
    "
      \mbox{
        \tiny
        \color{greenii}
        \bf
        right base change
      }
    "{below, yshift=-4pt}
  ]
  &&
  \Topos_{/\mathbf{B}\mathcal{T}_2}
  \mathrlap{
    \mbox{
      \tiny
      \color{darkblue}
      \bf
      \begin{tabular}{c}
        $\infty$-stacks over
        \\
        $\mathcal{T}_2$-moduli
      \end{tabular}
    }
  }
  \ar[
    ll,
    "
      \mbox{
        \tiny
        \color{greenii}
        \bf
        base change/pullback
      }
      (\mathbf{B}\phi)^\ast
    "{description}
  ]
  \ar[
    ll,
    phantom,
    shift left=9pt,
    "{
      \scalebox{.5}{$\bot$}
    }"
  ]
  \ar[
    ll,
    phantom,
    shift right=9pt,
    "{
      \scalebox{.5}{$\bot$}
    }"
  ]
\end{tikzcd}
\end{equation}

\noindent
{\bf Cyclic $\infty$-Stacks.}
Specializing \eqref{TransformationGroupsAsSliceObjects}
to the unique homomorphism from the trivial group
$1 \xrightarrow{e} \mathcal{T}$, where $\mathbf{B}e = \mathrm{pt}_{\mathbf{B}\mathcal{T}}$
is the base point inclusion into the $\mathcal{T}$-moduli $\infty$-stack,
we see that, in full generality, the construction of {\it $\mathcal{T}$-cyclic $\infty$-stacks}
\vspace{-2mm}
\begin{equation}
  \label{SCyclicInfinityStack}
  \mathrm{Cyc}_{\mathcal{T}}
  (
    \mathcal{X}
  )
  \;\coloneqq\;
  \Maps{}
    { \mathcal{T} }
    { \mathcal{X} }
  \!\sslash\!
  \mathcal{T}
  \;\;\;
  \in
  \;
  \Topos_{/\mathbf{B}\mathcal{T}}
  \xrightarrow{\; (p_{{}_{\mathbf{B}\mathcal{T}}})_! \;}
  \Topos
\end{equation}
is equivalently the right base change along the point inclusion
into the $\mathcal{T}$-moduli $\infty$-stack
(derived left base change back to the global context, as desired):
\vspace{-2mm}
\begin{equation}
  \label{CyclicicationAsRightDerivedBaseChange}
  \;\;\;\;\;\;
  \begin{tikzcd}[row sep=large, column sep=57pt]
    \mathllap{
      \raisebox{2pt}{
        \tiny
        \color{darkblue}
        \bf
        \begin{tabular}{c}
          $\infty$-stacks
        \end{tabular}
      }
    }
    \Topos
    \ar[
      dd,-,
      shift left=1pt
    ]
    \ar[
      dd,-,
      shift right=1pt
    ]
    \ar[
      rr,
      shift left=16pt,
      "{
        \mathcal{T} \acts \; \big( \mathcal{T} \times (-) \big)
      }"{description},
      "{
        \mbox{
        \tiny
        \color{greenii}
        \bf
        free $\infty$-action
        }
      }"{above, yshift=6pt}
    ]
    \ar[
      rr,
      shift right=16pt,
      "{
        \mathcal{T} \acts \; [ \mathcal{T},- ]
      }"{description},
      "
        \mbox{
          \tiny
          \color{greenii}
          \bf
          co-free $\infty$-action
        }
      "{below, yshift=-4pt}
    ]
    &{\phantom{AAAAAAAA}}&
    \overset{
      \mathclap{
      \raisebox{4pt}{
        \tiny
        \color{darkblue}
        \bf
        \begin{tabular}{c}
          $\infty$-actions
          \\
          of $\mathcal{T}$
        \end{tabular}
      }
      }
    }{
      \Actions{\mathcal{T}}(\Topos)
    }
    \ar[
      ll,
      "\mbox{
        \tiny
        \color{greenii}
        \bf
        underlying $\infty$-stack
      }"{description}
    ]
    \ar[
      ll,
      phantom,
      shift left=7pt,
      "\scalebox{.6}{$\bot$}"
    ]
    \ar[
      ll,
      phantom,
      shift right=8pt,
      "\scalebox{.6}{$\bot$}"
    ]
    \ar[
      dd,
      "\sim"{sloped, description},
      "{
        \def\arraystretch{.7}
        \begin{array}{c}
          \mathcal{T} \acts \, \mathcal{X}
          \\
          \mapsdown
          \\
          \mathcal{X} \!\sslash\! \mathcal{T}
        \end{array}
      }"{left},
      "
        \rotatebox{-90}{
          \tiny
          \color{greenii}
          \bf
          homotopy quotient
        }
      "{right, xshift=2pt}
    ]
    \ar[
      rr,
      shift left=16pt,
      "{
        (-)_{\mathcal{T}}
      }"{description},
      "{
        \mbox{
        \tiny
        \color{greenii}
        \bf
        homotopy co-invariants
        }
      }"{above, yshift=6pt}
    ]
    \ar[
      rr,
      shift right=16pt,
      "{
        (-)^{\mathcal{T}}
      }"{description},
      "
        \mbox{
          \tiny
          \color{greenii}
          \bf
          homotopy invariants
        }
      "{below, yshift=-4pt}
    ]
    &{\phantom{AAAAAAAA}}&
    \Topos
    \ar[
      ll,
      "\mbox{
        \tiny
        \color{greenii}
        \bf
        trivial $\infty$-action
      }"{description}
    ]
    \ar[
      ll,
      phantom,
      shift left=7pt,
      "\scalebox{.6}{$\bot$}"
    ]
    \ar[
      ll,
      phantom,
      shift right=8pt,
      "\scalebox{.6}{$\bot$}"
    ]
    \ar[
     dd,-,
     shift left=1pt,
    ]
    \ar[
     dd,-,
     shift right=1pt,
    ]
    \\
    {\phantom{A}}
    \\
    \Topos
    \ar[
      rr,
      shift right=18pt,
      "{
        \mathrm{Cyc}_{\mathcal{T}}
        \;\coloneqq\;
        (\mathrm{pt}_{\mathbf{B}\mathcal{T}})_\ast
      }"{description},
      "{
        \mbox{
          \tiny
          \color{greenii}
          \bf
          cyclification
          \;\;
          =
          \;\;
          right base change
        }
      }"{below, yshift=-5pt, xshift=5pt}
    ]
    &&
    \underset{
      \mathclap{
      \raisebox{-4pt}{
        \tiny
        \color{darkblue}
        \bf
        \begin{tabular}{c}
          $\infty$-stacks
          \\
          over
          \\
          $\mathcal{T}$-moduli $\infty$-stack
        \end{tabular}
      }
      }
    }{
      \Topos_{/\mathbf{B}\mathcal{T}}
    }
    \ar[
      ll,
      "{
        \mathrm{Ext}_{\mathcal{T}}
        \;\coloneqq\;
        (\mathrm{pt}_{\mathbf{B}\mathcal{T}})^\ast
      }"{description},
      "{
        \mbox{
          \tiny
          \color{greenii}
          \bf
          extension
          \;=\;
          base change
        }
      }"{above, yshift=6pt}
    ]
    \ar[
      ll,
      phantom,
      shift left=9pt,
      "\scalebox{.6}{$\bot$}"
    ]
    \ar[
      rr,
      shift left=18pt,
      "{
        (p_{{}_{\mathbf{B}\mathcal{T}}})_!
      }"{description},
      "{
        \mbox{
          \tiny
          \color{greenii}
          \bf
          left base change to absolute context
        }
      }"{above, yshift=+5pt}
    ]
    \ar[
      rr,
      shift right=18pt,
      "{
        (p_{{}_{\mathbf{B}\mathcal{T}}})_\ast
      }"{description},
      "{
        \mbox{
          \tiny
          \color{greenii}
          \bf
          right base change to absolute context
        }
      }"{below, yshift=-5pt, xshift=5pt}
    ]
    &&
    \Topos
    \ar[
      ll,
      "{
        (p_{{}_{\mathbf{B}\mathcal{T}}})^\ast
      }"{description}
    ]
    \ar[
      ll,
      phantom,
      shift left=9pt,
      "\scalebox{.6}{$\bot$}"
    ]
    \ar[
      ll,
      phantom,
      shift right=9pt,
      "\scalebox{.6}{$\bot$}"
    ]
    \\[-30pt]
    \ast
      \ar[
        rrrr,
        Rightarrow,-,
        rounded corners,
        to path={
           -- ([yshift=-10pt]\tikztostart.south)
           --node[below]{}
              ([yshift=-10pt]\tikztotarget.south)
           -- (\tikztotarget.south)}
        ]
    \ar[
      rr,
      " \mathrm{pt}_{\mathbf{B}\mathcal{T}} "{description},
      "
        \mbox{
          \tiny
          \color{greenii}
          \bf
          base point inclusion
        }
      "{below, yshift=-4.5pt}
    ]
    &&
    \mathbf{B}\mathcal{T}
    \ar[
      rr,
      " p_{{}_{\mathbf{B}\mathcal{T}}} "{description},
      "
        \mbox{
          \tiny
          \color{greenii}
          \bf
          projection  to base point
        }
      "{below, yshift=-4.5pt}
    ]
    &&
    \ast
  \end{tikzcd}
\end{equation}
\vspace{-2mm}

In the top left and in many following diagrams we abbreviate our notation for mapping stacks to
$$
  [-,-] \,:=\, \Maps{}{-}{-}
  \,.
$$

\noindent Notice how this $\infty$-group-theoretic adjunction
\eqref{CyclicicationAsRightDerivedBaseChange}
witnesses shaundamental aspects of loop stacks and their cyclification:

\vspace{2mm}
{\small
\begin{tabular}{|c||c|c|c|}
  \hline
  \bf Object in:
  &
  {\bf
  Mapping $\infty$-Stack Theory
  }
  &
  {\bf
  Transf. $\infty$-Group Theory
  }
  &
  {\bf
  Slice $\infty$-Topos Theory
  }
  \\
  \hline
  \hline
  $
    \Maps{}
      { \mathcal{T} }
      { \mathcal{X} }
  $
  &
  \begin{tabular}{c}
    $\mathcal{T}$-loop $\infty$-stack of $\mathcal{X}$
  \end{tabular}
  &
  \begin{tabular}{l}
    underlying $\infty$-stack of
    \\
    co-free $\mathcal{T}$-$\infty$-action
    \\
    co-induced by $\mathcal{X}$
  \end{tabular}
  &
  \begin{tabular}{c}
    comonadic descent
    \\
    of $\mathcal{X}$ along
    base point
    \\
    into $\mathcal{T}$-moduli $\infty$-stack
  \end{tabular}
  \\
  \hline
  $
    \Maps{}
      { \mathcal{T} }
      { \mathcal{X} }
    \!\sslash\!
    \mathcal{T}
  $
  &
  \begin{tabular}{l}
    $\mathcal{T}$-cyclic
    $\infty$-stack of $\mathcal{X}$
  \end{tabular}
  &
  \begin{tabular}{l}
    homotopy quotient by
    \\
    co-free $\mathcal{T}$-$\infty$-action
    \\
    co-induced by $\mathcal{X}$
  \end{tabular}
  &
  \begin{tabular}{c}
    right derived base change
    \\
    of $\mathcal{X}$ along
    base point
    \\
    into $\mathcal{T}$-moduli $\infty$-stack
  \end{tabular}
  \\
  \hline
\end{tabular}
}

\newpage

\noindent
{\bf Inertia $\infty$-Stacks.}
Moreover, if $\Topos$ is cohesive (Ntn. \ref{BasicNotation}) and thus equipped with a
shape modality \eqref{ShapeModality}, then
with $\mathcal{T}$ also its shape $\shape \mathcal{T}$
is canonically a group $\infty$-stack
(\cite[Prop. 3.4]{SS20OrbifoldCohomology})
and the shape unit
$\begin{tikzcd} \
  \mathcal{T} \ar[r,"\eta_{\mathcal{T}}^{\scalebox{.5}{$\shape$}}"{description}] & \shape \mathcal{T}  \end{tikzcd}$ is a homomorphism of
group $\infty$-stacks, so that we may consider
{\it $\mathcal{T}$-inertia $\infty$-stacks} and their
$\mathcal{T}$-cyclification in full generality:

\vspace{-3mm}
\begin{equation}
  \label{SInertiaInfinityStacks}
  \begin{tikzcd}[column sep=large]
      \mbox{
        \tiny
        \color{darkblue}
        \bf
        $\mathcal{T}$-inertia
        $\infty$-stack
      }
    &
    (\shape \mathcal{T})
    \acts \;
    \Maps{\big}
      { (\shape \mathcal{T}) }
      { \mathcal{X} }
    \ar[
      rr, |->,
      "{
        \mbox{
          \tiny
          \color{greenii}
          \bf
          restricted $\infty$-action
        }
      }"{above},
      "\mbox{
        \tiny
        \color{greenii}
        \bf
        along shape unit
      }"{below}
    ]
    \ar[
      dd,
      |->,
      "
        \rotatebox{-90}{
          \tiny
          \color{greenii}
          \bf
          homotopy
        }
      "{right, xshift=0pt},
      "
        \rotatebox{-90}{
          \tiny
          \color{greenii}
          \bf
          quotient
        }
      "{left, xshift=-0pt}
    ]
    &&
    \mathcal{T}
    \acts \;
    \Maps{\big}
      { (\shape \mathcal{T}) }
      { \mathcal{X} }
    \ar[
      dd,
      |->,
      "
        \rotatebox{-90}{
          \tiny
          \color{greenii}
          \bf
          homotopy
        }
      "{right, xshift=0pt},
      "
        \rotatebox{-90}{
          \tiny
          \color{greenii}
          \bf
          quotient
        }
      "{left, xshift=-0pt}
    ]
    \\
    \\
      \mbox{
        \tiny
        \color{darkblue}
        \bf
        \begin{tabular}{c}
          cyclic
          \\
          $\mathcal{T}$-inertia
          $\infty$-stacks
        \end{tabular}
      }
    &
    \Maps{\big}
      { (\shape \mathcal{T}) }
      { \mathcal{X} }
    \!\sslash\!
    (\shape \mathcal{T})
    \ar[
      rr, |->,
      "{
     \scalebox{0.7}{$   \left(
          \eta_{\mathbf{B}\mathcal{T}}^{\scalebox{.5}{$\shape$}}
        \right)^\ast
      $}
      }"
    ]
    &&
    \Maps{\big}
      { (\shape \mathcal{T}) }
      { \mathcal{X} }
    \!\sslash\!
    \mathcal{T}
  \end{tikzcd}
\end{equation}

Notice that we obtain a natural comparison morphism,  as clained in  \eqref{CircleShapeUnitInIntroduction},
from any cyclic $\mathcal{T}$-intertia stack
\eqref{SInertiaInfinityStacks}
to the full $\mathcal{T}$-cyclification \eqref{SCyclicInfinityStack}
by factoring the defining base change through the decomposition
$$
  \begin{tikzcd}[column sep=large]
    \ast
    \ar[
      r,
      "{
        \mathrm{pt}_{\scalebox{.7}{$
          \mathbf{B}\mathcal{T}
        $}}
      }"
    ]
    \ar[
      rr,
      rounded corners,
      to path={
           -- ([yshift=-02pt]\tikztostart.south)
           -- ([yshift=-12pt]\tikztostart.south)
           -- node[yshift=+7pt]{
             \scalebox{.7}{$
               \mathrm{pt}_{
                 \mathbf{B}\shape \mathcal{T}
               }
             $}
           }
              ([yshift=-10pt]\tikztotarget.south)
           -- ([yshift=-00pt]\tikztotarget.south)
        }
    ]
    &
    \mathbf{B}\mathcal{T}
    \ar[
      r,
      "{
        \mathbf{B}
        \eta^{\scalebox{.6}{$
          \shape
        $}}
        _{ \mathbf{B}\mathcal{T} }
      }"
    ]
    &
    \mathbf{B}\shape\mathcal{T}
  \end{tikzcd}
$$
and invoking the counit $\epsilon : f^\ast f_\ast \to \mathrm{id}$ of the right base change adjunction \eqref{BaseChange}:
\begin{equation}
  \label{TheComparisonMorphism}
  \hspace{-15pt}
  \begin{tikzcd}[column sep=30pt]
    \mathrm{Cyc}_{
      \scalebox{.7}{$\shape \mathcal{T}$}
    }
    \big(
      \mathcal{X}
    \big)
    \ar[d, Rightarrow,-]
    &[-48pt]
    &
    &[-20pt]
    &[-10pt]
    \mathrm{Cyc}_{
      \scalebox{.7}{$\mathcal{T}$}
    }
    \big(
      \mathcal{X}
    \big)
    \ar[d, Rightarrow,-]
    \\[-12pt]
    \HomotopyQuotient
    {
      \Maps{\big}
        { \shape \mathcal{T} }
        { \mathcal{X} }
    }
    { \shape \mathcal{T} }
    \ar[d, Rightarrow,-]
    \ar[d, Rightarrow,-]
    &&
    \HomotopyQuotient
    {
      \Maps{\big}
        { \shape \mathcal{T} }
        { \mathcal{X} }
    }
    { \mathcal{T} }
    \ar[d, Rightarrow,-]
    \ar[
      rr,
      dashed
    ]
    &[-20pt]
    &
    \HomotopyQuotient
    {
      \Maps{\big}
        { \mathcal{T} }
        { \mathcal{X} }
    }
    { \mathcal{T} }
    \ar[d, Rightarrow,-]
    \\[-12pt]
    \big(
      \mathrm{pt}_{
        \scalebox{.7}{$
          \mathbf{B}\shape\mathcal{T}
        $}
      }
    \big)_\ast (\mathcal{X})
    \ar[d]
    \ar[from=rr]
    \ar[
      drr,
      phantom,
      "{
        \scalebox{.7}{(pb)}
      }"{pos=.4}
    ]
    &&
    \big(
      \mathbf{B}
      \eta
        ^{ \scalebox{.6}{$\shape$} }
        _{ \mathcal{T} }
    \big)^\ast
    \big(
      \mathrm{pt}_{
        \scalebox{.7}{$
          \mathbf{B}\shape\mathcal{T}
        $}
      }
    \big)_\ast (\mathcal{X})
    \ar[d]
    \ar[
      r,
      phantom,
      "{\simeq}"
    ]
    &
    \big(
      \mathbf{B}
      \eta
        ^{ \scalebox{.6}{$\shape$} }
        _{ \mathcal{T} }
    \big)^\ast
    \big(
      \mathbf{B}
      \eta
        ^{ \scalebox{.6}{$\shape$} }
        _{ \mathcal{T} }
    \big)_\ast
    \big(
      \mathrm{pt}_{
        \scalebox{.7}{$
          \mathbf{B}\mathcal{T}
        $}
      }
    \big)_\ast
    (\mathcal{X})
    \ar[
      r,
      shorten=-1pt,
      "{
        \epsilon_{
          \scalebox{.6}{$
            (\mathrm{pt}_{
              \mathbf{B}\mathcal{T}
            })_\ast
            \mathcal{X}
          $}
        }
      }"
    ]
    &
    \big(
      \mathrm{pt}_{
        \scalebox{.7}{$
          \mathbf{B}\mathcal{T}
        $}
      }
    \big)_\ast
    (\mathcal{X})
    \ar[d]
    \\
    \mathbf{B} \shape \mathcal{T}
    \ar[
      from=rr,
      "{
        \mathbf{B}
        \eta
          ^{ \scalebox{.5}{$\shape$} }
          _{ \mathcal{T} }
      }"
    ]
    &&
    \mathbf{B}\mathcal{T}
    \ar[
       rr,
       Rightarrow,
       -
    ]
    &{}&
    \mathbf{B}\mathcal{T}
  \end{tikzcd}
\end{equation}

\begin{theorem}[GRH's extended inertia groupoid models the cyclified orbifold]
  \label{AbstractCharacterizationOfHuanInertiaOrbifold}
  For $\mathcal{X} \simeq X \!\sslash\! G \;\in\; \SmoothInfinityGroupoids$
  any good orbifold \eqref{GoodOrbifold},
  GRH's inertia orbifold \eqref{SkeletonOfHuanInertiaStackOfGoodOrbifold}
  is equivalently the $\mathcal{T} \coloneqq \CohesiveCircle$-cyclic inertia $\infty$-stack
  in the general sense of \eqref{SInertiaInfinityStacks}:
  \vspace{-5mm}
  $$
    \begin{tikzcd}
    \overset{
      \mathclap{
      \raisebox{4pt}{
        \tiny
        \color{darkblue}
        \bf
        \begin{tabular}{c}
          GRH's inertia
          \\
          orbifold
        \end{tabular}
      }
      }
    }{
      \Lambda_{\CohesiveCircle} \mathcal{X}
    }
    \;\simeq\;
    \overset{
      \raisebox{3pt}{
        \tiny
        \color{darkblue}
        \bf
      }
    }{
      \Maps{\big}
        { \shape \CohesiveCircle }
        { \mathcal{X} }
      \!\sslash\! \CohesiveCircle
    }
    \;\;
    =:
    \;\;
    \underset{
      \mathclap{
      \raisebox{-4pt}{
        \tiny
        \color{greenii}
        \bf
        \begin{tabular}{c}
          restrict action
          \\
          from shape of circle
          \\
          to full smooth circle
        \end{tabular}
      }
      }
    }{
      \big(
        \eta_{\mathbf{B}\CohesiveCircle}^{\scalebox{.5}{$\shape$}}
      \big)^\ast
    }
    \overset{
      \mathclap{
      \raisebox{4pt}{
        \tiny
        \color{darkblue}
        \bf
        \begin{tabular}{c}
          cyclification wrt
          \\
          shape of circle
        \end{tabular}
      }
      }
    }{
      \mathrm{Cyc}_{\scalebox{.6}{$\shape \CohesiveCircle$}}
      (
        \mathcal{X}
      )\,.
    }
   \end{tikzcd}
  $$
\end{theorem}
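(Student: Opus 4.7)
The plan is to unwind the abstract construction on the right-hand side for a good orbifold $\mathcal{X} \simeq X \!\sslash\! G$ and match it componentwise against the skeletal formula \eqref{SkeletonOfHuanInertiaStackOfGoodOrbifold}. I will proceed in two steps: first identify $\Maps{\big}{\shape \CohesiveCircle}{\mathcal{X}}$ together with its canonical $\shape \CohesiveCircle$-action, and then restrict that action along the shape unit $\eta \colon \CohesiveCircle \to \shape \CohesiveCircle$ and take its homotopy quotient, checking that the outcome agrees with $\ExtendedInertia \mathcal{X}$ on each conjugacy-class component.

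For the first step, Lem.~\ref{ShapeUnitOfTheSmoothCircle} yields $\shape \CohesiveCircle \simeq \mathbf{B}\mathbb{Z}$, so that $\Maps{\big}{\shape \CohesiveCircle}{\mathcal{X}} \simeq \Lambda (X \!\sslash\! G)$, which by \eqref{SkeletonOfInertiaStackOfGoodOrbifold} admits the well-known presentation $\coprod_{[g]} X^g \!\sslash\! C_g$. The residual $\mathbf{B}\mathbb{Z}$-action arises from the abelian $\infty$-group structure on $\mathbf{B}\mathbb{Z}$: its right self-action induces, by precomposition, an action on the mapping stack. In a strict presentation (e.g.\ via simplicial presheaves over $\CartesianSpaces$, realising $\Maps{\big}{\mathbf{B}\mathbb{Z}}{-}$ as a simplicial mapping space out of the nerve of the one-object groupoid), I would verify that this action preserves each connected component $X^g \!\sslash\! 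C_g$, is trivial on objects $x \in X^g$, and acts on morphisms by (pre)multiplication with the central element $g \in C_g$ — the familiar ``one full rotation of a based loop is equivalent to conjugation by its monodromy'' computation for inertia groupoids.

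For the second step, restriction along $\eta \colon \CohesiveCircle \to \shape\CohesiveCircle \simeq \mathbf{B}\mathbb{Z}$ transports this action to a $\CohesiveCircle$-action on each $X^g \!\sslash\! C_g$. I would then invoke the short exact sequence of $\infty$-groups $\mathbb{R} \to \CohesiveCircle \xrightarrow{\eta} \mathbf{B}\mathbb{Z}$ (which exists because $\mathbb{R}$ is cohesively contractible, so $\shape \mathbb{R} \simeq \ast$): the pulled-back $\CohesiveCircle$-action is classified by an $\mathbb{R}$-action, trivial on the underlying space $X^g$, whose descent datum along $\mathbb{R} \twoheadrightarrow \CohesiveCircle$ is precisely the identification of the generator $1 \in \mathbb{R}$ with the central element $g \in C_g$ identified in Step 1. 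Taking the homotopy quotient of the resulting action of $C_g \times \mathbb{R}$ on $X^g$ and passing to the coinvariants under this identification gives
\begin{equation*}
  \HomotopyQuotient
    { \big( \Maps{\big}{\shape \CohesiveCircle}{\mathcal{X}} \big) }
    { \CohesiveCircle }
  \;\simeq\;
  \coprod_{[g] \in G /_{{}_{\!\mathrm{ad}}} G}
  X^g \!\sslash\! \Big( (C_g \times \mathbb{R})/\langle (g^{-1}, 1) \rangle \Big)
  \;=\;
  \ExtendedInertia \mathcal{X} \,,
\end{equation*}
matching \eqref{SkeletonOfHuanInertiaStackOfGoodOrbifold}.

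The principal technical obstacle is the careful identification in Step 1 of the $\mathbf{B}\mathbb{Z}$-action on the $[g]$-component as ``shift by the central monodromy $g$'', with the sign convention that produces the generator $(g^{-1},1)$ of the kernel. I would address this by fixing a strict model (for instance, the quotient groupoid $\mathbb{R} \!\sslash\! \mathbb{Z}$ for $\mathbf{B}\mathbb{Z}$ and the nerve of the action groupoid for $X \!\sslash\! G$), under which the mapping stack and its $\mathbf{B}\mathbb{Z}$-translation action become manifestly combinatorial operations amenable to direct component-by-component inspection. Once this combinatorial identification is in place, Step 2 is formal: it consists of iterated applications of the pasting law (Fact~\ref{PastingLaw}) to the homotopy pullback squares defining the base change $(\mathbf{B}\eta)^\ast$, as indicated in \eqref{TheComparisonMorphism}.
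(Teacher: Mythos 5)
Your Step 1 is sound and coincides with what the paper does: identifying $\Maps{}{\shape\CohesiveCircle}{\mathcal{X}}$ with $\coprod_{[g]}X^g\!\sslash\! C_g$ and computing the canonical $\shape\CohesiveCircle\simeq\mathbf{B}\mathbb{Z}$-action componentwise as multiplication of the morphisms by the central element $g$ is exactly Lemma \ref{CanonicalShapeS1ActionOnInertiaOrbifold}, and your plan to verify it in a strict simplicial-presheaf model is the right one. The fiber sequence $\mathbb{R}\to\CohesiveCircle\xrightarrow{\eta}\mathbf{B}\mathbb{Z}$ you invoke is also correct (it is the content of Lemma \ref{ShapeUnitOfTheSmoothCircle}, cf.\ \eqref{CircleShapeUnitInIntroduction}), and the extension $1\to C_g\to\Lambda_g\to\CohesiveCircle\to 1$ is indeed the right heuristic for why the answer should be $X^g\!\sslash\!\Lambda_g$.

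The genuine gap is in Step 2, which is where the theorem actually lives. The sentence asserting that the restricted $\CohesiveCircle$-action ``is classified by an $\mathbb{R}$-action \ldots whose descent datum is precisely the identification of $1\in\mathbb{R}$ with $g$'', followed by ``passing to the coinvariants under this identification'', is a restatement of the conclusion, not a proof of it: identifying a coherent action of the non-discrete group $\CohesiveCircle$ (equivalently, an object of the slice over $\mathbf{B}\CohesiveCircle$) cannot be done by inspecting objects and morphisms of the groupoid components, and your closing claim that the rest ``is formal: iterated applications of the pasting law'' underestimates what is needed. The pasting law only tells you that the restricted-action quotient is the base change of $\mathrm{Cyc}_{\scalebox{.6}{$\shape\CohesiveCircle$}}(\mathcal{X})$ along the delooped shape unit; to \emph{evaluate} that base change and recognize GRH's group $(C_g\times\mathbb{R})/\langle(g^{-1},1)\rangle$ — including the orientation convention producing $(g^{-1},1)$ rather than $(g,1)$ — one needs either an explicit equivariant comparison morphism into the Borel construction, or a fibrant presentation of $\mathrm{Cyc}_{\scalebox{.6}{$\shape\CohesiveCircle$}}(\mathcal{X})\to\shape\mathbf{B}\CohesiveCircle$ together with a concrete model of the shape unit $\mathbf{B}\CohesiveCircle\to\shape\mathbf{B}\CohesiveCircle$ along which the strict pullback computes the homotopy pullback. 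This is precisely the work the paper does in Lemma \ref{ComparisonMorphismFromHuanInertiaToCyclification} (the two-page simplicial comparison morphism, built from the cofibrant resolution $C_g\times\mathbb{R}\times\mathbb{Z}^{\times^\bullet}\xrightarrow{\sim}\Lambda_g$ of Lemma \ref{CofibrantOfCircleGroup} and the Borel construction of Prop.\ \ref{QuillenEquivalenceBetweenBorelModelStructureAndSliceOverClassifyingComplex}), after which Prop.\ \ref{ComparisonMorphismIsPullbackOfShapeUnit} observes that the comparison differs from an isomorphism only by forgetting the contractible $\mathbb{R}$-coordinates and hence exhibits the required homotopy-cartesian square over $\eta_{\CohesiveCircle}^{\scalebox{.6}{\shape}}$. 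Your extension-theoretic route ($X^g\!\sslash\!\Lambda_g\simeq(X^g\!\sslash\! C_g)\!\sslash\!(\Lambda_g/C_g)$, then matching the induced circle action with the $\eta$-restricted one) could in principle be completed, e.g.\ by producing a map over $\mathbf{B}\CohesiveCircle$ and checking it on fibers over the connected base, but producing that map with its coherence data is of the same order of difficulty as the paper's comparison morphism and is exactly what your proposal leaves out.
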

\begin{proof}
  In view of \eqref{TransformationGroupsAsSliceObjects}, this
  follows from Prop. \ref{ComparisonMorphismIsPullbackOfShapeUnit},
  discussed in detail in \cref{ReproducingTheGRHIntertiaOrbifold} below.
\end{proof}
\begin{remark}[Subtleties]
While the idea that Thm. \ref{AbstractCharacterizationOfHuanInertiaOrbifold}
should be true possibly motivated the definition \eqref{SkeletonOfHuanInertiaStackOfGoodOrbifold},
its proof requires some care
(see the proof of Lemma \ref{ComparisonMorphismFromHuanInertiaToCyclification} below).
Notice that the approach via a shape modality on higher stacks
is crucial in bringing out this result:
Our diagram \eqref{CyclicicationAsRightDerivedBaseChange} shows at once
that the
traditional
way of identifying the inertia stack inside the smooth loop stack
as the $\CohesiveCircle$-fixed locus in a suitable groupoid presentation
(following \cite[Thm. 3.6.4]{LupercioUribe02LoopGroupoids})
is {\it not homotopy-meaningful}, as the {\it homotopy}-fixed locus
(e.g. \cite[Ex. 3.2.78]{SS21EPB})
of every $\mathcal{T}$-loop $\infty$-stack is just the original $\infty$-stack:
\begin{equation}
  \label{HomotopyFixedLocusOfLoopStack}
  \big(
    \Maps{}
      { \mathcal{T} }
      { \mathcal{X} }
  \big)^{\mathcal{T}}
  \;\simeq\;
  (p_{\mathbf{B}\mathcal{T}})_\ast
  (\mathrm{pt}_{\mathbf{B}\mathcal{T}})_\ast
  (
    \mathcal{X}
  )
  \;\simeq\;
  (\mathrm{id})_\ast
  (
    \mathcal{X}
  )
  \;\simeq\;
  \mathcal{X}
  \,,
\end{equation}
since, by \eqref{CyclicicationAsRightDerivedBaseChange}, it ends up computing the right base change of $\mathcal{X}$ along the identity:
$$
  \begin{tikzcd}
    \ast
    \ar[
      r,
      "{
        \mathrm{pt}_{\scalebox{.7}{$
          \mathbf{B}\mathcal{T}
        $}}
      }"
    ]
    \ar[
      rr,
      rounded corners,
      to path={
           -- ([yshift=-02pt]\tikztostart.south)
           -- ([yshift=-12pt]\tikztostart.south)
           -- node[yshift=+7pt]{
             \scalebox{.7}{$
               \mathrm{id}
             $}
           }
              ([yshift=-12pt]\tikztotarget.south)
           -- ([yshift=-00pt]\tikztotarget.south)
        }
    ]
    &
    \mathbf{B}\mathcal{T}
    \ar[
      r,
      "{
        p_{{}_{\mathbf{B}\mathcal{T}}}
      }"
    ]
    &
    \ast
  \end{tikzcd}
$$
\end{remark}

\subsection{Reproducing GRH's extended inertia orbifold}
\label{ReproducingTheGRHIntertiaOrbifold}

Here we prove Theorem \ref{AbstractCharacterizationOfHuanInertiaOrbifold},
that GRH's inertia orbifold construction is a model
for an abstractly defined cyclic inertia orbifold:
$\Lambda_{\CohesiveCircle} \mathcal{X} \,\simeq\, \Maps{\big}{\shape \CohesiveCircle}{\mathcal{X}}  \!\sslash\! \CohesiveCircle$.

\begin{lemma}[Comparison morphism from GRH's inertia orbifold to cyclic orbifold]
  \label{ComparisonMorphismFromHuanInertiaToCyclification}
  For $\mathcal{X} \,\simeq\, X \!\sslash\! G$
  a good orbifold,
  there is a comparison morphism,
  as shown in \eqref{ComparisonMorphismFaceMaps}
  and \eqref{ComparisonMorphismDegeneracyMaps},
  \vspace{-2mm}
  \begin{equation}
    \label{FormOfComparisonMorphism}
    \Lambda_{\CohesiveCircle}(\mathcal{X})
    \xrightarrow{\;\;\mathrm{comp}_{\mathcal{X}}\;\;}
    \mathrm{Cyc}_{\scalebox{.6}{$\shape \CohesiveCircle$}}(\mathcal{X})
    \,=\,
    \Maps{\big}
      { \shape \CohesiveCircle }
      { \mathcal{X} }
    \,\sslash\,
    \shape \CohesiveCircle
  \end{equation}

  \vspace{-2mm}
  \noindent
  from GRH's extended inertia orbifold \eqref{SkeletonOfHuanInertiaStackOfGoodOrbifold} to the
  $\shape \CohesiveCircle$-cyclification \eqref{SInertiaInfinityStacks} of the inertia orbifold.
\end{lemma}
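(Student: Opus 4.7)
The plan is to exhibit the comparison map \eqref{FormOfComparisonMorphism} as an explicit simplicial map between nerves that present the two sides. On the source, $\Lambda_{\CohesiveCircle} \mathcal{X} \simeq \coprod_{[g]} \HomotopyQuotient{X^g}{\Lambda_g}$ is presented by the nerve of the action groupoid, whose $n$-simplices in the $[g]$-summand are $X^g \times \Lambda_g^{\times n}$. On the target, using $\shape \CohesiveCircle \simeq \mathbf{B}\mathbb{Z}$ we identify $\Maps{\big}{\shape \CohesiveCircle}{\mathcal{X}} \simeq \coprod_{g \in G} \HomotopyQuotient{X^g}{C_g}$ (grouped by conjugacy classes), and model the subsequent $\shape \CohesiveCircle$-homotopy quotient by its two-stage Borel construction.

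The key algebraic input is the short exact sequence of Lie groups
\begin{equation*}
  1 \longrightarrow C_g \longrightarrow \Lambda_g \xrightarrow{\;[(c,t)]\,\mapsto\, t\,\mathrm{mod}\,\mathbb{Z}\;} \mathbb{R}/\mathbb{Z} \,\simeq\, \CohesiveCircle \longrightarrow 1,
\end{equation*}
paired with the observation that the $\mathbb{R}$-factor of $C_g \times \mathbb{R}$ acts trivially on $X^g$, so the $\Lambda_g$-action factors through its $C_g$-projection. Consequently $\HomotopyQuotient{X^g}{\Lambda_g}$ is canonically a $\CohesiveCircle$-principal $\infty$-bundle over $\HomotopyQuotient{X^g}{C_g}$, whose classifying cocycle records the monodromy $g$ of the extension. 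Applying the shape unit $\eta^{\scalebox{.6}{$\shape$}}_{\CohesiveCircle}$ to the structure group transports this to a $\shape \CohesiveCircle$-bundle over $\HomotopyQuotient{X^g}{C_g}$, and the universal property of the $\shape \CohesiveCircle$-homotopy quotient supplies the required map into the $[g]$-summand of $\HomotopyQuotient{\Maps{\big}{\shape \CohesiveCircle}{\mathcal{X}}}{\shape \CohesiveCircle}$.

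At the simplicial level, picking canonical representatives in $[0,1) \subset \mathbb{R}$, I would send $(x; [c_1,t_1], \ldots, [c_n,t_n]) \in X^g \times \Lambda_g^{\times n}$ to the pair of tuples $\big((x; c_1,\ldots,c_n),(t_1,\ldots,t_n)\big)$, feeding the $C_g$-components into the nerve of $\HomotopyQuotient{X^g}{C_g}$ and the $\mathbb{R}$-components into the $\mathbf{B}\mathbb{Z}$-Borel factor. Face maps then compose adjacent $\Lambda_g$-elements, which coherently project to both $C_g$- and $\CohesiveCircle$-compositions, while degeneracies insert the neutral class $[(e,0)]$; this produces \eqref{ComparisonMorphismFaceMaps} and \eqref{ComparisonMorphismDegeneracyMaps}.

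The main obstacle is two well-definedness checks. First, because $\Lambda_g$ is a quotient group, the identification $[(c,t+1)]=[(cg^{-1},t)]$ must be reconciled with the simplicial formulas: a change of canonical $\mathbb{R}$-representative has to be absorbed by the $\mathbb{Z}$-shift that $\shape \CohesiveCircle$ induces on the $C_g$-component of the inertia presentation. Second, equivariance across conjugacy classes: conjugation by $h \in G$ intertwines $\HomotopyQuotient{X^g}{\Lambda_g}$ with $\HomotopyQuotient{X^{hgh^{-1}}}{\Lambda_{hgh^{-1}}}$, and this must match the outer $G$-action on the target. Both checks are routine but demand careful bookkeeping of the cyclic identification in $\Lambda_g$, and they are precisely what the explicit face and degeneracy formulas are designed to encode.
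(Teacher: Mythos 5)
Your conceptual scaffolding is the right one (the extension $1 \to C_g \to \Lambda_g \to \CohesiveCircle \to 1$ with monodromy $g$ is indeed why the comparison exists), but two of your key steps do not hold as stated. First, the bundle claim is reversed: since $C_g$ is the kernel, it is $X^g \!\sslash\! C_g \to X^g \!\sslash\! \Lambda_g$ that is the $\CohesiveCircle$-principal bundle, i.e.\ $X^g \!\sslash\! \Lambda_g$ is the \emph{base} (the $\CohesiveCircle$-homotopy quotient of the inertia summand), not a $\CohesiveCircle$-bundle over $X^g \!\sslash\! C_g$. Consequently ``extending the structure group along $\eta^{\shape}_{\CohesiveCircle}$'' does not produce a map of the required form; what is actually needed is an identification of the residual (homotopy-coherent, since the extension is not split) $\CohesiveCircle$-action on $X^g \!\sslash\! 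C_g$ with the restriction along the shape unit of the canonical $\shape\CohesiveCircle$-action on the inertia summand — this is precisely the content carried in the paper by Lemma \ref{CanonicalShapeS1ActionOnInertiaOrbifold} together with the $g$-twisted component formulas, and your proposal asserts the ``monodromy'' picture without proving it.

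Second, the explicit simplicial formula is where the real gap lies. Presenting the source by the nerve $X^g \times \Lambda_g^{\times n}$ and ``picking canonical representatives in $[0,1)$'' is not admissible: the section $\mathbb{R}/\mathbb{Z} \to [0,1)$ is not smooth, so your assignment is not natural in smooth plots and hence not a morphism of simplicial presheaves over $\CartesianSpaces$; moreover the target's Borel factor needs \emph{integer} winding data, which simply is not present degreewise in the naive $\Lambda_g$-nerve, and the real numbers $t_i$ cannot be ``fed into the $\mathbf{B}\mathbb{Z}$-factor''. Even set-theoretically, sending the $C_g$-components unmodified fails the simplicial identities: composition in $\Lambda_g$ produces carries by powers of $g$ whenever the $t$'s wrap, which is exactly why the paper's formulas in \eqref{ComparisonMorphismFaceMaps} twist the entries by $h \mapsto h \cdot g^{-n}$. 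The device that makes the morphism exist at all is the replacement of $\Lambda_g$ by the simplicial group $C_g \times \mathbb{R} \times \mathbb{Z}^{\times^\bullet}$ of Lemma \ref{CofibrantOfCircleGroup}, so that the winding integers become part of the presentation of GRH's orbifold \eqref{SkeletonOfHuanInertiaStackOfGoodOrbifold}; this is not optional bookkeeping, and deferring it as ``routine'' leaves the construction without a well-defined map. (Your second worry, equivariance across conjugacy classes, is a non-issue: the coproduct is already indexed by conjugacy classes with a chosen representative, so no further matching of an outer $G$-action is required.)
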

\begin{proof}
  Shown in
  \eqref{ComparisonMorphismFaceMaps}
  and \eqref{ComparisonMorphismDegeneracyMaps}
  (on the next two pages)
  is a morphism of simplicial presheaves. We need to see that:

{\bf  (i)} this is well-defined as a morphism in $\SimplicialPresheaves(\CartesianSpaces)$,

{\bf  (ii)} under localization it presents a morphism in $\SmoothInfinityGroupoids$
    of the claimed form \eqref{FormOfComparisonMorphism}.

  \medskip
  \noindent
  Regarding {\bf (i)}:
  On both sides of
  \eqref{ComparisonMorphismFaceMaps} and \eqref{ComparisonMorphismDegeneracyMaps}
  we show the diagonal quotient $\big((-) \times W\mathcal{G}\big)/\mathcal{G}$
  \eqref{ColimitQuillenAdjunctionOnLocalSimplicialPresheaves}
  of the Cartesian product with a universal simplicial classifying space
  $W \mathcal{G}$
  Def. \ref{StandardModelOfUniversalSimplicialPrincipalComplex} by the diagonal
  action of a simplial group $\mathcal{G}$, all extended to simplicial
  presheaves over $\CartesianSpaces$.

  On the left of \eqref{ComparisonMorphismFaceMaps} and \eqref{ComparisonMorphismDegeneracyMaps},
  the simplicial group
  $\mathcal{G} = C_g \times \mathbb{R} \times \mathbb{Z}^{\times^\bullet}$
  is the cofibrant resolution
  \eqref{CofibrantResolutionOfHuanCentralizerGroup} of GRH's centralizer group
  with its induced simplicial action on the fixed loci:
  \vspace{-2mm}
  $$
    \begin{tikzcd}[row sep=-5pt]
      X^g \times
      \big(
        C_g \times \mathbb{R} \times \mathbb{Z}^{\times \bullet}
      \big)
      \ar[
        rr,
        "(-)\cdot(-)"
      ]
      &&
      X^g
      \\
  \scalebox{0.7}{$       \big(
        x,\, (h, r, \vec n)
      \big)
      $}
      &\longmapsto&
\scalebox{0.7}{$      x \cdot h $}
    \end{tikzcd}
  $$

  \vspace{-2mm}
\noindent
  On the right of \eqref{ComparisonMorphismFaceMaps} and \eqref{ComparisonMorphismDegeneracyMaps},
  the simplicial group is $\mathcal{G} = \mathbb{Z}^{\times^\bullet}$,
  and its action is on the skeleton of
  the inertia hom-complex via \eqref{FormulaForCanonicalBZActionOnInertia}:
  \vspace{-2mm}
  \begin{equation}
    \label{FormulaForCanonicalBZActionOnInertiaOfXModG}
    \begin{tikzcd}[row sep=-5pt]
    \big[
      \mathbf{B}\mathbb{Z} \times \Delta[\bullet],
      \,
      X \!\sslash\! G
    \big]_{\mathrm{skel}}
    \times
    \mathbb{Z}^{\times^\bullet}
    \ar[rr]
    &&
    \big[
      \mathbf{B}\mathbb{Z} \times \Delta[\bullet],
      \,
      X \!\sslash\! G
    \big]_{\mathrm{skel}}
    \\
    \scalebox{0.8}{$ \Big(
      \big( x, (\cdots, h_1, h_0) \big),
      \,
      ( \cdots, n_1, n_0 )
    \Big)$}
    &\longmapsto&
     \scalebox{0.8}{$ \big( x, (\cdots, g^{n_1} \cdot h_1, g^{n_0} \cdot h_0) \big)$}
    \end{tikzcd}
  \end{equation}

\vspace{-2mm}
\noindent
  The bulk of the diagrams
  \eqref{ComparisonMorphismFaceMaps} and \eqref{ComparisonMorphismDegeneracyMaps}
  shows the face maps -- using \eqref{FaceMapsOfWG} --
  and the degeneracy maps -- using \eqref{DegeneracyMapsOfWG} --
  of the resulting quotients on both sides,
  to check that the comparison morphism indeed respects these.
  For illustration of how these maps are obtained, we spell out
  the computation of the one on the bottom left of \eqref{ComparisonMorphismFaceMaps}:
  \vspace{-2mm}
  $$
  \hspace{-1mm}
    \begin{tikzcd}[row sep=-1pt, column sep=35pt]
      X^g
        \times
      (C_g \times \mathbb{R})
      \ar[
        r,
        "\sim"{yshift=-1pt}
      ]
      \ar[
        rrrr,
        rounded corners,
        to path={
             -- ([yshift=+24pt]\tikztostart.north)
             --node[above]{
                 \scalebox{.7}{$
                   d_0
                 $}
               } ([yshift=+24pt]\tikztotarget.north)
             -- (\tikztotarget.north)}
        ]
      &
      \frac
      {
        \overset{(X^g)_1}{
        \overbrace{
          X^g
        }
        }
          \times
        \overset{
          W(C_g \times \mathbb{R} \times \mathbb{Z}^{\times^\bullet})_{1}
        }{
        \overbrace{
        (C_g \times \mathbb{R} \times \mathbb{Z})
          \times
        (C_g \times \mathbb{R})
        }
        }
      }
      {
        \mathclap{\phantom{\vert^{\vert^{\vert}}}}
        \underset{
          (C_g \times \mathbb{R} \times \mathbb{Z}^{\times^\bullet})_1
        }{
        \underbrace{
          C_g \times \mathbb{R} \times \mathbb{Z}
        }
        }
      }
      \ar[
        rr,
        "\scalebox{0.7}{${
          \frac
            { d_0^{X^g} \times d_0^{W(\mathcal{G})} }
            {
              \mathclap{\phantom{\vert^{\vert^{\vert}}}}
              d_0^{\mathcal{G}}
            }
        }$}"
      ]
      &&
      \frac
      {
        \overset{(X^g)_0}{
        \overbrace{
          X^g
        }
        }
          \times
        \overset{
          W(C_g \times \mathbb{R} \times \mathbb{Z}^{\times^\bullet})_{0}
        }{
        \overbrace{
          C_g \times \mathbb{R}
        }
        }
      }
      {
        \mathclap{\phantom{\vert^{\vert^{\vert}}}}
        \underset{
          (C_g \times \mathbb{R} \times \mathbb{Z}^{\times^\bullet})_0
        }{
        \underbrace{
          C_g \times \mathbb{R}
        }
        }
      }
      \ar[r,"\sim"{yshift=-1pt}]
      &
      X^g
      \\
    \scalebox{0.8}{$    (x, (h,r))$}
      \ar[r, |->]
      &
    \scalebox{0.8}{$    \big[
        x, (e,0,0), (h,r)
      \big]$}
      \ar[rr, |->]
      &&
      \scalebox{0.8}{$  \big[
        x, (e,0) \cdot (h,r)
      \big]
      =
      \big[
        x \cdot h, (e,0)
      \big]$}
      \ar[r, |->]
      &
     \scalebox{0.8}{$   x \cdot h $}
      \mathrlap{\,.}
    \end{tikzcd}
  $$

\vspace{-2mm}
\noindent  Proceeding this way, one checks (see the next two pages) that all parallel squares in
  \eqref{ComparisonMorphismFaceMaps} and \eqref{ComparisonMorphismDegeneracyMaps}
  indeed commute.

\vspace{4cm}

\newpage

\vspace{-5mm}
\begin{equation}
  \label{ComparisonMorphismFaceMaps}
  \hspace{-5mm}
  \begin{tikzcd}[row sep=60pt, column sep=10pt]
    \mbox{
      \tiny
      \color{orangeii}
      \bf
      \begin{tabular}{c}
        smooth
        \\
        $\infty$-stacks
      \end{tabular}
    }
    &
    \overset{
      \mathclap{
      \raisebox{4pt}{
        \tiny
        \color{darkblue}
        \bf
        \begin{tabular}{c}
          GRH's inertia
        \end{tabular}
      }
      }
    }{
      \Lambda_{\CohesiveCircle}
      \big(
        X \!\sslash\! G
      \big)
    }
    \,\coloneqq\,
    \underset{[g]}{\coprod}
    \Big(
      X^g
        \sslash
      \big(
        (C_g \times \mathbb{R})/\langle (g^{-1},1) \rangle
      \big)
    \Big)
    \ar[
      rr,
      "{
        \mbox{
          \tiny
          \color{greenii}
          \bf
          \begin{tabular}{c}
            comparison
            morphism
          \end{tabular}
        }
      }"{above,yshift=10pt},
      "{
        \mathrm{comp}_{X \sslash G}
      }"{above},
      "{ \simeq_{\scalebox{.6}{\shape}} }"{below}
    ]
    &&
    \big[
      \shape \CohesiveCircle,
      \,
      X \!\sslash\! G
    \big]
    \!\sslash\! \shape \CohesiveCircle
    =:
    \overset{
      \mathclap{
      \raisebox{4pt}{
        \tiny
          \color{darkblue}
        \bf
        FSS-cyclification
      }
      }
    }{
      \mathrm{Cyc}_{\scalebox{.5}{$\shape \CohesiveCircle$}}\big( X \!\sslash\! G \big)
    }
    \\[-50pt]
    \mbox{
      \tiny
          \color{orangeii}
      \bf
      \begin{tabular}{c}
        simplicial
        \\
        sheaves
      \end{tabular}
    }
    &
    \underset{[g]}{\coprod}
    \Big(
    \big(
      X^g \times W( C_g \times \mathbb{R} \times \mathbb{Z}^{\times^\bullet} )
    \big)
    /
    (C_g \times \mathbb{R} \times \mathbb{Z}^{\times^\bullet})
    \Big)
    \ar[
      rr
    ]
    &&
    \Big(
    \big[
      \mathbb{Z}^{\times^\bullet},
      \,
      X \times G^{\times^\bullet}
    \big]_{\mathrm{skel}}
    \times
    W( \mathbb{Z}^{\times^\bullet} )
    \Big)
    \big/
    \mathbb{Z}^{\times^\bullet}
    \\[-25pt]
    \mbox{
      \tiny
      \color{orangeii}
      \bf
      deg 4
    }
    \ar[
      ddd,
      phantom,
      "{
        \rotatebox{90}{
          \tiny
          \color{greenii}
          \bf
          face maps
        }
      }"
    ]
    &
    \underset{[g]}{\coprod}
    \left(
    X^g
    \begin{array}{l}
    \times
    (C_g \times \mathbb{R} \times \mathbb{Z}^3)
    \times (C_g \times \mathbb{R} \times \mathbb{Z}^2)
    \\
    \times
    (C_g \times \mathbb{R} \times \mathbb{Z})
    \times
    (C_g \times \mathbb{R})
    \end{array}
    \!\!\!
    \right)
    \ar[
      ddd,
      shift right=110pt,
      "\scalebox{0.75}{$
        \left(
          \!\!\!\!
          \def\arraystretch{.7}
          \begin{array}{l}
            x,
            \\
            (h_3, r_3, n_{3,2}, n_{3,1}, n_{3,0}),
            \\
            (h_2, r_2, n_{2,1}, n_{2,0}),
            \\
            (h_1, r_1, n_{1}),
            \\
            (h_0, r_0)
          \end{array}
          \!\!\!\!\!\!
        \right)
      $}"{description, pos=.15},
      "\mapsto"{description, sloped, pos=.5},
      "\scalebox{0.75}{$
        \left(
          \!\!\!\!\!
          \def\arraystretch{.7}
          \begin{array}{l}
            x \cdot h_3,
            \\
            (h_2, r_2, n_{2,1}, n_{2,0}),
            \\
            (h_1, r_1, n_{1}),
            (h_0, r_0)
          \end{array}
          \!\!\!\!\!\!
        \right)
      $}"{description, pos=.87}
    ]
    \ar[
      ddd,
      shift right=64pt,
      "\mapsto"{description, sloped, pos=.3},
      "\scalebox{0.75}{$
        \left(
          \def\arraystretch{.7}
          \begin{array}{l}
            x,
            \\
            \left(
            \!\!\!\!
            \def\arraystretch{.7}
            \begin{array}{l}
            g^{\scalebox{.57}{$- n_{3,2}$}} \cdot h_{3} \cdot h_2,
            \\
            r_3 + n_{3,2} + r_2,
            \\
            n_{3,1} + n_{2,1},
            \\
            n_{3,0} + n_{2,0}
            \end{array}
            \!\!\!\!\!\!
            \right),
            \\
            (h_2, r_3, n_{2,1}, n_{2,0}),
            \\
            (h_1, r_1, n_1),
            \\
            (h_0, r_0)
          \end{array}
          \!\!\!\!\!\!
        \right)
      $}"{description, pos=.55}
    ]
    \ar[
      ddd,
      shift right=17pt,
      "\scalebox{0.75}{$
        \left(
          \!\!\!\!\!
          \def\arraystretch{.7}
          \begin{array}{l}
            x,
            \\
            (h_3, r_3, n_{3,2} + n_{3,1}, n_{3,0}),
            \\
            \left(
              \!\!\!\!
              \def\arraystretch{.7}
              \begin{array}{l}
                g^{\scalebox{.57}{$-n_{2,1}$}} \cdot h_2 \cdot h_1,
                \\
                r_2 + n_{2,1} + r_{1},
                \\
                n_{2,0} + n_{1}
              \end{array}
              \!\!\!\!\!\!
            \right),
            \\
            (h_0, r_0)
          \end{array}
          \!\!\!\!\!\!
        \right)
      $}"{description, pos=.18},
      "\mapsto"{description, sloped, pos=.04}
    ]
    \ar[
      ddd,
      shift right=-31pt,
      "\scalebox{0.75}{$
        \left(
          \!\!\!\!\!
          \def\arraystretch{.7}
          \begin{array}{l}
            x,
            \\
            (h_3,   r_3, n_{3,2}, n_{3,1} + n_{3,0}),
            \\
            (h_2, r_2, n_{2,1} + n_{2,0}),
            \\
            g^{\scalebox{.57}{$-n_1$}} \cdot h_1 \cdot h_0, r_1 + n_1 + r_0
          \end{array}
          \!\!\!\!\!\!
        \right)
      $}"{description, pos=.45},
      "\mapsto"{description, sloped, pos=.2}
    ]
    \ar[
      ddd,
      shift right=-76pt,
      "\scalebox{0.75}{$
        \left(
          \!\!\!\!\!
          \def\arraystretch{.7}
          \begin{array}{l}
            x,
            (h_3, r_3, n_{3,2}, n_{3,1}),
            \\
            \phantom{x,}
            (h_2, r_2, n_{2,1})
            \\
            \phantom{x,}
            (h_1, r_1)
          \end{array}
          \!\!\!\!\!\!
        \right)
      $}"{description, pos=.68},
      "\mapsto"{description, sloped, pos=.3}
    ]
    \ar[
      rr,
      "\scalebox{0.75}{$
        \;\;\;\;\;\;\;\;\;\;\;
        \def\arraystretch{.7}
        \begin{array}{l}
          x_{\phantom{1}} \,\mapsto\, x
          \\
          h_3 \,\mapsto\, h_3
          \\
          h_2 \,\mapsto\, h_2 \cdot g^{\scalebox{.57}{$-n_{{}_{3,2}}$}},
          \\
          h_1 \,\mapsto\, h_1 \cdot g^{\scalebox{.57}{$- n_{3,1} - n_{2,1} $}}
          \\
          h_0 \,\mapsto\, h_1 \cdot g^{\scalebox{.57}{$- n_{3,0} - n_{2,0} - n_{1} $}}
          \\
          n_{(-)} \,\mapsto\, n_{(-)}
        \end{array}
      $}"{yshift=1pt}
    ]
    &&
    \underset{[g]}{\coprod}
    \big(
      X^g \times C_g \times C_g \times C_g \times C_g \times
      \mathbb{Z}^3 \times \mathbb{Z}^2 \times \mathbb{Z}
    \big)
    \ar[
      ddd,
      shift right=70pt,
      "\scalebox{0.75}{$
        \left(
        \!\!\!\!
        \def\arraystretch{.7}
        \begin{array}{l}
          (x, h_3, h_2, h_1, h_0),
          \\
          (n_{3,2}, n_{3,1}, n_{3,0}),
          \\
          (n_{2,1}, n_{2,0}),
          \\
          n_1
        \end{array}
        \!\!\!\!\!\!
        \right)
      $}"{description, pos=.2},
      "\mapsto"{description, sloped, pos=.5},
      "\scalebox{0.75}{$
        \left(
        \!\!\!\!
        \def\arraystretch{.7}
        \begin{array}{l}
          \left(
            \!\!\!\!\!
            \begin{array}{l}
            x \cdot h_3,
            g^{\scalebox{.57}{$n_{3,2}$}} \cdot h_2,
            \\
            g^{\scalebox{.57}{$n_{3,1}$}} \cdot h_1,
            g^{\scalebox{.57}{$n_{3,0}$}} \cdot h_0
            \end{array}
            \!\!\!\!\!\!
          \right),
          \\
          (n_{2,1}, n_{2,0}), (n_1)
        \end{array}
        \!\!\!\!\!\!
        \right)
      $}"{description, xshift=10pt, pos=.7}
    ]
    \ar[
      ddd,
      shift right=22pt,
      "\scalebox{0.75}{$
        \left(
        \!\!\!\!
        \def\arraystretch{.7}
        \begin{array}{l}
          x, h_3 \cdot h_2, h_1, h_0,
          \\
          (n_{3,1} + n_{2,1} , n_{3,0} + n_{2,0}),
          \\
          n_1
        \end{array}
        \!\!\!\!\!\!\!
        \right)
      $}"{description, pos=.46},
      "\mapsto"{description,sloped, pos=.25}
    ]
    \ar[
      ddd,
      shift right=-26pt,
      "\scalebox{0.75}{$
        \left(
        \!\!\!\!
        \def\arraystretch{.7}
        \begin{array}{l}
          x,
          \\
          h_3, h_2 \cdot h_1, h_0,
          \\
          (n_{3,2} + n_{3,1}, n_{3,0}),
          \\
          ( n_{2,0} + n_1 )
        \end{array}
        \!\!\!\!\!\!\!
        \right)
      $}"{description, pos=.2},
      "\mapsto"{description,sloped, pos=.08}
    ]
    \ar[
      ddd,
      shift right=-64pt,
      "\scalebox{0.75}{$
        \left(
        \!\!\!\!
        \def\arraystretch{.7}
        \begin{array}{l}
          x,
          \\
          h_3, h_2, h_1 \cdot h_0,
          \\
          ( n_{3,2},\, n_{3,1} + n_{3,0} ),
          \\
          ( n_{2,1} )
        \end{array}
        \!\!\!\!\!\!\!
        \right)
      $}"{description, pos=.5},
      "\mapsto"{description,sloped, pos=.3}
    ]
    \ar[
      ddd,
      shift right=-104pt,
      "\scalebox{0.75}{$
        \left(
        \!\!\!\!
        \def\arraystretch{.7}
        \begin{array}{l}
          x,
          \\
          h_3, h_2, h_1,
          \\
          ( n_{3,2}, n_{3,1}),
          \\
          ( n_{2,1} )
        \end{array}
        \!\!\!\!\!\!\!
        \right)
      $}"{description, pos=.75},
      "\mapsto"{description,sloped, pos=.5}
    ]
    \\
    \\
    \\
    \mbox{
      \tiny
      \color{orangeii}
      \bf
      deg 3
    }
    \ar[
      dd,
      phantom,
      "{
        \rotatebox{90}{
          \tiny
          \color{greenii}
          \bf
          face maps
        }
      }"
    ]
    &
    \underset{[g]}{\coprod}
    \left(
    X^g
    \begin{array}{l}
    \times (C_g \times \mathbb{R} \times \mathbb{Z}^2)
    \\
    \times
    (C_g \times \mathbb{R} \times \mathbb{Z})
    \times
    (C_g \times \mathbb{R})
    \end{array}
    \right)
    \ar[
      dd,
      shift right=76pt,
      "\scalebox{0.75}{$
          \left(
            \!\!\!\!
            \def\arraystretch{.7}
            \begin{array}{l}
            x,
            \\
            (h_2, r_2, n_{2,1}, n_{2,0}),
            \\
            (h_1,r_1,n_1),
            \\
            (h_0,r_0)
            \end{array}
            \!\!\!\!\!\!
          \right)
      $}"{description, pos=.2},
      "\mapsdown"{description, pos=.46},
      "\scalebox{0.75}{$
        \left(
          \!\!\!\!
          \def\arraystretch{.7}
          \begin{array}{l}
            x \cdot h_2,
            \\
            (h_1,r_1,n_1),
            \\
            (h_0,r_0)
          \end{array}
          \!\!\!\!\!\!
        \right)
      $}"{description, pos=.7}
    ]
    \ar[
      dd,
      shift right=13    pt,
      "\mapsdown"{description, pos=.4},
      "\scalebox{0.75}{$
        \left(
          \!\!\!\!
          \def\arraystretch{.7}
          \begin{array}{l}
            x,
            \\
            \left(
              \!\!\!\!
              \begin{array}{l}
                g^{\scalebox{.57}{$-n_{2,1}$}} \cdot h_2 \cdot h_1,
                \\
                r_2 + n_{2,1} + r_1 ,
                \\
                n_{2,0} + n_1
              \end{array}
              \!\!\!\!\!\!
            \right),
            \\
            (h_0, r_0)
          \end{array}
          \!\!\!\!\!\!\!\!
      \right)
      $}"{description, pos=.7}
    ]
    \ar[
      dd,
      shift right=-30pt,
      "\mapsdown"{description, pos=.07},
      "\scalebox{0.75}{$
        \left(
          \!\!\!\!
          \def\arraystretch{.7}
          \begin{array}{l}
            x,
            \\
            (h_2, r_2, n_{2,1}+ n_{2,0}),
            \\
            \left(
              \!\!\!\!
              \def\arraystretch{.7}
              \begin{array}{l}
                g^{\scalebox{.57}{$-n_1$}} \cdot h_1 \cdot h_0,
                \\
                r_1 + n_1 + r_0
              \end{array}
              \!\!\!\!\!\!
            \right)
          \end{array}
          \!\!\!\!\!\!
        \right)
      $}"{description, pos=.3}
    ]
    \ar[
      dd,
      shift right=-70pt,
      "\mapsdown"{description, pos=.4},
      "\scalebox{0.75}{$
        \left(
          \!\!\!\!
          \begin{array}{l}
            x,
            \\
            (h_2, r_2, n_{2,1}),
            \\
            (h_1, r_1)
          \end{array}
          \!\!\!\!\!\!
        \right)
      $}"{description, pos=.7}
    ]
    \ar[
      rr,
      "\scalebox{0.75}{$
        \def\arraystretch{.7}
        \begin{array}{l}
          x_{\phantom{1}} \mapsto x
          \\
          h_2 \mapsto h_2
          \\
          h_1 \mapsto h_1 \cdot g^{\scalebox{.57}{$-n_{2,1}$}}
          \\
          h_0 \mapsto h_0 \cdot g^{\scalebox{.57}{$-n_{2,0} - n_1$}}
          \\
          n_{(-)} \mapsto n_{(-)}
        \end{array}
      $}"{yshift=0pt}
    ]
    &&
    \underset{[g]}{\coprod}
    \big(
      X^g \times C_g \times C_g \times C_g \times \mathbb{Z}^2 \times \mathbb{Z}
    \big)
    \ar[
      dd,
      shift right=60pt,
      "\scalebox{0.75}{$
        \left(
          \!\!\!\!
          \def\arraystretch{.7}
          \begin{array}{l}
            (x, h_2, h_1, h_0),
            \\
            (n_{2,1}, n_{2,1}),
            \\
            (n_1)
          \end{array}
          \!\!\!\!\!\!
        \right)
      $}"{description, pos=.2},
      "\mapsto"{description, sloped, pos=.45},
      "\scalebox{0.75}{$
        \left(
        \!\!\!\!\
        \def\arraystretch{.7}
        \begin{array}{l}
          x \cdot h_2,
          \\
          g^{\scalebox{.57}{$n_{2,1}$}}
            \cdot
          h_1,
          g^{\scalebox{.57}{$n_{2,0}$}} \cdot h_0,
          \\
          n_1
        \end{array}
        \!\!\!\!\!\!
        \right)
      $}"{description, pos=.65}
    ]
    \ar[
      dd,
      shift right=5pt,
      "\scalebox{0.75}{$
        \left(
        \!\!\!\!
        \def\arraystretch{.7}
        \begin{array}{l}
          x,
          \\
          h_2 \cdot h_1, h_0,
          \\
          n_{2,0} + n_{1}
        \end{array}
        \!\!\!\!\!\!
        \right)
      $}"{description, pos=.65}
    ]
    \ar[
      dd,
      shift right=-22pt,
      "\mapsto"{sloped, description, pos=.08},
      "\scalebox{0.75}{$
        \left(
        \!\!\!\!
        \def\arraystretch{.7}
        \begin{array}{l}
          x,
          \\
          h_2, h_1 \cdot h_0,
          \\
          n_{2,1} + n_{2,0}
        \end{array}
        \!\!\!\!\!\!
        \right)
      $}"{description, pos=.3}
    ]
    \ar[
      dd,
      shift right=-50pt,
      "\mapsto"{sloped, description, pos=.4},
      "\scalebox{0.75}{$
        \left(
        \!\!\!\!
        \def\arraystretch{.7}
        \begin{array}{l}
          x,
          \\
          h_2,\, h_1,
          \\
          n_{2,1}
        \end{array}
        \!\!\!\!\!\!
        \right)
      $}"{description, pos=.65}
    ]
    \\
    \\
    \mbox{
      \tiny
      \color{orangeii}
      \bf
      deg 2
    }
    \ar[
      d,
      phantom,
      "{
        \rotatebox{90}{
          \tiny
          \color{greenii}
          \bf
          face maps
        }
      }"
    ]
    &
    \underset{[g]}{\coprod}
    \Big(
    X^g
    \times
    (C_g \times \mathbb{R} \times \mathbb{Z})
    \times
    (C_g \times \mathbb{R})
    \Big)
    \ar[
      rr,
      "\scalebox{0.75}{$
        \def\arraystretch{.7}
        \begin{array}{l}
          x_{\phantom{1}} \,\mapsto\, x
          \\
          h_1 \,\mapsto\, h_1
          \\
          h_0 \,\mapsto\, h_0 \cdot g^{-n}
          \\
          n_{\phantom{1}} \,\mapsto\, n
        \end{array}
      $}"{description}
    ]
    \ar[
      d,
      shift right=52pt+10pt,
      "\scalebox{0.75}{$
        \def\arraystretch{.7}
        \begin{array}{c}
          {\phantom{A}}
          \\
          \mapsdown
          \\
          \scalebox{1.2}{$($}
            x \cdot h_1,\, (h_0,r_0)
          \scalebox{1.2}{$)$}
        \end{array}
      $}"{left, xshift=4pt}
    ]
    \ar[
      d,
      shift right=0pt+10pt,
      "\scalebox{0.75}{$
        \def\arraystretch{.7}
        \begin{array}{c}
          \scalebox{1.2}{$($}
            x,\, (h_1,r_1, n),\, (h_0,n_0)
          \scalebox{1.2}{$)$}
          \\
          \mapsdown
          \\
          \scalebox{1.2}{$($}
            x,\, (g^{-n} \cdot h_1 \cdot h_0,\, r_1 + r_0 + n)
          \scalebox{1.2}{$)$}
        \end{array}
      $}"{description}
    ]
    \ar[
      d,
      shift right=-52pt+10pt,
      "\scalebox{0.75}{$
        \def\arraystretch{.7}
        \begin{array}{c}
          {\phantom{A}}
          \\
          \mapsdown
          \\
          \scalebox{1.2}{$($}
            x,\, (h_1,\, r_1)
          \scalebox{1.2}{$)$}
        \end{array}
      $}"{xshift=-4pt}
    ]
    &&
    \underset{[g]}{\coprod}
    \big(
      X^g \times C_g \times C_g \times \mathbb{Z}
    \big)
    \ar[
      d,
      shift right=30pt,
      "\scalebox{0.75}{$
        \def\arraystretch{.7}
        \begin{array}{c}
          (x, h_1, h_0, n)
          \\
          \mapsdown
          \\
          (x \cdot h_1 , g^{n} \cdot h_0)
        \end{array}
      $}"{left}
    ]
    \ar[
      d,
      "\scalebox{0.75}{$
        \def\arraystretch{.7}
        \begin{array}{c}
          (x, h_1, h_0, n)
          \\
          \mapsdown
          \\
          (x, h_1 \cdot h_0)
        \end{array}
      $}"{description}
    ]
    \ar[
      d,
      shift left=30pt,
      "\scalebox{0.75}{$
        \def\arraystretch{.7}
        \begin{array}{c}
          (x, h_1, h_0, n)
          \\
          \mapsdown
          \\
          (x, h_1)
        \end{array}
      $}"
    ]
    \\[-6pt]
    \mbox{
      \tiny
      \color{orangeii}
      \bf
      deg 1
    }
    \ar[
      d,
      phantom,
      "{
        \rotatebox{90}{
          \tiny
          \color{greenii}
          \bf
          face maps
        }
      }"
    ]
    &
    \underset{[g]}{\coprod}
    \big(
    X^g
    \times
    C_g
    \times
    \mathbb{R}
    \big)
    \ar[
      rr,
      "\scalebox{0.75}{$
        \def\arraystretch{.7}
        \begin{array}{l}
          x_{\phantom{1}} \mapsto x
          \\
          h_{\phantom{1}} \mapsto h
          \\
          n_{\phantom{1}} \mapsto n
        \end{array}
      $}"{description}
    ]
    \ar[
      d,
      shift right=5pt,
      "\scalebox{0.75}{$
        \def\arraystretch{.7}
        \begin{array}{c}
          (x,(h,r))
          \\
          \mapsdown
          \\
          x
        \end{array}
      $}"{right,xshift=+4pt}
    ]
    \ar[
      d,
      shift left=5pt,
      "\scalebox{0.75}{$
        \def\arraystretch{.7}
        \begin{array}{c}
          (x,(h,r))
          \\
          \mapsdown
          \\
          x \cdot h
        \end{array}
      $}"{left, xshift=-4pt}
    ]
    &&
    \underset{[g]}{\coprod}
    \big(
      X^g
      \times
      C_g
    \big)
    \ar[
      d,
      shift right=5pt,
      "\scalebox{0.75}{$
        \def\arraystretch{.7}
        \begin{array}{c}
          (x,h)
          \\
          \mapsdown
          \\
          x
        \end{array}
      $}"{right,xshift=+4pt}
    ]
    \ar[
      d,
      shift left=5pt,
      "\scalebox{0.75}{$
        \def\arraystretch{.7}
        \begin{array}{c}
          (x,h)
          \\
          \mapsdown
          \\
          x \cdot h
        \end{array}
      $}"{left, xshift=-4pt}
    ]
    \\[-18pt]
    \mbox{
      \tiny
      \color{orangeii}
      \bf
      deg 0
    }
    &
    \underset{[g]}{\coprod}
    \big(
      X^g
    \big)
    \ar[
      rr,
      "{
        x \,\mapsto\, x
      }"
    ]
    &&
    \underset{[g]}{\coprod}
    \big(
      X^g
    \big)
  \end{tikzcd}
\end{equation}

\begin{equation}
  \label{ComparisonMorphismDegeneracyMaps}
  \hspace{-8mm}
  \begin{tikzcd}[column sep=30pt]
    \mbox{
      \tiny
      \color{orangeii}
      \bf
      \begin{tabular}{c}
        smooth
        \\
        $\infty$-stacks
      \end{tabular}
    }
    &[-24pt]
    \overset{
      \mathclap{
      \raisebox{4pt}{
        \tiny
        \color{darkblue}
        \bf
        \begin{tabular}{c}
          GRH's inertia
        \end{tabular}
      }
      }
    }{
      \Lambda_{\CohesiveCircle}
      \big(
        X \!\sslash\! G
      \big)
    }
    \,\coloneqq\,
    \underset{[g]}{\coprod}
    \Big(
      X^g
        \sslash
      \big(
        (C_g \times \mathbb{R})/\langle (g^{-1},1) \rangle
      \big)
    \Big)
    \ar[
      rr,
      "{
        \mbox{
          \tiny
          \color{greenii}
          \bf
          \begin{tabular}{c}
            comparison
            morphism
          \end{tabular}
        }
      }"{above,yshift=10pt},
      "{
        \mathrm{comp}_{X \sslash G}
      }"{above},
      "{ \simeq_{\scalebox{.6}{\shape}} }"{below}
    ]
    &&
    \big[
      \shape \CohesiveCircle,
      \,
      X \!\sslash\! G
    \big]
    \!\sslash\! \shape \CohesiveCircle
    =:
    \overset{
      \mathclap{
      \raisebox{4pt}{
        \tiny
          \color{darkblue}
        \bf
        FSS-cyclification
      }
      }
    }{
      \mathrm{Cyc}_{\scalebox{.5}{$\shape \CohesiveCircle$}}\big( X \!\sslash\! G \big)
    }
    \\[-15pt]
    \mbox{
      \tiny
          \color{orangeii}
      \bf
      \begin{tabular}{c}
        simplicial
        \\
        sheaves
      \end{tabular}
    }
    &
    \underset{[g]}{\coprod}
    \Big(
    \big(
      X^g \times W( C_g \times \mathbb{R} \times \mathbb{Z}^{\times^\bullet} )
    \big)
    /
    (C_g \times \mathbb{R} \times \mathbb{Z}^{\times^\bullet})
    \Big)
    \ar[
      rr
    ]
    &&
    \Big(
    \big[
      \mathbb{Z}^{\times^\bullet},
      \,
      X \times G^{\times^\bullet}
    \big]_{\mathrm{skel}}
    \times
    W( \mathbb{Z}^{\times^\bullet} )
    \Big)
    \big/
    \mathbb{Z}^{\times^\bullet}
    \\[+10pt]
    \mbox{
      \tiny
      \color{orangeii}
      \bf
      deg 4
    }
    \ar[
      dd,
      phantom,
      "\rotatebox{90}{
        \tiny
        \color{greenii}
        \bf
        degeneracy maps
      }"
    ]
    &
    \underset{[g]}{\coprod}
    \left(
      X^g
      \begin{array}{l}
        \times
      (C_g \times \mathbb{R} \times \mathbb{Z}^3)
        \times
      (C_g \times \mathbb{R} \times \mathbb{Z}^2)
      \\
        \times
      (C_g \times \mathbb{R} \times \mathbb{Z})
        \times
      (C_g \times \mathbb{R})
      \end{array}
      \!\!\!\!
    \right)
    \ar[
      rr,
      "\scalebox{0.75}{$
        \;\;\;\;\;\;
        \def\arraystretch{.7}
        \begin{array}{l}
          x \,\mapsto\, x
          \\
          h_3 \,\mapsto\, h_3
          \\
          h_2 \,\mapsto\, h_2 \cdot g^{- n_{3,2}}
          \\
          h_1 \,\mapsto\, h_1 \cdot g^{- n_{3,1} - n_{2,1}}
          \\
          h_0 \,\mapsto\, h_0 \cdot g^{-n_{3,0} - n_{2,0} - n_1}
          \\
          n_{(-)} \,\mapsto\, n_{(-)}
        \end{array}
      $}"
    ]
    &&
    \underset{[g]}{\coprod}
    (X^g \times C_g \times C_g \times C_g \times C_g \times \mathbb{Z}^3  \times \mathbb{Z}^2 \times \mathbb{Z})
    \\[110pt]
    \\
    \mbox{
      \tiny
      \color{orangeii}
      \bf
      deg 3
    }
    \ar[
      dd,
      phantom,
      "\rotatebox{90}{
        \tiny
        \color{greenii}
        \bf
        degeneracy maps
      }"
    ]
    &
    \underset{[g]}{\coprod}
    \left(
      \!\!
      X^g
      \begin{array}{l}
        \times
      (C_g \times \mathbb{R} \times \mathbb{Z}^2)
      \\
        \times
      (C_g \times \mathbb{R} \times \mathbb{Z})
        \times
      (C_g \times \mathbb{R})
      \end{array}
      \!\!\!\!
    \right)
    \ar[
      uu,
      shift left=80pt,
      "\scalebox{0.75}{$
        \left(
          \!\!\!\!
          \def\arraystretch{.7}
          \begin{array}{l}
            x,
            \\
            (h_2, r_2, n_{2,1}, n_{2,0})
            \\
            (h_1,r_1,n_1),
            \\
            (h_0, r_0)
          \end{array}
          \!\!\!\!\!\!
        \right)
      $}"{description, pos=.16},
      "\mapsup"{description, pos=.44},
      "\scalebox{0.75}{$
        \left(
        \!\!\!\!
        \begin{array}{l}
          x,
          (e,0,0,0,0),
          \\
          (h_2,r_2, n_{2,1}, n_{2,0}),
          \\
          (h_1,r_1, n_1)
          \\
          (h_0, r_0)
        \end{array}
        \!\!\!\!\!\!
        \right)
      $}"{description, pos=.77}
    ]
    \ar[
      uu,
      shift left=30pt,
      "\mapsup"{description, pos=.2},
      "\scalebox{0.75}{$
        \left(
        \!\!\!\!
        \begin{array}{l}
          x,
          (h_2,r_2,0,n_{2,1}, n_{2,0}),
          \\
          (e,0,0,0),
          (h_1, r_1, n_1)
          \\
          (h_0,r_0)
        \end{array}
        \!\!\!\!\!\!
        \right)
      $}"{description, pos=.44}
    ]
    \ar[
      uu,
      shift right=16pt,
      "\mapsup"{description, pos=.4},
      "\scalebox{0.75}{$
        \left(
        \!\!\!\!
        \begin{array}{l}
          x,
          (h_2,r_2,n_{2,1}, 0, n_{2,0}),
          \\
          (h_1, r_1, 0, n_1),
          (e,0,0),
          \\
          (h_0, r_0)
        \end{array}
        \!\!\!\!\!\!
        \right)
      $}"{description, pos=.75}
    ]
    \ar[
      uu,
      shift right=64pt,
      "\mapsup"{description, pos=.2},
      "\scalebox{0.75}{$
        \left(
        \!\!\!\!
        \begin{array}{l}
          x,
          (h_2,r_2,n_{2,1}, n_{2,0}, 0),
          \\
          (h_1, r_1, n_1, 0),
          \\
          (h_0, r_0),
          (e,0,0)
        \end{array}
        \!\!\!\!\!\!
        \right)
      $}"{description, pos=.45}
    ]
    \ar[
      rr,
      "\scalebox{0.75}{$
        \begin{array}{l}
          x \,\mapsto\, x
          \\
          h_2 \,\mapsto\, h_2
          \\
          h_1 \,\mapsto\, h_1 \cdot g^{- n_{2,1}}
          \\
          h_0 \,\mapsto\, h_0 \cdot g^{- n_{2,0} - n_1}
          \\
          n_{(-)} \,\mapsto\, n_{(-)}
        \end{array}
      $}"
    ]
    &&
    \underset{[g]}{\coprod}
    (X^g \times C_g \times C_g \times C_g \times \mathbb{Z}^2 \times \mathbb{Z})
    \ar[
      uu,
      shift left=70pt,
      "\scalebox{0.75}{$
        \left(
          \!\!\!\!
          \def\arraystretch{.7}
          \begin{array}{l}
            (x, h_2, h_1, h_0),
            \\
            (n_{2,1}, n_{2,0}), n_1
          \end{array}
          \!\!\!\!\!\!
        \right)
      $}"{description, pos=.22},
      "\mapsup"{description, pos=.44},
      "\scalebox{0.75}{$
        \left(
        \!\!\!\!
        \begin{array}{l}
          (x,e,h_2, h_1, h_0),
          \\
          (0,0,0), (n_{2,1}, n_{2,0})
          \\
          n_1
        \end{array}
        \!\!\!\!\!\!
        \right)
      $}"{description, pos=.77}
    ]
    \ar[
      uu,
      shift left=20pt,
      "\mapsto"{description, sloped, pos=.2},
      "\scalebox{0.75}{$
        \left(
        \!\!\!\!
        \begin{array}{l}
          (x, h_2, e, h_1, h_0),
          \\
          (0, n_{2,1}, n_{2,0}),
          \\
          (0,0), n_1
        \end{array}
        \!\!\!\!\!\!
        \right)
      $}"{description, pos=.45}
    ]
    \ar[
      uu,
      shift right=28pt,
      "\mapsto"{description, sloped, pos=.4},
      "\scalebox{0.75}{$
        \left(
        \!\!\!\!
        \begin{array}{l}
          (x, h_2, h_1, e,  h_0),
          \\
          (n_{2,1}, 0, n_{2,0}),
          \\
          (0, n_1), 0
        \end{array}
        \!\!\!\!\!\!
        \right)
      $}"{description, pos=.76}
    ]
    \ar[
      uu,
      shift right=70pt,
      "\mapsto"{description, sloped, pos=.2},
      "\scalebox{0.75}{$
        \left(
        \!\!\!\!
        \begin{array}{l}
          (x, h_2, h_1, h_0, 0),
          \\
          (n_{2,1}, n_{2,0}, 0),
          \\
          (n_1, 0), 0
        \end{array}
        \!\!\!\!\!\!
        \right)
      $}"{description, pos=.46}
    ]
    \\[80pt]
    \\
    \mbox{
      \tiny
      \color{orangeii}
      \bf
      deg 2
    }
    \ar[
      dd,
      phantom,
      "\rotatebox{90}{
        \tiny
        \color{greenii}
        \bf
        degeneracy maps
      }"
    ]
    &
    \underset{[g]}{\coprod}
    \big(
      X^g
        \times
      (C_g \times \mathbb{R} \times \mathbb{Z})
        \times
      (C_g \times \mathbb{R})
    \big)
    \ar[
      uu,
      shift left=57pt,
      "\scalebox{0.75}{$
        \left(
          \!\!\!\!
          \def\arraystretch{.7}
          \begin{array}{l}
            x,
            \\
            (h_1,r_1,n),
            \\
            (h_0, r_0)
          \end{array}
          \!\!\!\!\!\!
        \right)
      $}"{description, pos=.2},
      "\mapsup"{description, pos=.44},
      "\scalebox{0.75}{$
        \left(
        \!\!\!\!
        \begin{array}{l}
          x,
          (e,0,0,0),
          \\
          (h_1,r_1, n),
          \\
          (h_0, r_0)
        \end{array}
        \!\!\!\!\!\!
        \right)
      $}"{description, pos=.75}
    ]
    \ar[
      uu,
      "\scalebox{0.75}{$
        \left(
        \!\!\!\!
        \def\arraystrethc{.7}
        \begin{array}{l}
          x,
          (h_1,r_1, 0,n),
          \\
          (e,0,0),
          \\
          (h_0, r_0)
        \end{array}
        \!\!\!\!\!\!
        \right)
      $}"{description, pos=.75},
      "\mapsup"{description, pos=.4},
    ]
    \ar[
      uu,
      shift right=60pt,
      "\scalebox{0.75}{$
        \left(
        \!\!\!\!
        \def\arraystrethc{.7}
        \begin{array}{l}
          x,
          (h_1,r_1, n, 0),
          \\
          (h_0, r_0, 0),
          \\
          (e,0)
        \end{array}
        \!\!\!\!\!\!
        \right)
      $}"{description, pos=.75},
      "\mapsup"{description, pos=.4},
    ]
    \ar[
      rr,
      "\scalebox{0.75}{$
        \def\arraystretch{.7}
        \begin{array}{l}
          x \,\mapsto\, x
          \\
          h_1 \,\mapsto\, h_1
          \\
          h_0 \,\mapsto\, h_0 \cdot g^{-n}
          \\
          n \,\mapsto\, n
        \end{array}
      $}"{description}
    ]
    &&
    \underset{[g]}{\coprod}
    (X^g \times C_g \times C_g \times \mathbb{Z})
    \ar[
      uu,
      shift left=55pt,
      "\scalebox{0.75}{$
        (x, h_1, h_0, n)
      $}"{description, pos=.2},
      "\mapsup"{description, pos=.44},
      "\scalebox{0.75}{$
        \left(
        \!\!\!\!
        \begin{array}{l}
          (x, e, h_1, h_0),
          \\
          (0,0), n
        \end{array}
        \!\!\!\!\!\!
        \right)
      $}"{description, pos=.75}
    ]
    \ar[
      uu,
      "\mapsup"{description, pos=.44},
      "\scalebox{0.75}{$
        \left(
        \!\!\!\!
        \begin{array}{l}
          (x, h_1, e, h_0),
          \\
          (0,n), 0
        \end{array}
        \!\!\!\!\!\!
        \right)
      $}"{description, pos=.75}
    ]
    \ar[
      uu,
      shift right=55pt,
      "\mapsup"{description, pos=.44},
      "\scalebox{0.75}{$
        \left(
        \!\!\!\!
        \begin{array}{l}
          (x, h_1, h_0, e),
          \\
          (n,0), 0
        \end{array}
        \!\!\!\!\!\!
        \right)
      $}"{description, pos=.75}
    ]
    \\
    {\phantom{A}}
    \\
    \mbox{
      \tiny
      \color{orangeii}
      \bf
      deg 1
    }
    \ar[
      dd,
      phantom,
      "\rotatebox{90}{
        \tiny
        \color{greenii}
        \bf
        degeneracy maps
      }"
    ]
    &
    \underset{[g]}{\coprod}
    (X^g \times C_g \times \mathbb{R})
    \ar[
      rr,
      "\scalebox{0.75}{$
                \def\arraystretch{.7}
          \begin{array}{l}
            x \,\mapsto\, x
            \\
            h \,\mapsto\, h
            \\
            n \,\mapsto\, n
          \end{array}
          $}"{description}
    ]
    \ar[
      uu,
      shift left=6pt,
      "\scalebox{0.75}{$
        \begin{array}{c}
          (x,(e,0,0),(h,r))
          \\
          \mapsup
          \\
          (x,(h,r))
        \end{array}
      $}"
    ]
    \ar[
      uu,
      shift right=6pt,
      "\scalebox{0.75}{$
        \begin{array}{c}
          (x,(h,r,0), (e,0))
          \\
          \mapsup
          \\
          (x,(h,r))
        \end{array}
      $}"{right}
    ]
    &&
    \underset{[g]}{\coprod}
    (X^g \times C_g)
    \ar[
      uu,
      shift left=6pt,
      "\scalebox{0.75}{$
        \def\arraystretch{.7}
        \begin{array}{c}
          ((x,e,h), 0)
          \\
          \mapsup
          \\
          (x,h)
        \end{array}
      $}"
    ]
    \ar[
      uu,
      shift right=6pt,
      "\scalebox{0.75}{$
        \def\arraystretch{.7}
        \begin{array}{c}
          ((x,h,e), 0)
          \\
          \mapsup
          \\
          (x,h)
        \end{array}
      $}"{right}
    ]
    \\
    {\phantom{A}}
    \\
    \mbox{
      \tiny
      \color{orangeii}
      \bf
      deg 0
    }
    &
    \underset{
      [g]
    }{\coprod}
    (X^g)
    \ar[
      rr,
      "{
        x \,\mapsto\, x
      }"
    ]
    \ar[
      uu,
      "\scalebox{0.75}{$
                \begin{array}{c}
            (x,(e,0))
            \\
            \mapsup
            \\
            x
          \end{array}
             $}"
    ]
    &&
    \underset{[g]}{\coprod}
    (X^g)
    \ar[
      uu,
      "{
        \scalebox{0.75}{$
          \begin{array}{c}
            (x,e)
            \\
            \mapsup
            \\
            x
          \end{array}
        $}
      }"
    ]
  \end{tikzcd}
\end{equation}

\newpage

  \noindent
  Regarding {\bf (ii)}:  It remains to show that the
  morphism of simplicial presheaves
  shown on the previous two pages, in
  \eqref{ComparisonMorphismFaceMaps} and \eqref{ComparisonMorphismDegeneracyMaps},
  indeed represents a morphism of $\SmoothInfinityGroupoids$ of the
  form  \eqref{FormOfComparisonMorphism}.
  That the action on the right is the correct one is the content of
  Lemma \ref{CanonicalShapeS1ActionOnInertiaOrbifold}.
  The simplicial Borel constructions on both sides of
  \eqref{ComparisonMorphismFaceMaps}
  and \eqref{ComparisonMorphismDegeneracyMaps}
  present the respective homotopy quotients by \cite[\S 3.5]{NSS12a} (recalled as \cite[Lem. 3.2.73]{SS21EPB}).
\end{proof}

\begin{proposition}[Comparison morphism is pullback of shape unit]
  \label{ComparisonMorphismIsPullbackOfShapeUnit}
  The comparison morphism from Lemma \ref{ComparisonMorphismFromHuanInertiaToCyclification}
  sits in a homotopy-Cartesian square as follows
  (thus implying Thm. \ref{AbstractCharacterizationOfHuanInertiaOrbifold}):
  \vspace{-2mm}
\begin{equation}
  \label{ComparisonMorphimsInHomotopyPullback}
  \begin{tikzcd}[row sep=small]
    \Lambda_{\CohesiveCircle}(X \!\sslash\! G )
    \ar[
      rr,
      "\mathrm{comp}_{X \!\sslash\! G}"
    ]
    \ar[
      drr,
      phantom,
      "\mbox{\tiny\rm(pb)}"
    ]
    \ar[
      d
    ]
    &&
    \mathrm{Cyc}_{\scalebox{.6}{$\shape \CohesiveCircle$}}
    (X \!\sslash\! G)
    \ar[d]
    \\
    \mathbf{B}\CohesiveCircle
    \ar[
      rr,
      "\eta_{\CohesiveCircle}^{\scalebox{.6}{\shape}}"{below}
    ]
    &&
    \shape \mathbf{B}\CohesiveCircle
  \end{tikzcd}
  {\phantom{AAAA}}
    \Leftrightarrow
  {\phantom{AAAA}}
  \Lambda_{\CohesiveCircle}(X \!\sslash\! G)
  \;\simeq\;
  \big(
    \eta_{\CohesiveCircle}^{\scalebox{.6}{\shape}}
  \big)^\ast
  \mathrm{Cyc}_{\scalebox{.6}{$\shape \CohesiveCircle$}}
  (
    X \!\sslash\! G
  )\;.
\end{equation}
\end{proposition}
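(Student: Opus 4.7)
The strategy is to invoke the general equivalence \eqref{TransformationGroupsAsSliceObjects}: for any homomorphism of group $\infty$-stacks $\phi \colon \mathcal{T}_1 \to \mathcal{T}_2$, restriction of $\mathcal{T}_2$-actions to $\mathcal{T}_1$-actions corresponds under the slice equivalence to homotopy base change along $\mathbf{B}\phi$. Concretely, if $\mathcal{T}_2 \acts \mathcal{Y}$, then the homotopy quotient of the restricted $\mathcal{T}_1$-action fits into a homotopy pullback
\[
  \begin{tikzcd}
    \HomotopyQuotient{\mathcal{Y}}{\mathcal{T}_1}
    \ar[r] \ar[d]
    \ar[dr, phantom, "{\scalebox{.7}{(pb)}}"]
    &
    \HomotopyQuotient{\mathcal{Y}}{\mathcal{T}_2}
    \ar[d]
    \\
    \mathbf{B}\mathcal{T}_1
    \ar[r, "{\mathbf{B}\phi}"']
    &
    \mathbf{B}\mathcal{T}_2
  \end{tikzcd}
\]
since both vertical maps classify the respective principal bundles with total space $\mathcal{Y}$.

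I apply this with $\phi = \eta^{\scalebox{.6}{$\shape$}}_{\CohesiveCircle} \colon \CohesiveCircle \to \shape \CohesiveCircle$, which is a homomorphism of group $\infty$-stacks since $\CohesiveCircle$ is abelian and $\shape$ preserves products and the group structure. I take $\mathcal{Y} \coloneqq \Maps{\big}{\shape \CohesiveCircle}{X \!\sslash\! G}$ equipped with its canonical $\shape \CohesiveCircle$-action by precomposition with right multiplication; by definition, its homotopy quotient is $\mathrm{Cyc}_{\scalebox{.6}{$\shape \CohesiveCircle$}}(X \!\sslash\! G)$. Using that $\shape$ commutes with the delooping of the abelian group $\CohesiveCircle$, so that $\shape \mathbf{B}\CohesiveCircle \simeq \mathbf{B}\shape \CohesiveCircle$ and $\mathbf{B}\eta^{\scalebox{.6}{$\shape$}}_{\CohesiveCircle} \simeq \eta^{\scalebox{.6}{$\shape$}}_{\mathbf{B}\CohesiveCircle}$, I obtain a homotopy pullback square of the shape claimed in \eqref{ComparisonMorphimsInHomotopyPullback}, but with the top-left corner being a priori $\Maps{\big}{\shape \CohesiveCircle}{X \!\sslash\! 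G} \!\sslash\! \CohesiveCircle$ (the homotopy quotient by the $\CohesiveCircle$-action restricted along $\eta^{\scalebox{.6}{$\shape$}}_{\CohesiveCircle}$).

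It remains to identify this restricted homotopy quotient with GRH's inertia orbifold $\Lambda_{\CohesiveCircle}(X \!\sslash\! G)$, and to identify the top horizontal map with the comparison morphism $\mathrm{comp}_{X \sslash G}$. But this is precisely the content of Lemma \ref{ComparisonMorphismFromHuanInertiaToCyclification}: the simplicial-presheaf construction in diagrams \eqref{ComparisonMorphismFaceMaps}--\eqref{ComparisonMorphismDegeneracyMaps} presents GRH's orbifold as the $\CohesiveCircle$-Borel construction of $\Maps{\big}{\shape \CohesiveCircle}{X \!\sslash\! G}$ along the restricted action, and exhibits the comparison morphism as the canonical map to the $\shape \CohesiveCircle$-Borel construction induced by $\eta^{\scalebox{.6}{$\shape$}}_{\CohesiveCircle}$. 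Combined with Step 2, this yields \eqref{ComparisonMorphimsInHomotopyPullback}.

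The main obstacle is the compatibility of the $\CohesiveCircle$-action on $\Lambda_{\CohesiveCircle}(\mathcal{X})$ implicit in GRH's quotient \eqref{SkeletonOfHuanInertiaStackOfGoodOrbifold} (i.e.\ via the quotient $(C_g \times \mathbb{R})/\langle (g^{-1},1)\rangle$) with the restriction-of-action construction along the shape unit. This compatibility is not formal; it is exactly what is secured by the explicit identification, in the proof of Lemma \ref{ComparisonMorphismFromHuanInertiaToCyclification}, of the left-hand simplicial quotient with the $\CohesiveCircle$-Borel construction on $\Maps{\big}{\shape \CohesiveCircle}{\mathcal{X}}$, together with the appeal to Lemma \ref{CanonicalShapeS1ActionOnInertiaOrbifold} which pins down the relevant $\shape \CohesiveCircle$-action on the right. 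Once this verification is in hand, the homotopy pullback property of \eqref{ComparisonMorphimsInHomotopyPullback} follows from the abstract base-change principle above, and the equivalent formulation on the right of \eqref{ComparisonMorphimsInHomotopyPullback} is then just the universal property of the pullback.
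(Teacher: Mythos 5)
Your abstract step is correct but it is not where the work lies. By \eqref{TransformationGroupsAsSliceObjects}, restricting the canonical $\shape\CohesiveCircle$-action on $\Maps{\big}{\shape\CohesiveCircle}{X\sslash G}$ along $\eta^{\scalebox{.6}{$\shape$}}_{\CohesiveCircle}$ corresponds to base change along $\mathbf{B}\eta^{\scalebox{.6}{$\shape$}}_{\CohesiveCircle}\simeq \eta^{\scalebox{.6}{$\shape$}}_{\mathbf{B}\CohesiveCircle}$, so the restricted homotopy quotient $\Maps{\big}{\shape\CohesiveCircle}{X\sslash G}\sslash\CohesiveCircle$ tautologically sits in a homotopy pullback of the claimed shape; this is essentially how the paper passes from the present Proposition to Theorem \ref{AbstractCharacterizationOfHuanInertiaOrbifold}. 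The substance of Proposition \ref{ComparisonMorphismIsPullbackOfShapeUnit}, however, is that the square whose top-left corner is GRH's \emph{explicitly defined} orbifold $\coprod_{[g]} X^g\sslash\Lambda_g$ and whose top edge is the explicit morphism $\mathrm{comp}_{X\sslash G}$ is homotopy cartesian. Your Step 2 --- identifying GRH's orbifold with the restricted-action quotient, compatibly with $\mathrm{comp}$ --- is exactly that substance, and you defer it to Lemma \ref{ComparisonMorphismFromHuanInertiaToCyclification}. That Lemma does not contain it: it only constructs $\mathrm{comp}$ as a morphism of simplicial presheaves from the Borel construction of the resolved $\Lambda_g$-actions on the fixed loci $X^g$ to the $\mathbb{Z}^{\times^\bullet}$-Borel construction of the skeletal inertia complex; nowhere does it identify the left-hand side with a $\CohesiveCircle$-Borel construction on the inertia stack with restricted action. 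So your proposal reduces the Proposition to a (mis)attributed claim and leaves unproven precisely the non-formal step that the paper's ``Subtleties'' remark warns about --- a genuine gap.

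What the paper actually does at this point: inspecting the component diagrams \eqref{ComparisonMorphismFaceMaps} and \eqref{ComparisonMorphismDegeneracyMaps}, it observes that $\mathrm{comp}$ fails to be a degreewise isomorphism only in that it forgets the $\mathbb{R}$-coordinates; hence it factors as an isomorphism of simplicial presheaves onto $\big(\big[\mathbb{Z}^{\times^\bullet}, X\times G^{\times^\bullet}\big]_{\mathrm{skel}}\times W(\mathbb{R}\times\mathbb{Z}^{\times^\bullet})\big)\big/(\mathbb{R}\times\mathbb{Z}^{\times^\bullet})$, followed by the strict objectwise pullback of the $\mathbb{Z}^{\times^\bullet}$-Borel construction along $\overline{W}(\mathbb{R}\times\mathbb{Z}^{\times^\bullet})\to\overline{W}(\mathbb{Z}^{\times^\bullet})$. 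Since the right-hand vertical map is a Kan fibration (the skeletal inertia complex is the nerve of a groupoid, hence Kan, and the Borel construction is right Quillen by Prop. \ref{QuillenEquivalenceBetweenBorelModelStructureAndSliceOverClassifyingComplex}), and since the bottom map presents the shape unit of $\mathbf{B}\CohesiveCircle$ (Lemmas \ref{CofibrantOfCircleGroup}, \ref{ShapeUnitOfTheSmoothCircle}), this strict pullback is a homotopy pullback, which gives \eqref{ComparisonMorphimsInHomotopyPullback}. If you want to keep your route, you must supply this identification yourself: show that GRH's quotient by $\Lambda_g=(C_g\times\mathbb{R})/\langle(g^{-1},1)\rangle$ presents, compatibly with $\mathrm{comp}$, the $\CohesiveCircle$-Borel construction of the restricted action on $\Maps{\big}{\shape\CohesiveCircle}{X\sslash G}$ --- that is the missing proof, not a citation.
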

\begin{proof}
A glance at the component maps in
\eqref{ComparisonMorphismFaceMaps} and \eqref{ComparisonMorphismDegeneracyMaps}
reveals that the comparison morphism fails to be a degreewise isomorphism
of simplicial presheaves
only in that the elements $r \in \mathbb{R}$ on the left
hand side are forgotten.
Since pullbacks of simplicial diagrams are computed objectwise,
this means that the presentation of $\mathrm{comp}$
on simplicial presheaves factors through a dashed isomorphism
of simplicial presheaves in the following diagram:
\vspace{-3mm}
\begin{equation}
  \label{ComarisonMorphismFactors}
  \begin{tikzcd}[row sep=12pt]
    \underset{[g]}{\coprod}
    \Big(
    \big(
      X^g \times W( C_g \times \mathbb{R} \times \mathbb{Z}^{\times^\bullet} )
    \big)
    /
    (C_g \times \mathbb{R} \times \mathbb{Z}^{\times^\bullet})
    \Big)
    \ar[
      dr,
      shorten <=-14pt,
      "\mathrm{comp}_{X \!\sslash\! G}"
    ]
    \ar[
      d,
      dashed,
      shorten <=-8pt,
      "\sim"{sloped, pos=.1}
    ]
    \\
    \Big(
    \big[
      \mathbb{Z}^{\times^\bullet}\!\!,
      \,
      X \times G^{\times^\bullet}
    \big]_{\mathrm{skel}}
    \times
    W( \mathbb{R} \times \mathbb{Z}^{\times^\bullet} )
    \Big)
    \big/
    (\mathbb{R} \times \mathbb{Z}^{\times^\bullet})
    \ar[r]
    \ar[
      d,
      shorten <=-4pt
    ]
    \ar[
      dr,
      phantom,
      "\mbox{\tiny\rm (pb)}"
    ]
    &
    \Big(
    \big[
      \mathbb{Z}^{\times^\bullet},
      \,
      X \times G^{\times^\bullet}
    \big]_{\mathrm{skel}}
    \times
    W( \mathbb{Z}^{\times^\bullet} )
    \Big)
    \big/
    \mathbb{Z}^{\times^\bullet}
    \ar[
      d,
      "\in \mathrm{Fib}"{right}
    ]
    \\
    \overline{W}
    (\mathbb{R} \times \mathbb{Z}^{\times^\bullet})
    \ar[
      r,
      "{
        \overline{W}
        \left(
          (r, \vec n) \,\mapsto\, \vec n
        \right)
      }"{below}
    ]
    &
    \overline{W}
    ( \mathbb{Z}^{\times^\bullet} )
  \end{tikzcd}
\end{equation}

\vspace{-2mm}
\noindent
Here:

\begin{itemize}

\item[{(i)}]
the morphism on the right is a fibration, since
$\big[\mathbb{Z}^{\times^\bullet}, X \times G^{\times^\bullet} \big]_{\mathrm{skel}}$
is the nerve of a groupoid and hence a Kan complex (over each $U \in \CartesianSpaces$)
and because
$
  \big(
    (-) \times W (\mathbb{Z}^{\times^\bullet})
  \big)
  \big/
  (\mathbb{Z}^{\times^\bullet})$
is a right Quillen functor
(by Prop. \ref{QuillenEquivalenceBetweenBorelModelStructureAndSliceOverClassifyingComplex})
and hence preserves Kan fibrations;

\item[{(ii)}]
the object in the bottom left represents the delooping
$\mathbf{B}\CohesiveCircle$, by Lemma \ref{ShapeUnitOfTheSmoothCircle};

\item[{(iii)}]
the morphism on the bottom represents the
shape unit on $\mathbf{B}\CohesiveCircle$
by the same argument as in Prop. \ref{ShapeUnitOfTheSmoothCircle}.
\end{itemize}

\noindent
This implies that the diagram exhibits the
claimed homotopy pullback \eqref{ComparisonMorphimsInHomotopyPullback}.
\end{proof}

We conclude by proving the remaining Lemmas used in the above arguments.
\begin{lemma}[Cofibrant resolution of circle group]
\label{CofibrantOfCircleGroup}
For $g \in G$,
we have the following cofibrant replacements of the Lie group
$\Lambda_g$ \eqref{SkeletonOfHuanInertiaStackOfGoodOrbifold}
in $\SimplicialPresheaves(\CartesianSpaces)_{{\mathrm{proj}}\atop {\mathrm{loc}} }$:
\vspace{-.3cm}
\begin{equation}
  \label{CofibrantResolutionOfHuanCentralizerGroup}
  \hspace{-3cm}
  \begin{tikzcd}[row sep=-3pt]
    \varnothing
    \ar[
      r,
      "\in \mathrm{Cof}"{above}
    ]
    &
    \mathbb{R} \times \mathbb{Z}^{\times^\bullet}
    \ar[
      rr,
      "\sim"{above, yshift=-1pt}
    ]
    &&
    \qquad
    \mathbb{R}^{\times^\bullet_{\CohesiveCircle}}
    \qquad
    \ar[
      r,
      "\in \mathrm{W}"
    ]
    &
    \CohesiveCircle \;,
    \\
    &
    \scalebox{0.7}{$
    (r,\vec n)
    $}
    &\longmapsto&
    \hspace{-3cm}
    \mathrlap{
  \scalebox{0.7}{$    \big(
      r, (r + n_1), (r + n_1 + n_2), \cdots
    \big)
    $}
    }
    \\
    {\phantom{A}}
    \\
    \varnothing
    \ar[
      r,
      "\in \mathrm{Cof}"{above}
    ]
    &
    C_g \times \mathbb{R}
    \times
    \mathbb{Z}^{\times^\bullet}
    \ar[
      rr,
         "\sim"{above,yshift=-1pt}
    ]
    &{\phantom{AAA}}&
    \qquad \quad
    (C_g \times \mathbb{R})^{\times^{\bullet}_{\Lambda_g}}
    \qquad \qquad
    \ar[
      r,
      "\in \mathrm{W}"{above}
    ]
    &
    \Lambda_g\;.
    \\
    &
  \scalebox{0.7}{$    (h,r, -\vec n) $}
    &\longmapsto&
    \hspace{-5.5cm}
    \mathrlap{
    \scalebox{0.7}{$  \big(
      (h,r),
      (g^{n_1} \cdot h,\, r + n_1),
      (g^{n_1 + n_2} \cdot h,\, r + n_1 + n_2),
      \cdots
    \big)
    $}
    }
  \end{tikzcd}
\end{equation}
\end{lemma}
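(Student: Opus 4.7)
My plan is to treat both resolutions uniformly, as instances of the bar construction for a free action of $\mathbb{Z}$ on a smooth manifold $Y$: namely $Y = \mathbb{R}$ with the translation action (quotient $\CohesiveCircle = \mathbb{R}/\mathbb{Z}$) and $Y = C_g \times \mathbb{R}$ with the action generated by $(g^{-1},1)$ (quotient $\Lambda_g$). In each case the simplicial presheaf $Y \times \mathbb{Z}^{\times^\bullet}$ is the nerve of the action groupoid, and the claim splits into three things to verify: (a) the given explicit map lands in the {\v C}ech nerve of the quotient projection $Y \to Y/\mathbb{Z}$ and is a degreewise isomorphism, (b) the {\v C}ech nerve is a local weak equivalence onto $Y/\mathbb{Z}$, and (c) the bar simplicial presheaf is projectively cofibrant.

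For (a), I would simply check that the map $(y, \vec n) \mapsto \big(y,\, y \!\cdot\! n_1,\, y \!\cdot\! n_1 n_2,\, \ldots\big)$ displayed in the statement admits the evident inverse $(y_0, \ldots, y_k) \mapsto \big(y_0,\, y_0^{-1} y_1,\, y_1^{-1} y_2,\, \ldots\big)$, and that this inverse is well-defined on the fibre product $Y^{\times^\bullet_{Y/\mathbb{Z}}}$ exactly because consecutive differences (or ratios) land in the discrete subgroup $\mathbb{Z}$ by definition of the fibre product over the quotient. A brief check that the formulas commute with face and degeneracy maps then upgrades this to an isomorphism of simplicial sheaves; hence the arrows labelled ``$\sim$'' in the statement are in fact isomorphisms, not merely weak equivalences.

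For (b), the quotient projections $\mathbb{R} \to \CohesiveCircle$ and $C_g \times \mathbb{R} \to \Lambda_g$ are surjective local diffeomorphisms, hence covers with respect to the good-open-cover topology on $\CartesianSpaces$ that defines the local model structure (see \cref{TechnicalMaterial}); their {\v C}ech nerves are thus split hypercovers of the target, and the canonical augmentation is a local weak equivalence by the standard hypercover descent fact that we invoke from the technical background.

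For (c), the key observation is that each simplicial degree $Y \times \mathbb{Z}^n = \coprod_{\vec n \in \mathbb{Z}^n} Y$ is a coproduct of representables (with $C_g \times \mathbb{R} \simeq \coprod_{h \in C_g} \mathbb{R}$ itself a coproduct of representables), and that the degeneracies insert the neutral element $0 \in \mathbb{Z}$, so the simplicial presheaf is split. The $n$-th latching map is then the inclusion of the coproduct summands indexed by tuples $\vec n$ in which at least one entry vanishes into the full coproduct $\coprod_{\mathbb{Z}^n} Y$; this is a coproduct inclusion of representables, hence a projective cofibration. Split simplicial objects with this latching structure are Reedy cofibrant in the projective model structure, and in particular projectively cofibrant, giving the cofibration from $\varnothing$. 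I expect (c) to be the most notation-heavy but essentially routine step, and (b) to hide the only genuine technical subtlety, namely matching the covering map $Y \to Y/\mathbb{Z}$ with the precise site conventions of \cref{TechnicalMaterial}; the proof of Prop. \ref{ComparisonMorphismIsPullbackOfShapeUnit} has already appealed to the companion Lemma \ref{ShapeUnitOfTheSmoothCircle}, which handles the $Y = \mathbb{R}$ case directly and can be reused.
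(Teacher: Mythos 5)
Your argument is correct and takes essentially the same route as the paper: the paper proves the first resolution as Example \ref{StandardCofibrantResolutionOfTheCircle} --- identifying the bar construction $\mathbb{R}\times\mathbb{Z}^{\times^\bullet}$ with the {\v C}ech nerve of $\mathbb{R}\twoheadrightarrow\CohesiveCircle$ and invoking Dugger's cofibrancy recognition (Prop. \ref{DuggerCofibrancyRecognition}), which are exactly your steps (a) and (c) --- and declares the $\Lambda_g$ case analogous, which is precisely your uniform free-$\mathbb{Z}$-action treatment; your step (b) merely makes explicit the stalkwise weak-equivalence claim that the paper leaves implicit. One caution: drop the closing aside about reusing Lemma \ref{ShapeUnitOfTheSmoothCircle}, since its proof cites the present Lemma and that would be circular, though your main argument does not actually rely on it.
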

\begin{proof}
  The first statement is Example \ref{StandardCofibrantResolutionOfTheCircle}
  and the second follows analogously.
\end{proof}

\begin{lemma}[Shape unit of the smooth circle]
  \label{ShapeUnitOfTheSmoothCircle}
  A presentation for the shape unit of the smooth circle is given by
  \vspace{-2mm}
  \begin{equation}
    \label{PresentationOfShapeAdjunctionUnitOnSmoothCircle}
    \begin{tikzcd}[row sep=6pt]
      \mbox{
        \tiny
        \color{orangeii}
        \bf
        \begin{tabular}{c}
          smooth
          \\
          $\infty$-stacks
        \end{tabular}
      }
      &
      \CohesiveCircle
      \ar[
        rr,
        "\eta_{\CohesiveCircle}^{\scalebox{.6}{\shape}}"
      ]
      &&
      \shape \CohesiveCircle
      \,\simeq\,
      \mathbf{B}\mathbb{Z}
      &
      \in \SmoothInfinityGroupoids
      \\
      \mbox{
        \tiny
        \color{orangeii}
        \bf
        \begin{tabular}{l}
          simplicial
          \\
          presheaves
        \end{tabular}
      }
      &
      \mathbb{R} \times \mathbb{Z}^{\times^\bullet}
      \ar[
        rr,
        "{
          (r, \vec n) \;\mapsto\; \vec n
        }"{below}
      ]
      &&
      \mathbb{Z}^{\times^\bullet}
      &
      \in \SimplicialPresheaves(\CartesianSpaces)_{ {\mathrm{proj}} \atop {\mathrm{loc}} }
      \ar[
        u,
        "\, \Localization{\mathrm{W}}"{right}
      ]
    \end{tikzcd}
  \end{equation}
\end{lemma}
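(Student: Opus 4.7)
The plan is to identify both simplicial presheaves with their $\infty$-groupoid counterparts, compute $\shape(\CohesiveCircle)$ using that $\shape$ trivializes Cartesian spaces, and then verify that the levelwise projection $(r,\vec n) \mapsto \vec n$ correctly presents the unit.

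First, by Lemma \ref{CofibrantOfCircleGroup}, the simplicial presheaf $\mathbb{R} \times \mathbb{Z}^{\times^\bullet}$ is a local projectively cofibrant resolution of $\CohesiveCircle$; concretely, it is the nerve of the action groupoid for the (free) translation action of $\mathbb{Z}$ on $\mathbb{R}$, so it presents $\HomotopyQuotient{\mathbb{R}}{\mathbb{Z}} \simeq \mathbb{R}/\mathbb{Z} \simeq \CohesiveCircle$. The target $\mathbb{Z}^{\times^\bullet}$ is the standard simplicial bar complex $\overline{W}\mathbb{Z}$, presenting $\mathbf{B}\mathbb{Z} \simeq \HomotopyQuotient{\ast}{\mathbb{Z}}$.

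Second, the shape modality $\shape$ on $\SmoothInfinityGroupoids$ is a left adjoint (to the discrete inclusion), hence preserves homotopy colimits. By cohesion, every Cartesian space $\mathbb{R}^n$ has $\shape(\mathbb{R}^n) \simeq \ast$; equivalently, $\shape$ is modeled at the simplicial-presheaf level by the further left Bousfield localization at $\{U \to \ast\}_{U \in \CartesianSpaces}$. Applied degreewise to $\mathbb{R} \times \mathbb{Z}^{\times n}$ this yields the constant simplicial presheaf $\mathbb{Z}^{\times n}$, and combining with preservation of homotopy colimits gives
$$ \shape(\CohesiveCircle) \;\simeq\; \HomotopyQuotient{\shape(\mathbb{R})}{\mathbb{Z}} \;\simeq\; \HomotopyQuotient{\ast}{\mathbb{Z}} \;\simeq\; \mathbf{B}\mathbb{Z}. $$
The shape-unit component at $\CohesiveCircle$ is accordingly the canonical comparison from the cofibrant resolution to its levelwise shape, which in each simplicial degree is precisely the projection $\mathbb{R} \times \mathbb{Z}^{\times n} \to \mathbb{Z}^{\times n}$, $(r,\vec n) \mapsto \vec n$.

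Third, it remains to verify that this levelwise projection is a bona fide morphism of simplicial presheaves, i.e.\ respects face and degeneracy maps. This is a direct computation: the action-groupoid face maps depend on the $\mathbb{R}$-coordinate only through translations by $\mathbb{Z}$ (in $d_0$), and these translations become identities on $\mathbb{Z}^{\times \bullet}$ after discarding the $\mathbb{R}$-factor, so the projection is compatible with the standard simplicial structure of $\overline{W}\mathbb{Z}$.

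The main obstacle will be making rigorous the model-categorical step whereby the levelwise computation of $\shape$ really presents the derived unit of the cohesion adjunction. This rests on $\shape$ being left Quillen with respect to the local projective model structure together with cofibrancy of our chosen resolution (Lemma \ref{CofibrantOfCircleGroup}); the target $\mathbb{Z}^{\times^\bullet}$ is already $\shape$-local (being in the image of $\Discrete$), so no further fibrant replacement is needed on the codomain, and the projection is indeed the homotopically correct component of $\eta^{\scalebox{.6}{$\shape$}}_{\CohesiveCircle}$ in the sense of \cite{SS21EPB}.
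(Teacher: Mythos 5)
Your argument is correct and follows essentially the same route as the paper's proof: take the cofibrant resolution $\mathbb{R}\times\mathbb{Z}^{\times^\bullet}$ from Lemma \ref{CofibrantOfCircleGroup}, compute shape as the (left derived) colimit functor of Prop. \ref{QuillenFunctorForShapeModalityOnSmoothInfinityGroupoids}, which contracts the Cartesian factors degreewise to give $\mathbb{Z}^{\times^\bullet}$, and observe that since this output is already fibrant the point-set adjunction unit $(r,\vec n)\mapsto\vec n$ is the derived shape unit. Only a small precision: what licenses skipping fibrant replacement is not merely that the codomain lies in the image of $\Discrete$, but that $\mathbb{Z}^{\times^\bullet}=\overline{W}\mathbb{Z}$ is a Kan complex and $\mathrm{const}$ preserves fibrant objects (the content of the cited Quillen adjunction), which is exactly how the paper phrases it.
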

\begin{proof}
  By Prop. \ref{CofibrantOfCircleGroup}, the simplicial
  presheaf $\mathbb{R} \times \mathbb{Z}^{\times^\bullet}$
  is a cofibrant resolution of the circle,
  and by Prop. \ref{QuillenFunctorForShapeModalityOnSmoothInfinityGroupoids}
  its image under shape is given by
  \vspace{-2mm}
  $$
    \underset{\longrightarrow}{\mathrm{lim}}
    (\mathbb{R} \times \mathbb{Z}^{\times^\bullet})
    \;\simeq\;
    \mathbb{Z}^{\times^\bullet}
      \,\in\,
    (\SimplicialSets_{\mathrm{Qu}})_{\mathrm{fib}}
   \;   \xrightarrow{\;\;\mathrm{const}\;\;} \;
    \big(
      \SimplicialPresheaves(\CartesianSpaces)_{ {\mathrm{proj}} \atop {\mathrm{loc}} }
    \big)_{\mathrm{fib}}
    \,.
  $$

\vspace{-2mm}
\noindent
  This being fibrant means that the
  $(\underset{\longrightarrow}{\mathrm{lim}} \dashv \mathrm{const})$-adjunction unit,
  which is as shown in \eqref{PresentationOfShapeAdjunctionUnitOnSmoothCircle},
  is already the derived unit.
\end{proof}

\begin{lemma}[Canonical $\shape \CohesiveCircle$-action on inertia orbifold]
  \label{CanonicalShapeS1ActionOnInertiaOrbifold}
  The canonical
  action (by Prop. \ref{InducedSimplicialActions})
  of
  $\shape \CohesiveCircle \,\simeq\, \mathbf{B}\mathbb{Z}$ on
  any inertia stack \eqref{InertiaStackToSmoothLooStackAsShapeUnitPullback}
  --
  in particular on the inertia orbifold \eqref{SkeletonOfInertiaStackOfGoodOrbifold}
  of a good orbifold $\mathcal{X} \,\simeq\, X \!\sslash\! G$
  --
  is presented (under Prop. \ref{QuillenEquivalenceBetweenBorelModelStructureAndSliceOverClassifyingComplex})
  by the following simplicial action of
  $\mathbf{B}\mathbb{Z} \,\simeq\, \mathbb{Z}^{\times^\bullet}$:
  \vspace{-3mm}
  \begin{equation}
    \label{FormulaForCanonicalBZActionOnInertia}
    \hspace{-3mm}
    \begin{tikzcd}[row sep=-5pt]
    \Maps{\big}
    {
      \mathbf{B}\mathbb{Z} \times \Delta[\bullet]
    }
    {
      \mathcal{X}
    }
    \times
    \mathbb{Z}^{\times^\bullet}
    \ar[rr]
    &&
    \Maps{\big}
    {
      \mathbf{B}\mathbb{Z} \times \Delta[\bullet]
    }
    {
      \mathcal{X}
    }
    \\
  \scalebox{0.8}{$  \Big(
      \big( x, (h_k, \cdots, h_2, h_1) \big),
      \,
      ( n_k, \cdots, n_2, n_1 )
    \Big) $}
    &\longmapsto&
  \scalebox{0.8}{$    \big( x, (g^{n_k} \cdot h_k, \cdots, g^{n_2} \cdot h_2, g^{n_1} \cdot h_1) \big)
  $}.
    \end{tikzcd}
  \end{equation}
\end{lemma}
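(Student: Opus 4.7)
The plan is to assemble the claim from three ingredients: (i)~the general description of the canonical action of a simplicial group on a mapping complex out of itself (Prop.~\ref{InducedSimplicialActions}), (ii)~the explicit simplicial model $\shape\CohesiveCircle \simeq \mathbf{B}\mathbb{Z} \simeq \mathbb{Z}^{\times^\bullet}$ from Lem.~\ref{ShapeUnitOfTheSmoothCircle}, and (iii)~the concrete decomposition \eqref{SkeletonOfInertiaStackOfGoodOrbifold} of the inertia orbifold into components indexed by conjugacy classes $[g]$.

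First I would unpack (i): for a group object $\mathcal{T}$ in $\SimplicialPresheaves(\CartesianSpaces)$ and any $\mathcal{X}$, the canonical $\mathcal{T}$-action on $[\mathcal{T},\mathcal{X}]$ sitting in \eqref{CyclicicationAsRightDerivedBaseChange} is modeled, at the level of simplicial presheaves, by precomposition with the (right) regular action $\mathcal{T} \times \mathcal{T} \to \mathcal{T}$. Specializing to the simplicial group $\mathbb{Z}^{\times^\bullet}$, which is the nerve of the abelian group $\mathbb{Z}$, this regular action is levelwise just coordinatewise addition $\mathbb{Z}^k \times \mathbb{Z}^k \to \mathbb{Z}^k$, $\big((h_k,\ldots,h_1),(n_k,\ldots,n_1)\big) \mapsto (h_k + n_k,\ldots,h_1+n_1)$.

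Next I would spell out what this precomposition does on a $k$-simplex $\phi \in [\mathbf{B}\mathbb{Z} \times \Delta[k], X \!\sslash\! G]$. Since $(X \!\sslash\! G)_n = X \times G^n$ and $(\mathbf{B}\mathbb{Z})_n = \mathbb{Z}^n$, the map $\phi$ is determined by its restriction $\phi|_{\mathbf{B}\mathbb{Z} \times \{*\}} \colon \mathbf{B}\mathbb{Z} \to X \!\sslash\! G$ together with the higher simplicial coherences in $\Delta[k]$-direction, and its value on the top simplex $\big((h_k,\ldots,h_1),\mathrm{id}_{\Delta[k]}\big)$ gives the element of the form $(x,(\widetilde{h}_k,\ldots,\widetilde{h}_1))$ displayed in \eqref{FormulaForCanonicalBZActionOnInertia}. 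Under right translation by $(n_k,\ldots,n_1)$, each $h_i$ is replaced by $h_i + n_i$; by the simplicial group-homomorphism condition in the nerve of $\mathbb{Z}$ into the nerve of $X\!\sslash\! G$, this sends $\widetilde{h}_i = \phi(h_i)$ to $\phi(h_i + n_i) = \phi(n_i)\cdot \widetilde{h}_i$.

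To conclude I would identify $\phi(n_i)$: the good-orbifold decomposition \eqref{SkeletonOfInertiaStackOfGoodOrbifold} shows that on the component indexed by $[g]$, every simplicial map $\mathbf{B}\mathbb{Z} \to X \!\sslash\! G$ sends $1 \in \mathbb{Z}$ to $g \in G$, so $\phi(n_i) = g^{n_i}$. Substituting this into the previous step gives exactly the claimed formula $(h_k,\ldots,h_1)\mapsto (g^{n_k}\cdot h_k,\ldots,g^{n_1}\cdot h_1)$, and by Prop.~\ref{QuillenEquivalenceBetweenBorelModelStructureAndSliceOverClassifyingComplex} this presents the corresponding homotopy-meaningful $\shape \CohesiveCircle$-action in $\SmoothInfinityGroupoids$.

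The main obstacle is the bookkeeping of simplicial levels, in particular verifying that the formula at level $k$ is compatible with the face and degeneracy maps and that the identification of $(x,(h_k,\ldots,h_1))$ with a concrete $\phi$ is functorial in the simplicial index, so that the levelwise formula really assembles into a simplicial action. A secondary (but routine) point is checking the left-vs.-right and order convention, namely that the regular right action on $\mathbf{B}\mathbb{Z}$ induced by precomposition in \eqref{CyclicicationAsRightDerivedBaseChange} yields left multiplication by $g^{n_i}$ on the target simplex (as opposed to right multiplication), which follows from the contravariance of the mapping-complex construction in its first argument.
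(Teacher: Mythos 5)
Your overall route is the same as the paper's: realize the canonical action via the cofree-action formula \eqref{CofreeSimplicialActionInComponents} of Prop.~\ref{InducedSimplicialActions} (precomposition with translation on $\mathbb{Z}^{\times^\bullet}$, routed through the diagonal on $\Delta[k]$), evaluate on explicit cells of $\mathbf{B}\mathbb{Z}\times\Delta[k]$, and then use the skeletal decomposition of the inertia orbifold to identify the factor that appears as $g^{n_i}$; the paper does exactly this, for $k=1$, by chasing the non-degenerate cells of $\mathbf{B}\mathbb{Z}\times\Delta[1]$ (Ex.~\ref{NondegenerateSimplicesInSquare}) through \eqref{CofreeSimplicialActionInComponents}.

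Two points in your middle step are exactly where that chase does real work and where your write-up, taken literally, does not parse. First, the induced action on the mapping complex is \emph{not} ``add the same tuple $(n_k,\ldots,n_1)$ to the $\mathbf{B}\mathbb{Z}$-coordinate of every cell'': by \eqref{CofreeSimplicialActionInComponents} one adds the image of the $\Delta[k]$-coordinate under the classifying map $\Delta[k]\to\mathbf{B}\mathbb{Z}$ of $(n_k,\ldots,n_1)$. This is what guarantees that the vertex loops of $\phi$ (hence the monodromy datum $g$ and the component $[g]$) are unchanged while only the edges in the $\Delta[k]$-direction are shifted, which is the whole content of the asserted formula. Second, the step ``$\widetilde h_i=\phi(h_i)$ and $\phi(h_i+n_i)=\phi(n_i)\cdot\widetilde h_i$'' conflates the integers labelling a test cell with the $G$-valued transformation components $h_i$ of the simplex: $\phi$ is a functor into the groupoid $X\!\sslash\! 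G$, not a group homomorphism, and a map $\mathbf{B}\mathbb{Z}\to X\!\sslash\! G$ lying in the component $[g]$ sends $1$ only to a \emph{conjugate} of $g$ in general. What one actually needs is that the skeletal coordinates of the acted simplex are $\phi\big((n_k,\ldots,n_1),\iota_k\big)$, and that, by functoriality applied to the factorization $(n_i,[i-1,i])=(n_i,[i,i])\circ(0,[i-1,i])$ in $\mathbf{B}\mathbb{Z}\times[k]$, its $i$-th component is the composite of the transformation edge $h_i$ with the $n_i$-th power of the adjacent vertex loop; it is the skeletal/centralizer normalization of \eqref{SkeletonOfInertiaStackOfGoodOrbifold} (all vertex loops equal to the chosen representative $g$, with $h_i\in C_g$) that turns this into exactly $g^{n_i}\cdot h_i$, as in \eqref{FormulaForCanonicalBZActionOnInertia}. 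With those two repairs your argument coincides with the paper's proof; the left/right issue you flag is harmless since $\mathbb{Z}$ is abelian.
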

\begin{proof}
We show the case $k = 1$; the general case is directly analogous.
Here we unwind formula \eqref{CofreeSimplicialActionInComponents} as follows:
\vspace{-2mm}
$$
  \hspace{-2mm}
  \begin{tikzcd}[row sep=-12pt, column sep=32pt]
    \mathbf{B}\mathbb{Z}
    \times
    \Delta[1]
    \ar[
      r,
      "\mathrm{id} \times \mathrm{diag}"
    ]
    &
    \mathbf{B}\mathbb{Z}
    \times
    \Delta[1]
    \times
    \Delta[1]
    \ar[
      r,
      "\mathrm{id}
         \times
       {\color{purple}n}
         \times
       \mathrm{id}"
    ]
    &
    \mathbf{B}\mathbb{Z}
    \times
    \mathbf{B}\mathbb{Z}
    \times
    \Delta[1]
    \ar[
      r,
      "+ \times \mathrm{id}"
    ]
    &
    \mathbf{B}\mathbb{Z}
    \times
    \Delta[1]
    \ar[
      rrr,
      "{
        \raisebox{3pt}{$\overset{\scalebox{.7}{$$}}{\scalebox{.7}{$1$}}$}
        \overset{\scalebox{.6}{$[0,1]$}}{\scalebox{1.3}{$\Box$}}
        \;\;\raisebox{2pt}{\scalebox{.7}{$\mapsto$}}\;\;
        \raisebox{3pt}{$\overset{\scalebox{.7}{$x$}}{\scalebox{.7}{$g$}}$}
        \overset{\scalebox{.7}{$h$}}{\scalebox{1.3}{$\Box$}}
      }"
    ]
    &&&
    X \!\sslash\! G
    \\
    {\phantom{ A \atop A }}
    \\
 \scalebox{.7}{$
      \big(
        (0,1),
        \,
        [0,1,1]
      \big)
    $}
    \ar[
      r,
      phantom,
      "\mapsto"
    ]
    &
    \scalebox{.7}{$
      \big(
        (0,1),
        \,
        [0,1,1]
        \,
        [0,1,1]
      \big)
    $}
    \ar[
      r,
      phantom,
      "\mapsto"
    ]
    &
    \scalebox{.7}{$
      \big(
        (0,1),
        \,
        ({\color{purple}n},0),
        \,
        [0,1,1]
      \big)
    $}
    \ar[
      r,
      phantom,
      "\mapsto"
    ]
    &
    \scalebox{.7}{$
      \big(
        ({\color{purple}n},1),
        \,
        [0,1,1]
      \big)
    $}
    \ar[
      drrr,
      phantom,
      "\longmapsto"{sloped}
    ]
    &&&
    \\
    &&&
    &&&
    \raisebox{3pt}{$\overset{\scalebox{.7}{$x$}}{\scalebox{.7}{$g$}}$}
    \overset{\scalebox{.7}{$\;\,g^{\color{purple}n} \!\cdot\! h$}}{\scalebox{1.3}{$\Box$}}
    \\
    \scalebox{.7}{$ \big(
      (1,0),
      \,
      [0,0,1]
    \big)
    $}
    \ar[
      r,
      phantom,
      "\mapsto"
    ]
    &
    \scalebox{.7}{$
    \big(
      (1,0),
      \,
      [0,0,1]
      \,
      [0,0,1]
    \big)
    $}
    \ar[
      r,
      phantom,
      "\mapsto"
    ]
    &
    \scalebox{.7}{$
    \big(
      (1,0),
      \,
      (0,{\color{purple} n}),
      \,
      [0,0,1]
    \big)
    $}
    \ar[
      r,
      phantom,
      "\mapsto"
    ]
    &
    \scalebox{.7}{$
    \big(
      (1,{\color{purple}n}),
      \,
      [0,0,1]
    \big)
    $}
    \ar[
      urrr,
      phantom,
      "\longmapsto"{sloped}
    ]
  \end{tikzcd}
$$
$$
\begin{tikzcd}[column sep=60pt, row sep=60pt]
  \big(
    \bullet
    ,
    [0]
  \big)
  \ar[
    r,
    "{
      (
        0
        ,
        [0,1]
      )
    }"{above}
  ]
  \ar[
    d,
    "{
      (
        1
        ,
        [0,0]
      )
    }"{description}
  ]
  \ar[
    dr,
    "{
      (
        1
        ,\,
        [0,1]
      )
    }"{description, sloped}
  ]
  &
  \big(
    \bullet
    ,
    [1]
  \big)
  \ar[
    d,
    "{
      (
        1
        ,
        [1,1]
      )
    }"{description}
  ]
  \ar[
    dl,
    phantom,
    "{
      (
        (1,0)
        ,
        [0,0,1]
      )
    }"{near end, scale=.7, yshift=-7pt, xshift=-4pt}
  ]
  \ar[
    dl,
    phantom,
    "{
      (
        (0,1)
        ,
        [0,1,1]
      )
    }"{near start, scale=.7, yshift=+7pt, xshift=+4pt}
  ]
  \\
  \big(
    \bullet
    ,
    [0]
  \big)
  \ar[
    r,
    "{
      (
        0
        ,
        [0,1]
      )
    }"{below}
  ]
  &
  \big(
    \bullet
    ,
    [1]
  \big)
\end{tikzcd}
\qquad \quad \;\;\;\;\;\;\;\;\;\;\;\;\;
\longmapsto
\qquad \quad \;\;\;\;\;\;\;\;\;\;\;\;\;
\begin{tikzcd}[column sep=60pt, row sep=60pt]
  \big(
    \bullet
    ,
    [0]
  \big)
  \ar[
    r,
    "{
      (
        {\color{purple}n}
        ,
        [0,1]
      )
    }"{above}
  ]
  \ar[
    d,
    "{
      (
        1
        ,
        [0,0]
      )
    }"{description}
  ]
  \ar[
    dr,
    "{
      (
        1+{\color{purple}n}
        ,\,
        [0,1]
      )
    }"{description, sloped}
  ]
  &
  \big(
    \bullet
    ,
    [1]
  \big)
  \ar[
    d,
    "{
      (
        1
        ,
        [1,1]
      )
    }"{description}
  ]
  \ar[
    dl,
    phantom,
    "{
      (
        (1,{\color{purple}n})
        ,
        [0,0,1]
      )
    }"{near end, scale=.7, yshift=-7pt, xshift=-4pt}
  ]
  \ar[
    dl,
    phantom,
    "{
      (
        ({\color{purple}n},1)
        ,
        [0,1,1]
      )
    }"{near start, scale=.7, yshift=+7pt, xshift=+4pt}
  ]
  \\
  \big(
    \bullet
    ,
    [0]
  \big)
  \ar[
    r,
    "{
      (
        {\color{purple}n}
        ,
        [0,1]
      )
    }"{below}
  ]
  &
  \big(
    \bullet
    ,
    [1]
  \big)
\end{tikzcd}
\;
\longmapsto
\;
\begin{tikzcd}[column sep=20pt]
  x
  \ar[
    d,
    "g"{left}
  ]
  \ar[
    r,
    "g^{\color{purple}n} \cdot h"{above}
  ]
  &
  h \cdot x
  \ar[
    d,
    "{
      g'
     }"{right}
  ]
  \\
  x
  \ar[
    r,
    "{
      g^{\color{purple}n}
        \cdot
      h
    }"{swap}
  ]
  &
  h \cdot x
\end{tikzcd}
$$
On the left above we show the non-degenerate 2-cell in the Cartesian product (Ex. \ref{NondegenerateSimplicesInSquare})
whose image will pick up the generating data in the 1-cell of the hom-complex.
Chasing this 2-cell through the formula \eqref{CofreeSimplicialActionInComponents}
yields the result, as shown.
\end{proof}

\section{Transgression as cyclification}
\label{TransgressionViaCyclification}
\
A simple but archetypical special case of orbifold cohomology is {\it group cohomology}, which may be understood as given by
homotopy classes of maps of $\infty$-groupoids of the form $\mathbf{B}G \xrightarrow{\;} \mathbf{B}^n A$ (Ex. \ref{IncarnationsOfGroupCohomology}).
Here we discuss the technical detail of how cyclification of orbifolds leads to
transgression in group cohomology, by proving the equivalence expressed by the top horizontal arrow in the diagram \eqref{TransgressionViaCyclification}.

\medskip

The following component Definition \ref{TransgressionFormulaForGroupCocycles} is (up to an irrelevant global sign which we omit)
due to \cite[Def. 4.1]{AdemRuanZhang07}, and maybe independently due to \cite[\S 1.3.3]{Willerton08} where it is motivated by a geometric picture similar to that made precise by Thm. \ref{TransgressionInGroupCohomologyViaLooping} below. Notice that it is
tedious (albeit straightforward) to explicitly check that the component formula \eqref{TransgressionFormula} really makes sense,
in that it satisfies the cocycle condition; but this is implied by our more abstract characterization in Thm. \ref{TransgressionInGroupCohomologyViaLooping} below.

\begin{definition}[Transgression formula for discrete group cocycles]  \label{TransgressionFormulaForGroupCocycles}
  For $A \,\in\, \AbelianGroups$,
  $n \in \mathbb{N}$ and $c : G^{\times_{n+1}} \xrightarrow{\;} A$
  a group cocycle $[c] \,\in\, H^{n+1}_{\mathrm{grp}}(G;\, A)$,
  obtain an $n$-cocycle on the inertia groupoid $\Lambda \mathbf{B} G$
  (Ntn. \ref{EvaluationMapOnNerveOfInertiaGroupoid})
  \vspace{-2mm}
  $$
    [\mathrm{tr}(c)]
    \,\in\,
    H^n\big( \Lambda \mathbf{B}G;\, A\big)
  $$

  \vspace{-3mm}
  \noindent
  by setting:
  \begin{equation}
    \label{TransgressionFormula}
    \def\arraystretch{1.7}
    \begin{array}{l}
    \mathrm{tr}(c)
    \Big(
      \gamma
      \xrightarrow{g_{n-1}}
      \mathrm{Ad}_{n-1}(\gamma)
      \xrightarrow{g_{n-2}}
      \cdots
      \xrightarrow{g_{1}}
      \mathrm{Ad}_{1}(\gamma)
      \xrightarrow{g_0}
      \mathrm{Ad}_0(\gamma)
    \Big)
    \\
    \;\coloneqq\;
    \underset{
      0 \leq j \leq n
    }{
      \scalebox{1.3}{$\sum$}
    }
    (-1)^{j}
    \cdot
    c\Big(
      g_{n-1},
      \cdots,
      g_{n-j},
      \mathrm{Ad}_j(\gamma),
      g_{n-j-1},
      \cdots,
      g_0
    \Big)
    \,,
    \end{array}
  \end{equation}

  \vspace{-2mm}
  \noindent
  where we abbreviate
  \begin{equation}
     \label{AbbreviationForAdjointAction}
      \mathrm{Ad}_j(\gamma)
      \;\coloneqq\;
      \mathrm{Ad}_{(g_{n-1}\cdots g_j)}(\gamma)
      \;\coloneqq\;
      (g_{n-1}\cdots g_0)^{-1}
      \cdot \gamma \cdot
      (g_{n-1} \cdots g_0)
      \,.
  \end{equation}
\end{definition}
\begin{remark}[Group cohomology]
  Since the inertia groupoid of the delooping of a finite group decomposes as a disjoint union
  of delooped centralizer groups $C_g$
  indexed over the conjugacy classes $[g] \,\in\, G /_{\mathrm{ad}} G$
  \begin{equation}    \label{DecompositionOfInertiaGroupoid}
    \Lambda
    \mathbf{B}G
    \;\;\;
      \simeq
    \quad
    \underset{
      \mathclap{
        [g]
        \,\in\,
        G_{\mathrm{ad}}/G
      }
    }{\coprod}
    \quad
    \mathbf{B}C_g
    \;\;\;\;
    \simeq
    \;\;\;\;
    \mathbf{B}G
    \;\;\sqcup\;\;\;
    \underset{
      \mathclap{
        [g]
        \,\neq\,
        [\mathrm{e}]
      }
    }{\coprod}
    \;\;
    \mathbf{B}C_g
    \,,
  \end{equation}
  the formula \eqref{TransgressionFormula} restricts to transgression maps to the group cohomology of all these groups,
  in particular to $C_{\mathrm{e}} = G$ itself -- as highlighted on the right of \eqref{DecompositionOfInertiaGroupoid}.
  Often only this leading component of the full transgression map is considered (e.g., implicitly so in \cite[p. 14]{DijkgraafWitten90}).
\end{remark}

We now prove (Thm. \ref{TransgressionInGroupCohomologyViaLooping}) below, that the transgression formula \eqref{TransgressionFormula}
abstractly arises from looping/cyclification according to \eqref{TransgressionViaCyclification}.
(Something similar is alluded to in \cite[\S 1.3.3]{Willerton08}.)
First, we record the following Lemma \ref{EvaluationMapOnNerveOfInertiaGroupoid} in simplicial homotopy theory, which is elementary
but requires some care; in stating this we make heavy use of constructions and facts in simplicial homotopy theory that are
recalled/introduced in \cref{SomeSimplicialHomotopyTheory}:

\begin{lemma}[Evaluation map on nerve of inertia groupoid]
\label{EvaluationMapOnNerveOfInertiaGroupoid}
On non-degenerate cells (cf. Prop. \ref{NonDegenerateSimpliciesInProductOfSimplicies}), the evaluation map \eqref{EvaluationMap}
on the inertia function complex (Ntn. \ref{NerveOfInertiaGroupoids}) is given by
\vspace{-2mm}
\begin{equation}
  \begin{tikzcd}[
    row sep=-4pt,
    column sep=8pt
  ]
  \big(
    S^1_{\mathrm{min}}
  \big)_{n+1}
  \times
  \mathrm{Hom}
  \big(
    S^1_{\mathrm{min}}
      \times
    \Delta[n+1]
    ,\,
    \overline{W}G
  \big)
  \ar[
    rr,
    "{ \mathrm{ev} }"
  ]
  &&
  (
    \overline{W}G
  )_{n+1}
  \\
\scalebox{0.7}{$  \Big(
    s_{(0,\cdots, \widehat{k}, \cdots, n)}
    \ell
    ,\,
    s_k
  \big(
    \gamma
    ;\,
    g_{n-1}
    ,\,
    \cdots
    ,\,
    g_{0}
  \big)
  \Big)
  $}
  &\longmapsto&
\scalebox{0.7}{$  \big(
    g_{n-1}
    ,\,
    \cdots,
    g_{n-k}
    ,\,
    \mathrm{Ad}_k(\gamma)
    ,\,
    g_{n - (k+1)}
    ,\,
    \cdots
    ,\,
    g_0
  \big)
  $}
  \mathrlap{\,.}
  \end{tikzcd}
\end{equation}
\end{lemma}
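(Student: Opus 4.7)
The plan is to unpack both sides of the evaluation map explicitly and then track a representative shuffle through the product-simplex structure. As a preliminary step, I would record (what is implicit in the very statement) the non-degenerate simplices of the minimal simplicial model of the circle: since $S^1_{\mathrm{min}}$ has a single non-degenerate $0$-simplex and a single non-degenerate $1$-simplex $\ell$, the Eilenberg--Zilber lemma forces every non-degenerate simplex in degree $n+1 \geq 1$ to be of the form $s_{(0,\ldots,\widehat{k},\ldots,n)}\ell$ for a unique $0 \leq k \leq n$. This produces the $n+1$ cells indexing the fibres of the map.

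Next, I would identify the data classifying a map $\phi : S^1_{\mathrm{min}} \times \Delta[n+1] \to \overline{W}G$. By the standard description of the nerve $\overline{W}G$, and invoking Prop.~\ref{NonDegenerateSimpliciesInProductOfSimplicies} on non-degenerate cells of $\Delta[1] \times \Delta[n+1]$, such a $\phi$ is determined by an $n$-cell $(\gamma;\, g_{n-1},\ldots,g_0)$ of the inertia function complex, where $\gamma \in G$ is the holonomy assigned to $\ell$ over the initial vertex of $\Delta[n]$ and $g_{n-1},\ldots,g_0$ are the composable edges of $\Delta[n]$; the naturality (functoriality of $\phi$ under the face maps into the vertices of $\Delta[n]$) forces the holonomy assigned to $\ell$ over the $j$-th vertex to be the conjugate $\mathrm{Ad}_j(\gamma)$ of \eqref{AbbreviationForAdjointAction}.

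With these two identifications in hand, I would spell out the evaluation map: on a non-degenerate $(n+1)$-cell of the product indexed by $k$ (geometrically: the shuffle that walks up the first $k$ edges of $\Delta[n+1]$, then crosses $\ell$, then walks down the remaining edges), the defining formula \eqref{EvaluationMap} pairs $s_{(0,\ldots,\widehat{k},\ldots,n)}\ell$ with $s_k(\gamma;\, g_{n-1},\ldots,g_0)$. Using the standard face/degeneracy formulas of $\overline{W}G$, this composite is the word in $G$ obtained by concatenating the edges $g_{n-1},\ldots,g_{n-k}$, then the crossing edge, which carries the loop over the $k$-th vertex, namely $\mathrm{Ad}_k(\gamma)$, then the remaining edges $g_{n-k-1},\ldots,g_0$. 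This yields exactly the tuple on the right-hand side of the claim.

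The main obstacle is the bookkeeping of the conjugation: the position of the crossing of $\ell$ in the shuffle, namely after $g_{n-1},\ldots,g_{n-k}$, places the loop over the $k$-th vertex of $\Delta[n]$, so the holonomy inserted is $\mathrm{Ad}_{(g_{n-1}\cdots g_{n-k})}(\gamma) = \mathrm{Ad}_k(\gamma)$ in the notation of \eqref{AbbreviationForAdjointAction}, and the index offset between the shuffle label $k$ and the insertion slot $n-k$ in the output tuple must be reconciled carefully. Once this is verified on a single shuffle, the formula follows on all non-degenerate cells simultaneously, and extends to all cells by the simplicial identities.
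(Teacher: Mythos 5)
Your argument is correct and is essentially the paper's own proof: the paper likewise computes $\mathrm{ev}$ via the component formula \eqref{EvaluationMapOnSimplicialMappingComplexes}, pairing the shuffle cell whose $\ell$-crossing occurs at step $k$ with $\iota_{n+1}$ and chasing it through the explicit description of the mapping-complex cell $(\gamma;g_{n-1},\ldots,g_0)$ from Ntn.~\ref{NerveOfInertiaGroupoids}, where naturality places the conjugate $\mathrm{Ad}_k(\gamma)$ (conjugation by the composite $g_{n-1}\cdots g_{n-k}$ of the first $k$ edges) over the $k$-th vertex, yielding the word $(g_{n-1},\ldots,g_{n-k},\mathrm{Ad}_k(\gamma),g_{n-(k+1)},\ldots,g_0)$ in $\overline{W}G$. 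One small correction of terminology: the cells $s_{(0,\ldots,\widehat{k},\ldots,n)}\ell$ are \emph{degenerate} in $S^1_{\mathrm{min}}$ itself (its only non-degenerate cells are the vertex and $\ell$); the non-degeneracy in question is that of the corresponding cells of the \emph{product}, given by the $(1,n)$-shuffles of Prop.~\ref{NonDegenerateSimpliciesInProductOfSimplicies}, which is what your argument in fact uses.
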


\vspace{-2mm}
\noindent
Here, on the left, $\ell \,\in\,\big(S^1_{\mathrm{min}}\big)$ denotes the non-degenerate cell in the minimal simplicial
circle (Ntn. \ref{MinimalSimplicialCircle}) and $(\gamma; g_{n-1}, \cdots, g_0)$ (Ntn. \ref{NerveOfInertiaGroupoids}) denotes
a sequence of $n$ composable morphisms in the inertia groupoid.
\begin{proof}
Using formula \eqref{EvaluationMapOnSimplicialMappingComplexes} for the components of the evaluation map, and then unwinding
as in Ntn. \ref{NerveOfInertiaGroupoids}:

\vspace{.1cm}
\hspace{-.9cm}
\def\arraystretch{2}
\tabcolsep=-5pt
\begin{tabular}{cc}
\multirow{2}{*}{
$
  \begin{tikzcd}
    \\[-30pt]
    \big(
      s_{ (1, \cdots, \widehat{k}, \cdots, n+1) }\ell
      ,\
      \iota_{n+1}
    \big)
    \ar[
      d,
      phantom,
      "{ \in }"{rotate=-90}
    ]
    \\[-10pt]
    \big(
      S
        \,\times\,
      \Delta[n+1]
    \big)_{\mathrlap{n+1}}
    \ar[
      d,
      "{
        \def\arraystretch{1}
        \begin{array}{l}
        \scalebox{0.7}{$        s_k
        \big(
          \gamma
          ;\;
          g_{n-1}
          ,\,
          \cdots
          ,\,
          g_0
        \big)
        \,=\,
        $}
        \\
     \scalebox{0.7}{$   \big(
          \gamma
          ;\;
          g_{n-1}
          ,\,
          \cdots
          ,\,
          g_{n-k}
          ,\,
          \mathrm{e}
          ,\,
          g_{n-(k+1)}
          ,\,
          \cdots
          ,\,
          g_0
        \big)
        $}
        \end{array}
      }"{description}
    ]
    \\[+48pt]
    \big(
      \overline{W}G
    \big)_{\mathrlap{n+1}}
  \end{tikzcd}
$
}
&
\quad
$
  \begin{tikzcd}[
    column sep=35pt,
    row sep=25pt
  ]
    (
     \ast
     ,\,
      0
   )
    \ar[
      r,
      "{
        (
          \mathrm{id}_\ast
          ,\,
          [0,1]
        )
      }"{yshift=1pt}
    ]
    \ar[
      r,
      -,
      line width=5pt,
      blue,
      opacity=.2
    ]
    \ar[
      d,
      "{
        (
          \ell
          ,\,
          \mathrm{id}_0
        )
      }"{description}
    ]
    &
    (
      \ast
      ,\,
      1
   )
    \ar[
      d,
      "{
        (
          \mathrm{id}_\ast
          ,\,
          [0,1]
        )
      }"{description}
    ]
    \ar[
      r,
      "{
        (
          \mathrm{id}_\ast
          ,\,
          [1,2]
        )
      }"{yshift=1pt}
    ]
    \ar[
      r,
      -,
      line width=5pt,
      blue,
      opacity=.2
    ]
    \ar[
      d,
      "{
        (
          \mathrm{id}_\ast
          ,\,
          [0,1]
        )
      }"{description}
    ]
    \ar[r]
    &
    \cdots
    \ar[r]
    \ar[
      r,
      -,
      line width=5pt,
      blue,
      opacity=.2
    ]
    &[-10pt]
    (
     \ast
     ,
      k
   )
   \ar[
     d,
     "{
       (\ell,\, \mathrm{id}_k)
     }"{description}
   ]
   \ar[
     r,
     "{ \mathrm{id}_{(\ast,k)} }"
   ]
   \ar[
     dr
   ]
    \ar[
      dr,
      -,
      line width=5pt,
      blue,
      opacity=.2
    ]
   &
   (
     \ast
     ,
     k
   )
    \ar[
      d,
      "{
        (
          \ell
          ,\,
          \mathrm{id}_k
        )
      }"{description}
    ]
    \ar[r]
    &[-10pt]
    \cdots
    \ar[
      r,
      "{
        (
          \mathrm{id}_\ast
          ,\,
          [n-1,n]
        )
      }"{yshift=1pt}
    ]
    \ar[
      d,
      phantom,
      "{ \cdots }"
    ]
    &
    (
     \ast
     ,
      n
   )
    \ar[
      d,
      "{
        (
          \ell
          ,\,
          \mathrm{id}_n
        )
      }"{description}
    ]
    \\
    (
     \ast,
      0
    )
    \ar[
      r,
      "{
        (
          \mathrm{id}_\ast
          ,\,
          [0,1]
        )
      }"{swap, yshift=-1pt}
    ]
    &
    (
     \ast,
      1
   )
    \ar[
      r,
      "{
        (
          \mathrm{id}_\ast
          ,\,
          [1,2]
        )
      }"{swap, yshift=-1pt}
    ]
    &
    \cdots
    \ar[r]
    \ar[
      r
    ]
    &
    (
     \ast,
      k
    )
    \ar[
      r,
      "{ \mathrm{id}_{(\ast,k)} }"{swap}
    ]
    &
    (
      \ast
      ,
      k
    )
    \ar[r]
    \ar[
      r,
      -,
      line width=5pt,
      blue,
      opacity=.2
    ]
    &
    \cdots
    \ar[
      r,
      "{
        (
          \mathrm{id}_\ast
          ,\,
          [n-1,n]
        )
      }"{swap, yshift=-1pt}
    ]
    \ar[
      r,
      -,
      line width=5pt,
      blue,
      opacity=.2
    ]
    &
    (
     \ast,
     n
   )
  \end{tikzcd}
$
\\[-10pt]
& \rotatebox{-90}{$\mapsto$}
\\[+10pt]
&
\hspace{3pt}
$
  \begin{tikzcd}[
    column sep=50pt,
    row sep=35pt
  ]
    \bullet
    \ar[
      r,
      "{
        g_{n-1}
      }"{}
    ]
    \ar[
      r,
      -,
      line width=5pt,
      blue,
      opacity=.2
    ]
    \ar[
      d,
      "{ \gamma }"{description}
    ]
    &
    \bullet
    \ar[
      r,
      "{
        g_{n-2}
      }"
    ]
    \ar[
      r,
      -,
      line width=5pt,
      blue,
      opacity=.2
    ]
    \ar[
      d,
      "{ \mathrm{Ad}_1(\gamma) }"{description}
    ]
    &
    \cdots
    \ar[
      r,
      "{
       g_{n-k}
      }"
    ]
    \ar[
      r,
      -,
      line width=5pt,
      blue,
      opacity=.2
    ]
    &[-20pt]
    \bullet
    \ar[
      d,
      "{
        \mathrm{Ad}_k(\gamma)
      }"{description}
    ]
    \ar[
      dr,
      "{ \mathrm{Ad}_k(\gamma) }"{
        sloped, description}
    ]
    \ar[
      dr,
      -,
      line width=5pt,
      blue,
      opacity=.2
    ]
    \ar[
      r,
      "{ \mathrm{e} }"
    ]
    &[-10pt]
    \bullet
    \ar[
      r,
      "{ g_{n - (k+1)} }"
    ]
    \ar[
      d,
      "{
        \mathrm{Ad}_k(\gamma)
      }"{description}
    ]
    &[-20pt]
    \cdots
    \ar[r]
    \ar[
      d,
      phantom,
      "{ \cdots }"
    ]
    \ar[
      r,
      "{
        g_{0}
      }"
    ]
    &
    \bullet
    \ar[
      d,
      "{ \mathrm{Ad}_n(\gamma) }"{description}
    ]
    \\
    \bullet
    \ar[
      r,
      "{
        g_{n-1}
      }"{swap}
    ]
    &
    \bullet
    \ar[
      r,
      "{
        g_{n-2}
      }"{swap}
    ]
    &
    \cdots
    \ar[
      r,
      "{ g_{n-k} }"{swap}
    ]
    &
    \ar[
      r,
      "{ \mathrm{e} }"{swap}
    ]
    \bullet
    &
    \bullet
    \ar[
      r,
      "{ g_{n-(k+1)} }"{swap}
    ]
    \ar[
      r,
      -,
      line width=5pt,
      blue,
      opacity=.2
    ]
    &
    \cdots
    \ar[
      r,
      "{ g_0 }"{swap}
    ]
    \ar[
      r,
      -,
      line width=5pt,
      blue,
      opacity=.2
    ]
    &
    \bullet
  \end{tikzcd}
$
\end{tabular}
\vspace{-.5cm}

\end{proof}

\begin{theorem}[Transgression in discrete group cohomology via looping] \label{TransgressionInGroupCohomologyViaLooping}
  The transgression formula of Def. \ref{TransgressionFormulaForGroupCocycles}
  expresses equivalently the operation of applying the free homotopy loop functor $\Maps{}{\mathbf{B}\Integers}{-}$ to group cocycles
  understood as maps $\mathbf{B}G \to \mathbf{B}^{n+1} A$ (cf. Ex. \ref{IncarnationsOfGroupCohomology}), composed with projection
  onto the resulting shifted coefficients (via Ex. \ref{FreeLoopSpaceOfEilenbergMacLaneSpace}):
  \vspace{-6mm}
  $$
    \begin{tikzcd}[column sep=20pt, row sep=small]
      H^{n+1}_{\mathrm{grp}}
      (G;\, A)
      \ar[r, phantom, "\simeq"]
      &[-10pt]
      \pi_0
      \Maps{\big}
        { B G }
        {
          \mathbf{B}^{n+1} A
        }
      \ar[
        rr,
        "{
          \Maps{}
            { \mathbf{B} \Integers }
            { - }
        }"
      ]
      \ar[
        drr,
        shorten <=-10pt,
        shorten >=5pt,
        shift right=5pt,
        "{
          \scalebox{.7}{
            \color{greenii}
            \bf
            transgression
          }
        }"{sloped}
      ]
      &\phantom{AA}&
      \pi_0
      \Maps{\big}
       {
        \Lambda \mathbf{B}G
       }
       {
         \overset{
           \mathclap{
             \raisebox{3pt}{
               \scalebox{.7}{
               \rm
               by Ex. \ref{FreeLoopSpaceOfEilenbergMacLaneSpace}
               }
             }
           }
         }{
        \overbrace{
          \mathbf{B}^n A
          \,\times\,
          \mathbf{B}^{n+1} A
        }
        }
      }
      \ar[
        d,
        "{
          \scalebox{.7}{$
            \mathrm{pr}_1
          $}
        }"
      ]
      \\
      &
      &&
      \pi_0
      \Maps{}
        { \Lambda \mathbf{B}G }
        { \mathbf{B}^n A }
      \ar[
        r,
        phantom,
        "\simeq"
      ]
      \ar[
        d,
        ->>,
        "{
          \pi_0
          \Maps{}
            {
              \mathbf{B}G
              \hookrightarrow
              \Lambda \mathbf{B}G
            }
            { \mathbf{B}^n A }
        }"{left}
      ]
      &[-30pt]
      H^n
      (
        \Lambda \mathbf{B}G
        ;\,
        A
      )
      \ar[
        d,
        ->>
      ]
      \\[+15pt]
      & &&
      \pi_0
      \Maps{}
        { \mathbf{B}G }
        { \mathbf{B}^n A }
      \ar[
        r,
        phantom,
        "\simeq"
      ]
      &[-30pt]
      H^n_{\mathrm{grp}}
      (
        G
        ;\,
        A
      )      \;.
    \end{tikzcd}
  $$
\end{theorem}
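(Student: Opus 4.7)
The plan is to work entirely simplicially using the standard models $\mathbf{B}G \simeq \overline{W}G$ and $\mathbf{B}^{n+1}A$ for the relevant Eilenberg--MacLane space, with the minimal simplicial circle $S^1_{\mathrm{min}}$ serving as a cofibrant replacement of $\mathbf{B}\mathbb{Z}$. Via the product / internal-hom Quillen adjunction in simplicial sets, the looped cocycle $\Lambda \mathbf{B}G \xrightarrow{\Maps{}{\mathbf{B}\mathbb{Z}}{c}} \Maps{\big}{\mathbf{B}\mathbb{Z}}{\mathbf{B}^{n+1}A}$ is fully determined by its adjoint composite $\Lambda \mathbf{B}G \times S^1_{\mathrm{min}} \xrightarrow{\mathrm{ev}} \mathbf{B}G \xrightarrow{c} \mathbf{B}^{n+1}A$, whose first factor is already made completely explicit on non-degenerate $(n+1)$-cells by Lemma \ref{EvaluationMapOnNerveOfInertiaGroupoid}.

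Next, under the splitting $\Maps{}{\mathbf{B}\mathbb{Z}}{\mathbf{B}^{n+1}A} \simeq \mathbf{B}^n A \times \mathbf{B}^{n+1}A$ of Example \ref{FreeLoopSpaceOfEilenbergMacLaneSpace}, I would identify the projection onto $\mathbf{B}^n A$---after passing to normalized chains via Dold--Kan---with the \emph{fiber integration along $S^1_{\mathrm{min}}$}: this takes an $(n+1)$-cocycle on $\Lambda \mathbf{B}G \times S^1_{\mathrm{min}}$ to the $n$-cocycle on $\Lambda \mathbf{B}G$ whose value on an $n$-simplex $(\gamma; g_{n-1}, \ldots, g_0)$ is the alternating sum, with signs $(-1)^k$ for $k = 0, \ldots, n$, of its values on the $(n+1)$ non-degenerate $(n+1)$-simplices of $S^1_{\mathrm{min}} \times \Delta[n]$ sitting above that $n$-simplex. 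These non-degenerate cells are parametrized exactly by the position $k$ of the loop-generator $\ell$, and match those computed in Lemma \ref{EvaluationMapOnNerveOfInertiaGroupoid}.

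Substituting the explicit evaluation formula of Lemma \ref{EvaluationMapOnNerveOfInertiaGroupoid} into this signed sum, the $k$-th summand evaluates to $(-1)^k \cdot c\bigl(g_{n-1}, \ldots, g_{n-k}, \mathrm{Ad}_k(\gamma), g_{n-(k+1)}, \ldots, g_0\bigr)$, which matches the $j = k$ term of formula \eqref{TransgressionFormula} in Definition \ref{TransgressionFormulaForGroupCocycles}. Summing over $k$ yields the claimed equality; notice that, as a pleasant byproduct, the cocycle condition for $\mathrm{tr}(c)$---which is tedious to verify directly from \eqref{TransgressionFormula}---drops out automatically as the image under the $\infty$-functor $\Maps{}{\mathbf{B}\mathbb{Z}}{-}$ of the cocycle condition for $c$.

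The main obstacle is the middle step: giving a fully rigorous justification that the projection onto $\mathbf{B}^n A$ in the Eilenberg--MacLane splitting is implemented cell-by-cell as the signed sum above with exactly the shuffle signs $(-1)^k$. The cleanest route is to decompose $S^1_{\mathrm{min}} \times \Delta[n]$ by the prism formula (using Prop. \ref{NonDegenerateSimpliciesInProductOfSimplicies}) and read off the signs from the Eilenberg--Zilber shuffle map on normalized chains; alternatively one may verify directly at the level of simplicial abelian groups that the suspension splitting of $\Maps{}{S^1_{\mathrm{min}}}{K(A, n+1)}$ is given by this signed sum. Either approach is routine Dold--Kan / Eilenberg--Zilber bookkeeping once the explicit formula for $\mathrm{ev}$ provided by Lemma \ref{EvaluationMapOnNerveOfInertiaGroupoid} is in hand.
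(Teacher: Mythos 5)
Your proposal is correct and follows essentially the same route as the paper's proof: identify the looped cocycle, via the internal-hom adjunction, with precomposition of $c$ along the evaluation map of Lemma \ref{EvaluationMapOnNerveOfInertiaGroupoid}, then extract the degree-$n$ component using the Eilenberg--Zilber shuffle decomposition of $S^1_{\mathrm{min}} \times \Delta[n]$, whose signed summands reproduce formula \eqref{TransgressionFormula}. The ``middle step'' you flag is exactly what the paper closes by the chain of identifications through Dold--Kan, strong monoidality of $\mathbb{Z}[-]$, the Eilenberg--Zilber equivalence (Prop. \ref{EilenbergZilberAlexanderWhitneyRetraction}) and the splitting $\NormalizedChains(\mathbb{Z}[S^1_{\mathrm{min}}]) \simeq \mathbb{Z} \oplus \mathbb{Z}[1]$, i.e.\ by the very route you propose.
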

\begin{proof}
  In outline, this follows by regarding
  \eqref{AdjunctOfMapsViaEvaluation}
  the functor $\Maps{}{\mathbf{B}\Integers}{-}$ as pre-composition with the evaluation map and further precomposing this, under the free abelian group functor,
  with the Eilenberg-Zilber map (Prop. \ref{EilenbergZilberAlexanderWhitneyRetraction}): the component formula for the evaluation map, from Lemma
  \ref{EvaluationMapOnNerveOfInertiaGroupoid}, fed through the Eilenberg-MacLane formula \eqref{EilenbergMaclaneFormula}
  produces the signed summands that appear in \eqref{TransgressionFormula}.
  In detail, we have the following sequence of natural transformations between homotopy classes\footnote{
    The collection of facts from
    model category theory needed for the routine verification that
    the underlying naive sequence of natural transformations passes to homotopy classes in each step (on p. \pageref{ProofSteps}) may  be found
    reviewed \cite[\S A]{FSS20CharacterMap}.
  } of morphisms:
\vspace{-4mm}
  $$
    \label{ProofSteps}
    \def\arraystretch{2}
    \begin{array}{lll}
      \hspace{-5pt}
      H^{n+1}_{\mathrm{grp}}
      (
        G;\, A
      )
           & \;=\;
      \pi_0
      \Maps{}
        { \mathbf{B}G }
        { \mathbf{B}^{n+1} A }
      &
      \proofstep{
        Ex. \ref{IncarnationsOfGroupCohomology}
      }
      \\[-8pt]
    &  \;\simeq\;
      \mathrm{Ho}(\SimplicialSets)
      \Big(
        \overline{W}G
        ;\,
        \mathrm{DK}
        \big(
          A[n+1]
        \big)
      \Big)
      &
      \proofstep{
        \hspace{-11pt}
        \def\arraystretch{1}
        \begin{tabular}{l}
        Ex. \ref{UniversalSimplicialPrincipalComplexForOrdoinaryGroupG}
        \\
        \&
        Ex. \ref{ShiftedAbelianGroups}
        \end{tabular}
      }
      \\[-8pt]
 &     \hspace{-10pt}
      \xrightarrow{
        \Maps{}{\mathbf{B}\Integers}{-}
      }\;
      \mathrm{Ho}(\SimplicialSets)
      \Big(
        \Maps{\big}
          { S^1_{\mathrm{min}}  }
          { \overline{W}G }
          ,\,
        \Maps{\big}
          { S^1_{\mathrm{min}}  }
          {
            \mathrm{DK}
            \big(
              A[n+1]
            \big)
          }
      \Big)
      &
      \proofstep{
        Ex. \ref{MinimalModelOfFreeLoopSpace}
      }
      \\[-5pt]
    &  \;\simeq\;
      \mathrm{Ho}(\SimplicialSets)
      \Big(
        S^1_{\mathrm{min}}
          \times
        \Maps{\big}
          { S^1_{\mathrm{min}}  }
          { \overline{W}G }
          ,\,
          \mathrm{DK}
          \big(
            A[n+1]
          \big)
      \Big)
      &
      \proofstep{
        \eqref{InternalHomAdjunction}
      }
      \\[-5pt]
   &   \;\simeq\;
      \mathrm{Ho}\big(
        \mathrm{Ch}^{\geq 0}(\AbelianGroups)
      \big)
      \Big(
        \NormalizedChains
        \circ
        \mathbb{Z}
        \big[
        S^1_{\mathrm{min}}
          \times
        \Maps{\big}
          { S^1_{\mathrm{min}}  }
          { \overline{W}G }
        \big]
          ,\,
          A[n+1]
      \Big)
      &
      \proofstep{
        \eqref{DoldKanCorrespondence}
      }
      \\
     & \;\simeq\;
      \mathrm{Ho}\big(
        \mathrm{Ch}^{\geq 0}(\AbelianGroups)
      \big)
      \bigg(
        \NormalizedChains
        \Big(
        \mathbb{Z}
        \big[
        S^1_{\mathrm{min}}
        \big]
          \otimes
        \mathbb{Z}
        \big[
        \Maps{\big}
          { S^1_{\mathrm{min}}  }
          { \overline{W}G }
        \big]
        \Big)
        ,\,
        A[n+1]
      \bigg)
      &
      \proofstep{
        \eqref{FreeAbIsStrongMonoidal}
      }
      \\
     & \xrightarrow
        [
          \nabla^\ast
        ]
        {
          \sim
        }
      \mathrm{Ho}\big(
        \mathrm{Ch}^{\geq 0}(\AbelianGroups)
      \big)
      \bigg(
        \NormalizedChains
        \Big(
        \mathbb{Z}
        \big[
        S^1_{\mathrm{min}}
        \big]
        \Big)
          \otimes
        \NormalizedChains
        \Big(
        \mathbb{Z}
        \big[
        \Maps{\big}
          { S^1_{\mathrm{min}}  }
          { \overline{W}G }
        \big]
        \Big)
        ,\,
        A[n+1]
      \bigg)
      &
      \proofstep{
        \eqref{EilenbergMaclaneFormula}
      }
      \\
    &  \;\simeq\;
      \mathrm{Ho}\big(
        \mathrm{Ch}^{\geq 0}(\AbelianGroups)
      \big)
      \bigg(
        \NormalizedChains
        \Big(
        \mathbb{Z}
        \big[
        \Maps{\big}
          { S^1_{\mathrm{min}}  }
          { \overline{W}G }
        \big]
        \Big)
        ,\,
        \Maps{\Big}
          {
            \NormalizedChains
            \big(
            \mathbb{Z}
            \big[
            S^1_{\mathrm{min}}
            \big]
            \big)
          }
          { A[n+1] }
      \bigg)
      &
      \proofstep{
        \eqref{InternalHomAdjunction}
      }
      \\
    &  \;\simeq\;
      \mathrm{Ho}\big(
        \mathrm{Ch}^{\geq 0}(\AbelianGroups)
      \big)
      \bigg(
        \NormalizedChains
        \Big(
        \mathbb{Z}
        \big[
        \Maps{\big}
          { S^1_{\mathrm{min}}  }
          { \overline{W}G }
        \big]
        \Big)
        ,\,
        A[n]
        \oplus
        A[n+1]
      \bigg)
      &
      \proofstep{
        \eqref{NormalizedChainsOnMinimalSimplicialCircle}
      }
      \\
&      \xrightarrow{ \mathrm{pr}_1 }
      \mathrm{Ho}\big(
        \mathrm{Ch}^{\geq 0}(\AbelianGroups)
      \big)
      \bigg(
        \NormalizedChains
        \Big(
        \mathbb{Z}
        \big[
        \Maps{\big}
          { S^1_{\mathrm{min}}  }
          { \overline{W}G }
        \big]
        \Big)
        ,\,
        A[n]
      \bigg)
      \\[-5pt]
&      \;=\;
      \pi_0
      \Maps{\big}
        { \Lambda\mathbf{B}G }
        { \mathbf{B}^n A }
      \;=\;
      H^m
      \big(
        \Lambda\mathbf{B}G
        ;\,
        A
      \big)
        &
      \proofstep{
        \eqref{DoldKanCorrespondence}}
       .
    \end{array}
  $$

  \vspace{-2mm}
\noindent   Noticing here
(see  \eqref{AdjunctOfMapsViaEvaluation})
  that the composite of the application of the functor $\Maps{}{\mathbf{B}\Integers}{-}$ with the adjointness relation is equivalent
  to pre-composition with the evaluation map, the net effect of this composite of transformations in degree $n+1$ is the
  pre-composition of the cocycle map in that degree, regarded as a homomorphism of abelian groups
  \vspace{-2mm}
  $$
    \begin{tikzcd}[
      row sep=3pt
    ]
      \widetilde c_{n+1}
      \;:\;
      \mathbb{Z}[
        G^{n+1}
      ]
      \simeq
      \mathbb{Z}[
        (\overline{W}G)_{n+1}
      ]
      \ar[r]
      &
      A
      \mathrlap{\,,}
    \end{tikzcd}
  $$

  \vspace{-2mm}
  \noindent
  with the composite of the   Eilenberg-Zilber-  and the
  evaluation map:
  \vspace{-2mm}
  \begin{equation}
    \label{EvaluationPrecomposedWithEilenbergZilber}
    \hspace{-3.5cm}
    \adjustbox{scale=.85}{
    \begin{tikzcd}[
      column sep=27pt,
      row sep=2pt
    ]
      \mathbb{Z}
      \big[
        \big(
          S^1_{
            \mathrm{min}
          }
        \big)_{1}
      \big]
      \otimes
      \mathbb{Z}
      \big[
      \Homs{\big}
        {
          S^1_{\mathrm{min} }
          \times
          \Delta[n]
        }
        { \overline{W}G }
      \big]
      {}
      \ar[
        r,
        shorten=-1pt,
        "{ \nabla }"
      ]
      &[-9pt]
      \mathbb{Z}
      \big[
      \big(
      S^1_{
      \mathrm{min}
      }
      \big)_{n+1}
      \times
      \Homs{}
        {
          S^1_{\mathrm{min} }
          \times
          \Delta[n+1]
        }
        { \overline{W}G }
      \big]
      \ar[
        r,
        shorten=-1pt,
        "{
        \mathbb{Z}[
          \mathrm{ev}
        ]
        }"
      ]
      &
      \mathbb{Z}
      [
        (\overline{W}G)_{n+1}
      ]
      \\
      \hspace{-1cm}
   \scalebox{0.7}{$  \ell
      \,\otimes\,
      \big(
        \gamma
        ;\,
        g_{n-1}
        ,\,
        \cdots
        ,\,
        g_0
      \big)
      $}
     \hspace{-4cm}
     \ar[
        r,
        phantom,
        "{
          \overset{
            \mathclap{
            \raisebox{2pt}{
            \scalebox{.7}{
               \eqref{EilenbergMaclaneFormula}
            }
            }
            }
          }{
            \longmapsto
          }
         }"
      ]
      &
      \hspace{-2cm}
        \scalebox{0.7}{$
        \hspace{-2cm}
      \underset{
        0 \leq j \leq n
      }{
        \scalebox{1.3}{$\sum$}
      }
      \,
      (-1)^{j}
      \cdot
      \Big(
      s_{(0,\cdots, \widehat{k}, \cdots,n)} \ell
      ,\,
      s_k
      \big(
        \gamma
        ;\,
        g_{n-1}
        ,\,
        \cdots
        ,\,
        g_0
      \big)
      \Big)
      $}
      \hspace{-1.8cm}
      \ar[
        r,
        |->,
        shorten=50pt,
        "{
          \scalebox{.7}{
          Lem. \ref{EvaluationMapOnNerveOfInertiaGroupoid}
          }
        }"{yshift=2pt, sloped}
      ]
           &
      \hspace{-50pt}
    \scalebox{0.7}{$     \underset{
        0 \leq j \leq n
      }{
        \scalebox{1.3}{$\sum$}
      }
      \,
      (-1)^{j}
      \cdot
      \big(
        g_{n-1}
        ,\,
        \cdots
        ,\,
        g_{n-k}
        ,\,
        \mathrm{Ad}_k(\gamma)
        ,\,
        g_{n-(k+1)}
        ,\
        \cdots
        ,\,
        g_0
      \big)
      $}
    \end{tikzcd}
    }
    \hspace{-3cm}
  \end{equation}

  \vspace{-2mm}
  \noindent
  followed by restriction to the coefficient of $\ell$
  \eqref{CircleCell}
  on the left. This manifestly yields the formula
  \eqref{TransgressionFormula}.
\end{proof}

\newpage

\section{Integral 4-classes of equivariant 4-Cohomotopy}
\label{CharacteristicFourClassesOfEquivariantFourCohomotopy}

Here we prove that the integral 4-class $\widetilde {\Gamma}_4$ which underlies any tangentially twisted 4-Cohomotopy cocycle {\it at an ADE-singularity} is that which classifies the ``Platonic'' 2-group extensions of \cite{EpaGanter16}. These 4-classes (equivalently in their incarnation as 3-classes after cyclification) are the twists of quasi-elliptic cohomology which are ``predicted'' by {\it Hypothesis H} in the sense explained at the end of \cref{IntroductionAndOverview}.

\medskip

\begin{notation}[Canonical representation 4-sphere of finite subgroup of $\mathrm{Sp}(1)$]
  \label{CanonicalRepresentation4SphereOfFiniteSubgroupOfSU2}
  Given a finite subgroup $G \xhookrightarrow{i} \mathrm{Sp}(1)$ (Prop. \ref{FiniteSubgroupsOfSU2}),
  we write
  $$
    G \acts \; \mathbb{H}
    \;\coloneqq\;
    \mathrm{res}_i
    \big(
      \mathrm{Sp}(1) \acts \, \mathbb{H}
    \big)
    \,\in\,
    \Actions{G}(\VectorSpaces)
  $$
  for the linear representation which is the restriction along $i$
  of the defining linear action of $\mathrm{Sp}(1)$ on the
  real vector space $\mathbb{H} \simeq_{\mathbb{R}} \mathbb{R}^4$
  underlying the algebra of quaternions. Accordingly, we write
  \vspace{-2mm}
  $$
    G \acts \; \shape S^\mathbb{H}
    \;\in\;
    \Actions{G}(\TopologicalSpaces)
    \xrightarrow{\mathrm{Sing}}
    \Actions{G}(\SimplicialSets)
  $$

  \vspace{-2mm}
\noindent
  for the shape of the corresponding representation sphere.
\end{notation}

\begin{theorem}[Equivariant integral characteristic classes of ADE-equivariant 4-Cohomotopy]
  For $G \xhookrightarrow{i} \mathrm{Sp}(1) \simeq \mathrm{SU}(2) \simeq \mathrm{Spin}(3)$
  a finite subgroup
  (Prop. \ref{FiniteSubgroupsOfSU2}),
  the group of
  equivariant integral characteristic 4-classes of
  ADE-equivariant 4-Cohomotopy is
  $$
    H^4_G
    \big(
      S^{\mathbb{H}},
      \,
      \mathbb{Z}
    \big)
    \;\;
      \simeq
    \;\;
    \mathbb{Z}
    \,\oplus\,
    \mathbb{Z}_{\left\vert G \right\vert}\;.
  $$
\end{theorem}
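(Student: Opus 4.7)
The plan is to compute $H^4_G(S^{\mathbb{H}}, \mathbb{Z}) \simeq H^4(\HomotopyQuotient{S^{\mathbb{H}}}{G}, \mathbb{Z})$ (Borel equivariant cohomology) via the Serre spectral sequence of the associated $4$-sphere bundle
\[
  S^{\mathbb{H}} \;\longrightarrow\; \HomotopyQuotient{S^{\mathbb{H}}}{G} \;\longrightarrow\; \mathbf{B}G \,,
\]
exploiting that $S^{\mathbb{H}} \simeq S^4$ and that $G \hookrightarrow \mathrm{Sp}(1) \subset \mathrm{SO}(4)$ acts by orientation-preserving isometries on $\mathbb{H} \cong \mathbb{R}^4$.

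Since the induced action on $H^*(S^{\mathbb{H}}, \mathbb{Z})$ is trivial and this cohomology is concentrated in degrees $0$ and $4$ (both $\mathbb{Z}$), the $E_2$-page has only the two nontrivial rows $E_2^{p, 0} = E_2^{p, 4} = H^p(\mathbf{B}G, \mathbb{Z})$. Only $E_\infty^{0, 4}$ and $E_\infty^{4, 0}$ can contribute to $H^4$; no differentials enter or leave $(4,0)$ (the relevant bi-degrees all vanish on $E_2$), so $E_\infty^{4, 0} = H^4(\mathbf{B}G, \mathbb{Z})$ automatically. The single potentially nontrivial differential out of $(0,4)$ is $d_5 \colon \mathbb{Z} = E_5^{0,4} \to E_5^{5,0} = H^5(\mathbf{B}G, \mathbb{Z})$.

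The key input is the classical theorem on periodic cohomology of finite groups acting freely on spheres: every finite $G \subset \mathrm{Sp}(1) \simeq \mathrm{SU}(2)$ acts freely on the unit $S^3 \subset \mathbb{H}$, so $S^3/G$ is a closed orientable $3$-manifold, and the Gysin sequence of the associated $S^3$-bundle $S^3 \to S^3/G \to \mathbf{B}G$ forces $H^*(\mathbf{B}G, \mathbb{Z})$ to be periodic with period dividing $4$. Combined with the universal-coefficient identity $H^1(\mathbf{B}G, \mathbb{Z}) = \mathrm{Hom}(G^{\mathrm{ab}}, \mathbb{Z}) = 0$ for $G$ finite, periodicity gives $H^5(\mathbf{B}G) \cong H^1(\mathbf{B}G) = 0$. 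Hence $d_5 = 0$, the spectral sequence collapses at $E_2$, and the resulting filtration extension
\[
  0 \,\longrightarrow\, H^4(\mathbf{B}G) \,\longrightarrow\, H^4(\HomotopyQuotient{S^{\mathbb{H}}}{G}) \,\longrightarrow\, \mathbb{Z} \,\longrightarrow\, 0
\]
splits because $\mathbb{Z}$ is projective.

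It remains to identify $H^4(\mathbf{B}G, \mathbb{Z}) \cong \mathbb{Z}/|G|$. The same Gysin sequence produces
\[
  0 \,\longrightarrow\, H^3(\mathbf{B}G) \,\longrightarrow\, H^3(S^3/G) \,\xrightarrow{\;\pi_*\;}\, H^0(\mathbf{B}G) \,\xrightarrow{\;\cup\, e\;}\, H^4(\mathbf{B}G) \,\longrightarrow\, H^4(S^3/G) = 0 \,,
\]
in which $H^3(S^3/G) \cong \mathbb{Z}$ by orientability, and fiber-integration $\pi_*$ is multiplication by the covering degree $|G|$ (since the fiber inclusion is homotopic to the $|G|$-fold cover $S^3 \to S^3/G$, and degree-$d$ covers of closed oriented $n$-manifolds induce multiplication by $d$ on top integral cohomology). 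This forces $H^3(\mathbf{B}G) = 0$ and identifies $H^4(\mathbf{B}G)$ as the cokernel $\mathbb{Z}/|G|$, generated by the equivariant Euler class $e(V_G) = c_2(V_G)$ of the associated bundle $V_G := EG \times_G \mathbb{H}$. The main obstacle in this plan is precisely this uniform identification $H^4(\mathbf{B}G) \cong \mathbb{Z}/|G|$ across the ADE-series: the cyclic case $G = \mathbb{Z}/n$ (type $A_{n-1}$) is immediate from $H^*(\mathbf{B}\mathbb{Z}/n, \mathbb{Z}) = \mathbb{Z}[x]/(nx)$ with $|x| = 2$, whereas the binary dihedral and binary polyhedral cases (types $D$, $E$) genuinely require the Gysin/periodic-cohomology argument above.
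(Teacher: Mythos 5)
Your proposal is correct, and its skeleton is the same as the paper's: identify the characteristic classes with Borel cohomology of the homotopy quotient (in the paper this is Lemma \ref{EquivariantIntegralCharacteristicClassesOfEquivariantCohomotopy}, proved via singular cohesion but explicitly noted there to be essentially definitional, so your starting point is not a gap), run the Serre spectral sequence of $S^4 \to S^{\mathbb{H}}\!\sslash\! G \to B G$ with constant coefficients (trivial $\pi_1$-action since $G$ acts orientation-preservingly, resp.\ through the connected group $\mathrm{SO}(5)$ in the paper), observe that only $E_\infty^{0,4}$ and $E_\infty^{4,0}$ can contribute in total degree $4$, and split the resulting extension using that $\mathbb{Z}$ is projective. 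Where you genuinely diverge is the key input: the paper feeds in the full tabulated integral group cohomology of the finite subgroups of $\mathrm{SU}(2)$ (Prop.~\ref{IntegralGroupCohomologyOfFiniteSubgroupsOfSU2}, quoted from Epa--Ganter and Tomoda--Zvengrowski, whose exceptional cases rest on case-by-case computation) and concludes that the whole spectral sequence collapses, whereas you derive the only three facts actually needed --- $H^3(BG;\mathbb{Z})=0$, $H^5(BG;\mathbb{Z})\cong H^1(BG;\mathbb{Z})=0$, and $H^4(BG;\mathbb{Z})\cong \mathbb{Z}/\lvert G\rvert$ generated by the Euler class --- uniformly in $G$ from the free action on $S(\mathbb{H})=S^3$ and the Gysin sequence of $S^3 \to S^3/G \to BG$. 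This is leaner and self-contained (no ADE case distinctions), and your identification of the generator with $e(EG\times_G\mathbb{H})=c_2$ independently recovers what the paper obtains in Prop.~\ref{PullbackOfSecondChernClassToADESubgroup}; what the paper's route buys instead is the complete $E_2$-page (hence collapse everywhere and the degree-$2$ term $G^{\mathrm{ab}}$), which it displays but does not need for this theorem. The one step in your argument deserving a word of justification is identifying the Gysin pushforward $\pi_*\colon H^3(S^3/G;\mathbb{Z})\to H^0(BG;\mathbb{Z})$ with restriction to the fiber: this is the standard edge-homomorphism description of the Gysin map in that degree (or naturality of fiber integration under base change to a point), after which your covering-degree computation giving multiplication by $\lvert G\rvert$, and hence cokernel $\mathbb{Z}/\lvert G\rvert$, goes through verbatim.
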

\begin{proof}
  By
  Lemma \ref{EquivariantIntegralCharacteristicClassesOfEquivariantCohomotopy}
  with
  Lemma \ref{TheIntegral4CohomologyOfHomotopyQuotientOf4SphereByADEGroup}.
\end{proof}

The following Lemma \ref{EquivariantIntegralCharacteristicClassesOfEquivariantCohomotopy} may be taken to be the definition of ordinary equivariant cohomology, or alse of a standard fact of Bredon cohomology with invariant coefficients. We include the following proof just to showcase how this fits into the singular-cohesive formalism of \cite{SS20OrbifoldCohomology}\cite{SS21EPB}:
\begin{lemma}[Equivariant integral characteristic classes of equivariant Cohomotopy]
  \label{EquivariantIntegralCharacteristicClassesOfEquivariantCohomotopy}
  The equivariant integral characteristic classes of
  equivariant Cohomotopy
  are naturally identified with the
  ordinary cohomology of
  the homotopy quotient of the sphere.
\end{lemma}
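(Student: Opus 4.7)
The plan is to deduce the claim from the base-change adjunctions displayed in \eqref{TransformationGroupsAsSliceObjects}, applied to the terminal homomorphism $G \to 1$, using that ordinary integral cohomology is represented by the Eilenberg--MacLane object $\mathbf{B}^n\mathbb{Z}$ equipped with the trivial $G$-action.

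First, I would interpret an equivariant integral characteristic class of equivariant Cohomotopy on $S^{\mathbb{H}}$ as a cocycle in the equivariant $\infty$-topos of the form
$$
  S^{\mathbb{H}} \;\xrightarrow{\;\;\;\;\;}\; \mathrm{triv}\big(\mathbf{B}^n\mathbb{Z}\big),
$$
where $\mathrm{triv}$ denotes equipping the Eilenberg--MacLane coefficient with the trivial $G$-action. Under the slice-topos equivalence of \eqref{TransformationGroupsAsSliceObjects}, the domain corresponds to the homotopy-quotient projection $\HomotopyQuotient{S^{\mathbb{H}}}{G} \to \mathbf{B}G$, while the codomain corresponds to the pullback $(p_{\mathbf{B}G})^{\ast}(\mathbf{B}^n\mathbb{Z})$ along the absolute projection $p_{\mathbf{B}G} : \mathbf{B}G \to \ast$, by definition of the trivial action.

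Next, I would invoke the base-change adjunction $(p_{\mathbf{B}G})_! \dashv (p_{\mathbf{B}G})^{\ast}$ from \eqref{TransformationGroupsAsSliceObjects} to rewrite
$$
  \mathrm{Map}_G\big(S^{\mathbb{H}},\, \mathrm{triv}(\mathbf{B}^n\mathbb{Z})\big)
  \;\simeq\;
  \mathrm{Map}\big(\HomotopyQuotient{S^{\mathbb{H}}}{G},\, \mathbf{B}^n\mathbb{Z}\big).
$$
Passing to connected components on both sides then yields the desired natural identification
$$
  H^n_G\big(S^{\mathbb{H}};\,\mathbb{Z}\big) \;\simeq\; H^n\big(\HomotopyQuotient{S^{\mathbb{H}}}{G};\,\mathbb{Z}\big),
$$
which is exactly ordinary cohomology of the Borel construction.

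The main obstacle is really just bookkeeping rather than a technical difficulty: one must confirm that the phrase ``equivariant integral characteristic class'' in the present paper refers to cocycles valued in the coefficient object with the \emph{trivial} $G$-action (rather than a Bredon-style variant with a nontrivial coefficient system), since only for the trivial action does the right base change $(p_{\mathbf{B}G})^{\ast}$ describe the coefficient. Once this identification is made, the remaining argument is a single, entirely formal application of the base-change adjunction already used throughout the paper (e.g.\ in the cyclification adjunction \eqref{CyclicicationAsRightDerivedBaseChange}), and no further ingredients are needed.
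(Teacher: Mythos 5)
Your argument is internally consistent, but it establishes the statement only under the Borel-equivariant reading of ``equivariant'' --- exactly the reading that your own closing caveat asks to be confirmed, and which is \emph{not} the paper's starting point. In this paper the equivariant characteristic classes of equivariant $4$-Cohomotopy live in the singular-cohesive (``proper''-equivariant, orbi-singular) setting of \cite{SS20OrbifoldCohomology}\cite{SS21EPB}: a class is a map $\shape\,\orbisingular\big(S^{V}\!\sslash\! G\big)\longrightarrow \orbisingular\big(\mathbf{B}^4\mathbb{Z}\!\sslash\! G\big)$ in the slice over $\orbisingular\,\mathbf{B}G$, so the coefficient is the orbi-singularization of the Borel-quotiented Eilenberg--MacLane stack --- a Bredon-style coefficient object, not the bare trivial-action coefficient in the slice over $\mathbf{B}G$. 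The content of the Lemma, and the reason the paper proves it at all, is the reduction of this proper-equivariant mapping space to the Borel one. The paper's proof runs the chain: triviality of the action gives $\mathbf{B}^4\mathbb{Z}\!\sslash\! G\simeq \mathbf{B}^4\mathbb{Z}\times\mathbf{B}G$; $\orbisingular$ is a right adjoint and so preserves this product; base change \eqref{BaseChange} strips off the slice over $\orbisingular\mathbf{B}G$; and then the singular-cohesion adjunction $\smooth\dashv\orbisingular$, the commutation of $\shape$ with $\smooth$ (\cite[Lem. 3.67]{SS20OrbifoldCohomology}) and the fact that $S^{V}\!\sslash\! G$ is a smooth (non-orbi-singular) object collapse the domain to $\shape\big(S^{V}\!\sslash\! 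G\big)$, i.e.\ to the Borel construction.

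Your proposal reproduces only the ``trivial action plus base change'' portion of that chain, carried out after you have already placed yourself in $\Topos_{/\mathbf{B}G}$ via \eqref{TransformationGroupsAsSliceObjects}; at that point the statement is essentially definitional, as you note yourself, and the paper indeed concedes that it ``may be taken to be the definition of ordinary equivariant cohomology.'' So nothing you wrote is false, but the issue you defer as bookkeeping is where the mathematical work sits: your argument supplies no mechanism (no $\smooth\dashv\orbisingular$ adjunction, no compatibility of shape with $\smooth$, no appeal to smoothness of $S^{V}\!\sslash\! G$) for passing from the orbi-singular formulation in which equivariant Cohomotopy and its characteristic classes are actually defined in this paper to the Borel slice in which your computation takes place. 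A minor additional point: the shape modality should appear on the domain throughout, since the classes in question are pulled back along shapes of the orbifolds; in your displays it is silently omitted.
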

\begin{proof}
We have the following sequence of natural equivalences of
hom-$\infty$-groupoids:
$$
\hspace{-3mm}
  \def\arraystretch{1.2}
  \begin{array}{lll}
  \left\{\!\!\!\!\!\!\!\!\!\!\!\!\!
  \begin{tikzcd}[column sep=-4pt, row sep=4pt]
    \phantom{\smooth\;}
    \shape \orbisingular
    \big(
      S^{V}
        \!\sslash\!
      G
    \big)
    \ar[
      rr,
      dashed
    ]
    \ar[dr]
    &&
    \orbisingular
    \big(
      \mathbf{B}^4 \mathbb{Z}
      \!\sslash\!
      G
    \big)
    \ar[dl]
    \\
    &
    \orbisingular \mathbf{B}G
  \end{tikzcd}
  \!\!\!\!\! \right\}
   \;
   \simeq
   \!\!\!
  &
  \!\!
  \left\{\!\!\!\!\!\!\!\!\!\!\!
  \begin{tikzcd}[column sep=-4pt, row sep=4pt]
    \phantom{\smooth\;}
    \shape \orbisingular
    \big(
      S^{V}
        \!\sslash\!
      G
    \big)
    \ar[
      rr,
      dashed
    ]
    \ar[dr]
    &&
    \orbisingular
    \big(
      \mathbf{B}^4 \mathbb{Z}
      \times
      \mathbf{B}G
    \big)
    \ar[dl]
    \\
    &
    \orbisingular \mathbf{B}G
  \end{tikzcd}
  \!\!\!\!\! \right\}
  &
\proofstep{
  trivial action
  on coefficients
}
  \\
  &
  \!\!\!\!\!\!\!\!\!\!\!
  \simeq
  \left\{\!\!\!\!\!\!\!\!\!\!
  \begin{tikzcd}[column sep=-4pt, row sep=4pt]
    \phantom{\smooth\;}
    \shape \orbisingular
    \big(
      S^{V}
        \!\sslash\!
      G
    \big)
    \ar[
      rr,
      dashed
    ]
    \ar[dr]
    &&
    \big(
      \orbisingular
      \mathbf{B}^4 \mathbb{Z}
    \big)
      \times
    \big(
      \orbisingular
      \mathbf{B}G
    \big)
    \ar[dl]
    \\
    &
    \orbisingular \mathbf{B}G
  \end{tikzcd}
  \!\!\!\!\! \right\}
  &
  \proofstep{since $\orbisingular$ is right adjoint}
  \\
  &
  \!\!\!\!\!\!\!\!\!\!\!
  \simeq
  \left\{
  \begin{tikzcd}[column sep=14pt]
    \phantom{\smooth\;}
    \shape \orbisingular
    \big(
      S^{V}
       \!\sslash\!
      G
    \big)
    \ar[rr, dashed]
    &&
    \mathbf{B}^4 \mathbb{Z}
  \end{tikzcd}
\!\!\!  \right\}
  & \!\!
  \proofstep{by base change \eqref{BaseChange}}
  \\
&
\!\!\!\!\!\!\!\!\!\!\!
  \simeq
  \left\{
  \begin{tikzcd}[column sep=14pt]
    \smooth \, \shape \orbisingular
    \big(
      S^{V}
        \!\sslash\!
      G
    \big)
    \ar[rr, dashed]
    &&
    \mathbf{B}^4 \mathbb{Z}
  \end{tikzcd}
  \!\!\! \right\}
  & \!\!
  \proofstep{by singular cohesion $\smooth \dashv \orbisingular$}
  \\
  &
  \!\!\!\!\!\!\!\!\!\!\!
  \simeq
  \left\{
  \begin{tikzcd}[column sep=14pt]
    \shape \,\smooth \orbisingular
    \big(
      S^{V}
        \!\sslash\!
      G
    \big)
    \ar[rr, dashed]
    &&
    \mathbf{B}^4 \mathbb{Z}
  \end{tikzcd}
  \!\!\!\right\}
  & \!\!
  \proofstep{by \cite[Lem. 3.67]{SS20OrbifoldCohomology}}
  \\
&
\!\!\!\!\!\!\!\!\!\!\!
  \simeq
  \left\{
  \begin{tikzcd}[column sep=14pt]
    \phantom{\orbisingular\,}
    \shape \, \smooth
    \big(
      S^{H}
        \!\sslash\!
      G
    \big)
    \ar[rr, dashed]
    &&
    \mathbf{B}^4 \mathbb{Z}
  \end{tikzcd}
  \!\!\! \right\}
  &\!\!
  \proofstep{by singular cohesion}
  \\
&
\!\!\!\!\!\!\!\!\!\!\!
  \simeq
  \left\{
  \begin{tikzcd}[column sep=14pt]
    \phantom{\orbisingular \, \smooth\;}
    \shape
    \big(
      S^{V}
        \!\sslash\!
      G
    \big)
    \ar[rr, dashed]
    &&
    \mathbf{B}^4 \mathbb{Z}
  \end{tikzcd}
  \!\!\! \right\}
  & \!\!
  \proofstep{since $S^{V} \!\sslash\! G$ is smooth.}
  \end{array}
$$

\vspace{-7mm}
\end{proof}

\begin{lemma}[Integral 4-cohomology of homotopy quotient of 4-sphere by ADE-group]
  \label{TheIntegral4CohomologyOfHomotopyQuotientOf4SphereByADEGroup}
  Let $G \xhookrightarrow{i} \mathrm{Sp}(1)$ be a finite subgroup
  (Prop. \ref{FiniteSubgroupsOfSU2}).
  Then the integral 4-cohomology of
  the homotopy quotient of its canonical representation sphere
  (Nota. \ref{CanonicalRepresentation4SphereOfFiniteSubgroupOfSU2}) is
  \begin{equation}
    \label{Integral4CohomologyOfHomotopyQuotientOf4SphereByFiniteSubgroupOfSU2}
    H^4
    \big(
      (\shape S^\mathbb{H}) \!\sslash\! G,
      \,
      \mathbb{Z}
    \big)
    \;\simeq\;
    \mathbb{Z} \oplus (\mathbb{Z}/\left\vert G\right\vert)
    \,,
  \end{equation}
  where the first summand is the cohomology of the fiber, and the second
  is the group cohomology of $G$, in that we have a split short exact sequence
  in cohomology
  \vspace{-3mm}
  \begin{equation}
    \label{ShortExactSequenceOfCohomologyOfBorelConstructionOfADEGroupActingOn4Sphere}
    \begin{tikzcd}[row sep=2pt]
      0
      &
      \overset{
        \mathbb{Z}
      }{
      \overbrace{
      H^4
      \big(
        S^4;\, \mathbb{Z}
      \big)
      }
      }
      \ar[l]
      &&
      H^4
      \big(
        \shape S^{\mathbb{H}} \!\sslash\! G;
        \,
        \mathbb{Z}
      \big)
      \ar[
        ll,
        "q^\ast"{above}
      ]
      &&
      \overset{ \mathbb{Z}_{\left\vert G \right\vert} }{
        \overbrace{
          H^4(B G;\, \mathbb{Z})
        }
      }
      \ar[
        ll,
        "\rho^\ast"{above}
      ]
      &
      0
      \ar[l]
      \\
      &
      \shape S^4
      \ar[
        rr,
        "q"
      ]
      &&
      \shape
      \big(
        S^{\mathbb{H}} \!\sslash\! G
      \big)
      \ar[
        rr,
        "\rho"
      ]
      &&
      B G
    \end{tikzcd}
  \end{equation}

  \vspace{-2mm}
\noindent  induced by the homotopy fiber sequence of the Borel construction
  (Lem. \ref{EquivariantIntegralCharacteristicClassesOfEquivariantCohomotopy}), shown at
  the bottom in \eqref{ShortExactSequenceOfCohomologyOfBorelConstructionOfADEGroupActingOn4Sphere}.
\end{lemma}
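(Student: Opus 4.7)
The plan is to exploit the $G$-fixed point $0\in\mathbb{H}$ of the linear action, which simultaneously provides a section of $\rho$ and realizes $\shape(S^{\mathbb{H}}\!\sslash\! G)\to BG$ as the fiberwise one-point compactification of the oriented rank-$4$ real vector bundle $\mathbb{H}_G := EG\times_G\mathbb{H}\to BG$ (oriented since $G\subset\mathrm{Sp}(1)\subset\mathrm{SO}(4)$). I would then extract the desired decomposition from the Thom cofiber sequence together with the splitting provided by the section.

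First, I would observe that the inclusion $\{0\}\hookrightarrow\mathbb{H}$ (equivalently, the inclusion of the point at infinity in $S^{\mathbb{H}}$) is $G$-equivariant, so its Borelification yields a section $s\colon BG\to\shape(S^{\mathbb{H}}\!\sslash\! G)$ of $\rho$. In particular $\rho^{*}$ becomes split injective in integral cohomology. Next, I would identify the cofiber of the $\infty$-section of $s$ with the Thom space $\mathrm{Th}(\mathbb{H}_G)$ of the oriented Borel bundle, so that the Thom isomorphism gives $\tilde{H}^{n+4}(\mathrm{Th}(\mathbb{H}_G);\mathbb{Z})\cong H^{n}(BG;\mathbb{Z})$. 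The cofiber long exact sequence in degree $4$ reads
\[
  H^{3}(BG;\mathbb{Z})\;\longrightarrow\;\mathbb{Z}\;\longrightarrow\;H^{4}\!\bigl(\shape(S^{\mathbb{H}}\!\sslash\! G);\mathbb{Z}\bigr)\;\longrightarrow\;H^{4}(BG;\mathbb{Z})\;\longrightarrow\;H^{1}(BG;\mathbb{Z}),
\]
in which the leftmost map vanishes because $H^{3}(BG;\mathbb{Z})$ is finite and $\mathbb{Z}$ is torsion-free, and the rightmost group is zero because $H^{1}(BG;\mathbb{Z})=\mathrm{Hom}(G,\mathbb{Z})=0$ for $G$ finite. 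Combined with the splitting produced by $s$, this yields exactly the split short exact sequence \eqref{ShortExactSequenceOfCohomologyOfBorelConstructionOfADEGroupActingOn4Sphere} and hence $H^{4}\bigl(\shape(S^{\mathbb{H}}\!\sslash\! G);\mathbb{Z}\bigr)\simeq\mathbb{Z}\oplus H^{4}(BG;\mathbb{Z})$.

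To close \eqref{Integral4CohomologyOfHomotopyQuotientOf4SphereByFiniteSubgroupOfSU2}, I would invoke the classical period-$4$ cohomology of finite subgroups of $\mathrm{SU}(2)\simeq\mathrm{Sp}(1)$: each such $G$ acts freely on $S^{4k+3}\subset\mathbb{H}^{k+1}$, so that $BG\simeq\varinjlim_k S^{4k+3}/G$ has cohomology of period $4$ with top class $H^{4k}(BG;\mathbb{Z})\cong\mathbb{Z}/|G|$ for every $k\geq 1$. Specializing to $k=1$ yields $H^{4}(BG;\mathbb{Z})\cong\mathbb{Z}/|G|$ uniformly across the ADE families, completing the argument. (A parallel route, more in keeping with the Serre spectral sequence of the fibration $S^{4}\to\shape(S^{\mathbb{H}}\!\sslash\! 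G)\to BG$, uses the Thom class of $\mathbb{H}_G$ to produce a fiber class in $H^{4}$, forcing the transgression $d_{5}\colon E_{5}^{0,4}\to E_{5}^{5,0}$ to vanish; the degeneration combined with the section recovers the same conclusion.)

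The main obstacle I expect is the precise identification of $\shape(S^{\mathbb{H}}\!\sslash\! G)$ as the fiberwise one-point compactification of $\mathbb{H}_G$ and of the cofiber of its $\infty$-section with the Thom space $\mathrm{Th}(\mathbb{H}_G)$. This requires commuting the shape functor past the relevant homotopy colimits and cofibers (which is harmless since $\shape$ is a left adjoint on $\SmoothInfinityGroupoids$, cf.~\eqref{ShapeModality}), but it is the step where care with cohesive/stacky formulation is needed; once this identification is in hand, the Thom isomorphism, the torsion considerations above, and the periodicity of $H^{\bullet}(BG;\mathbb{Z})$ close the proof mechanically.
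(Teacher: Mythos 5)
Your argument is correct, but it is a genuinely different route from the one the paper takes. The paper runs the integral Serre spectral sequence of the Borel fibration $S^4 \to \shape\big(S^{\mathbb{H}} \!\sslash\! G\big) \to B G$: it first checks that $\pi_1(B G)\simeq G$ acts trivially on $H^\bullet(S^4;\mathbb{Z})$ (because the action factors through $\mathrm{SO}(5)$, which preserves the volume class), then feeds in the known integral group cohomology of the ADE subgroups (Prop. \ref{IntegralGroupCohomologyOfFiniteSubgroupsOfSU2}, in particular its vanishing in odd degrees) to see that every differential has zero source or target, so the sequence collapses at $E_2$ and the resulting extension splits since $\mathrm{Ext}(\mathbb{Z},-)=0$. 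You instead exploit the fixed point to produce a section $s$ of $\rho$, identify $\shape\big(S^{\mathbb{H}}\!\sslash\! G\big)$ as the fiberwise compactification of the oriented Borel bundle $E G\times_G\mathbb{H}$, and run the cofiber sequence of $s$ together with the Thom isomorphism — essentially a Gysin-type argument. What your route buys: you only need $H^1(BG;\mathbb{Z})=0$ and finiteness of $H^3(BG;\mathbb{Z})$ rather than the full odd-degree vanishing and the shape of the whole $E_2$-page, and the splitting is realized geometrically by $s$ and $\rho^*$ rather than abstractly by $\mathrm{Ext}(\mathbb{Z},-)=0$; note that to recover the paper's sequence literally, with $q^\ast$ as the surjection onto $H^4(S^4)$, you should add the (one-line) remark that the Thom class restricts to the generator on each fiber. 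What the paper's route buys: it stays entirely inside the Borel fibration and the already-tabulated group cohomology, so it never has to justify the identification of the cofiber of the section with $\mathrm{Th}(EG\times_G\mathbb{H})$ — the step you yourself flag as the delicate one in the cohesive setting (though it is harmless after applying $\shape$, since everything is then an ordinary Borel construction). Both arguments ultimately rest on the same geometric input, namely that $G$ acts through $\mathrm{SO}(4)\subset\mathrm{SO}(5)$, used by you for orientability of the bundle and by the paper for triviality of the $\pi_1$-action. Your appeal to period-$4$ cohomology of finite subgroups of $\mathrm{Sp}(1)$ (free action on $S^3$, hence $H^4(BG;\mathbb{Z})\cong\mathbb{Z}/|G|$) is the classical fact the paper instead cites as Prop. \ref{IntegralGroupCohomologyOfFiniteSubgroupsOfSU2}; either reference closes that step.
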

\begin{proof}
  Observe that the fundamental group $\pi_1(B G) \,\simeq\, G$ of the
  base space of the fibration \eqref{ShortExactSequenceOfCohomologyOfBorelConstructionOfADEGroupActingOn4Sphere}
  acts trivially on the cohomology
  $$
    H^n\big(\shape S^4, \mathbb{Z}\big)
    \;\simeq\;
    \left\{
    \begin{array}{cl}
      \mathbb{Z} & \mbox{for $n \in \{0,4\}$}
      \\
      0 & \mbox{otherwise}
    \end{array}
    \right.
  $$
  of the fiber space. This is because:

  \begin{itemize}

    \item
    we have a $G$-equivariant isomorphism
    between the representation sphere of $\mathbb{H}$
    and the unit sphere in $\mathbb{R} \oplus \mathbb{H}$
    (by stereographic projection, e.g. \cite[p. 2]{MP04}):
    $
      S^{\mathbb{H}}
      \;\simeq_G\;
      S(\mathbb{R} \oplus \mathbb{H})
      \,;
    $

    \item
    the action of $\mathrm{Sp}(1)$ on $\mathbb{H} \simeq_{{}_{\mathbb{R}}} \mathbb{R}^4$
    is through the defining action of $\mathrm{SO}(4)$
    (since the quaternions are a normed division algebra, so that left multiplication
    by unit-norm quaternions $q \in \mathrm{Sp}(1) = S(\mathbb{H})$ preserves the norm),
    whence the action of $\mathrm{Sp}(1)$ on $\mathbb{R} \oplus \mathbb{H} \simeq_{{}_{\mathbb{R}}}$
    is through $\mathrm{SO}(5)$ (e.g. \cite[Rem. A.8]{HSS18});

    \item
    by the Hopf degree theorem and the de Rham theorem,
    the generator of $\mathbb{Z} \simeq H^4(S^4, \mathbb{Z})$
    is represented by the standard volume form on $S^4$, which is evidently
    preserved by the $\mathrm{SO}(5)$-action (e.g. \cite[p. 31]{BSS18}).

  \end{itemize}

  \noindent
  Therefore (e.g. \cite[Thm. 1.14]{HatcherSpectralSequences}),
  we have a cohomological Serre spectral sequence of the form
   \vspace{-2mm}
  \begin{equation}
    \label{SSSForIntegralCohomologyOfHomotopyQuotientOf4SphereByFiniteSubgroupOfSU2}
    E_2^{p,q}
    \;=\;
    H^p
    \big(
      B G;
      \,
      H^q(S^4, \mathbb{Z})
    \big)
    \;\;
      \Rightarrow
    \;\;
    H^{p + q}
    \big(
      S^\mathbb{H} \!\sslash\! G;
      \,
      \mathbb{Z}
    \big)
    \,.
  \end{equation}

  \vspace{-2mm}
  \noindent
  From
  Prop. \ref{IntegralGroupCohomologyOfFiniteSubgroupsOfSU2}
  with
  Ex. \ref{IncarnationsOfGroupCohomology},
  it follows that its second page looks as follows:

  \vspace{-5mm}
  $$
    E_2^{\bullet, \bullet}
    \;\;\;=\;\;\;
    \begin{tikzcd}[row sep={between origins, 21pt}, column sep={between origins, 21pt}]
      {}
      &
      {}
      \\[-12pt]
      {}
          &
      {}
      \\
      &
      \vdots
      &
      \vdots
      &
      \vdots
      &
      \vdots
      &
      \vdots
      &
      \vdots
      &
      \vdots
      &
      \vdots
      \\
      {}
      \ar[
        ddddddrrrrrr,
        -,
        line width=19pt,
        shift left=1pt,
        opacity=.15,
        color=blue
      ]
      &
      0
      &
      0
      &
      0
      &
      0
      &
      0
      &
      0
      &
      0
      &
      0
      &
      \cdots
      \\
      &
      \mathbb{Z}
      \ar[
        drr
      ]
      &
      0
      &
      G^{\mathrm{ab}}
      \ar[
        drr
      ]
      &
      0
      &
      \mathbb{Z}_{\left\vert G\right\vert}
      \ar[
        drr
      ]
      &
      0
      &
      G^{\mathrm{ab}}
      \ar[
        drr
      ]
      &
      0
      &
      \cdots
      \\
      &
      0
      &
      0
      &
      0
      &
      0
      &
      0
      &
      0
      &
      0
      &
      0
      &
      \cdots
      \\
      &
      0
      &
      0
      &
      0
      &
      0
      &
      0
      &
      0
      &
      0
      &
      0
      &
      \cdots
      \\
      &
      0
      &
      0
      &
      0
      &
      0
      &
      0
      &
      0
      &
      0
      &
      0
      &
      \cdots
      \\
      \ar[
        rrrrrrrrrr,
        color=lightgray,
        shift right=9pt,
        "B G"{below, color=black, pos=.95}
      ]
      &
      \mathbb{Z}
      &
      0
      &
      G^{\mathrm{ab}}
      &
      0
      &
      \mathbb{Z}_{\left\vert G\right\vert}
      &
      0
      &
      G^{\mathrm{ab}}
      &
      0
      &
      \cdots
      &
      {}
      \\
      &
      \ar[
        uuuuuuuuu,
        color=lightgray,
        shift left=5pt,
        "S^4"{left, color=black, pos=.93}
      ]
      &{}&{}&{}&{}&{}
    \end{tikzcd}
  $$

  \vspace{-2mm}
\noindent  This shows that every differential on this and on every following page
  has zero domain or zero codomain, so that the spectral sequence
  collapses already on this page:
  $E^{\bullet, \bullet}_\infty \,\simeq\, E^{\bullet,\ \bullet}_2$.
  By its convergence
  \eqref{SSSForIntegralCohomologyOfHomotopyQuotientOf4SphereByFiniteSubgroupOfSU2},
  this means that we have a short exact sequence of the form
  \eqref{ShortExactSequenceOfCohomologyOfBorelConstructionOfADEGroupActingOn4Sphere}.
  Since $\mathrm{Ext}(\mathbb{Z},-) = 0$, the claim
  \eqref{Integral4CohomologyOfHomotopyQuotientOf4SphereByFiniteSubgroupOfSU2}
  follows.
\end{proof}

\begin{proposition}[Shifted integral characteristic 4-class for equivariant 4-Cohomotopy {(\cite[\S 3.4]{FSS19TwistedCohomotopy})}]
  \label{ShiftedIntegralCharacteristic4ClassForEquivariant4Cohomotopy}
  There exists an integral class
  \vspace{-1mm}
  \begin{equation}
    \label{ShiftedIntegralUniversal4Class}
    \widetilde \Gamma_4
    \;\coloneqq\;
    \tfrac{1}{2}\rchi_4 + \tfrac{1}{4}p_1
    \;\;
    \in
    \;\;
    H^4
    \big(
      \shape
        \HomotopyQuotient
          { S^{\mathbb{H}} }
          { \mathrm{Sp}(2) }
      ;\,
      \mathbb{Z}
    \big)
  \end{equation}
  whose rational image is the sum of half the Euler class with
  1/4th of the Pontrjagin class, hence whose restriction to the 4-sphere
  fiber is the volume class
  \vspace{-5mm}
  \begin{equation}
    \label{ShiftedIntegral4ClassRestrictsToVolumeClassOn4Sphere}
    \begin{tikzcd}[column sep=1pt, row sep=small]
      H^4
      \big(
        \shape S^4;\,
        \mathbb{Z}
      \big)
      &\ni&
      \scalebox{0.8}{$
        {[\mathrm{vol}_{S^4}]}
      $}
      \\
      H^4
      \Big(
        \shape
        \big(
          S^{\mathbb{H}} \!\sslash\! \mathrm{Sp}(2)
        \big);
        \,
        \mathbb{Z}
      \Big)
      \ar[u]
      &
      \ni
      &
      \scalebox{0.8}{$
         \widetilde{\Gamma}_4
      $}
      \ar[
        u,
        |->
      ]
    \end{tikzcd}
  \end{equation}
\end{proposition}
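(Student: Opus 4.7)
\medskip

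\noindent
\textbf{Proof proposal.} The plan is to construct $\widetilde{\Gamma}_4$ by combining (i) a spectral sequence argument showing that the volume class on the fiber lifts integrally to the Borel construction, with (ii) an explicit identification of such a lift in terms of the rank-4 vertical tangent bundle, whose structure group refines to $\mathrm{Spin}(4) \simeq \mathrm{Sp}(1) \times \mathrm{Sp}(1)$ by virtue of $\mathrm{Sp}(2) \simeq \mathrm{Spin}(5)$ acting on $S^{\mathbb{H}}$ via its double cover onto $\mathrm{SO}(5)$.

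First I would analyze the Serre spectral sequence of the Borel fibration
$$
  \begin{tikzcd}
    S^{\mathbb{H}} \ar[r] &  S^{\mathbb{H}} \!\sslash\! \mathrm{Sp}(2) \ar[r] & B\mathrm{Sp}(2)\;.
  \end{tikzcd}
$$
Since $\mathrm{Sp}(2)$ is simply connected, the local coefficient system is trivial; moreover the action on $H^4(S^4;\mathbb{Z})$ factors through $\mathrm{SO}(5)$ and preserves the volume form. The ring $H^\bullet(B\mathrm{Sp}(2);\mathbb{Z})$ is concentrated in degrees divisible by $4$, so in particular $H^5(B\mathrm{Sp}(2);\mathbb{Z}) = 0$ and the differential $d_5 : E_5^{0,4} \to E_5^{5,0}$ vanishes. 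Hence the restriction map $H^4(S^{\mathbb{H}}\!\sslash\!\mathrm{Sp}(2);\mathbb{Z}) \twoheadrightarrow H^4(S^4;\mathbb{Z})$ is surjective, so the volume class admits an integral lift; this already secures the existence in \eqref{ShiftedIntegral4ClassRestrictsToVolumeClassOn4Sphere}. It remains to single out a lift with the claimed rational expression.

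Next I would produce an explicit such lift using the vertical tangent bundle $T_{\mathrm{v}} \to S^{\mathbb{H}}\!\sslash\!\mathrm{Sp}(2)$, which is a rank-$4$ oriented real vector bundle whose structure reduces to $\mathrm{Spin}(4) \simeq \mathrm{Sp}(1)\times\mathrm{Sp}(1)$ (the stabilizer of a point in the double cover $\mathrm{Spin}(5) \to \mathrm{SO}(5)$). Writing $T_{\mathrm{v}} \otimes \mathbb{C} = V_+ \oplus V_-$ for the two associated rank-$2$ complex bundles coming from the two $\mathrm{SU}(2)$-factors, the standard identities
$$
  \tfrac{1}{2}p_1(T_{\mathrm{v}}) = -\big(c_2(V_+) + c_2(V_-)\big),
  \qquad
  \rchi_4(T_{\mathrm{v}}) = -\big(c_2(V_+) - c_2(V_-)\big)
$$
(with signs fixed by a choice of orientation) give
$$
  \tfrac{1}{4}p_1(T_{\mathrm{v}}) + \tfrac{1}{2}\rchi_4(T_{\mathrm{v}}) = -c_2(V_+) \;\in\; H^4\big(S^{\mathbb{H}}\!\sslash\!\mathrm{Sp}(2);\mathbb{Z}\big),
$$
a manifestly integral class. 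Setting $\widetilde{\Gamma}_4 \coloneqq -c_2(V_+)$ (or $+c_2(V_-)$, which differs by an element pulled back from $B\mathrm{Sp}(2)$ of the form $\tfrac{1}{2}p_1$ on the universal rank-$5$ bundle) then gives an integral class whose rationalization is $\tfrac{1}{2}\rchi_4 + \tfrac{1}{4}p_1$. Restricting to a fiber, the universal rank-$5$ bundle pulls back trivially, so $p_1(T_{\mathrm{v}})|_{S^4} = p_1(TS^4) = 0$ (since $TS^4$ is stably trivial), while $\rchi_4(T_{\mathrm{v}})|_{S^4} = \rchi(TS^4) = 2\,[\mathrm{vol}_{S^4}]$, yielding $\widetilde{\Gamma}_4|_{S^4} = [\mathrm{vol}_{S^4}]$ as required in \eqref{ShiftedIntegral4ClassRestrictsToVolumeClassOn4Sphere}.

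The main obstacle will be bookkeeping of signs and conventions in identifying the $\mathrm{Spin}(4)$-equivariant classes with half-integral combinations of $p_1$ and $\rchi_4$ of the underlying $\mathrm{SO}(4)$-bundle, and in checking that the reduction of structure group of $T_{\mathrm{v}}$ from $\mathrm{SO}(4)$ to $\mathrm{Spin}(4)$ is globally compatible with the chosen $\mathrm{Spin}(5)$-structure (rather than merely existing locally). Once these are pinned down, the rational formula for $\widetilde{\Gamma}_4$ in \eqref{ShiftedIntegralUniversal4Class} follows by writing $\tfrac{1}{2}\rchi_4 + \tfrac{1}{4}p_1 = -c_2(V_+)$ in $H^4(\,\cdot\,;\mathbb{Q})$, and the integral lift is the class $-c_2(V_+)$ itself. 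A sanity check via the rational Gysin sequence for the rank-$5$ sphere bundle then confirms that such a lift is unique up to the pullback of $\tfrac{1}{4}p_1$ from $B\mathrm{Sp}(2)$, matching the statement that $\widetilde{\Gamma}_4$ is the required universal shifted integral $4$-class.
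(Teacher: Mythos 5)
Your proposal is correct and is essentially the argument of the source that this paper cites for the statement (\cite[\S 3.4]{FSS19TwistedCohomotopy}): since $\mathrm{Sp}(2)\simeq\mathrm{Spin}(5)$ acts transitively on $S^{\mathbb{H}}\simeq S(\mathbb{R}\oplus\mathbb{H})$ with stabilizer $\mathrm{Spin}(4)\simeq\mathrm{Sp}(1)\times\mathrm{Sp}(1)$, one has $S^{\mathbb{H}}\!\sslash\!\mathrm{Sp}(2)\,\simeq\,B\mathrm{Spin}(4)$ with the vertical tangent bundle being the universal rank-4 bundle, so that $\tfrac{1}{2}\chi_4+\tfrac{1}{4}p_1$ is (up to sign conventions) the second Chern class of one chiral spinor bundle, hence integral, and restricts on the fiber to the generator because $\chi_4\mapsto 2[\mathrm{vol}_{S^4}]$, $p_1\mapsto 0$ and $H^4(S^4;\mathbb{Z})$ is torsion-free, so the rational computation pins down the integral restriction. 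Two inessential points: your closing uniqueness remark should say that lifts differ by the image of $H^4\big(B\mathrm{Sp}(2);\mathbb{Z}\big)$, whose generator pulls back to $\tfrac{1}{2}p_1$ (the class $\tfrac{1}{4}p_1$ is not integral on $B\mathrm{Sp}(2)$), and the preliminary Serre-spectral-sequence step, while correct, is redundant once the explicit class $-c_2(V_+)$ is exhibited.
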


\begin{proposition}[Universal shifted integral 4-flux restricts to generator on ADE-singularity]
  \label{UniversalShiftedIntegral4FluxRestrictsToGeneratorOnADESingularity}
  For a finite subgroup of $\mathrm{Sp}(1)$ (Prop. \ref{FiniteSubgroupsOfSU2}),
  embedded as the {\it left diagonal entry} into $\mathrm{Sp}(2)$
  \vspace{-3mm}
  \begin{equation}
    \begin{tikzcd}[row sep=-3pt, column sep=2pt]
      G
      \ar[
        rr,
        hook,
        "i"
      ]
    \ar[
      rrrrrr,
      rounded corners,
      hook,
      to path={
           -- ([yshift=+9pt]\tikztostart.north)
           --node[above]{
               \scalebox{.7}{$j$}
             } ([yshift=+8pt]\tikztotarget.north)
           -- (\tikztotarget.north)}
    ]
      &&
      \mathrm{Sp}(1)
      \ar[
        rr,
        hook,
        "l"
      ]
      &&
      \mathrm{Sp}(1) \times \mathrm{Sp}(1)
      \ar[
        rr,
        hook
      ]
      &&
      \mathrm{Sp}(2)
      \\
     \scalebox{0.8}{$ q $}
      &\longmapsto&
    \scalebox{0.8}{$  q $}
      &\longmapsto&
    \scalebox{0.8}{$  (q,1) $}
      &\longmapsto&
     \scalebox{0.7}{$
      \begin{pmatrix}
          \raisebox{1.5pt}{q} \, \; 0
          \\
          0 \;  1
    \end{pmatrix}
    $}
    \end{tikzcd}
  \end{equation}

  \vspace{-3.5mm}
\noindent
  the pullback of $\widetilde \Gamma_4$ \eqref{ShiftedIntegralUniversal4Class}
  along $\shape (S^{\mathbb{H}} \sslash j)$
  is the element
  $(1,[1])$ according to Lem. \ref{TheIntegral4CohomologyOfHomotopyQuotientOf4SphereByADEGroup}:
  \vspace{-2mm}
  \begin{equation}
    \label{Unit4FluxOnADERepresentationSphere}
    \begin{tikzcd}[row sep=-5pt]
      \mathbb{Z}
      \oplus
      \mathbb{Z}_{\left\vert G \right\vert}
      \ar[
        r,
        phantom,
        "\simeq"
      ]
      &[-6pt]
      H^4
      \big(
        \shape S^{\mathbb{H}} \sslash G;
        \,
        \mathbb{Z}
      \big)
      &&
      H^4
      \Big(
        \big(
          \shape S^{\mathbb{H}} \sslash \mathrm{Sp}(2)
        \big);
        \,
        \mathbb{Z}
      \Big).
      \ar[
        ll,
        "B j^\ast"{above}
      ]
      \\
      \scalebox{0.8}{$
        (1,[1])
      $}
      & &&
      \scalebox{0.8}{$
        \widetilde{\Gamma}_4
      $}
      \ar[
        lll,
        |->
      ]
    \end{tikzcd}
  \end{equation}
\end{proposition}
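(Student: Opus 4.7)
The plan is to compute the two coordinates of $\shape(S^{\mathbb{H}}\!\sslash\! j)^{*}(\widetilde{\Gamma}_{4})$ separately under the splitting $H^{4}(\shape(S^{\mathbb{H}}\!\sslash\! G);\mathbb{Z})\simeq\mathbb{Z}\oplus\mathbb{Z}_{|G|}$ of Lemma~\ref{TheIntegral4CohomologyOfHomotopyQuotientOf4SphereByADEGroup}. Since $G\subset\mathrm{Sp}(1)$ acts on $\mathbb{H}$ by left multiplication, it fixes $0\in\mathbb{H}$ and hence the point at infinity $\infty\in S^{\mathbb{H}}\simeq S(\mathbb{R}\oplus\mathbb{H})$; this yields a section $s\colon BG\to\shape(S^{\mathbb{H}}\!\sslash\! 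G)$ of $\rho$ whose induced $s^{*}$ retracts $\rho^{*}$ and splits the short exact sequence \eqref{ShortExactSequenceOfCohomologyOfBorelConstructionOfADEGroupActingOn4Sphere} as $b\mapsto(q^{*}b,\,s^{*}b)$.

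\medskip

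For the first coordinate, naturality of $q^{*}$ along the fiber inclusion together with \eqref{ShiftedIntegral4ClassRestrictsToVolumeClassOn4Sphere} immediately gives $q^{*}\shape(S^{\mathbb{H}}\!\sslash\! j)^{*}\widetilde{\Gamma}_{4}=[\mathrm{vol}_{S^{4}}]=1\in\mathbb{Z}$. For the second coordinate, I will factor the composite $\shape(S^{\mathbb{H}}\!\sslash\! j)\circ s$ through the fixed-point inclusion of the larger subgroup $\mathrm{Spin}(4)\subset\mathrm{Spin}(5)$: under the isomorphism $\mathrm{Sp}(2)\simeq\mathrm{Spin}(5)$, the block-diagonal embedding $\mathrm{Sp}(1)\times\mathrm{Sp}(1)\hookrightarrow\mathrm{Sp}(2)$ corresponds to $\mathrm{Spin}(4)\hookrightarrow\mathrm{Spin}(5)$, which acts on $\mathbb{R}\oplus\mathbb{H}$ fixing the $\mathbb{R}$-summand. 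Thus $\infty$ is fixed by all of $\mathrm{Sp}(1)\times\mathrm{Sp}(1)$, and the composite factors as
\begin{equation*}
  BG\xrightarrow{\ Bi\ }B\mathrm{Sp}(1)\xhookrightarrow{\ Bl\ }B(\mathrm{Sp}(1)\times\mathrm{Sp}(1))\xrightarrow{\ s_{\infty}\ }\shape\bigl(S^{\mathbb{H}}\!\sslash\!\mathrm{Sp}(2)\bigr),
\end{equation*}
where $s_{\infty}^{*}$ pulls the vertical tangent bundle back to the tangent representation $T_{\infty}S^{\mathbb{H}}\cong\mathbb{H}$ of $\mathrm{Sp}(1)\times\mathrm{Sp}(1)$ acting through $\mathrm{SO}(4)$ by $(q_{1},q_{2})\cdot v=q_{1}vq_{2}^{-1}$.

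\medskip

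A standard characteristic-class calculation exploits the identification $\mathbb{H}\otimes_{\mathbb{R}}\mathbb{C}\simeq V_{L}\boxtimes V_{R}$ of complex $\mathrm{Sp}(1)\times\mathrm{Sp}(1)$-representations (with $V_{L/R}\simeq\mathbb{C}^{2}$ the two fundamental representations, $c_{2}(V_{L/R})=c_{L/R}$) and the splitting principle to give $c_{2}(V_{L}\boxtimes V_{R})=2(c_{L}+c_{R})$ and $c_{4}(V_{L}\boxtimes V_{R})=(c_{L}-c_{R})^{2}$; hence $s_{\infty}^{*}(p_{1})=\pm 2(c_{L}+c_{R})$ and $s_{\infty}^{*}(\rchi_{4})=\pm(c_{L}-c_{R})$ (the latter from $e^{2}=p_{2}$, with signs pinned down by compatibility with Step~1). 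The combination $\tfrac{1}{2}\rchi_{4}+\tfrac{1}{4}p_{1}$ then evaluates to $\pm c_{L}$ in $H^{4}(B(\mathrm{Sp}(1)\times\mathrm{Sp}(1));\mathbb{Z})$, which under $(Bl)^{*}$ (sending $c_{L}\mapsto c$ and $c_{R}\mapsto 0$, where $c$ generates $H^{4}(B\mathrm{Sp}(1);\mathbb{Z})\simeq\mathbb{Z}$) restricts to a generator $\pm c$. Finally, I will invoke the Serre spectral sequence of $\mathrm{Sp}(1)/G\to BG\to B\mathrm{Sp}(1)$: since $\mathrm{Sp}(1)\simeq S^{3}$ and $\mathrm{Sp}(1)/G\to\mathrm{Sp}(1)$ is the $|G|$-fold universal cover, the transgression sends the fundamental class of the closed oriented 3-manifold $\mathrm{Sp}(1)/G$ to $|G|\cdot c$, so that $(Bi)^{*}\colon\mathbb{Z}\to H^{4}(BG;\mathbb{Z})\simeq\mathbb{Z}_{|G|}$ is the quotient map and $c$ restricts to a generator, yielding $[1]$ for the second coordinate. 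The main obstacle will be carefully tracking orientation conventions underlying the signs of $\rchi_{4}$ and $p_{1}$ at the fixed point; these are pinned down by the compatibility constraint that the fiber restriction in Step~1 return $+1$ rather than $-1$.
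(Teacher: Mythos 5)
Your overall route is the same as the paper's — first coordinate from the fiber restriction \eqref{ShiftedIntegral4ClassRestrictsToVolumeClassOn4Sphere}, second coordinate by restricting over $B\mathrm{Sp}(1)$ and then to $BG$ as in Prop.~\ref{PullbackOfSecondChernClassToADESubgroup} — but you make explicit what the paper's one-diagram chase leaves implicit: the choice of splitting via the fixed-point section, and the actual value of $\widetilde{\Gamma}_4$ at a fixed point computed from the $\mathrm{Spin}(4)\simeq\mathrm{Sp}(1)\times\mathrm{Sp}(1)$ tangent representation; you also re-derive the input of Prop.~\ref{PullbackOfSecondChernClassToADESubgroup} by a Serre spectral sequence argument for $\mathrm{Sp}(1)/G\to BG\to B\mathrm{Sp}(1)$ rather than citing it, which is fine. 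Your Chern-root identities are correct ($c_2(V_L\boxtimes V_R)=2(c_L+c_R)$, $c_4(V_L\boxtimes V_R)=(c_L-c_R)^2$), but note that $p_1$ carries no sign ambiguity: $p_1=-c_2\big((-)\otimes_{\mathbb{R}}\mathbb{C}\big)=-2(c_L+c_R)$, so with $c_{L/R}=c_2(V_{L/R})$ the candidate values of $\tfrac12\chi_4+\tfrac14 p_1$ at the fixed point are $-c_L$ or $-c_R$.

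The genuine gap is your final step: the claim that the remaining ambiguity is "pinned down by compatibility with Step~1" cannot work, and it is exactly the point on which the conclusion hinges. The fiber restriction sees only the Euler term and returns the positive generator for \emph{either} resolution of the $c_L$-versus-$c_R$ ambiguity (equivalently: either choice of pole, or of orientation of the vertical tangent bundle), so Step~1 imposes no constraint. Concretely, both $G$-fixed sections pull the $\tfrac14 p_1$-term back to $\tfrac14 p_1(\mathbb{H}_L)=-\tfrac12 c_2(V)$, while they pull $\tfrac12\chi_4$ back to $\pm\tfrac12 c_2(V)$ with \emph{opposite} signs; hence the two fixed points carry the values $0$ and $-c_2(V)$ (the distinguished generator $\tfrac14 p_1$ of $H^4(B\mathrm{Sp}(1))$) in some order — in your notation, $-c_R$ restricts to $0$ along $Bl$ while $-c_L$ restricts to the generator. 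If the pole you call $\infty$ happens to be the one carrying $0$, your argument yields $(1,[0])$ rather than $(1,[1])$, so this is not an irrelevant global sign but the substance of the torsion claim. To close the proof you must actually track which pole $\infty$ is relative to the orientation conventions entering the definition of $\widetilde{\Gamma}_4$ in Prop.~\ref{ShiftedIntegralCharacteristic4ClassForEquivariant4Cohomotopy} (i.e.\ in \cite{FSS19TwistedCohomotopy}) and the identification $S^{\mathbb{H}}\simeq S(\mathbb{R}\oplus\mathbb{H})$, or else prove the invariant statement (one of the two fixed points carries the generator, the other zero) and identify which one the Proposition refers to. To be fair, the paper's own proof also leaves this orientation bookkeeping implicit; your fixed-point computation, once that bookkeeping is actually carried out, is the honest completion of it.
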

\begin{proof}
  Consider the corresponding morphism of Borel constructions
  \eqref{BorelConstructionAdjunction}
  and observe the following pullbacks of cohomology generators
  through this diagram:
  \vspace{-3mm}
  \begin{equation}
    \begin{tikzcd}[row sep=14pt]
      \scalebox{0.8}{$
        (1,[0])
      $}
      &[-20pt]
      &&
      &[-20pt]
      \scalebox{0.8}{$
        {[\mathrm{vol}_{S^4}]}
      $}
      \ar[
        llll,
        |->,
        "\scalebox{.8}{
          Lem. \ref{TheIntegral4CohomologyOfHomotopyQuotientOf4SphereByADEGroup}
        }"{above}
      ]
      \\[-20pt]
      &
      S^4
        \ar[
          rr,-,
          shift left=1pt,
        ]
        \ar[
          rr,-,
          shift right=1pt,
        ]
        \ar[d]
        &&
      S^4
      \ar[d]
      \\
      \scalebox{0.8}{$
        (0,[1])
      $}
      &
      \shape
      \big(
        S^{\mathbb{H}}
          \!\sslash\!
        G
      \big)
      \ar[
        rr,
        "
          \scalebox{.8}{$
            \shape(S^{\mathbb{H}} \!\sslash\! j )
          $}
        "
      ]
      \ar[d]
      &&
      \shape
      \big(
        S^{\mathbb{H}}
          \!\sslash\!
        \mathrm{Sp}(1)
      \big)
      \ar[d]
      &
      \scalebox{0.8}{$
        \widetilde{\Gamma}_4
      $}
      \ar[
        uu,
        |->,
        "\scalebox{.8}{\eqref{ShiftedIntegral4ClassRestrictsToVolumeClassOn4Sphere}}"{right}
      ]
      \\
      &
      B G
      \ar[
        rr,
        "Bi"
      ]
      &&
      B\mathrm{Sp}(1)
      \\[-20pt]
      \scalebox{0.8}{$ [1] $}
      \ar[
        uu,
        |->,
        "\scalebox{.8}{
          Lem. \ref{TheIntegral4CohomologyOfHomotopyQuotientOf4SphereByADEGroup}
        }"{left}
      ]
      & && &
      \scalebox{0.8}{$
        \tfrac{1}{4}p_1
      $}
      \ar[
        llll,
        |->,
        "\scalebox{.8}{Prop. \ref{PullbackOfSecondChernClassToADESubgroup}}"
      ]
    \end{tikzcd}
  \end{equation}

\vspace{-8mm}
\end{proof}

\begin{remark}[Background integral 4-flux at flat ADE-singularity]
  \label{BackgroundIntegral4FluxAtFlatADESingularity}
  Noticing that $G \acts \, S^{\mathbb{H}}$ has  a fixed point
  (in fact two), hence that  $S^{\mathbb{H}} \!\sslash\! G \xrightarrow{\rho} B G$
  has a section, this means that in the vicinity of any $G$-orbifold-singularity
  $\mathcal{X} \simeq  \mathbb{R}^n \!\sslash\! G$,
  hence with $\shape \mathcal{X} \,\simeq\, \HomotopyQuotient{ \ast }{G} $,
  there is
  integral {\it background} C-flux of
  value $[1] \in H^4(B G; \mathbb{Z})$
  \vspace{-2mm}
  \begin{equation}
    \label{BackgroundOrbiCFlux}
    \begin{tikzcd}[row sep=small, column sep=large]
      \HomotopyQuotient
        { \ast }
        { G }
      \ar[
        rr,
        dashed,
        "
          \mbox{
            \tiny
            \color{greenii}
            \bf
            \begin{tabular}{c}
              unique cocycle in
              \\
              equivariant
              4-Cohomotopy
            \end{tabular}
          }
        "{below}
      ]
      \ar[
        d,
        "{\sim}"{sloped}
      ]
      \ar[
      rrrr,
      rounded corners,
      to path={
           -- ([yshift=+9pt]\tikztostart.north)
           --node[above]{
               \scalebox{.7}{$
                 (0,[1])
               $}
             } ([yshift=+10pt]\tikztotarget.north)
           -- (\tikztotarget.north)}
      ]
      &[+20pt]
      &
      S^{\mathbb{H}} \!\sslash\! \mathrm{Sp}(2)
      \ar[
        d,
        "\, \rho"
      ]
      \ar[
        rr,
        "\widetilde \Gamma_4"
      ]
      &&
      \mathbf{B}^4 \mathbb{Z}
      \\
      \mathbf{B}G
      \ar[rr]
      &&
      \mathbf{B}\mathrm{Sp}(2)
    \end{tikzcd}
  \end{equation}

  \vspace{-2mm}
  \noindent
  This is hence the background value of
  ``M-theoretic discrete torsion'',
  in the terminology of \cite{Sharpe00}\cite{Seki01}, see also \cite[\S 4.6]{dBDHKMMS02}.
  The 2-gerbe over $\HomotopyQuotient{\ast}{G}$ classified by this background 4-flux
  is (the delooping of) the universal {\it Platonic 2-group}-extension
  of $G$, in the terminology of \cite{EpaGanter16}.
 However, the proper definition of brane charge {\it localized at} a singularity (i.e. disregarding charges that are ``escaping to infinity'') is given by
 the cohomology/cohomotopy of the {\it one-point compactification} of the transverse space $\HomotopyQuotient{ \mathbb{R}^n }{G}$ to the brane
 \cite[(8)]{SS19Tad}\cite[\S 2.1]{SS19Conf}\cite[(12)]{SS21MF}. For  flat M5-branes at an ADE-singularity this is again the 4-sphere orbifold
 $\HomotopyQuotient{ S^{\mathbb{H}} }{G}$ \cite{SS19Tad}, and the unit integral brane charge of a single one of these is \eqref{Unit4FluxOnADERepresentationSphere}:
 \vspace{-2mm}
  \begin{equation}
    \label{LocalizedBackgroundOrbiCFlux}
    \begin{tikzcd}[row sep=small, column sep=large]
      \HomotopyQuotient
        { S^{\mathbb{H}} }
        { G }
      \ar[
        rr,
        dashed,
        "
          \mbox{
            \tiny
            \color{greenii}
            \bf
            \begin{tabular}{c}
              unit cocycle in
              \\
              equivariant
              4-Cohomotopy
            \end{tabular}
          }
        "{below}
      ]
      \ar[d]
      \ar[
      rrrr,
      rounded corners,
      to path={
           -- ([yshift=+9pt]\tikztostart.north)
           --node[above]{
               \scalebox{.7}{$
                 (1,[1])
               $}
             } ([yshift=+10pt]\tikztotarget.north)
           -- (\tikztotarget.north)}
      ]
      &[+20pt]
      &
      S^{\mathbb{H}} \!\sslash\! \mathrm{Sp}(2)
      \ar[
        d,
        "\, \rho"
      ]
      \ar[
        rr,
        "\widetilde \Gamma_4"
      ]
      &&
      \mathbf{B}^4 \mathbb{Z}
      \\
      \mathbf{B}G
      \ar[rr]
      &&
      \mathbf{B}\mathrm{Sp}(2)
    \end{tikzcd}
  \end{equation}
\end{remark}

\begin{remark}[{\bf Conclusion}.]
\label{ConclusionOnMBraneChargeTwist}
With this we have finished compiling the ingredients for the construction/application indicated at the end of \cref{IntroductionAndOverview}:
The unit integral background 4-flux in M-theory localized at an ADE-singularity, as  predicted by {\it Hypothesis H}, is \eqref{LocalizedBackgroundOrbiCFlux};
and its double dimensional reduction
\eqref{ReductionOfFourCohomotopyCharges}
to 3-flux according to \eqref{ReductionOfFourCohomotopyCharges} is, by \eqref{TransgressionMapViaCyclification}, the corresponding trangression class.
This is the twist for ADE-equivariant quasi-elliptic cohomology to be used in \eqref{QuasiEllitpicCohomologyOfFourSphere} for measuring M-brane charge in equivariant quasi-elliptic cohomology and thus potentially relating to the elliptic genus of the M5-brane.
\end{remark}

\medskip

\appendix

\section{Appendix: Technical background}
\label{TechnicalMaterial}

For use in the main text, here we recall and reference some technical background:

\noindent
- on basic facts of simplicial homotopy theory (\cref{SomeSimplicialHomotopyTheory})

\noindent
- on basic notions in cohesive $\infty$-topos theory (\cref{NotionsFromCohesiveInfinityToposTheory}).

\subsection{Some simplicial homotopy theory}
\label{SomeSimplicialHomotopyTheory}

After recalling the combinatorics of products of simplicial sets in streamlined form,
here we compile some technical background on simplicial groups and their actions and prove some useful facts that do not seem to be easily citable from the literature. For more along these lines see \cite[\S 3.1.2]{SS21EPB}, whose notation we follow. For general background on simplicial homotopy theory see \cite{GoerssJardine99}\cite[\S 1]{Rezk22}.

\medskip

\noindent
{\bf Categories and simplicial sets.} Just to set up our notation:

\begin{notation}[Mapping objects]
  \label{MappingObjects}
 $\,$

  \noindent
  \begin{itemize}[leftmargin=.5cm]
  \item[--]
  Generally, $\Homs{}{-}{-}$ is to denote hom-sets in a given category, while $\Maps{}{-}{-}$ denotes internal hom-objects (for cartesian closed categories), defined to yield natural bijections:
  \begin{equation}
    \label{InternalHomAdjunction}
    \Homs{\big}
      { X \times Y }
      { Z }
    \xleftrightarrow
      [\sim]
      {\; \widetilde{(-)} \;}
    \Homs{}
      { X }
      {
        \Maps{}
          { Y }
          { Z }
      }
    \,.
  \end{equation}
  \item[--]
  In particular, for a pair of (small) categories\footnote{To be pedantic, in the present context by a ``category'' we mean, as is usual, a {\it strict} category, i.e. with a fixed set of objects (which is more information than retained in the equivalence class). It is (only) on these strict categories that the simplicial nerve (Ntn. \ref{NotationForSimplicialSets}) is defined.} $\mathcal{X}, \mathcal{Y} \,\in\, \Categories$, the notation $\Homs{}{\mathcal{C}}{\mathcal{D}}$ denotes the plain set of functors between them, while $\Maps{}{\mathcal{X}}{\mathcal{Y}}$ denotes the category of such functors with natural transformations between them, schematically:
  \vspace{-5mm}
  \begin{equation}
    \label{FunctorCategory}
    \Homs{}
      { \mathcal{X} }
      { \mathcal{Y} }
    \;=\;
    \big\{
      \mathcal{X}
      \xrightarrow{\;\;\;\; F \;\;\;\;}
      \mathcal{Y}
    \big\}
    \,,
    \hspace{1cm}
    \Maps{}
      { \mathcal{X} }
      { \mathcal{Y} }
    \;=\;
    \Bigg\{\!\!\!\!
    \begin{tikzcd}
     \mathcal{X}
     \ar[
       rr,
       bend left=25pt,
       "{ F }",
       "{  }"{swap, name=s}
     ]
     \ar[
       rr,
       bend right=25pt,
       "{ F' }"{swap},
       "{  }"{name=t}
     ]
     \ar[
       from=s,
       to=t,
       Rightarrow,
       shorten=-1pt,
       "{ \phi }"{pos=.4}
     ]
     &&
     \mathcal{Y}
    \end{tikzcd}
   \!\!\!\! \Bigg\}
    \,.
  \end{equation}

  \vspace{-2mm}
  \item[--]
  We write
  \begin{equation}
    \label{EvaluationMap}
    \mathrm{ev}^A_X
    \;:=\;
    \widetilde{
      \mathrm{id}_{
        \Maps{}{X}{A}
      }
    }
    \;:\;
      X \times \Maps{}{ X }{ A }
      \xrightarrow{\phantom{--}}
      A
  \end{equation}
  for the {\it evaluation map} on these mapping objects, i.e. the adjunct \eqref{InternalHomAdjunction} of the identity on $\Maps{}{X}{A}$.

  Notice that the naturality of \eqref{InternalHomAdjunction} implies that adjunct of the application of the mapping
  object functor to a morphism equals the pre-composition of that morphism with the evaluation map:
  \vspace{-2mm}
  \begin{equation}
    \label{AdjunctOfMapsViaEvaluation}
    f : A\to B
    \hspace{30pt}
    \vdash
    \hspace{30pt}
    \widetilde{\Maps{}{X}{f}}
    \;=\;
    f \circ \mathrm{ev}^A_X
    \;:\;
    X \times \Maps{}{X}{A}
    \xrightarrow{\phantom{--}}
    B\;.
  \end{equation}
  \end{itemize}
\end{notation}

\begin{notation}[Notation for simplicial sets]
\label{NotationForSimplicialSets}
We write, essentially as usual:

\begin{itemize}[leftmargin=.5cm]
\setlength\itemsep{6pt}
\item[--] $[n] := \{ 0 \to 1 \to \cdots \to n \}$ for the category free on a sequence of $n$ composable morphisms,
for $n \in \mathbb{N}$;

so that functors $[n] \xrightarrow{\;}\mathcal{C}$ may be identified with paths of $n$ composable morphisms in $\mathcal{C}$;

\smallskip
\item[--]
\begin{itemize}
\item[$\bullet$]
$d^i_n \;\; :\, [n] \xrightarrow{\;} [n+1]$
\hspace{10pt}
$
  \begin{tikzcd}[
    column sep=18pt
  ]
    0
    \ar[r]
    \ar[
      r,
      -,
      line width=5pt,
      blue,
      opacity=.2
    ]
    &
    1
    \ar[r]
    \ar[
      r,
      -,
      line width=5pt,
      blue,
      opacity=.2
    ]
    &
    \cdots
    \ar[r]
    \ar[
      r,
      -,
      line width=5pt,
      blue,
      opacity=.2
    ]
    &
    i-1
    \ar[r]
    \ar[
      rr,
      bend right=20
    ]
    \ar[
      rr,
      -,
      line width=5pt,
      blue,
      opacity=.2,
      bend right=20
    ]
    &
    i
    \ar[r]
    &
    i+1
    \ar[r]
    \ar[
      r,
      -,
      line width=5pt,
      blue,
      opacity=.2
    ]
    &
    \cdots
    \ar[r]
    \ar[
      r,
      -,
      line width=5pt,
      blue,
      opacity=.2
    ]
    &
    n
    \ar[r]
    \ar[
      r,
      -,
      line width=5pt,
      blue,
      opacity=.2
    ]
    &
    n+1 \mathrlap{\;,}
  \end{tikzcd}
$

\item[$\bullet$]
$s^i_{n+1} \!: [n+1] \xrightarrow{\;} [n]$
\hspace{8pt}
$
  \begin{tikzcd}[
    column sep=18pt,
    row sep= 8pt
  ]
    0
    \ar[r]
    \ar[
      r,
      line width=5pt,
      -,
      blue,
      opacity=.2
    ]
    &
    1
    \ar[r]
    \ar[
      r,
      line width=5pt,
      -,
      blue,
      opacity=.2
    ]
    &
    \cdots
    \ar[r]
    \ar[
      r,
      line width=5pt,
      -,
      blue,
      opacity=.2
    ]
    &
    i
    \ar[d, Rightarrow, -]
    \ar[
      d,
      line width=5pt,
      -,
      blue,
      opacity=.2
    ]
    \\
    &&&
    i
    \ar[r]
    \ar[
      r,
      line width=5pt,
      -,
      blue,
      opacity=.2
    ]
    &
    \cdots
    \ar[r]
    \ar[
      r,
      line width=5pt,
      -,
      blue,
      opacity=.2
    ]
    &
    n-1
    \ar[r]
    \ar[
      r,
      line width=5pt,
      -,
      blue,
      opacity=.2
    ]
    &
    n \mathrlap{\;,}
  \end{tikzcd}
$

\item[$\bullet$]
$\iota_n \;\;: \;\;\;[n] \xrightarrow{\sim} [n]$
\hspace{21pt}
$
  \begin{tikzcd}[
    column sep=18pt
  ]
    0
    \ar[r]
    \ar[
      r,
      -,
      line width=5pt,
      blue,
      opacity=.2
    ]
    &
    1
    \ar[r]
    \ar[
      r,
      -,
      line width=5pt,
      blue,
      opacity=.2
    ]
    &
    \cdots
    \ar[r]
    \ar[
      r,
      -,
      line width=5pt,
      blue,
      opacity=.2
    ]
    &
    n-1
    \ar[r]
    \ar[
      r,
      -,
      line width=5pt,
      blue,
      opacity=.2
    ]
    &
    n
    \mathrlap{\;,}
  \end{tikzcd}
$

\end{itemize}

\medskip
for the {\it co-face-}, the {\it co-degeneracy-}, and the {\it identity-}functors between these categories,
shown on the right as paths of composable edges in the respective codomain (as such used around Lem. \ref{EvaluationMapOnNerveOfInertiaGroupoid}, cf. \hyperlink{FigureNonDegeneratedSimplicesInProductOfSimplices}{\it Figure NDS});

\item[--] $\Delta \xhookrightarrow{\;} \mathrm{Cat}$ for the full subcategory of the 1-category of (strict) categories on those of the form $[n]$, $n \in \mathbb{N}$;

\item[--]
$\SimplicialSets \,=\, \Presheaves(\Delta) \,=\, \Homs{}{\Delta^{\mathrm{op}}}{\Sets}$ for the category of simplicial sets;

\item[--]
$
  N
  \,:\,
    \mathrm{Cat}^{\mathrm{smll}}
    \xrightarrow{
        \mathcal{C}
        \,\mapsto\,
        \left(
          [n]
          \,\mapsto\,
          \Homs{}
            { [n] }
            { \mathcal{C} }
        \right)
    }
    \SimplicialSets
\;\;\;$
for the simplicial nerve functor;

\item[--] $\Delta[n] \,:=\, N[n] \,\in\, \SimplicialSets$ for the standard simplicial simplices.

\end{itemize}

\end{notation}

\begin{example}[Product categories]
  \label{ProductCategories}
  For $\mathcal{C}$, $\mathcal{D}$ a pair of small categories, their {\it product category} $\mathcal{C} \times \mathcal{D}$ has morphisms forming commuting squares as follows, for $f$ any morphism in $\mathcal{C}$ and $g$ any morphism in $\mathcal{D}$:
  $$
    \begin{tikzcd}[row sep=small, column sep=large]
      (c_1, d_1)
      \ar[
        rr,
        "{
          (f,\, \mathrm{id}_{d_1}) }"{above}
      ]
      \ar[
        dd,
        "{
          (\mathrm{id}_{c_1},\, g)
        }"{left}
      ]
      \ar[
        ddrr,
        "{
          (f,g)
        }"{sloped, description}
      ]
      &&
      (c_2, d_1)
      \ar[
        dd,
        "{
          (\mathrm{id}_{c_2},\, g)
        }"{right}
      ]
      \\
      \\
      (c_1,\, d_2)
      \ar[
        rr,
        "{ (f, \mathrm{id}_{d_2}) }"{below}
      ]
      &&
      (c_2,\, d_2)
    \end{tikzcd}
  $$
  This implies that natural transformations between functors $\mathcal{C} \xrightarrow{\;} \mathcal{D}$ are in bijection to
  functors $\mathcal{C} \times [1] \xrightarrow{\;} \mathcal{D}$, hence that the simplicial nerve of functor categories \eqref{FunctorCategory}
  is given by:
  \begin{equation}
    \label{NerveOfFunctorCategory}
    N \Maps{}{\mathcal{C}}{\mathcal{D}}
    \;\;:\;\;
    [n]
    \;\longmapsto\;
    \Homs{}
      { \mathcal{C} \times [n] }
      { \mathcal{D} }
    \,.
  \end{equation}
\end{example}

\medskip

\noindent
{\bf Products and mapping complexes of simplicial sets.}
Famously, the cartesian product of simplicial sets, despite its evident component-wise construction,

\vspace{-.3cm}
\begin{equation}
  \label{ProductOfSimplicialSets}
  X, Y \,\in\, \SimplicialSets
  \hspace{10pt}
  \vdash
  \hspace{10pt}
  \left\{
  \def\arraystretch{1.3}
  \begin{array}{l}
    X \times Y \,\in\, \SimplicialSets
    \\
    \big(
      X \times Y
    \big)_n
    \;=\;
    X_n \times Y_n
    \\
    s_k^{X\times Y}
    \;=\;
    \big(
      s_k^X
      ,\,
      s_k^Y
    \big)
    ,\,
    \;\;\;\;
    d_k^{X\times Y}
    \;=\;
    \big(
      d_k^X
      ,\,
      d_k^Y
    \big)
    \mathrlap{\,,}
  \end{array}
  \right.
\end{equation}
has a remarkably rich combinatorics, which we briefly recall as Prop. \ref{NonDegenerateSimpliciesInProductOfSimplicies}.
The phenomena dually induced on the mapping complexes (these we recall as Prop. \ref{SimplicialMappingComplex} below and use as Lemma \ref{EvaluationMapOnNerveOfInertiaGroupoid} in the main text) may not have received comparable attention yet; in any case this is what drives the proof of Thm. \ref{TransgressionInGroupCohomologyViaLooping} in the main text.

\medskip

The content of the following Prop. \ref{NonDegenerateSimpliciesInProductOfSimplicies} is classical (it is at least implicit in  the ``Eilenberg-MacLane formula'' \cite[(5.3)]{EilenbergMacLane53}, as reviewed for instance in \cite[\S 5.2.1 \& \S B.6]{Friedman20}, with exposition in \cite[\S 5]{Friedman08}) but we will have need for the following concise formulation (cf. also \href{https://kerodon.net/tag/00RH}[Kerodon, Ntn. 2.5.7.2]):

\begin{proposition}[Non-degenerate simplices in products product simplicial sets]
  \label{NonDegenerateSimpliciesInProductOfSimplicies}
  For $p, q \,\in\, \mathbb{N}$, the non-degenerate $p+q$-simplices
  in the Cartesian product of the standard $p$-simplex
  with the standard $q$-simplex,

  \vspace{-.3cm}
  $$
    \begin{tikzcd}
    \Delta[p + q]
    \ar[
      rr,
      "{
        (
          \sigma^{\mathrm{hor}},
          \sigma^{\mathrm{ver}}
        )
      }"
    ]
    &&
    \Delta[p] \times \Delta[q]
    \\
    \end{tikzcd}
  $$
  \vspace{-1cm}

  \noindent
  are precisely (cf. \hyperlink{FigureNonDegeneratedSimplicesInProductOfSimplices}{\it Figure NDS}):

  \begin{itemize}[leftmargin=.5cm]

  \item[--] as pairs of paths of edges $\big( \sigma^{\mathrm{hor}}[k,k+1],\, \sigma^{\mathrm{ver}}[k,k+1] \big)$,
  those for which at each step precisely one of the two edges in the pair is degenerate and the other a generating edge;

  \item[--] equivalently, the sequences of step numbers at which one or the other is non-trivial, these being the
  ``$(p,q)$-shuffles'' $(\mu, \nu)$ of elements $(0,1,2, \cdots, p+q-1)$:
  \vspace{-1mm}
  \begin{equation}
    \label{Shuffles}
    \begin{array}{ccc}
    \multicolumn{3}{c}{\proofstep{ \rm
      steps $k$ at which}}
    \\
    \proofstep{\rm $\sigma^{\mathrm{hor}}$ is non-trivial: }
    &&
    \proofstep{\rm $\sigma^{\mathrm{ver}}$ is non-trivial: }
    \\
  \proofstep{  $\mu_0 < \cdots < \mu_{p-1}$}
    &\phantom{A}&
 \proofstep{   $\nu_0 < \cdots < \nu_{q-1}$}
    \,,
    \end{array}
  \end{equation}
  \item[--] from which the given non-degenerate cell $\big(
    \sigma^{\mathrm{hor}}
    ,\,
    \sigma^{\mathrm{ver}}
  \big)$
  is recovered as:
  \vspace{-1mm}
  \begin{equation}
    \label{ShuffleFormulaForNonDegenerateSimplices}
    \def\arraystretch{1.3}
    \begin{array}{l}
    \sigma^{\mathrm{hor}}
    \,=\,
    s^{\nu}_{p+q}
    \;:=\;
    [p + q]
    \xrightarrow{\;\;
      s^{ \nu_0  }_{p+q}
    \;\;}
    [p + q - 1]
    \xrightarrow{\;\;
      s^{ \nu_1  }_{p+q-1}
    \;\;}
    [p + q - 2]
    \xrightarrow{\;\;\;}
      \cdots
    \xrightarrow{\;\;\;}
    [p + 1]
    \xrightarrow{\;\; s^{\nu_{q-1}}_{p+1} \;\;}
    [p]
    \\
    \sigma^{\mathrm{ver}}
    \,=\,
    s^{\mu}
    \; \;\;\;\; :=\;
    [p + q]
    \xrightarrow{\;\;
      s^{ \mu_0  }_{ p + q }
    \;\;}
    [p - 1 + q ]
    \xrightarrow{\;\;
      s^{ \mu_1  }_{ p -1 + q  }
    \;\;}
    [p - 2 + q ]
    \xrightarrow{\;\;}
      \cdots
    \xrightarrow{\;\;}
    [1 + q ]
    \xrightarrow{\;\; s^{\mu_{p-1}}_{1 + q} \;\;}
    [q]\;.
  \end{array}
  \end{equation}
  \end{itemize}

  \noindent
  For $X, Y\,\in\, \SimplicialSets$,
  the non-degenerate $p+q$-simplices in
  $X \times Y$
  \eqref{ProductOfSimplicialSets}
  are of the form
  \vspace{-1mm}
  \begin{equation}
    \label{FormulaForNonDegProductsOfSimplices}
    \Big(
    s^X_{\nu}(x_p)
    ,\;
    s^Y_{\mu}(y_q)
    \Big)
    \;\;
    :
    \;\;
    \begin{tikzcd}[row sep=-8pt]
      &
      \Delta[p+q]
      \ar[
        rr,
        "{ s^{ \nu }_{p+q} }"
      ]
      &&
      \Delta[p]
      \ar[rr, "x_p" ]
      &&
      X
      \\
      \Delta[p+q]
      \ar[
        r,
        shorten >=12pt,
        "{\mathrm{diag}}"{pos=.3}
      ]
      &
      \times
      &&
      \times
      &&
      \times
      \\
      &
      \Delta[p+q]
      \ar[
        rr,
        "{ s^{ \mu }_{p+q} }"{swap}
      ]
      &&
      \Delta[q]
      \ar[rr, "y_q"{swap} ]
      &&
      Y
    \end{tikzcd}
  \end{equation}
  for $x_p \,\in\, X_p$ and $y_q \,\in\, Y_q$
  non-degenerate cells and $(\mu,\nu)$ a
  $(p,q)$-shuffle.
\end{proposition}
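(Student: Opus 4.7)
The strategy is to reduce the proposition to two classical facts. The first is the Eilenberg-Zilber lemma (in its combinatorial form): every $\sigma \in X_n$ in a simplicial set factors \emph{uniquely} as $\sigma = s^{X}_{\alpha}(\widetilde{\sigma})$ with $\widetilde{\sigma} \in X_{p'}$ non-degenerate, where $s^{X}_{\alpha}$ is the monotone composite of degeneracies indexed by a uniquely determined subset $\alpha \subseteq \{0, \ldots, n-1\}$ of size $n - p'$. The second is that degeneracies in $X \times Y$ act componentwise, by the very definition \eqref{ProductOfSimplicialSets}. Combining these, a cell $(\sigma, \tau) \in (X \times Y)_n$ is degenerate iff there is an index $k$ along which \emph{both} $\sigma$ and $\tau$ are degenerate; hence, writing $\sigma = s^{X}_{\nu}(\widetilde{\sigma})$ and $\tau = s^{Y}_{\mu}(\widetilde{\tau})$ via Eilenberg-Zilber, the pair is non-degenerate iff the index sets $\mu$ and $\nu$ are disjoint.

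The dimension count is then immediate. If $\widetilde{\sigma} \in X_{p'}$ and $\widetilde{\tau} \in Y_{q'}$ are the underlying non-degenerate cells, then $|\nu| = n - p'$ and $|\mu| = n - q'$, and the disjointness $\mu \cap \nu = \varnothing$ in $\{0, \ldots, n-1\}$ forces $(n-p') + (n-q') \le n$, i.e.\ $p' + q' \ge n$. Demanding that the pair form a non-degenerate $(p+q)$-simplex, with $n = p + q$, this inequality must be saturated, so $p' = p$, $q' = q$, and $\mu \sqcup \nu = \{0, \ldots, p+q-1\}$. With $|\mu| = p$ and $|\nu| = q$, this is precisely the data of a $(p,q)$-shuffle as in \eqref{Shuffles}. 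Running this argument backward — any choice of non-degenerate $x_p \in X_p$, non-degenerate $y_q \in Y_q$ and shuffle $(\mu, \nu)$ manifestly produces a non-degenerate $(p+q)$-simplex via formula \eqref{FormulaForNonDegProductsOfSimplices} — yields the claimed bijective parametrization.

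The specialization to $X = \Delta[p]$, $Y = \Delta[q]$ follows at once from the standard fact that the unique non-degenerate top-cell of $\Delta[p]$ is the identity $\iota_p$, so non-degenerate $(p+q)$-cells of the product are indexed exactly by $(p,q)$-shuffles. The edge-by-edge geometric description is then just a re-reading of the shuffle: the pair $(\sigma^{\mathrm{hor}}[k, k+1], \sigma^{\mathrm{ver}}[k, k+1])$ at step $k$ has its horizontal edge degenerate iff $k \in \nu$, and its vertical edge degenerate iff $k \in \mu$, so disjointness and exhaustiveness of $\mu \sqcup \nu$ translate directly into the statement that exactly one of the two edges is a generating edge at each step.

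The only genuine technical obstacle is notational bookkeeping: to unpack $\sigma^{\mathrm{hor}}$ as an explicit composite of co-degeneracies \eqref{ShuffleFormulaForNonDegenerateSimplices}, one must track that each successive factor $s^{\nu_{j}}_{p+q-j}$ acts on a simplex of dimension one less than the previous, with indices read off in the appropriate convention from the original positions $\nu_{0} < \cdots < \nu_{q-1}$. Once the indexing convention is fixed, the verification is a direct unwinding; all the structural content sits in the Eilenberg-Zilber lemma and the disjointness-forces-shuffle dimension count above.
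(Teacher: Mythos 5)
Your route is genuinely different from the paper's. The paper argues directly in the edge-path picture: componentwise degeneracies mean a product cell is non-degenerate iff no step is trivial in both components, and then a counting argument on the $p+q$ unit advances shows that a step non-trivial in both components would force some other step to be trivial in both. You instead invoke the Eilenberg--Zilber factorization lemma and reduce non-degeneracy of a pair to disjointness of the two degeneracy index sets, then do a dimension count. For $X=\Delta[p]$, $Y=\Delta[q]$ this is correct and arguably more structural than the paper's "moment of reflection" --- but note that saturation $p'+q'=p+q$ is forced there only because $\Delta[p]$ and $\Delta[q]$ have no non-degenerate cells above dimensions $p$ and $q$, so that $p'\le p$ and $q'\le q$; you should say this explicitly, since it, and not the mere requirement that the pair be a non-degenerate $(p+q)$-simplex, is what closes the argument.

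That points to the genuine gap: in your general-$X,Y$ paragraph the step ``this inequality must be saturated'' is unjustified and in fact false. Any pair $(\sigma,\tau)$ with $\sigma\in X_n$ and $\tau\in Y_n$ both non-degenerate is a non-degenerate $n$-cell of $X\times Y$ with $\mu=\nu=\varnothing$ and $p'+q'=2n>n$; concretely, the diagonal $1$-simplex $(\iota_1,\iota_1)$ of $\Delta[1]\times\Delta[1]$ is non-degenerate but is not of the form \eqref{FormulaForNonDegProductsOfSimplices} for any decomposition $1=p+q$. So the claimed bijective parametrization of \emph{all} non-degenerate $(p+q)$-cells of an arbitrary product by triples (non-degenerate $x_p$, non-degenerate $y_q$, $(p,q)$-shuffle) does not hold. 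What your disjointness criterion does give immediately --- and what is actually needed downstream, for the Eilenberg--MacLane shuffle formula \eqref{EilenbergMaclaneFormula} and Lemma \ref{EvaluationMapOnNerveOfInertiaGroupoid} --- is the converse direction: every cell of the form \eqref{FormulaForNonDegProductsOfSimplices} with $x_p$, $y_q$ non-degenerate and $(\mu,\nu)$ a shuffle is non-degenerate, since $\mu\cap\nu=\varnothing$. The final clause of the Proposition should be read (and proved) in that direction; as a two-sided classification it is only correct over the standard simplices, which is also the only case the paper's own proof treats directly.
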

\begin{proof}
  Generally, since the degeneracy maps in a product $X \times Y$ of simplicial sets are the pairs of the separate degeneracy maps, $s_k^{X \times Y} = \big(s_k^X, s_k^Y \big)$ \eqref{ProductOfSimplicialSets}, a cell in the product is non-degenerate precisely if its two components are not both in the image of some $s_k$, for the same $k$.
  This implies generally that non-degenerate paths in products are those that do not have steps where both components are degenerate. On the other hand, since the particular paths in question have length $p+q$, a moment of reflection shows that if they had a step with both components non-trivial, then the previous constraint could not be satisfied in all other steps. This implies the first claim above. From this, the next two formulations follow by inspection.
\end{proof}

\noindent
\hspace{-.1cm}
\begin{tabular}{ll}
\hypertarget{FigureNonDegeneratedSimplicesInProductOfSimplices}{}
\begin{minipage}{7cm}
  \scriptsize
  {\bf
  Figure NDS.
  }
  The non-degenerate simplices in the product $\Delta[p] \times \Delta[q]$ (seen as in Ex. \ref{ProductCategories}) are (according to Prop. \ref{NonDegenerateSimpliciesInProductOfSimplicies}) given by those paths of $(p+q)$ steps for which each step is a unit step either horizontally or vertically. The lists of steps (counted starting at 0) going horizontally, $\mu = (\mu_0 < \mu_1 < \cdots, \mu_{p-1})$, or going vertically, $\nu = (\nu_0 < \nu_1 < \cdots, \nu_{p-1})$ form jointly a $(p,q)$-(un-){\it shuffle} permutation of $(0, 1, \cdots, p-q)$, which bijectively encodes the respective non-degenerate cell according to the formula \eqref{ShuffleFormulaForNonDegenerateSimplices}.

  \medskip

  (Beware that the dashed diagonal arrow shown on the right is a stand-in for any remaining {\it zig-zag} path and not for any actual diagonal steps.)
\end{minipage}
&
\qquad
$
  \adjustbox{scale=.65}{
  \begin{tikzcd}[
    row sep=20pt, column sep=40pt
  ]
    &[-12pt]
    \ar[
      r,
      phantom,
      "{
        \mathclap{
          \color{gray}
          \mu_0 = 0
        }
      }"
    ]
    &
    {}
    \ar[
      r,
      phantom,
      "{
        \mathclap{
          \color{gray}
          \mu_1 = 3
        }
      }"
    ]
    &
    {}
    &
    \color{gray}
    \cdots
    \\[-20pt]
    \ar[
      d,
      phantom,
      "{
         \color{gray}
         \nu_0 = 1
      }"
    ]
    &
    {(0,0)}
    \ar[r]
    \ar[d]
    \ar[
      r,-,
      color=blue,
      opacity=.25,
      line width=6pt
    ]
    &
    {(1,0)}
    \ar[r]
    \ar[d]
    \ar[
      d,-,
      color=blue,
      opacity=.25,
      line width=6pt
    ]
    &
    {(2,0)}
    \ar[rr,dotted]
    \ar[d]
    &
    &
    {(p,0)}
    \ar[d]
    \\
    {}
    \ar[
      d,
      phantom,
      "{
        \color{gray}
        \nu_1 = 2
      }"
    ]
    &
    {(0,1)}
    \ar[r]
    \ar[d]
    &
    {(1,1)}
    \ar[r]
    \ar[d]
    \ar[
      d,-,
      color=blue,
      opacity=.25,
      line width=6pt
    ]
    &
    {(2,1)}
    \ar[rr, dotted]
    \ar[d]
    &
    &
    {(p,1)}
    \ar[d]
    \\
    {}
    &
    {(0,2)}
    \ar[r]
    \ar[dd, dotted]
    &
    {(1,2)}
    \ar[r]
    \ar[dd, dotted]
    \ar[
      r,-,
      color=blue,
      opacity=.25,
      line width=6pt
    ]
    &
    {(2,2)}
    \ar[rr, dotted]
    \ar[dd, dotted]
    \ar[ddrr, dotted]
    \ar[
      ddrr,-,
      color=blue,
      opacity=.25,
      line width=6pt,
      dotted
    ]
    &
    &
    {(p,2)}
    \ar[dd, dotted]
    \\
    \color{gray}
    \vdots
    &
    &
    &
    &&
    \\
    &
    {(0,q)}
    \ar[r]
    &
    {(1,q)}
    \ar[r]
    &
    {(2,q)}
    \ar[rr, dotted]
    &
    &
    {(p,q)}
  \end{tikzcd}
  }
$
\end{tabular}

\begin{example}\label{NondegenerateSimplicesInSquare}
  The complete set of non-degenerate cells in
  the simplicial square
  $\Delta[1] \times \Delta[1]$ is the following, with the paths (according to \hyperlink{FigureNonDegeneratedSimplicesInProductOfSimplices}{\it Figure NDS}) highlighted
  which correspond to the two non-degenerate 2-simplices:
\vspace{-2mm}
  $$
  \begin{tikzcd}[column sep=huge]
    \scalebox{.8}{$
      \left( {[0]} \atop {[0]} \right)
    $}
    \ar[
      rr,
      "{
        \left(
        {[0,0]}
        \atop
        {[0,1]}
        \right)
      }"{above, scale=.8}
    ]
    \ar[
      rr,
      -,
      line width=5pt,
      blue,
      draw opacity=.2
    ]
    \ar[
      dd,
      "{
        \left(
        {[0,1]}
        \atop
        {[0,0]}
        \right)
      }"{left, scale=.8}
    ]
    \ar[
      dd,
      -,
      line width=5pt,
      green,
      draw opacity=.2
    ]
    \ar[
      ddrr,
      "{
        \left(
        { [0,1] }
        \atop
        { [0,1] }
        \right)
      }"{description, scale=.8}
    ]
    &&
    \scalebox{.8}{$
      \left( {[0]} \atop {[1]} \right)
    $}
    \ar[
      dd,
      "{
        \left(
        {[0,1]}
        \atop
        {[1,1]}
        \right)
      }"{right, scale=.8}
    ]
    \ar[
      dd,
      -,
      line width=5pt,
      blue,
      draw opacity=.2
    ]
    \ar[
      ddll,
      phantom,
      "{
        \left(
        { [0,0,1] }
        \atop
        { [0,1,1] }
        \right)
      }"{pos=.22, scale=.8}
    ]
    \ar[
      ddll,
      phantom,
      "{
        \left(
        { [0,1,1] }
        \atop
        { [0,0,1] }
        \right)
      }"{pos=.82, scale=.8}
    ]
    \\
    \\
    \scalebox{.78}{$
      \left( {[1]} \atop {[0]} \right)
    $}
    \ar[
      rr,
      "{
        \left(
        {[1,1]}
        \atop
        {[0,1]}
        \right)
      }"{below, scale=.8}
    ]
    \ar[
      rr,
      -,
      line width=5pt,
      green,
      draw opacity=.2
    ]
    &&
    \scalebox{.8}{$
      \left( {[1]} \atop {[1]} \right)
    $}
  \end{tikzcd}
$$
\end{example}

\noindent
The following
makes explicit the solution to the universal property \eqref{InternalHomAdjunction} of mapping objects in simplicial sets.
\begin{proposition}[Simplicial mapping complexes, e.g. {\cite[\S I.5]{GoerssJardine99}}]
  \label{SimplicialMappingComplex}
  For $X, A \,\in\, \SimplicialSets$:
  \begin{itemize}[leftmargin=.7cm]
  \item[\bf(i)]
  their {\it mapping complex}
  $\Maps{}{X}{A} \,\in\, \SimplicialSets$
  is
  \vspace{-2mm}
  \begin{equation}
    \label{ComponentsOfSimplicialMappingComplex}
    \Maps{}{X}{A}
    \;:\;
    [n]
    \;\longmapsto\;
    \Homs{}
      { X \times \Delta[n] }
      { A }
  \end{equation}

  \vspace{-2mm}
  \noindent
  in that with this formula we have,
  for  $S \,\in\, \SimplicialSets$,
  the required natural isomorphisms (cf. Ntn. \ref{MappingObjects})
  \vspace{-2mm}
  \begin{equation}
    \label{HomAdjunctionIsoForSimplicialSets}
    \Homs{}{S \times X}{A}
    \;\simeq\;
    \Homs{\big}{ S  }{
      \Maps{}
        { X }
        { A }
    }
    \,;
  \end{equation}

   \vspace{-2mm}
  \item[\bf(ii)] the corresponding evaluation map
  \eqref{EvaluationMap}
  is given by naive evaluation on given cells $\sigma_n$ but paired with the identity $n$-cell $\iota_n$:
  \vspace{-5mm}
  \begin{equation}
    \label{EvaluationMapOnSimplicialMappingComplexes}
    \begin{tikzcd}[
      row sep=-1pt
    ]
      \big(
        S \times \Maps{}{S}{A}
      \big)_n
      \ar[
        r,
        phantom,
        "{ \simeq }",
        "{
          \scalebox{.7}{
            \eqref{ComponentsOfSimplicialMappingComplex}
          }
        }"{yshift=-10pt}
      ]
      &[-15pt]
      S_n
        \times
      \Homs{\big}
        { S \times \Delta[n] }
        { A }
      \ar[
        rr,
        "{
          (
            \mathrm{ev}^{A}_S
          )_n
        }"
      ]
      &&
      A_n
      \\[-2pt]
      &
       \scalebox{0.75}{$
       (
        \sigma_n
        ,\,
        f
      )
      $}
      &\longmapsto&
      \scalebox{0.75}{$
        f(
          \sigma_n, \iota_n
        )
      $}
      \mathrlap{\,.}
    \end{tikzcd}
  \end{equation}
  \end{itemize}
\end{proposition}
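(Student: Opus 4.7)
The plan is to derive both parts as consequences of the Yoneda lemma for the presheaf topos $\SimplicialSets = \Presheaves(\simplicial)$. As a preliminary step I would verify that the prescription $[n] \mapsto \Homs{}{X \times \Delta[n]}{A}$ is a well-defined simplicial set: every morphism $\alpha : [m] \to [n]$ in $\simplicial$ induces a map $\Delta[\alpha] : \Delta[m] \to \Delta[n]$ by Yoneda, and pre-composition with $\mathrm{id}_X \times \Delta[\alpha]$ provides the required contravariant functoriality on $\Homs{}{X \times \Delta[n]}{A}$.

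For part (i), I would prove the natural bijection \eqref{HomAdjunctionIsoForSimplicialSets} by reducing to the representable case $S = \Delta[n]$. Both functors $S \mapsto \Homs{}{S \times X}{A}$ and $S \mapsto \Homs{}{S}{\Maps{}{X}{A}}$ convert colimits in $S$ into limits: the second because $\Homs{}{-}{-}$ always does in its first argument, and the first because Cartesian products in $\SimplicialSets$ are computed levelwise in $\Sets$ and therefore preserve colimits in each variable, after which $\Homs{}{-}{A}$ sends colimits to limits. Since every simplicial set is canonically a colimit of its standard simplices, it suffices to give the natural bijection for $S = \Delta[n]$. There it reduces, via Yoneda together with \eqref{ComponentsOfSimplicialMappingComplex}, to the tautological identification $\Homs{}{\Delta[n] \times X}{A} = (\Maps{}{X}{A})_n$, using the symmetry of the Cartesian product.

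For part (ii), the evaluation map is by definition \eqref{EvaluationMap} the adjunct of $\mathrm{id}_{\Maps{}{X}{A}}$ under the bijection of part (i). To extract its action on $n$-simplices, I would trace a pair $(\sigma_n, f) \in X_n \times (\Maps{}{X}{A})_n$ through the Yoneda identifications: $\sigma_n$ corresponds to a classifying map $\tilde\sigma_n : \Delta[n] \to X$, while $f$ is by definition \eqref{ComponentsOfSimplicialMappingComplex} a map $f : X \times \Delta[n] \to A$ whose name $\tilde f : \Delta[n] \to \Maps{}{X}{A}$ satisfies the triangle identity $\mathrm{ev}^A_X \circ (\mathrm{id}_X \times \tilde f) = f$ characterizing $\mathrm{ev}^A_X$ as the counit of the adjunction. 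The $n$-cell $\mathrm{ev}^A_X(\sigma_n, f) \in A_n$ then corresponds under Yoneda to the composite $\Delta[n] \to X \times \Maps{}{X}{A} \to A$ obtained by pairing $\tilde\sigma_n$ with $\tilde f$ and post-composing with $\mathrm{ev}^A_X$; rewriting $(\tilde\sigma_n, \tilde f)$ as $(\tilde\sigma_n \times \tilde f) \circ \mathrm{diag}_{\Delta[n]}$ and applying the triangle identity yields the composite $\Delta[n] \xrightarrow{(\tilde\sigma_n,\, \mathrm{id}_{\Delta[n]})} X \times \Delta[n] \xrightarrow{f} A$, whose value on $\iota_n$ is exactly $f(\sigma_n, \iota_n)$.

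The entire argument is formal; there is no real obstacle beyond consistent bookkeeping among the three identifications $(\Maps{}{X}{A})_n \simeq \Homs{}{\Delta[n]}{\Maps{}{X}{A}} \simeq \Homs{}{X \times \Delta[n]}{A}$ and careful tracking of the naturality of the adjunction bijection throughout the chase.
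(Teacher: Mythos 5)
Your proof is correct: the paper offers no proof of this Proposition at all, deferring to \cite[\S I.5]{GoerssJardine99}, and your argument---cartesian closure of the presheaf topos via the density theorem (both sides send colimits in $S$ to limits, so reduce to $S = \Delta[n]$, where the bijection is the tautological Yoneda identification $\Homs{}{\Delta[n]}{\Maps{}{X}{A}} \simeq \Homs{}{X \times \Delta[n]}{A}$), followed by computing the counit on $n$-cells via naturality of the adjunction bijection---is precisely the standard argument behind that citation. The only bookkeeping worth making fully explicit is the naturality in $[n] \in \Delta$ of the representable-level identification, needed so the isomorphisms glue along the canonical colimit presentation of $S$, together with the harmless swap of factors between the paper's convention $\mathrm{ev}^A_S \colon S \times \Maps{}{S}{A} \to A$ and the adjunction as written in Ntn.~\ref{MappingObjects}; you already flag both.
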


\begin{remark}[Self-enrichment of simplicial sets]
  \label{SelfEnrichmentOfSimplicialSets}
  By \eqref{HomAdjunctionIsoForSimplicialSets}, the construction of simplicial mapping complexes is functorial in both arguments:
  \vspace{-2mm}
  $$
    \Maps{}
      {-}
      {-}
    \;:\;
    \begin{tikzcd}
      \SimplicialSets^{\mathrm{op}}
      \times
      \SimplicialSets
      \ar[r]
      &
      \SimplicialSets
    \end{tikzcd}
  $$
  and has a natural, associative and unital composition operation
  \vspace{-2mm}
  $$
    \begin{tikzcd}
    \Maps{}
      {X}{Y}
    \times
    \Maps{}
      {Y}{Z}
    \ar[r, "{ \circ }"]
    &
    \Maps{}
      {X}
      {Z}
      \,,
    \end{tikzcd}
  $$

  \vspace{-2mm}
  \noindent
  which on 0-cells reduces to the ordinary composition of morphisms.
  This gives $\SimplicialSets$ the structure of an $\SimplicialSets$-enriched category
  (e.g. \cite{Kelly82}\cite[\S 9.1.1]{Hirschhorn02}).
\end{remark}

\medskip
\begin{proposition}[Properties of the simplicial nerve]
  \label{PropertiesOfTheSimplicialNerve}
  The simplicial nerve
  $N : \mathrm{Cat}^{\mathrm{smll}} \xrightarrow{\;} \SimplicialSets$
  (Ntn. \ref{NotationForSimplicialSets})
  \begin{itemize}[leftmargin=.7cm]
    \item[\bf(i)] is fully faithful:
     \vspace{-1mm}
    \begin{equation}
    \label{FullyFaithfulnessOfSimplicialNerve}
    \begin{tikzcd}
      \Homs{}
        { \mathcal{C} }
        { \mathcal{D} }
      \ar[
        r,
        "{
           N_{\mathcal{C},\mathcal{D}}
        }",
        "{ \sim }"{swap}
      ]
      &
      \Homs{}
        { N\mathcal{C} }
        { N\mathcal{D} }
      \,;
    \end{tikzcd}
    \end{equation}

     \vspace{-1mm}
    \item[\bf(ii)] sends product categories (Ex. \ref{ProductCategories}) to products \eqref{ProductOfSimplicialSets} of their nerves
     \vspace{-2mm}
    \begin{equation}
      \label{SimplicialNerveStrongMonoidalness}
      N
      (
        \mathcal{C} \times \mathcal{D}
      )
      \;\;
      \simeq
      \;\;
      (N \mathcal{C})
        \times
      (N \mathcal{D})
      \,;
    \end{equation}

     \vspace{-2mm}
    \item[\bf(iii)] sends functor categories \eqref{FunctorCategory} to
    simplicial function complexes
    \eqref{ComponentsOfSimplicialMappingComplex}:
 \vspace{-2mm}
    \begin{equation}
      \label{NerveOfFunctorCategoriesIsFunctionComplexOfNerves}
      N
      \Maps{}
        { \mathcal{C} }
        { \mathcal{D} }
      \;\simeq\;
      \Maps{}
        { N \mathcal{C} }
        { N \mathcal{D} }
    \end{equation}
  \end{itemize}

   \vspace{-2mm}
\noindent
  (all these being natural isomorphisms).
\end{proposition}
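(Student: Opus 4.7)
The plan is to prove the three parts in order, using (i) and (ii) to deduce (iii) essentially formally.

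\textbf{For (i)}, the strategy is to exhibit an explicit inverse to $N_{\mathcal{C},\mathcal{D}}$. Given a simplicial map $f : N\mathcal{C} \to N\mathcal{D}$, I would define a functor $F : \mathcal{C} \to \mathcal{D}$ by setting $F$ on objects to be $f_0 : (N\mathcal{C})_0 = \mathrm{Obj}(\mathcal{C}) \to \mathrm{Obj}(\mathcal{D}) = (N\mathcal{D})_0$ and $F$ on morphisms to be $f_1 : (N\mathcal{C})_1 = \mathrm{Mor}(\mathcal{C}) \to \mathrm{Mor}(\mathcal{D}) = (N\mathcal{D})_1$. Compatibility of $f_1$ with the two face maps $d_0, d_1 : [0] \to [1]$ ensures that $F$ is well-defined on source and target; compatibility with the degeneracy $s_0 : [1] \to [0]$ ensures $F(\mathrm{id}_c) = \mathrm{id}_{F(c)}$; and compatibility of $f_2$ with the face map $d_1 : [1] \to [2]$ (which corresponds to composition in a category) yields $F(g \circ h) = F(g) \circ F(h)$. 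Conversely, $N_{\mathcal{C},\mathcal{D}}(F)$ recovers $f$ in degrees $0$ and $1$ by construction; and since every $n$-simplex in $N\mathcal{C}$ is the image of a chain of $n$ composable $1$-simplices under degeneracies and faces applied from the $2$-skeleton, the two composites $f \mapsto F \mapsto N F$ and $F \mapsto NF \mapsto F$ are mutually inverse, giving \eqref{FullyFaithfulnessOfSimplicialNerve}.

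\textbf{For (ii)}, the proof is essentially a one-line computation: in each degree $n$, using $\mathrm{Hom}$-$\times$-distributivity in $\mathrm{Cat}$,
\[
  N(\mathcal{C} \times \mathcal{D})_n
  \;=\; \mathrm{Hom}\big([n],\,\mathcal{C} \times \mathcal{D}\big)
  \;\simeq\; \mathrm{Hom}\big([n],\mathcal{C}\big) \times \mathrm{Hom}\big([n],\mathcal{D}\big)
  \;=\; (N\mathcal{C})_n \times (N\mathcal{D})_n,
\]
and the simplicial structure maps $d_i^{N(\mathcal{C}\times\mathcal{D})}$, $s_i^{N(\mathcal{C}\times\mathcal{D})}$ agree componentwise with those of $N\mathcal{C} \times N\mathcal{D}$ (cf.\ \eqref{ProductOfSimplicialSets}) because they are all induced by pre-composition with the co-face/co-degeneracy functors in $\Delta$, which act the same way on each factor. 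Naturality in $\mathcal{C}$ and $\mathcal{D}$ is then immediate.

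\textbf{For (iii)}, combine (i), (ii) and \eqref{NerveOfFunctorCategory}. In each simplicial degree, chain the natural bijections
\[
  N\mathrm{Maps}(\mathcal{C},\mathcal{D})_n
  \;\underset{\eqref{NerveOfFunctorCategory}}{=}\;
  \mathrm{Hom}\big(\mathcal{C}\times [n],\,\mathcal{D}\big)
  \;\underset{\eqref{FullyFaithfulnessOfSimplicialNerve}}{\simeq}\;
  \mathrm{Hom}\big(N(\mathcal{C}\times [n]),\,N\mathcal{D}\big)
  \;\underset{\eqref{SimplicialNerveStrongMonoidalness}}{\simeq}\;
  \mathrm{Hom}\big(N\mathcal{C}\times \Delta[n],\,N\mathcal{D}\big)
  \;\underset{\eqref{ComponentsOfSimplicialMappingComplex}}{=}\;
  \mathrm{Maps}(N\mathcal{C},N\mathcal{D})_n,
\]
and verify that the resulting bijection commutes with the simplicial operators, i.e.\ that for $\alpha : [m] \to [n]$ in $\Delta$ both sides reduce to pre-composition with $\mathrm{id}_{\mathcal{C}} \times \alpha$ (respectively $\mathrm{id}_{N\mathcal{C}} \times \Delta[\alpha]$). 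Naturality in $\mathcal{C}$ and $\mathcal{D}$ follows from naturality of each of the four displayed bijections.

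The main obstacle, and the only step requiring genuine argument rather than bookkeeping, is part (i): specifically the need to justify that a simplicial map between nerves is fully determined by its $2$-truncation, which rests on the combinatorial fact that every non-degenerate simplex in the nerve of a category is the iterated $d_1$-image of a spine of composable $1$-simplices (equivalently, that $N\mathcal{C}$ satisfies the strict Segal condition). Once this is granted, parts (ii) and (iii) follow by purely formal manipulations.
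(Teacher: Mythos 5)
Your proposal is correct and takes essentially the same route as the paper: part (ii) by degreewise inspection of the definitions, and part (iii) by exactly the same chain of natural isomorphisms through \eqref{NerveOfFunctorCategory}, \eqref{FullyFaithfulnessOfSimplicialNerve}, \eqref{SimplicialNerveStrongMonoidalness} and \eqref{ComponentsOfSimplicialMappingComplex}. For part (i) the paper simply cites the classical references, and the explicit inverse you construct (a simplicial map between nerves being determined by its $1$-skeleton via the strict Segal condition) is precisely that standard argument, so there is no substantive difference.
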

\begin{proof}
  The first statement is classical, the (elementary) proof is spelled out e.g. in \cite[Prop. 4.10]{Rezk22} [\href{https://kerodon.net/tag/002Z}{Kerodon, Prop. 1.2.2.1}].
  The second statement is immediate from the two definitions, as both are given by pairs of all structure morphisms. From this the third statement is obtained as the following sequence of natural isomorphisms, for $n \in \mathbb{N}$:
  $$
    \def\arraystretch{1.3}
    \begin{array}{lll}
      \big(
      N \Maps{}
        { \mathcal{C} }
        { \mathcal{D} }
      \big)_n
       &   \;\simeq\;
     \Homs{}
       { \mathcal{C}\times[n] }
       { \mathcal{D} }
     &
     \proofstep{by \eqref{NerveOfFunctorCategory} }
     \\
  &  \;\simeq\;
     \Homs{\big}
       { N( \mathcal{C} \times [n] ) }
       { N \mathcal{D} }
     &
     \proofstep{by \eqref{FullyFaithfulnessOfSimplicialNerve} }
     \\
 &    \;\simeq\;
     \Homs{\big}
       { N \mathcal{C} \times \Delta[n]  }
       { N \mathcal{D} }
     &
     \proofstep{by \eqref{SimplicialNerveStrongMonoidalness} }
     \\
&     \;\simeq\;
     \big(
     \Homs{\big}
       { N \mathcal{C}  }
       { N \mathcal{D} }
     \big)_n
     &
     \proofstep{by \eqref{ComponentsOfSimplicialMappingComplex} }
     \!\rlap{.}
    \end{array}
  $$

  \vspace{-6mm}
\end{proof}

\medskip

\noindent
{\bf Homotopy theory of simplicial Sets.} We make use of basic (simplicial) model category theory. A standard account is \cite{Hirschhorn02},
a concise overview of all the ingredients that we need is in \cite[\S A]{FSS20CharacterMap}.

\begin{notation}[Homotopy theory of simplical sets, e.g. {\cite[\S I.11]{GoerssJardine99}}]
We write
\vspace{-2mm}
\begin{equation}
  \label{ClassicalModelStuctureOnSimplicialSets}
  \SimplicialSets_{\mathrm{Qu}}
  \;\in\;
  \ModelCategories
\end{equation}

\vspace{-2mm}
\noindent for the classical Kan-Quillen model category structure on simplicial sets.
\end{notation}

\begin{notation}[Homotopy theory of reduced simplicial sets, e.g. {\cite[\S V Prop. 6.12]{GoerssJardine99}}]
  \label{HomotopyTheoryOfReducedSimplicialSets}
  We write
  \vspace{-2mm}
  \begin{equation}
    \label{ModelStructureOnReducedSimplicialSets}
    \SimplicialSets_{\geq 1, \mathrm{inj}}
    \;\;
    \in
    \;\;
    \ModelCategories
  \end{equation}

  \vspace{-2mm}
  \noindent
  for the model category of reduced simplicial sets
  (those $S \in \SimplicialSets$ with a single vertex, $S_0 = \ast$)
  whose weak equivalences and cofibrations are those of the underlying
  $\SimplicialSets_{\mathrm{Qu}}$ \eqref{ClassicalModelStuctureOnSimplicialSets}.
\end{notation}

\begin{lemma}[Fibrant reduced simplicial sets are Kan complexes {\cite[\S V, Lem. 6.6]{GoerssJardine99}}]
  \label{FibrantReducedSimplicialSetsAreKanComplexes}
  While the forgetful functor
  from \eqref{ModelStructureOnReducedSimplicialSets} to \eqref{ClassicalModelStuctureOnSimplicialSets}
  \vspace{-2mm}
  $$
    \begin{tikzcd}
      \SimplicialSets_{\geq 1, \mathrm{inj}}
      \ar[
        rr,
        "\underlying"
      ]
      &&
      \SimplicialSets_{\mathrm{Qu}}
    \end{tikzcd}
  $$

 \vspace{-2mm}
 \noindent
  does not preserve all fibrations, it does preserve fibrant objects.
\end{lemma}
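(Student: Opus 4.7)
\smallskip

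\noindent
\textbf{Proof proposal.}
The plan is to show directly that any fibrant $X \in \SimplicialSets_{\geq 1, \mathrm{inj}}$ satisfies the horn-filling condition against every Kan-Quillen generator $\Lambda[n]_k \hookrightarrow \Delta[n]$. I would split by the horn-dimension, since the low-dimensional case is anomalous.

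For $n = 1$: Any map $\Lambda[1]_k \to X$ selects a vertex, and reducedness forces this vertex to be the unique one $\ast \in X_0$. The extension to $\Delta[1]$ is then given by the degenerate edge $s_0(\ast)$. No fibrancy hypothesis is needed here.

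For $n \geq 2$: The key observation is that every vertex of $\Delta[n]$ lies on at least two of its codimension-$1$ faces, and so $\mathrm{sk}_0 \Lambda[n]_k = \mathrm{sk}_0 \Delta[n]$. Consequently any map $f : \Lambda[n]_k \to X$ must send every vertex to $\ast$, hence factors uniquely through the reduced quotient $\widetilde{\Lambda} := \Lambda[n]_k / \mathrm{sk}_0$. Extending $f$ to $\Delta[n]$ is thus equivalent to extending the resulting $\widetilde{f} : \widetilde\Lambda \to X$ along the induced monomorphism $\widetilde{\Lambda} \hookrightarrow \widetilde{\Delta} := \Delta[n]/\mathrm{sk}_0$, which is now a morphism of reduced simplicial sets. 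I would then conclude by invoking fibrancy of $X$ against this morphism, provided it is a trivial cofibration in $\SimplicialSets_{\geq 1, \mathrm{inj}}$.

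The main step is therefore to verify that $\widetilde{\Lambda} \hookrightarrow \widetilde{\Delta}$ is indeed a trivial cofibration in the reduced model structure. I would do so by identifying it as a pushout: writing out the obvious cube and using $\mathrm{sk}_0 \Lambda[n]_k = \mathrm{sk}_0 \Delta[n]$, one sees that $\widetilde{\Delta}$ is the pushout of $\Lambda[n]_k \hookrightarrow \Delta[n]$ along the quotient $\Lambda[n]_k \twoheadrightarrow \widetilde{\Lambda}$. Hence $\widetilde{\Lambda} \hookrightarrow \widetilde{\Delta}$ is the cobase change of an anodyne monomorphism, therefore itself a monomorphism and a Kan-Quillen weak equivalence. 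Since by Ntn. \ref{HomotopyTheoryOfReducedSimplicialSets} the cofibrations and weak equivalences of $\SimplicialSets_{\geq 1, \mathrm{inj}}$ are precisely those of $\SimplicialSets_{\mathrm{Qu}}$ on underlying simplicial sets, this is a trivial cofibration in $\SimplicialSets_{\geq 1, \mathrm{inj}}$, and the required filler exists. The one delicate point I expect is the pushout identification in the $n \geq 2$ case, which is not hard but requires carefully unwinding that collapsing $\mathrm{sk}_0$ on $\Lambda[n]_k$ and then re-gluing $\Delta[n]$ identifies exactly those vertices that are also identified by collapsing $\mathrm{sk}_0$ directly on $\Delta[n]$.
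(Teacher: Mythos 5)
Your argument is correct. The paper itself gives no proof of this lemma --- it is quoted from Goerss--Jardine (\S V, Lem.\ 6.6) --- and your route is essentially the standard one underlying that reference: for $n \geq 2$ every vertex of $\Delta[n]$ already lies in $\Lambda[n]_k$, so a horn in a reduced $X$ factors through the vertex-collapsed horn, and the induced map $\Lambda[n]_k/\mathrm{sk}_0 \hookrightarrow \Delta[n]/\mathrm{sk}_0$ is, by your pushout identification, a cobase change of an anodyne monomorphism, hence a monomorphism and weak equivalence between reduced simplicial sets, i.e.\ a trivial cofibration of $\SimplicialSets_{\geq 1, \mathrm{inj}}$ (by Ntn.\ \ref{HomotopyTheoryOfReducedSimplicialSets}), against which the fibrant $X$ lifts; the $n = 1$ horns fill by the degenerate edge at the base point, with no fibrancy needed. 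The one step you flag as delicate is in fact immediate from the pasting law for pushouts applied to the inclusions $\mathrm{sk}_0\Lambda[n]_k \subseteq \Lambda[n]_k \subseteq \Delta[n]$, using $\mathrm{sk}_0\Lambda[n]_k = \mathrm{sk}_0\Delta[n]$: collapsing $\mathrm{sk}_0$ is pushout along $\mathrm{sk}_0 \to \ast$, so $\Delta[n] \sqcup_{\Lambda[n]_k} \big(\Lambda[n]_k/\mathrm{sk}_0\big) \cong \Delta[n]/\mathrm{sk}_0$. Note finally that you establish only the positive clause (preservation of fibrant objects), which is all the paper ever uses; the parenthetical negative clause, that not every reduced fibration is a Kan fibration, would need a separate counterexample, but neither the paper nor its citation is invoked for that in the main text.
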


\noindent
{\bf Simplicial Groups.}
\begin{notation}[Homotopy theory of simplicial groups, e.g. {\cite[\S II 3.7]{Quillen67}\cite[\S V]{GoerssJardine99}}]
  \label{HomotopyTheoryOfSimplicialGroups}
  We write
   \vspace{-2mm}
  $$
    \Groups(\SimplicialSets)_{\mathrm{proj}}
    \;\in\;
    \ModelCategories
  $$

 \vspace{-2mm}
  \noindent
  for the model category of simplicial groups
  (\cite[\S 17]{May67}\cite[\S 3]{Curtis71}),
  whose
  weak equivalences and fibrations are those of
  the underlying  $\SimplicialSets_{\mathrm{Qu}}$ \eqref{ClassicalModelStuctureOnSimplicialSets}.
\end{notation}
\begin{lemma}[Simplicial groups are Kan complexes, e.g. {\cite[Thm. 3]{Moore54}\cite[Thm. 17.1]{May67}\cite[Lem. 3.1]{Curtis71}}]
  \label{UnderlyingSimplicialSetOfASimplicialGroupIsKanComplex}
  The underling simplicial set of any simplicial group is a Kan complex.
\end{lemma}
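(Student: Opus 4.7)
The plan is to verify the Kan extension condition directly, constructing a filler for any horn $\Lambda^n_k \subset \Delta^n$ mapping into $G$ by exploiting the group multiplication on each set $G_n$ together with the simplicial identities. Concretely, I would encode the horn as a compatible family $(x_0, \ldots, \widehat{x_k}, \ldots, x_n)$ with $x_i \in G_{n-1}$ satisfying $d_i x_j = d_{j-1} x_i$ for $i < j$, $i,j \neq k$, and seek $w \in G_n$ with $d_i w = x_i$ for all $i \neq k$.

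The construction will proceed by induction, producing auxiliary elements $w_r \in G_n$ for $-1 \leq r \leq n$, $r \neq k$, such that $d_i w_r = x_i$ for every $i \leq r$ with $i \neq k$. The base case takes $w_{-1}$ to be the identity of $G_n$. For the inductive step (say $r < k$), I would multiply the previous approximation by a degenerate correction of the form
\[
  w_r \;=\; w_{r-1} \cdot s_r\bigl( (d_r w_{r-1})^{-1} \cdot x_r \bigr),
\]
chosen so that $d_r w_r = x_r$ holds by design. The final element $w_n$ (or $w_{n-1}$ when $k = n$) will then be the desired filler.

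The main technical point is verifying that the inductive step preserves the already-matched faces, i.e.\ that $d_i w_r = x_i$ still holds for $i < r$, $i \neq k$. This will reduce to applying the simplicial identity $d_i s_r = s_{r-1} d_i$ for $i < r$ and invoking the horn compatibility $d_i x_r = d_{r-1} x_i$, which together force the face-$i$ value of the correction factor to be the identity in $G_{n-1}$ and therefore invisible to $d_i$. The case $k = 0$ requires a mirror construction running downward from $w_n = e$, or equivalently splitting the induction into two passes (one for indices $r < k$ and one for $r > k$) with corrections on the appropriate side; in either variant the same simplicial identities deliver the verification. Once both passes go through for arbitrary $n$ and $k$, every horn in $G$ admits a filler, so its underlying simplicial set is a Kan complex.
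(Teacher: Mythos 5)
Your construction is correct and is exactly the classical Moore/May argument: the paper itself offers no proof of this lemma but cites precisely the references (Moore, May Thm.~17.1, Curtis Lem.~3.1) whose proof proceeds by the same inductive multiplication with degenerate correction factors $w_r = w_{r-1}\cdot s_r\bigl((d_r w_{r-1})^{-1}x_r\bigr)$, verified via $d_i s_r = s_{r-1} d_i$ and the horn compatibilities. One small imprecision: the mirror (downward) pass is needed whenever $k<n$, not only for $k=0$, since $s_n\colon G_{n-1}\to G_n$ does not exist -- but your alternative formulation as two passes, one for indices below $k$ and one for indices above $k$ with the correction $s_{r-1}$ on the upper side, is exactly the standard fix, so the argument goes through.
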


\begin{proposition}[Quillen equivalence between simplicial groups and reduced simplicial sets
 {\cite[\S V, Prop. 6.3]{GoerssJardine99}}]
  \label{QuillenEquivalenceBetweenSimplicialGroupsAndReducedSimplicialSets}
  The simplicial classifying space construction (Def. \ref{StandardModelOfUniversalSimplicialPrincipalComplex})
  is the right adjoint of a Quillen equivalence between
  the
  projective model structure on simplicial groups (Nota. \ref{HomotopyTheoryOfSimplicialGroups})
  and the injective model structure on reduced simplicial sets
  (Nota \ref{HomotopyTheoryOfReducedSimplicialSets}):
   \vspace{-2mm}
  $$
    \begin{tikzcd}
      \Groups(\SimplicialSets)_{\mathrm{proj}}
      \ar[
        rr,
        shift right=6pt,
        "\overline{W}(-)"{below}
      ]
      \ar[
        rr,
        phantom,
        "\scalebox{.7}{$\simeq_{\mathrlap{\rm Qu}}$}"
      ]
      &&
      (\SimplicialSets_{\geq 1})_{\mathrm{inj}}\;.
      \ar[
        ll,
        shift right=6pt
      ]
    \end{tikzcd}
  $$
\end{proposition}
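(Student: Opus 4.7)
The plan is to verify the three standard ingredients of a Quillen equivalence, following the classical approach of Kan.

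\textbf{Step 1: Construct the adjunction.} First I would exhibit the left adjoint explicitly as Kan's loop group functor $G : (\SimplicialSets_{\geq 1})_{\mathrm{inj}} \to \Groups(\SimplicialSets)_{\mathrm{proj}}$, defined in degree $n$ as the free group on the non-degenerate $(n+1)$-simplices that are not in the image of the last degeneracy, with face and degeneracy maps dictated by the simplicial identities so as to be compatible with $\overline{W}$. Then the adjunction $\Homs{}{G(X)}{H} \simeq \Homs{}{X}{\overline{W}(H)}$ is verified directly from the definition of $\overline{W}H$ (whose $n$-simplices are tuples $(h_{n-1}, \ldots, h_0)$ of elements of $H$ in descending dimension, cf.\ the explicit use of $\overline{W}G$ throughout Section \ref{ReproducingTheGRHIntertiaOrbifold}): a map $X \to \overline{W}H$ amounts to a coherent assignment of elements of $H$ to non-degenerate simplices of $X$, which is the same as a group homomorphism out of $GX$.

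\textbf{Step 2: Verify the Quillen adjunction.} It suffices to show that $\overline{W}$ is a right Quillen functor, i.e.\ preserves fibrations and trivial fibrations. Since all simplicial groups are fibrant (Lemma \ref{UnderlyingSimplicialSetOfASimplicialGroupIsKanComplex}) and $\overline{W}$ of a simplicial group is a Kan complex that is automatically reduced, the fibrant objects are preserved. For a fibration $H \twoheadrightarrow K$ in $\Groups(\SimplicialSets)_{\mathrm{proj}}$ (i.e.\ a Kan fibration between underlying simplicial sets), one shows $\overline{W}(H) \twoheadrightarrow \overline{W}(K)$ is a Kan fibration by using the explicit formula for horn-fillers in $\overline{W}(H)$, which reduces to the horn-filling property of $H \twoheadrightarrow K$ shifted by one dimension; preservation of weak equivalences follows from the long exact sequence comparing $\pi_n \overline{W}(H) \cong \pi_{n-1} H$ (together with $\pi_0 \overline{W}(H) = \ast$).

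\textbf{Step 3: Derived unit and counit are weak equivalences.} Every object of $\Groups(\SimplicialSets)_{\mathrm{proj}}$ is fibrant, so one needs the counit $\epsilon_H : G\overline{W}(H) \to H$ to be a weak equivalence for every simplicial group $H$. Similarly, every reduced simplicial set is cofibrant in $(\SimplicialSets_{\geq 1})_{\mathrm{inj}}$, so one needs the unit $\eta_X : X \to \overline{W}G(X)$ to be a weak equivalence for every reduced $X$. The standard route is to establish a natural \emph{principal $G(X)$-bundle} $WG(X) \twoheadrightarrow \overline{W}G(X)$, where $WG(X)$ is the total space of the universal bundle (contractible by an explicit extra degeneracy / simplicial contraction). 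The long exact sequence of this Kan fibration gives $\pi_n \overline{W}G(X) \cong \pi_{n-1} G(X)$, and Kan's original computation shows $\pi_{n-1} G(X) \cong \pi_n X$ for reduced $X$, with the composite isomorphism induced precisely by $\eta_X$; this yields the unit condition. Dually, one identifies $\epsilon_H$ on homotopy groups via the same sequence applied to $\overline{W}(H)$.

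\textbf{Main obstacle.} The only nontrivial content is Step 3: the contractibility of $WG(X)$ via an extra degeneracy and the identification $\pi_{n-1}G(X) \cong \pi_n X$ are delicate combinatorial facts about Kan's loop group construction. Everything else (adjunction bijection, preservation of fibrations, fibrancy/cofibrancy of all objects) is essentially formal once the definitions of $G$ and $\overline{W}$ are spelled out. Since this is a well-known classical result, I would in practice simply cite \cite[\S V, Prop.\ 6.3]{GoerssJardine99}.
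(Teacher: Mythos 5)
The paper gives no proof of this proposition at all — it is recalled as a classical fact with only the citation to [GJ99, \S V, Prop.~6.3] — and your plan is precisely the standard Kan loop-group argument behind that citation (adjunction $G \dashv \overline{W}$, right-Quillen property of $\overline{W}$, unit/counit identified on homotopy groups via the contractible principal bundles), so in effect you and the paper take the same route, with your Step 3 being exactly the content one delegates to Goerss--Jardine. One small point of care in your Step 2: fibrations in $(\SimplicialSets_{\geq 1})_{\mathrm{inj}}$ are not simply the underlying Kan fibrations (cf.\ Lemma \ref{FibrantReducedSimplicialSetsAreKanComplexes}), but since a Kan fibration between reduced simplicial sets has the right lifting property against all monomorphisms that are weak equivalences, and hence is a fibration in the reduced structure, your argument that $\overline{W}$ carries fibrations of simplicial groups to Kan fibrations does suffice.
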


\medskip

\noindent
{\bf Abelian simplicial groups and chain complexes.}

\begin{notation}[Dold-Kan correspondence -- e.g. {\cite[\S III.2]{GoerssJardine99}\cite[\S 2]{SchwedeShipley03}}]
We write:
\begin{itemize}[leftmargin=.5cm]

\item[--] $\big( \AbelianGroups(\SimplicialSets), \otimes  \big)$ for the category of simplicial abelian groups equipped with the degree-wise tensor product of abelian groups;

\item[--] $\big( \ConnectiveChainComplexes(\AbelianGroups), \otimes \big)$ for the category of connective chain complexes of abelian groups, equipped with its natural tensor product;
\end{itemize}

\begin{itemize}[leftmargin=.5cm]
\item[--] the Dold-Kan correspondence (reviewed in our context in \cite[A.63]{FSS20CharacterMap}):
 \vspace{-2mm}
\begin{equation}
  \label{DoldKanCorrespondence}
    \begin{tikzcd}
    \mathrm{Ch}^{\geq 0}
    (\AbelianGroups)
    \ar[
      from=rr,
      shift right=8pt,
      "{
        \NormalizedChains
      }"{swap}
    ]
    \ar[
      rr,
      shift right=8pt,
      "{
        \Gamma
      }"{swap}
    ]
    \ar[
      rr,
      phantom,
      "{
        \bot
      }"{yshift=+2pt, scale=.7},
      "{
        \sim
      }"{yshift=-4pt, scale=.7},
    ]
    \ar[
      rrrr,
      rounded corners,
      to path={
        -- ([yshift=-00pt]\tikztostart.south)
        -- ([yshift=-14.5pt]\tikztostart.south)
        -- node[yshift=-8pt]{
             \scalebox{.7}{
               $\mathrm{DK}$
             }
           }
            ([yshift=-15pt]\tikztotarget.south)
        -- ([yshift=-00pt]\tikztotarget.south)
      }
    ]
    &&
    \AbelianGroups(\SimplicialSets)
    \ar[
      from=rr,
      shift right=8pt,
      "{
        \mathbb{Z}[-]
      }"{swap}
    ]
    \ar[
      rr,
      shift right=8pt,
      "{
        \underlying
      }"{swap}
    ]
    \ar[
      rr,
      phantom,
      "{
        \bot
      }"{scale=.7}
    ]
    &&
    \SimplicialSets
    \mathrlap{\,,}
    \end{tikzcd}
\end{equation}

 \vspace{-2mm}
\noindent
where $\NormalizedChains(-)$ forms (``normalized'') chain complexes of non-degenerate cells, and

where $\mathbb{Z}[-]$ denotes the (degreewise) free abelian group functor;

\item[--]
notice here that $\NormalizedChains(-)$ respects the tensor product only up to homotopy (this is the content of Prop. \ref{EilenbergZilberAlexanderWhitneyRetraction} below), while $\mathbb{Z}[-]$ is strong monoidal:
 \vspace{-2mm}
\begin{equation}
  \label{FreeAbIsStrongMonoidal}
  X,Y \,:\, \SimplicialSets
  \;\;\;\;\;\;\;\;\;\;
  \vdash
  \;\;\;\;\;\;\;\;\;
  \mathbb{Z}
  [
    X \times Y
  ]
  \;\simeq\;
  \mathbb{Z}[X] \otimes \mathbb{Z}[Y]\;;
\end{equation}

 \vspace{-1mm}
\item[--] the Eilenberg-MacLane formula \cite[(5.3)]{EilenbergMacLane53}
for the Eilenberg-Zilber map \cite{EilenbergZilber53}
(review in \cite[\S 1.6]{Loday92}):
 \vspace{-2mm}
\begin{equation}
  \label{EilenbergMaclaneFormula}
  A,B
    \,\in\,
  \AbelianGroups(\SimplicialSets)
  \hspace{.8cm}
  \vdash
  \hspace{.8cm}
  \begin{tikzcd}[
    row sep=-2pt,
    column sep=8pt
  ]
    \NormalizedChains(A)
    \otimes
    \NormalizedChains(B)
    \ar[
      rr,
      "{
        \nabla_{A.B}
      }"
    ]
    &&
    \NormalizedChains
    (
      A \otimes B
    )
    \\
  \scalebox{0.75}{$ a_p \otimes b_q$}
    &\longmapsto&
  \scalebox{0.75}{$    \underset{
      \mathclap{
        (\mu,\nu)
      }
    }{\scalebox{1.3}{$\sum$}}
    \;
    \mathrm{sgn}(\mu,\nu)
    \cdot
    \big(
      s_{ \nu }(a_p)
    \big)
    \otimes
    \big(
      s_{ \mu }(b_q)
    \big)
    $}
    \,,
  \end{tikzcd}
\end{equation}

 \vspace{-3mm}
\noindent
where the sum is over all $(p,q)$-shuffles $(\mu, \nu)$
from \eqref{Shuffles}
and $\left(s_\nu(-), s_\mu(-)\right)$ is according to \eqref{FormulaForNonDegProductsOfSimplices}.
\end{itemize}
\end{notation}

\begin{example}[Shifted abelian groups]
  \label{ShiftedAbelianGroups}
  For $A \in \AbelianGroups(\Sets)$ and $n \in \mathbb{N}$, the image under the Dold-Kan construction
  \eqref{DoldKanCorrespondence} of the chain complex that is concentrated on $A$ in degree $n$ is a model for the $n$-fold delooping of $A$:
  $$
    \mathbf{B}^n A
    \;\;\simeq\;\;
    \mathrm{DK}\big(A[n]\big)
    \;\;
    \in
    \;\;
    \mathrm{Ho}\big(
      \SimplicialSets_{\mathrm{Qu}}
    \big).
  $$
\end{example}

\begin{example}[Incarnations of group cohomology]
  \label{IncarnationsOfGroupCohomology}
  For
  $G, A
    \in
    \Groups(\Sets)
      \xhookrightarrow{\;}
    \Groups(\SimplicialSets)
      \xrightarrow{\Localization{\mathrm{W}}}
    \Groups(\InfinityGroupoids)$
  with $A$ abelian,
the following abelian groups are all naturally
isomorphic, for all $n \in \mathbb{N}$:
\vspace{-2mm}
  $$
    \overset{
      \mathclap{
      \raisebox{3pt}{
        \tiny
        \color{darkblue}
        \bf
        group cohomology
      }
      }
    }{
    H^{n+1}_{\mathrm{grp}}
    \big(
      G,
      \,
      A
    \big)
    }
    \;\simeq\;
    \pi_0
    \overset{
      \mathclap{
      \raisebox{5pt}{
        \tiny
        \color{darkblue}
        \bf
        \begin{tabular}{c}
          homotopy classes of homs
          \\
          of delooped simplicial groups
        \end{tabular}
      }
      }
    }{
    \SimplicialSets
    \Big(
      \overline{W}G,
      \,
      \mathrm{DK}(A[n])
    \Big)
    }
    \;\simeq\;
    \overset{
      \mathclap{
      \raisebox{6pt}{
        \tiny
        \color{darkblue}
        \bf
        \begin{tabular}{c}
          cohomology of
          \\
          classifying space
        \end{tabular}
      }
      }
    }{
    H^{n+1}
    (
      B G,
      A
    )
    }
    \;\simeq\;
    \pi_0
    \overset{
      \mathclap{
      \raisebox{5pt}{
        \tiny
        \color{darkblue}
        \bf
        \begin{tabular}{c}
          homotopy classes of homs
          \\
          of delooped $\infty$-groups
        \end{tabular}
      }
      }
    }{
    \InfinityGroupoids
    \big(
      B G,
      \,
      B^{n+1} A
    \big).
    }
  $$
\end{example}

\begin{example}[Normalized chains on minimal simplicial circle]
  The normalized chains complex
  of the free simplicial abelian group
  \eqref{DoldKanCorrespondence} on the minimal simplicial circle (Ntn. \ref{MinimalSimplicialCircle}) is
  \begin{equation}
    \label{NormalizedChainsOnMinimalSimplicialCircle}
    \NormalizedChains
    \big(
      \mathbb{Z}
      \big[
        S^1_{\mathrm{min}}
      \big]
    \big)
    \;\;
    =
    \;\;
    \NormalizedChains
    \left(\!\!\!\!\!\!
      \adjustbox{raise=2pt}{
      \begin{tikzcd}[
        row sep=16pt
      ]
        \mathbb{Z}[
          \{\ast, \ell\}
        ]
        \ar[d, shorten=-1pt, shift left=12pt]
        \ar[d, shorten=-1pt, shift right=12pt]
        \ar[from=d, shorten=-2pt, " s_0 "{description, pos=.45}]
        \\
        \mathbb{Z}[
          \{\ast\}
        ]
      \end{tikzcd}
      }
   \!\!\!\!\!\! \right)
    \;\;
    =
    \;\;
      \adjustbox{raise=2pt}{
      \begin{tikzcd}[
        row sep=16pt
      ]
      \mathbb{Z}[\{\ell\}]
      \ar[d, "{ \partial =  0}"]
      \\
      \mathbb{Z}[\{\ast\}]
    \end{tikzcd}
    }
    \;\;
    =
    \;\;
    \mathbb{Z} \,\oplus\, \mathbb{Z}[1]\;.
  \end{equation}
\end{example}

\begin{proposition}[Eilenberg-Zilber/Alexander-Whitney deformation retraction]
\label{EilenbergZilberAlexanderWhitneyRetraction}
The Eilenberg-Zilber map $\nabla_{A,B}$
\eqref{EilenbergMaclaneFormula}
on normalized chain complexes is a homotopy equivalence, in fact, it has a deformation retraction (given by the Alexander-Whitney map  $\Delta_{A,B}$):
\vspace{-4mm}
$$
  \begin{tikzcd}
    \NormalizedChains(A)
    \otimes
    \NormalizedChains(B)
    \ar[
      rrrr,
      phantom,
      "{ }"{name=t}
    ]
    \ar[
      rr,
      "{
        \nabla_{A,B}
      }"{description}
    ]
    \ar[
      rrrr,
      bend left=15,
      "{ \mathrm{id} }",
      "{\ }"{swap, name=s}
    ]
    &&
    \NormalizedChains
    (
      A \otimes B
    )
    \ar[
      rr,
      "{ \Delta_{A,B} }"{description}
    ]
    \ar[
      rrrr,
      phantom,
      "{\ }"{yshift=-4pt, name=s2}
    ]
    \ar[
      rrrr,
      bend right=15,
      "{ \mathrm{id} }"{swap},
      "{\ }"{name=t2}
    ]
    &&
    \NormalizedChains(A)
    \otimes
    \NormalizedChains(B)
    \ar[
      rr,
      "{
        \nabla_{A,B}
      }"{description}
    ]
    &&
    \NormalizedChains
    (
      A \otimes B
    )
    \ar[
      from=s,
      to=t,
      shorten >=6pt,
      Rightarrow,
      -
    ]\;.
    \ar[
      from=s2,
      to=t2,
      shorten <=2pt,
      shorten >=0pt,
      Rightarrow
    ]
  \end{tikzcd}
$$
\end{proposition}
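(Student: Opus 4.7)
The plan is to define the Alexander--Whitney map $\Delta_{A,B}$ explicitly on normalized chains by the classical front-face/back-face formula
$$
  \Delta_{A,B}(c_{p+q})
  \;=\;
  \sum_{p+q = n}
  d_{\mathrm{front}}^{p}(c_n) \,\otimes\, d_{\mathrm{back}}^{q}(c_n),
$$
where $d_{\mathrm{front}}^{p} = d_{p+1}\cdots d_n$ and $d_{\mathrm{back}}^{q} = (d_0)^{p}$. One then verifies that $\Delta_{A,B}$ is natural in $A$ and $B$ and is a chain map (signs matching those in \eqref{EilenbergMaclaneFormula}). With $\Delta_{A,B}$ and $\nabla_{A,B}$ in hand, proving the claimed deformation retraction splits into three tasks: (i) check $\Delta_{A,B}\circ\nabla_{A,B} = \mathrm{id}$; (ii) produce a chain homotopy $h_{A,B}$ with $\partial h + h \partial = \mathrm{id} - \nabla_{A,B}\circ\Delta_{A,B}$; and (iii) verify the adjoint side-condition $\Delta_{A,B}\circ h_{A,B} = 0 = h_{A,B}\circ\nabla_{A,B}$ that upgrades a mere homotopy equivalence to a deformation retraction.

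For task (i), which is the easy direction, I would feed the shuffle sum \eqref{EilenbergMaclaneFormula} into the front/back-face decomposition and observe that each simplicial identity $d_i \circ s_j$ makes all cross-terms collapse except the single summand corresponding to the trivial shuffle, where front-face picks out $a_p$ and back-face picks out $b_q$. This is the standard combinatorial identity (cf.\ \cite[\S 1.6.11]{Loday92}): after expanding $d_{\mathrm{front}}^p\bigl(s_\nu(a_p)\otimes s_\mu(b_q)\bigr)$ using the cosimplicial identities, all terms in which the shuffle mixes the indices produce a degenerate simplex in one of the two factors and hence vanish in the normalized complex, leaving only the identity.

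For task (ii), the hard part of the argument, the standard approach is the method of acyclic models, originally due to Eilenberg--MacLane. Both $\mathrm{id}$ and $\nabla_{A,B}\circ\Delta_{A,B}$ are natural transformations between the bifunctors $(A,B)\mapsto N_\bullet(A\otimes B)$, they agree in degree $0$, and the target functor is representable on pairs of standard simplices $(\mathbb{Z}[\Delta[p]],\mathbb{Z}[\Delta[q]])$ whose normalized chain complexes are contractible in positive degrees. Acyclicity of these models lets one inductively build a natural chain homotopy $h_{A,B}$ degree by degree, choosing at each step a null-homotopy of the cycle $\mathrm{id}-\nabla\Delta - \partial h_{<n} - h_{<n}\partial$ on the universal cells. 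Alternatively, one may write down the explicit ``shuffle contraction'' formula for $h_{A,B}$ (see e.g.\ \cite[\S 2.1.b]{SchwedeShipley03} and references therein) and verify the homotopy identity directly by another shuffle calculation.

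For task (iii), the two side-conditions are again combinatorial checks: the explicit $h_{A,B}$ produced by the acyclic-model induction can always be normalized to land in the kernel of $\Delta_{A,B}$ and to vanish on the image of $\nabla_{A,B}$, by subtracting appropriate correction terms (using $\Delta\nabla = \mathrm{id}$ to ensure the corrections do not disturb the homotopy identity). The main obstacle is purely notational/combinatorial bookkeeping in task (ii); no new conceptual ingredient is needed, and the whole statement is, as indicated by the references \cite[\S III.2]{GoerssJardine99}\cite[\S 2]{SchwedeShipley03}, classical. We therefore content ourselves with the above outline and refer the reader to these sources for the full explicit formulae.
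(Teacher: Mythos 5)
Your outline is correct and is exactly the classical Eilenberg--Zilber/Alexander--Whitney argument; the paper itself gives no proof but instead points to the same literature you invoke (\cite{EilenbergZilber53}, \cite[Thm.\ 2.1a]{EilenbergMacLane54}, \cite[Cor.\ 29.10]{May67}, with the explicit homotopy operator in \cite{GDR99}), so your approach coincides with the paper's handling of the statement. One small remark: the statement as drawn only asserts $\Delta_{A,B}\circ\nabla_{A,B}=\mathrm{id}$ strictly together with a homotopy $\nabla_{A,B}\circ\Delta_{A,B}\Rightarrow\mathrm{id}$, so your task (iii) on the side conditions, while true and available in the cited sources, is not needed for the proposition; also note that the strict identity in your task (i) genuinely uses normalization (the cross-terms only vanish because degenerate cells are killed), which you correctly flag.
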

For the case of unnormalized chain complexes (where we have just  an homotopy equivalence) this is due to \cite{EilenbergZilber53}, the case for normalized chain complexes is due to \cite[Thm. 2.1a]{EilenbergMacLane54}, both are reviewed in \cite[Cor. 29.10]{May67}. Explicit description of the homotopy operator is in \cite{GDR99}.

\medskip
\medskip

\noindent
{\bf Universal principal simplicial complex.}

\begin{definition}[Universal principal simplicial complex
{\cite[Def. 10.3]{Kan58}\cite[p. 269]{GoerssJardine99}}]
 \label{StandardModelOfUniversalSimplicialPrincipalComplex}
  Let $\mathcal{G} \in \Groups(\SimplicialSets)$.

  \noindent
  {\bf (i)} Its {\it standard universal principal complex} is the simplicial set
  \vspace{-2mm}
  $$
    W \mathcal{G}
    \;\;
    \in
    \;
    \SimplicialSets
  $$

  \vspace{-2mm}
  \noindent
  whose

  \begin{itemize}
  \item
  component sets are
  $$
    (W \mathcal{G})_n
    \;\coloneqq\;
    \mathcal{G}_{n}
      \times
    \mathcal{G}_{n-1}
      \times
    \cdots
      \times
    \mathcal{G}_0
    \,,
  $$

  \item
  face maps are given by
  \vspace{-2mm}
  \begin{equation}
    \label{FaceMapsOfWG}
    \hspace{-5mm}
    d_i
    \big(
      \gamma_n, \, \gamma_{n-1},\, \cdots ,\, \gamma_0
    \big)
    \;\coloneqq\;
    \left\{\!\!\!\!
    \begin{array}{lcl}
      \big(
        d_i(\gamma_n),
        \,
        d_{i-1}(\gamma_{n-1}),
        \,
        \cdots,
        \,
        d_0(\gamma_{n-i}) \cdot \gamma_{n-i-1},
        \,
        \gamma_{n-i-2},
        \,
        \cdots,
        \,
        \gamma_0
      \big)
      &\mbox{for}&
      0 < i < n
      \\[5pt]
      \big(
        d_n(\gamma_n),
        \,
        d_{n-1}(\gamma_{n-1}),
        \,
        \cdots,
        \,
        d_1(\gamma_1)
      \big)
      &\mbox{for}&
      i = n
      \,,
    \end{array}
    \right.
  \end{equation}

  \item
  degeneracy maps are given by
  \vspace{-2mm}
  \begin{equation}
    \label{DegeneracyMapsOfWG}
    s_i
    \big(
      \gamma_n,
      \,
      \gamma_{n-1},
      \,
      \cdots,
      \,
      \gamma_0
    \big)
    \;:=\;
    \big(
      s_i(\gamma_n),
      \,
      s_{i-1}(\gamma_{n - 1}),
      \,
      \cdots,
      \,
      s_0(\gamma_{n - i}),
      \,
      e,
      \,
      \gamma_{n - i - 1},
      \,
      \cdots,
      \,
      \gamma_0
    \big)
    \,,
  \end{equation}

  \item
  and equipped with the left $\mathcal{G}$-action
  (Ex. \ref{UniversalPrincipalSimplicialComplexInGActions})
  given by
  \vspace{-2mm}
  \begin{equation}
    \label{LeftActionOnUniversalSimplicialPrincipalSpace}
    \begin{tikzcd}[row sep=-3pt]
      \mathcal{G} \times W\mathcal{G}
      \ar[rr]
      &&
      W \mathcal{G}
      \\
  \scalebox{0.7}{$    \big(
        h_n,
        \,
        (\gamma_n, \gamma_{n-1}, \cdots, \gamma_0)
      \big)
      $}
      &\longmapsto&
      \scalebox{0.7}{$   \big(
        h_n \cdot \gamma_n,
        \,
        \gamma_{n-1},
        \,
        \cdots,
        \,
        \gamma_0
      \big)
      $}
      \,.
    \end{tikzcd}
  \end{equation}

  \end{itemize}

  \noindent
  {\bf (ii)}  Its standard {\it simplicial delooping} or
  {\it simplicial classifying complex}
  $\overline{W}\mathcal{G}$ is the quotient
  by that action \eqref{LeftActionOnUniversalSimplicialPrincipalSpace}:
  \vspace{-2mm}
  \begin{equation}
    \label{StandardSimplicialDeloopingAsQuotient}
    W\mathcal{G}
    \xrightarrow{ \;\;q_{{}_{W\mathcal{G}}}\;\; }
    \overline{W}\mathcal{G}
    \;:=\;
    \big(
      W \mathcal{G}
    \big)/\mathcal{G}
    \,.
  \end{equation}
\end{definition}

\begin{example}[Low-dimensional cells of universal simplicial principal complex]
Unwinding the definition \eqref{FaceMapsOfWG} of the face maps of $W \mathcal{G}$
(Def. \ref{StandardModelOfUniversalSimplicialPrincipalComplex}) shows that its
1-simplices are of the form
$$
  (W \mathcal{G})_1
  \;\;
  =
  \;\;
  \left\{\!\!\!\!
  \left.
  \begin{tikzcd}
    d_1(g_1)
    \ar[
      rr,
      "{(g_1, g_0)}"
    ]
    &&
    d_0(g_1)\cdot g_0
  \end{tikzcd}
  \;\right\vert\;
  \begin{array}{l}
    g_0 \,\in\, \mathcal{G}_0
    \\
    g_1 \,\in\, \mathcal{G}_1
  \end{array}
\!\!  \right\}
$$
and its 2-simplices are of this form:
$$
  (W\mathcal{G})_2
  \;\;
  =
  \;\;
  \left\{
  \left.
  \begin{tikzcd}[column sep=-10pt, row sep=20pt]
    &&
    \scalebox{.8}{$
      {d_0 d_2(g_2) \cdot d_1(g_1)}
      \,=\,
      {d_1 d_0(g_2) \cdot d_1(g_1)}
    $}
    {}
    \ar[
      ddrr,
      "{
        \scalebox{.8}{$\left(
          d_0(g_2) \cdot g_1,
          \,
          g_0
       \right)$ }
      }"{above, sloped, pos=.6},
      "\ "{below, name=s, pos=.02}
    ]
    \\
    \\
    {
      {\phantom{=\,} d_1 d_2(g_2)}
      \atop
      {=\, d_1 d_1(g_2)}
    }
    \ar[
      rrrr,
      "{
        \scalebox{.8}{$\left(
        d_1(g_2),
        \,
        d_0(g_1)\cdot g_0
        \right)$}
      }"{below},
      "{\ }"{above, name=t}
    ]
    \ar[
      uurr,
      "{
        \scalebox{.8}{$\left(
          d_2(g_2),
          \,
          d_1(g_1)
        \right)$}
      }"{above, sloped, pos=.4}
    ]
    &&&&
    {
      {\phantom{=,} d_0 d_0(g_2) \cdot d_0(g_1) \cdot g_0}
      \atop
      {=\,  d_0 d_1(g_2)\cdot d_0(g_1) \cdot g_0}
    }
    \ar[
      from=s,
      to=t,
      Rightarrow,
      "{
        (g_2, g_1, g_0)
      }"{description}
    ]
  \end{tikzcd}
  \;
  \right\vert
  \;
  \begin{array}{l}
    g_0 \,\in\, \mathcal{G}_0,
    \\
    g_1 \,\in\, \mathcal{G}_1,
    \\
    g_2 \,\in\, \mathcal{G}_2
  \end{array}
  \right\}.
$$

\end{example}

\begin{example}[Universal principal simplicial complex for ordinary group $G$]
  \label{UniversalSimplicialPrincipalComplexForOrdoinaryGroupG}
  If
  \vspace{-2mm}
  $$
    G
    \;\in\;
    \Groups
    \longhookrightarrow
    \Groups
    \big(
      \SimplicialSets
    \big)
  $$

  \vspace{-2mm}
\noindent  is an ordinary discrete group, regarded as a simplicial group
  (hence the functor constant on $G$ on the opposite simplex category),
  then the  standard model of its universal principal complex (Def. \ref{StandardModelOfUniversalSimplicialPrincipalComplex})
  is isomorphic to the nerve of the
  action groupoid
  of the right multiplication action of $G$ on itself:
  \vspace{-2mm}
  \begin{equation}
    \label{WGForOrdinaryGroupIsNerveOfActionGroupoid}
    W G
    \;=\;
    N
    \big(
      G \times G
        \rightrightarrows
      G
    \big)
    \,.
  \end{equation}
\vspace{-4mm}
$$
(W G)_2
\;\;
=
\;\;
\left\{
\left.
\begin{tikzcd}
  & g_2 g_1
  \ar[
    ddr,
    "{(g_2 g_1, g_0)}"{sloped},
    ""{name=s, below, pos=0.01}
  ]
  \\
  \\
  g_2
  \ar[
    uur,
    "{(g_2, g_1)}"{sloped}
  ]
  \ar[
    rr,
    "{(g_2, g_1 g_0)}"{below},
    ""{name=t, above}
  ]
  &
  {}
  &
  g_2 g_1 g_0
  \ar[
    from=s, to=t,
    Rightarrow,
    "{(g_2, g_1, g_0)}"{description}
  ]
\end{tikzcd}
\;\right\vert\;
 g_0, g_1, g_2 \,\in\, G
\right\}
$$
Accordingly, the standard simplicial delooping \eqref{StandardSimplicialDeloopingAsQuotient}
of an ordinary group is isomorphic to the simplicial nerve of its delooping groupoid:
\begin{equation}
  \label{WGAsNeverOfDeloopinggroupoid}
  \overline{W}G
  \;\simeq\;
  N\big(\,
    \underset{
      =: \, \mathbf{B}G
    }{
    \underbrace{
      G \rightrightarrows \ast
    }
    }
 \, \big)
  \;\;
  \in
  \;
  \SimplicialSets
  \,.
\end{equation}
\end{example}
\begin{proposition}[Basic properties of standard simplicial principal complex {\cite[\S V, Lem. 4.1, 4.6, Cor. 6.8]{GoerssJardine99}}]
  \label{BasicPropertiesOfStandardSimplicialPrincipalComplex}
  For $\mathcal{G} \in \Groups(\mathrm{SimplSets})$,
  its standard universal principal complex (Def. \ref{StandardModelOfUniversalSimplicialPrincipalComplex})
  has the following properties:

  {\bf (i)} $W \mathcal{G}$ is contractible;

  {\bf (ii)}
    $W \mathcal{G}$ and $\overline{W} \mathcal{G}$ are Kan complexes;
\end{proposition}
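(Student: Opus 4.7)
The plan is to establish both claims by exploiting the fact that $W\mathcal{G}$ comes equipped with an \emph{extra degeneracy} and sits in a principal fibration sequence $\mathcal{G} \to W\mathcal{G} \xrightarrow{q} \overline{W}\mathcal{G}$ (the quotient map \eqref{StandardSimplicialDeloopingAsQuotient}) along which we can transport horn-filling.

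For \textbf{(i)}, I would introduce the extra degeneracy
\[
  s_{-1} \;:\; (W\mathcal{G})_n \longrightarrow (W\mathcal{G})_{n+1}\,,
  \quad
  (\gamma_n, \gamma_{n-1}, \ldots, \gamma_0) \longmapsto (e_{n+1}, \gamma_n, \gamma_{n-1}, \ldots, \gamma_0)\,,
\]
where $e_{n+1} \in \mathcal{G}_{n+1}$ is the identity. Using the explicit face map formula \eqref{FaceMapsOfWG} and the fact that multiplication by $e$ and application of any face/degeneracy to $e$ returns $e$, one checks immediately that $d_0 \circ s_{-1} = \mathrm{id}$ and $d_{i} \circ s_{-1} = s_{-1} \circ d_{i-1}$ for $i \geq 1$, and analogously for degeneracies. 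This data exhibits $W\mathcal{G}$ as a simplicially contractible (``split'') simplicial set augmented over the point, whence the inclusion $\ast \hookrightarrow W\mathcal{G}$ of the basepoint $(e,\ldots,e)$ is a simplicial homotopy equivalence.

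For \textbf{(ii)}, the Kan property of $W\mathcal{G}$ follows from a standard argument: any horn $\Lambda^k[n] \to W\mathcal{G}$ can be extended to $\Delta[n]$ by using the contracting structure; concretely, one applies $s_{-1}$ to the value on the appropriate face and checks compatibility with the other faces using the simplicial identities. For $\overline{W}\mathcal{G}$, I would argue via the quotient map $q$: given a horn $h: \Lambda^k[n] \to \overline{W}\mathcal{G}$, one lifts it stepwise to a horn $\tilde h : \Lambda^k[n] \to W\mathcal{G}$ using the fact that $q$ is surjective in each degree and that the fibers are $\mathcal{G}$-torsors (so mismatches between lifts of adjacent faces may be corrected by the free right $\mathcal{G}$-action on $W\mathcal{G}$, using that $\mathcal{G}$ itself is a Kan complex by Lem.~\ref{UnderlyingSimplicialSetOfASimplicialGroupIsKanComplex}). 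One then fills $\tilde h$ inside the Kan complex $W\mathcal{G}$ and projects the filler down by $q$.

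The main obstacle I anticipate is the coherent lifting in the last step: lifting a horn into $W\mathcal{G}$ rather than just its individual simplices requires matching faces in $W\mathcal{G}$ that may initially differ by an element of $\mathcal{G}$, and one must cancel these discrepancies by acting by cochains valued in $\mathcal{G}$ in lower simplicial degrees without disturbing the already-chosen faces. This is a book-keeping argument using the simplicial group structure of $\mathcal{G}$ (in particular its Kan fibration property, Lem.~\ref{UnderlyingSimplicialSetOfASimplicialGroupIsKanComplex}) and is the technical heart of the classical result; once granted, the Kan property of $\overline{W}\mathcal{G}$ is immediate.
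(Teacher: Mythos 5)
Your argument for part (i) is fine: the extra degeneracy $s_{-1}(\gamma_n,\ldots,\gamma_0)=(e_{n+1},\gamma_n,\ldots,\gamma_0)$ does satisfy $d_0 s_{-1}=\mathrm{id}$ and $d_i s_{-1}=s_{-1}d_{i-1}$ against the face formula \eqref{FaceMapsOfWG}, and this standard ``extra degeneracies'' datum gives a simplicial contraction of $W\mathcal{G}$ onto its basepoint. The genuine gap is in part (ii). First, the step ``$W\mathcal{G}$ is Kan because it has a contracting structure'' rests on a false general principle: simplicial sets with extra degeneracies need not be Kan (e.g.\ $\Delta[n]$ itself has extra degeneracies), and applying $s_{-1}$ to one face of a horn does not in general produce a filler compatible with the remaining faces. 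A correct direct proof must actually solve the horn-filling equations degreewise using the group structure of $\mathcal{G}$ (much as in the proof that simplicial groups are Kan), or must exhibit $q\colon W\mathcal{G}\to \overline{W}\mathcal{G}$ as a Kan fibration over a Kan base --- neither of which your sketch supplies. Second, your route to the Kan property of $\overline{W}\mathcal{G}$ (lift a horn through $q$, correcting discrepancies by the $\mathcal{G}$-action, fill upstairs, project down) defers exactly the nontrivial content --- the coherent, face-compatible lifting --- which you yourself flag as ``the technical heart''; as written this is a plan, not a proof. Note also the logical order: since your $\overline{W}\mathcal{G}$-step presupposes $W\mathcal{G}$ Kan, and the clean way to get $W\mathcal{G}$ Kan is from $q$ being a Kan fibration over the Kan complex $\overline{W}\mathcal{G}$, your two halves of (ii) lean on each other unless the missing combinatorics is actually carried out.

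For comparison, the paper avoids all combinatorics in (ii) and argues in the opposite order: $\overline{W}\mathcal{G}$ is Kan because $\overline{W}$ is right Quillen from simplicial groups to reduced simplicial sets (Prop.~\ref{QuillenEquivalenceBetweenSimplicialGroupsAndReducedSimplicialSets}), every simplicial group is fibrant (Lem.~\ref{UnderlyingSimplicialSetOfASimplicialGroupIsKanComplex}), and fibrant reduced simplicial sets are Kan (Lem.~\ref{FibrantReducedSimplicialSetsAreKanComplexes}); then $W\mathcal{G}$ is Kan because $q$ is a Kan fibration \eqref{TheUniversalPrincipalSimplicialBundle}, being the image of a fibrant object under the right Quillen Borel construction (Prop.~\ref{QuillenEquivalenceBetweenBorelModelStructureAndSliceOverClassifyingComplex}, Ex.~\ref{CoprojectionsOutOfBorelConstructionAreKanFibrations}), and a Kan fibration over a Kan complex has Kan total space. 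If you want to keep your elementary flavor, the fix is to replace the ``contraction implies Kan'' step by an explicit filler construction for $W\mathcal{G}$ (solving for the entries $(\gamma_n,\ldots,\gamma_0)$ one at a time, using invertibility in $\mathcal{G}$ and that $\mathcal{G}$ is Kan), and to prove separately that $q$ is a Kan fibration before attempting to descend horn fillers to $\overline{W}\mathcal{G}$; your extra-degeneracy argument then remains a nice self-contained proof of (i), which the paper simply cites.
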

\begin{proof}
  That $\overline{W}\mathcal{G}$ is Kan fibrant
  follows as the combination of
  Lem.
  \ref{UnderlyingSimplicialSetOfASimplicialGroupIsKanComplex},
  Prop.   \ref{QuillenEquivalenceBetweenSimplicialGroupsAndReducedSimplicialSets},
  and
  Lem. \ref{FibrantReducedSimplicialSetsAreKanComplexes}.
  This implies that $W \mathcal{G}$ is Kan fibrant since
  $W \mathcal{G} \xrightarrow{q} \overline{W}\mathcal{G}$
  is a Kan fibration \eqref{TheUniversalPrincipalSimplicialBundle}
  (by Prop. \ref{QuillenEquivalenceBetweenSimplicialGroupsAndReducedSimplicialSets},
  see Ex. \ref{CoprojectionsOutOfBorelConstructionAreKanFibrations}).
\end{proof}

\medskip

\noindent
{\bf Inertia groupoids.}

\begin{notation}[Minimal simplicial circle]
\label{MinimalSimplicialCircle}
We write
\vspace{-1mm}
$$
  S^1_{\mathrm{min}}
  \;:=\;
  \Delta[1]/\partial\Delta[1]
  \;\;
  \in
  \;
  \SimplicialSets \;,
$$
and denote its unique non-degenerate 1-cell by
\vspace{-2mm}
\begin{equation}
  \label{CircleCell}
  \ell
  \;:=\;
  [0,1]
  \in
  \Delta[1]
  \; \twoheadrightarrow \;
  S^1_{\mathrm{min}} \;.
\end{equation}
\end{notation}

\begin{example}[Minimal model of free loop space]
  \label{MinimalModelOfFreeLoopSpace}
  The classifying space $\overline{W}\mathbb{Z}$
  \eqref{WGAsNeverOfDeloopinggroupoid}
  is a Kan fibrant replacement of the minimal simplicial circle (Ntn. \ref{MinimalSimplicialCircle})
  and both are models for the homotopy cofiber product of $\ast \sqcup \ast \xrightarrow{\;} \ast$ with itself:

  \vspace{-4mm}
  $$
    \adjustbox{raise=-10pt}{
    \begin{tikzcd}[row sep=small, column sep=large]
      \ast \coprod \ast
      \ar[
        r,
        hook,
        "{ \in \Cofibrations }"{}
      ]
      \ar[d]
      \ar[
        dr,
        phantom,
        "\mbox{\tiny\rm (po)}"
      ]
      &
      \Delta[1]
      \ar[d]
      \ar[r, " \in \WeakEquivalences"{}]
      &
      \ast
      \\
      \ast
      \ar[r]
      &
      S^1_{\mathrm{min}}
      \ar[
        r,
        "\in \WeakEquivalences"
      ]
      &
      \overline{W}\mathbb{Z}
      \ar[
        r,
        "\in \Fibrations"
      ]
      &
      \ast
      \\[-12pt]
      &
 \scalebox{.7}{$      \ell $}
      \ar[r, phantom, "{\longmapsto}"]
      &
   \scalebox{.7}{$    1 $}
    \end{tikzcd}
    }
    \in
    \SimplicialSets_{\mathrm{Qu}}
    \phantom{AAAAA}
    \Rightarrow
    \phantom{AAAAA}
    \begin{tikzcd}[row sep=small]
      \ast \coprod \ast
      \ar[r]
      \ar[d]
      \ar[
        dr,
        phantom,
        "\mbox{\tiny\rm (hpo)}"
      ]
      &
      \ast
      \ar[d]
      \\
      \ast
      \ar[r]
      &
      \mathbf{B}\mathbb{Z}
    \end{tikzcd}
    \in \InfinityGroupoids\;.
  $$

  \vspace{-2mm}
  \noindent
  This implies that for any
  $\mathcal{X} \,\in\, \InfinityGroupoids$,
  the free loop $\infty$-groupoid
  is equivalently the homotopy fiber of
  its diagonal map with itself:
  \vspace{-4mm}
  $$
    \Maps{\big}
      { \mathbf{B}\mathbb{Z} }
      { \mathcal{X} }
    \;\;
    \simeq
    \;\;
    \Maps{\Big}
      {
        \ast
        \underset{\ast \sqcup \ast}{\coprod} \ast
      }
      {
        \mathcal{X}
      }
    \;\;
    \simeq
    \;\;
      \mathcal{X}
        \underset
          {\mathcal{X} \times \mathcal{X}}
          {\prod}
      \mathcal{X}
    \,,
    \;\;\;\;
    \mbox{where}
    \;\;\;\;
    \begin{tikzcd}[row sep=small]
      \mathcal{X}
        \underset
          {
            \mathcal{X}
              \times
            \mathcal{X}
          }
          {\prod}
      \mathcal{X}
      \ar[d]
      \ar[r]
      \ar[
        dr,
        phantom,
        "\mbox{\tiny\rm(hpb)}"{pos=.4}
      ]
      &
      \mathcal{X}
      \ar[
        d,
        "\mathrm{diag}_{\mathcal{X}}"
      ]
      \\
      \mathcal{X}
      \ar[
        r,
        "\mathrm{diag}_{\mathcal{X}}"{below}
      ]
      &
      \mathcal{X}
        \times
      \mathcal{X}
    \end{tikzcd}
    \;\;\;
    \in
    \;
    \InfinityGroupoids
    \;.
  $$
  It also implies that for fibrant $X \,\in\, \SimplicialSets$
  the following comparison map is a simplicial weak equivalence; but direct inspection reveals that it is even an isomorphism (regarding $\mathbb{Z}$ as the free group generated by one element $\ell$):
  \begin{equation}
    \label{MapsOutOfMinimalCircle}
    \begin{tikzcd}[
      row sep=4pt,
      column sep=30pt
    ]
      \Maps{\big}
        { N \mathbf{B}\mathbb{Z} }
        { X }
      \ar[
        rr,
        "{
          \Maps{}
            { \ell \,\mapsto\, 1 }
            { X }
        }",
        "{
          \sim
        }"{swap}
      ]
      &&
      \Maps{\big}
        { S^1_{\mathrm{min}} }
        { X }
    \end{tikzcd}
  \end{equation}
\end{example}

\begin{notation}[Nerves of inertia groupoids]
\label{NerveOfInertiaGroupoids}
For $G \,\in\, \Groups(\Sets)$, its {\it inertia groupoid} is the functor groupoid
\eqref{FunctorCategory}
$$
  \Lambda \mathbf{B}G
  \;:=\;
  \Maps{}
    { \mathbf{B}\mathbb{Z} }
    { \mathbf{B}G }
  \,,
$$
hence under the simplicial nerve -- and using Prop. \ref{PropertiesOfTheSimplicialNerve} with Ex. \ref{UniversalSimplicialPrincipalComplexForOrdoinaryGroupG} -- is the simplicial mapping complex
$$
  N \Lambda \mathbf{B}G
  \;=\;
  \Maps{}
    { N \mathbf{B}\mathbb{Z} }
    { N \mathbf{B}G }
  \;\simeq\;
  \Maps{}
    { \overline{W} \mathbb{Z} }
    { \overline{W} G }
$$
In view of \eqref{ComponentsOfSimplicialMappingComplex} and \eqref{MapsOutOfMinimalCircle} we denote by
\begin{equation}
  \label{SequenceOfMorphismsInInertiaGroupoid}
 \big(
  \gamma
  ;\,
  g_{n-1}
  ,\,
  g_{n-2}
  ,\,
  \cdots
  ,\
  g_{0}
 \big)
 \;\; \in \;\;
 \mathrm{Hom}
 \big(
   S \times \Delta[n]
   ,\,
   \overline{W}G
 \big)
 \;
 \underset{
   \scalebox{.7}{
     \eqref{MapsOutOfMinimalCircle}
   }
 }{
   \simeq
 }
 \;
 \big(
 N
 \Maps{}
   { \mathbf{B}\mathbb{Z} }
   { \mathbf{B}G }
 \big)_{n}
\end{equation}
the $n$-cell in the nerve of the inertia groupoid which corresponds to the sequence of natural transformation
that start at the functor
$$
  \gamma \,\in\, G \,\simeq\, \mathrm{Hom}_{\mathrm{Grp}}(\mathbb{Z}, G) \,\simeq\, \mathrm{Hom}(\mathbf{B}\mathbb{Z}, \mathbf{B}G)
$$
and successively have components $g_{n-\bullet} \in G$.
\vspace{.3cm}
Using Prop. \ref{PropertiesOfTheSimplicialNerve},
one sees that this is characterized as mapping non-degenerate $(n+1)$-cells in $S \times \Delta[n]$ (according to Prop. \ref{NonDegenerateSimpliciesInProductOfSimplicies})
as follows (where ``$\mathrm{Ad}_j(-)$'' is as in \eqref{AbbreviationForAdjointAction}):

\vspace{1cm}

\hspace{-1cm}
\def\arraystretch{2}
\begin{tabular}{cc}
\multirow{2}{*}{
$
  \begin{tikzcd}
    \\[-65pt]
    \Big(
      \overset{
        \mathclap{
          \ell
          \,\circ\,
          s
            ^{(0, \cdots, \widehat{k}, \cdots, n)}
            _{n+1}
        }
      }{
      \overbrace{
      s^{S^1_{\mathrm{min}}}_{(0, \cdots, \widehat{k}, \cdots, n)}
      \ell
      }
      }
      ,\,
      s^k_{n+1}
    \Big)
    \ar[
      d,
      phantom,
      "{ \in }"{rotate=-90}
    ]
    \\[-10pt]
    \big(
      S
        \,\times\,
      \Delta[n]
    \big)_{\mathrlap{n+1}}
    \ar[
      d,
      "{
   \scalebox{0.7}{$     \big(
          \gamma
          ;\;
          g_{n-1}
          ,\,
          g_{n-2}
          ,\,
          \cdots
          ,\,
          g_0
        \big)
        $}
      }"{description}
    ]
    \\[+48pt]
    \big(
      \overline{W}G
    \big)_{\mathrlap{n+1}}
  \end{tikzcd}
$
}
&
$
  \begin{tikzcd}[
    column sep=45pt,
    row sep=25pt
  ]
    (
     \ast
     ,\,
      0
   )
    \ar[
      r,
      "{
        (
          \mathrm{id}_\ast
          ,\,
          [0,1]
        )
      }"{yshift=1pt}
    ]
    \ar[
      r,
      -,
      line width=5pt,
      blue,
      opacity=.2
    ]
    \ar[
      d,
      "{
        (
          \ell
          ,\,
          \mathrm{id}_0
        )
      }"{description}
    ]
    &
    (
      \ast
      ,\,
      1
   )
    \ar[
      d,
      "{
        (
          \mathrm{id}_\ast
          ,\,
          [0,1]
        )
      }"{description}
    ]
    \ar[
      r,
      "{
        (
          \mathrm{id}_\ast
          ,\,
          [1,2]
        )
      }"{yshift=1pt}
    ]
    \ar[
      r,
      -,
      line width=5pt,
      blue,
      opacity=.2
    ]
    \ar[
      d,
      "{
        (
          \mathrm{id}_\ast
          ,\,
          [0,1]
        )
      }"{description}
    ]
    \ar[r]
    &
    \cdots
    \ar[r]
    \ar[
      r,
      -,
      line width=5pt,
      blue,
      opacity=.2
    ]
    &[-10pt]
    (
     \ast
     ,
      k
   )
    \ar[
      r
    ]
    \ar[
      d,
      "{
        (
          \ell
          ,\,
          \mathrm{id}_k
        )
      }"{description}
    ]
    \ar[
      d,
      -,
      line width=5pt,
      blue,
      opacity=.2
    ]
    &[-10pt]
    \cdots
    \ar[
      r,
      "{
        (
          \mathrm{id}_\ast
          ,\,
          [n-1,n]
        )
      }"{yshift=1pt}
    ]
    \ar[
      d,
      phantom,
      "{ \cdots }"
    ]
    &
    (
     \ast
     ,
      n
   )
    \ar[
      d,
      "{
        (
          \ell
          ,\,
          \mathrm{id}_n
        )
      }"{description}
    ]
    \\
    (
     \ast,
      0
   )
    \ar[
      r,
      "{
        (
          \mathrm{id}_\ast
          ,\,
          [0,1]
        )
      }"{swap, yshift=-1pt}
    ]
    &
    (
     \ast,
      1
   )
    \ar[
      r,
      "{
        (
          \mathrm{id}_\ast
          ,\,
          [1,2]
        )
      }"{swap, yshift=-1pt}
    ]
    &
    \cdots
    \ar[r]
    \ar[
      r
    ]
    &
    (
     \ast,
      k
    )
    \ar[
      r
    ]
    \ar[
      r,
      -,
      line width=5pt,
      blue,
      opacity=.2
    ]
    &
    \cdots
    \ar[
      r,
      "{
        (
          \mathrm{id}_\ast
          ,\,
          [n-1,n]
        )
      }"{swap, yshift=-1pt}
    ]
    \ar[
      r,
      -,
      line width=5pt,
      blue,
      opacity=.2
    ]
    &
    (
     \ast,
     n
   )
  \end{tikzcd}
$
\\[-10pt]
& \rotatebox{-90}{$\mapsto$}
\\[+10pt]
&
\hspace{3pt}
$
  \begin{tikzcd}[
    column sep=60pt,
    row sep=35pt
  ]
    \bullet
    \ar[
      r,
      "{
        g_{n-1}
      }"{}
    ]
    \ar[
      r,
      -,
      line width=5pt,
      blue,
      opacity=.2
    ]
    \ar[
      d,
      "{ \gamma }"{description}
    ]
    &
    \bullet
    \ar[
      r,
      "{
        g_{n-2}
      }"
    ]
    \ar[
      r,
      -,
      line width=5pt,
      blue,
      opacity=.2
    ]
    \ar[
      d,
      "{ \mathrm{Ad}_1(\gamma) }"{description}
    ]
    &
    \cdots
    \ar[
      r,
      "{ g_{n-k} }"
    ]
    \ar[
      r,
      -,
      line width=5pt,
      blue,
      opacity=.2
    ]
    &[-20pt]
    \bullet
    \ar[
      d,
      "{ \mathrm{Ad}_k(\gamma) }"{description}
    ]
    \ar[
      d,
      -,
      line width=5pt,
      blue,
      opacity=.2
    ]
    \ar[
      r,
      "{ g_{n-(k+1)} }"
    ]
    &[-20pt]
    \cdots
    \ar[
      d,
      phantom,
      "{ \cdots }"
    ]
    \ar[
      r,
      "{
        g_{0}
      }"
    ]
    &
    \bullet
    \ar[
      d,
      "{ \mathrm{Ad}_n(\gamma) }"{description}
    ]
    \\
    \bullet
    \ar[
      r,
      "{
        g_{n-1}
      }"{swap}
    ]
    &
    \bullet
    \ar[
      r,
      "{
        g_{n-2}
      }"{swap}
    ]
    &
    \cdots
    \ar[
      r,
      "{ g_{n-k} }"{swap}
    ]
    &
    \bullet
    \ar[
      r,
      "{ g_{n-(k+1)} }"{swap}
    ]
    \ar[
      r,
      -,
      line width=5pt,
      blue,
      opacity=.2
    ]
    &
    \cdots
    \ar[
      r,
      "{ g_0 }"{swap}
    ]
    \ar[
      r,
      -,
      line width=5pt,
      blue,
      opacity=.2
    ]
    &
    \bullet
  \end{tikzcd}
$
\end{tabular}
\end{notation}

\medskip

\noindent
{\bf Simplicial group actions.}

\begin{notation}[Simplicial group actions]
  \label{SimplicialGroupActions}
  For $\mathcal{G} \in \Groups\big(\SimplicialSets\big)$,
  we denote

  \noindent
  {\bf (i)}
  by

  \vspace{-.5cm}
  \begin{equation}
    \label{SimplicialDeloopingOfSimplicialGroup}
    \mathbf{B}\mathcal{G}
    \;\in\;
    \EnrichedCategories{\SimplicialSets}
    \,,
    \;\;\;\;\;
    \mathbf{B}\mathcal{G}(\ast,\ast) \;:=\; \mathcal{G}
    \,,
  \end{equation}
  \vspace{-.4cm}

  \noindent
  the simplicial groupoid with a single object $\ast$, with
  $\mathcal{G}$ as its unique hom-object
  and with composition ``$\circ$''
  given by the {\it reverse} of the group product ``$\cdot$''

  \vspace{-.5cm}
  \begin{equation}
    \label{CompositionInDeloopedSimplicialGroup}
    \begin{tikzcd}[row sep =-4pt]
      \mathcal{G} \times \mathcal{G}
      \ar[
        rr,
        "\circ"
      ]
      &&
      \mathcal{G}
      \\
   \scalebox{0.7}{$      (g_n , h_n) $}
      &\longmapsto&
  \scalebox{0.7}{$       h_n \cdot g_n $}
      \mathrlap{\,;}
    \end{tikzcd}
  \end{equation}
  \vspace{-.4cm}

  \noindent
  {\bf (ii)}
  the category of $\mathcal{G}$-actions on simplicial sets by:
  \vspace{-2mm}
  \begin{equation}
    \label{CategoryOfSimplicialGroupActions}
    \Actions{\mathcal{G}}
    (
      \SimplicialSets
    )
    \;:=\;
    \SimplicialFunctors
    (
      \mathbf{B} \mathcal{G},
      \,
      \SimplicialSets
    )
    \,,
  \end{equation}

  \vspace{-2mm}
\noindent
  identified
  with the category of $\SimplicialSets$-enriched functors
  from the delooping \eqref{SimplicialDeloopingOfSimplicialGroup}
  of $\mathcal{G}$ to $\SimplicialSets$ (via Rem. \ref{SelfEnrichmentOfSimplicialSets}).
\end{notation}

\bigskip
\begin{remark}[Simplicial group actions are from the left]
  \label{SimplicialGroupActionsAreFromTheLeft}
  The convention \eqref{CompositionInDeloopedSimplicialGroup}
  for the delooping $\mathbf{B}\mathcal{G}$ \eqref{SimplicialDeloopingOfSimplicialGroup}
  implies
  that the simplicial $\mathcal{G}$-actions
  \eqref{CategoryOfSimplicialGroupActions} are {\it left} actions:
  \vspace{-2mm}
  $$
    \begin{tikzcd}[row sep=0pt]
      \mathbf{B}\mathcal{G}
      \ar[
        rr,
        "\mathcal{G} \acts \, X"
      ]
      &&
      \SimplicialSets
      \\[2pt]
      \bullet
      \ar[
        d,
        "g_1"{left}
      ]
      \ar[
        dd,
        rounded corners,
        to path={
           -- ([xshift=-8pt]\tikztostart.west)
           --node[below, sloped]{
               \scalebox{.7}{$
                 g_1 \circ g_2
                 \,\coloneqq\,
                 g_2 \cdot g_1
               $}
             } ([xshift=-8pt]\tikztotarget.west)
           -- (\tikztotarget.west)}
      ]
      &&
      X
      \ar[
        d,
        "g_1 \cdot (-)"
      ]
      \ar[
        dd,
        rounded corners,
        to path={
           -- ([xshift=+21pt]\tikztostart.east)
           --node[above, sloped]{
               \scalebox{.7}{$
                 (g_2 \cdot g_1) \cdot (-)
               $}
             } ([xshift=+21pt]\tikztotarget.east)
           -- (\tikztotarget.east)}
      ]
      \\[17pt]
      \bullet
      \ar[
        d,
        "g_2"{left}
      ]
      &&
      X
      \ar[
        d,
        "g_2 \cdot (-)"
      ]
      \\[17pt]
      \bullet
      &&
      X
    \end{tikzcd}
    {\phantom{AAAAAA}}
    \begin{tikzcd}[row sep=-2pt]
      \mathcal{G} \times X
      \ar[
        rr,
        "(-) \cdot (-)"
      ]
      &&
      X
      \\
    \scalebox{0.7}{$     (g_n, x_n) $}
      &\longmapsto&
     \scalebox{0.7}{$    g_n \cdot x_n $}
      \mathrlap{\,.}
    \end{tikzcd}
  $$
  \vspace{-.4cm}
\end{remark}
\begin{example}[Universal principal simplicial complex in $\mathcal{G}$-actions]
  \label{UniversalPrincipalSimplicialComplexInGActions}
  For $\mathcal{G} \in \SimplicialGroups$,
  the universal principal simplicial complex
  $W \mathcal{G}$ (Def. \ref{StandardModelOfUniversalSimplicialPrincipalComplex})
  becomes an object of \eqref{CategoryOfSimplicialGroupActions}
  by the formula \eqref{LeftActionOnUniversalSimplicialPrincipalSpace}.
  \vspace{-2mm}
  \begin{equation}
    \label{UniversalPrincipalSimplicialComplexAsLeftSimplicialGroupAction}
    \mathcal{G} \acts \; W \mathcal{G}
    \,\in\,
    \Actions{\mathcal{G}}(\SimplicialSets)\;.
  \end{equation}
    \end{example}

Making explicit the following elementary Ex. \ref{SimplicialGroupCanonicallyActingOnItself}
serves to straighten out a web of conventions about (simplicial) group actions.
\begin{example}[Simplicial group canonically acting on itself]
  \label{SimplicialGroupCanonicallyActingOnItself}
  Any $\mathcal{G} \in \Groups(\SimplicialSets)$ becomes
  an object of the category of simplicial $\mathcal{G}$-actions
  \eqref{CategoryOfSimplicialGroupActions}
  in three canonical ways:

  \vspace{-6mm}
  \begin{equation}
  \hspace{-3mm}
    \begin{tikzcd}[column sep=18pt, row sep=-4pt]
      \mathcal{G} \times \mathcal{G}
      \ar[
        rr,
        "\mathclap{\mbox{
          \tiny
          \color{greenii}
          \bf
          \def\arraystretch{.9}
          \begin{tabular}{c}
            left
            \\
            multiplication
            action
          \end{tabular}
        }}"
      ]
      &&
      \mathcal{G}\;,
      \\
    \scalebox{0.7}{$     (g_n, h_n) $}
      &\longmapsto&
     \scalebox{0.7}{$    g_n \cdot h_n $}
    \end{tikzcd}
    \qquad  \quad
    \begin{tikzcd}[column sep=18pt, row sep=-4pt]
      \mathcal{G} \times \mathcal{G}
      \ar[
        rr,
        "\mathclap{\mbox{
          \tiny
          \color{greenii}
          \bf
          \def\arraystretch{.9}
          \begin{tabular}{c}
            right inverse
            \\
            multiplication
            action
          \end{tabular}
        }}"
      ]
      &&
      \mathcal{G}\;,
      \\
     \scalebox{0.7}{$    (g_n, h_n) $}
      &\longmapsto&
     \scalebox{0.7}{$    h_n \cdot g_n^{-1} $}
    \end{tikzcd}
    \qquad  \quad
    \begin{tikzcd}[column sep=15pt, row sep=-4pt]
      \mathcal{G} \times \mathcal{G}
      \ar[
        rr,
        "\mathclap{\mbox{
          \tiny
          \color{greenii}
          \bf
          \def\arraystretch{.9}
          \begin{tabular}{c}
            adjoint/conjugation
            \\
            action
          \end{tabular}
        }}"
      ]
      &&
      \mathcal{G}\;.
      \\
      \scalebox{0.7}{$   (g_n, h_n) $}
      &\longmapsto&
      \scalebox{0.7}{$   g_n \cdot h_n \cdot g_n^{-1} $}
    \end{tikzcd}
  \end{equation}

  \vspace{-3mm}
\noindent
  The first two are isomorphic in $\Actions{\mathcal{G}}(\SimplicialSets)$
  via the inversion operation:
  \vspace{-.3cm}
  \begin{equation}
    \begin{tikzcd}[row sep=0pt, column sep=14pt]
      \scalebox{0.7}{$   (g_n, h_n) $}
      \ar[
       rrrr,
       |->
      ]
      \ar[
        ddd,
        |->
      ]
      &&[40pt] &&
   \scalebox{0.7}{$      g_n \cdot h_n $}
      \ar[
        ddd,
        |->
      ]
      \\
      &
      \mathcal{G} \times \mathcal{G}
      \ar[
        rr,
        "{
          \mbox{
            \tiny
            \color{greenii}
            \bf
            \def\arraystretch{.9}
            \begin{tabular}{c}
              left
              \\
              multiplication
            \end{tabular}
          }
        }"{above}
      ]
      \ar[
        d,
        "{
          \mathrm{id} \times (-)^{-1}
        }"{left},
        "\sim"{above, sloped}
      ]
      &&
      \mathcal{G}
      \ar[
        d,
        "{
          (-)^{-1}
        }"{right},
        "\sim"{below, sloped}
      ]
      \\[20pt]
      &
      \mathcal{G} \times \mathcal{G}
      \ar[
        rr,
        "{
          \mbox{
            \tiny
            \color{greenii}
            \bf
            \def\arraystretch{.9}
            \begin{tabular}{c}
              right inverse
              \\
              multiplication
            \end{tabular}
          }
        }"{below}
      ]
      &&
      \mathcal{G}
      \\
       \scalebox{0.7}{$  (g_n, h_n^{-1}) $}
      \ar[
        rrrr,
        |->
      ]
      &&&&
   \scalebox{0.7}{$      h_n^{-1} \cdot g_n^{-1} $}
    \end{tikzcd}
  \end{equation}
  \vspace{-.4cm}

\end{example}

\newpage

\noindent
    {\bf Homotopy theory of simplicial group actions.}

\begin{notation}[Model category of simplicial group actions {(\cite[\S 2]{DDK80}\cite[\S 5]{Guillou06}\cite[\S V Thm. 2.3]{GoerssJardine99})}]
   \label{ProjectiveModelStructureOnActionsOfSimplicialGroups}
  For $\mathcal{G} \in \Groups\big( \SimplicialSets\big)$,
  we have on
  the category of $\mathcal{G}$-actions \eqref{CategoryOfSimplicialGroupActions}
  the projective model structure
  (the {\it coarse-} or {\it Borel- equivariant model structure})
  whose fibrations and weak equivalences are those
  of the underlying $\SimplicialSets_{\mathrm{Qu}}$
  (Ntn. \ref{ModelCategoriesOfSimplicialPresheaves}), which we denote as:

  \vspace{-.5cm}
  \begin{equation}
    \label{BorelModelStructure}
    \Actions{\mathcal{G}}
    (
      \SimplicialSets
    )_{\mathrm{proj}}
    \;:=\;
    \SimplicialFunctors
    (
      \mathbf{B} \mathcal{G},
      \,
      \SimplicialSets
    )_{\mathrm{proj}}
    \,.
  \end{equation}
  \vspace{-.4cm}
\end{notation}

\begin{lemma}[Cofibrations of simplicial group actions {\cite[Prop. 2.2 (ii)]{DDK80}\cite[Prop. 5.3]{Guillou06}\cite[\S V Lem. 2.4]{GoerssJardine99}}]
  \label{CofibrationsOfSimplicialGroupActions}
  The cofibrations of
  $
    \Actions{\mathcal{G}}
    \big(
      \SimplicialSets
    \big)_{\mathrm{proj}}
  $ \eqref{BorelModelStructure} are the monomorphisms
  such that the $\mathcal{G}$-action on the simplices not in their image is free.
\end{lemma}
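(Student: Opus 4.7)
The plan is to verify the characterization by matching the cofibrations of the transferred/Borel model structure \eqref{BorelModelStructure} against the class of monomorphisms with free action on the complement, using the small object argument in the standard way for projective model structures on diagram categories.

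First, I would identify the generating cofibrations of $\Actions{\mathcal{G}}(\SimplicialSets)_{\mathrm{proj}}$: since fibrations and weak equivalences are created by the forgetful functor to $\SimplicialSets_{\mathrm{Qu}}$, the generating (trivial) cofibrations are obtained by applying its left adjoint $\mathcal{G} \times (-)$ to the generating (trivial) cofibrations of $\SimplicialSets_{\mathrm{Qu}}$, yielding
\[
  I_{\mathcal{G}} \;=\; \big\{\, \mathcal{G} \times \partial\Delta[n] \;\hookrightarrow\; \mathcal{G} \times \Delta[n] \,\big\}_{n \in \mathbb{N}}\,,
  \qquad
  J_{\mathcal{G}} \;=\; \big\{\, \mathcal{G} \times \Lambda^k[n] \;\hookrightarrow\; \mathcal{G} \times \Delta[n] \,\big\}_{0 \leq k \leq n}.
\]
Each such generating map is manifestly a monomorphism whose complement consists of simplices of the form $(g, \sigma)$ with $\sigma$ a non-degenerate simplex outside the sub-complex, and on these $\mathcal{G}$ acts freely by left multiplication on the first factor.

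Next I would show that the class $\mathscr{F}$ of monomorphisms of $\mathcal{G}$-actions with free action on the complement is closed under the operations generated by the small object argument: namely, pushouts along arbitrary maps, transfinite composition, and retracts. Closure under pushouts is the key point: given a pushout square with left edge in $\mathscr{F}$, the complement of the right edge is a quotient of the complement of the left edge by the equivalence relation generated by the attaching map; since $\mathcal{G}$ acts freely on the source complement and the attaching map is $\mathcal{G}$-equivariant, the quotient action remains free. Transfinite composition and retracts are then immediate from the set-theoretic nature of "free on complement" for monomorphisms. This shows every relative $I_{\mathcal{G}}$-cell complex, and hence every retract thereof, lies in $\mathscr{F}$; that is, every projective cofibration has the claimed form.

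Finally, for the converse, I would argue that any monomorphism $A \hookrightarrow B$ in $\mathscr{F}$ is a retract of a relative $I_{\mathcal{G}}$-cell complex, hence a projective cofibration. Concretely, one builds $B$ from $A$ by attaching equivariant cells $\mathcal{G} \times \Delta[n]$ along their boundaries: choose a representative $\sigma$ in each $\mathcal{G}$-orbit of non-degenerate simplices of $B$ not in $A$, ordered by dimension; freeness of the action ensures $\mathcal{G} \times \Delta[n] \to B$ is well-defined and its restriction to $\mathcal{G} \times \partial\Delta[n]$ factors through the previous stage; the resulting transfinite composite of pushouts of maps in $I_{\mathcal{G}}$ yields $A \hookrightarrow B$. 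The main subtlety here is ensuring that the chosen orbit representatives' faces lie in the already-attached stage, which is handled by the standard skeletal filtration argument once one uses freeness of the action to lift the attaching maps equivariantly. The full proof is carried out in \cite[\S V Lem.~2.4]{GoerssJardine99} and \cite[Prop.~2.2(ii)]{DDK80}, to which we refer.
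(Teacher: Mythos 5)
The paper gives no proof of this lemma at all---it is quoted verbatim from the cited sources---and your outline is exactly the standard argument those sources give (transfer along the free--forgetful adjunction, generating cofibrations $\mathcal{G}\times\partial\Delta[n]\hookrightarrow \mathcal{G}\times\Delta[n]$, closure of the free-complement monomorphisms under cell attachments and retracts, and the relative skeletal filtration for the converse), so your approach is the same as the paper's treatment, which simply defers to \cite{DDK80}\cite{Guillou06}\cite{GoerssJardine99}. One local repair: in the pushout step the complement is \emph{not} a quotient of the old complement, and the principle you invoke (``an equivariant quotient of a free action is free'') is false in general; what actually happens is that monomorphisms of simplicial $\mathcal{G}$-sets are stable under pushout and degreewise $D_n \cong C_n \sqcup (B_n\setminus A_n)$, so the complement of $C\hookrightarrow D$ is $\mathcal{G}$-equivariantly in bijection with that of $A\hookrightarrow B$, which is what preserves freeness. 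Also note that in the converse direction the real content (this is where Goerss--Jardine, \S V Lem.\ 2.4, spend their effort) is the simplicial-group subtlety of choosing degreewise orbit representatives of non-degenerate simplices and checking compatibility with degeneracies---your sketch waves at this and then defers to the references, which is acceptable here since the paper does the same.
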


\begin{lemma}[Equivariant equivalence of simplicial universal principal complexes]
  \label{EquivariantEquivalenceOfSimplicialUniversalPrincipalComplexes}
  For $\mathcal{H} \xhookrightarrow{\;i\;} \mathcal{G}$
  a simplicial subgroup inclusion, the induced inclusion

  \vspace{-.4cm}
  $$
    \begin{tikzcd}
      W \mathcal{H}
      \ar[
        rr,
        "W(i)"{above},
        "\in \mathrm{W}"{below}
      ]
      &&
      W \mathcal{G}
    \end{tikzcd}
    \;\;\;
    \in
    \;
    \Actions{\mathcal{H}} (\SimplicialSets)_{\mathrm{proj}}
  $$
  \vspace{-.5cm}

  \noindent
  of their standard simplicial principal complexes
  (Def. \ref{StandardModelOfUniversalSimplicialPrincipalComplex})
  equipped with their canonical $\mathcal{H}$-action
  \eqref{LeftActionOnUniversalSimplicialPrincipalSpace}
  is a weak equivalence
  in the Borel-equivariant model structure \eqref{BorelModelStructure}
\end{lemma}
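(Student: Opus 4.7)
The strategy is to reduce the claim to the underlying simplicial sets and invoke contractibility of the standard universal principal complexes. By Notation \ref{ProjectiveModelStructureOnActionsOfSimplicialGroups} the weak equivalences in the Borel-equivariant model structure $\Actions{\mathcal{H}}(\SimplicialSets)_{\mathrm{proj}}$ are created by the forgetful functor to $\SimplicialSets_{\mathrm{Qu}}$. Hence it suffices to check that $W(i)\colon W\mathcal{H}\to W\mathcal{G}$ is a weak homotopy equivalence of underlying simplicial sets.

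First, I would verify that $W(i)$ is indeed a morphism in $\Actions{\mathcal{H}}(\SimplicialSets)$: the map is given degreewise by applying $i$ factorwise to tuples $(\gamma_n,\gamma_{n-1},\ldots,\gamma_0)$, so it is obviously compatible with the face/degeneracy formulas \eqref{FaceMapsOfWG}--\eqref{DegeneracyMapsOfWG} (since $i$ is a homomorphism of simplicial groups), and the left $\mathcal{H}$-action on $W\mathcal{G}$ is the one obtained by restricting along $i$ the formula \eqref{LeftActionOnUniversalSimplicialPrincipalSpace}, so $W(i)$ is $\mathcal{H}$-equivariant by construction.

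Second, I invoke Prop.~\ref{BasicPropertiesOfStandardSimplicialPrincipalComplex}(i): both $W\mathcal{H}$ and $W\mathcal{G}$ are contractible simplicial sets. Any map between two contractible Kan complexes is automatically a weak equivalence in $\SimplicialSets_{\mathrm{Qu}}$, since both source and target are weakly equivalent to the point and the triangle
\[
  \begin{tikzcd}[row sep=small]
    W\mathcal{H}\ar[rr,"W(i)"]\ar[dr,"\sim"{swap}] && W\mathcal{G}\ar[dl,"\sim"]\\
    & \ast &
  \end{tikzcd}
\]
commutes up to homotopy (trivially, since the target is contractible). Combining with the first step, $W(i)$ is a weak equivalence in $\Actions{\mathcal{H}}(\SimplicialSets)_{\mathrm{proj}}$, as claimed.

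There is essentially no obstacle here: the entire content is packaged in Prop.~\ref{BasicPropertiesOfStandardSimplicialPrincipalComplex}, together with the definition of the Borel model structure. The only thing that deserves a line of verification is the equivariance of $W(i)$, which is immediate from the functoriality of the $W$-construction on homomorphisms of simplicial groups.
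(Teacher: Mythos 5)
Your proposal is correct and follows essentially the same route as the paper: the paper's proof likewise observes that both $W\mathcal{H}$ and $W\mathcal{G}$ are contractible by Prop.~\ref{BasicPropertiesOfStandardSimplicialPrincipalComplex}, so that the underlying map of any equivariant morphism between them is a simplicial weak homotopy equivalence, which is exactly what a weak equivalence in the Borel-equivariant model structure means. Your extra remarks on the equivariance of $W(i)$ and the two-out-of-three argument are fine but add nothing beyond the paper's one-line argument.
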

\begin{proof}
The underlying simplicial sets of both are contractible,
  by Prop. \ref{BasicPropertiesOfStandardSimplicialPrincipalComplex},
so that underlying any equivariant morphism between them is an
simplicial weak homotopy equivalence.
\end{proof}

The following Prop. \ref{QuillenEquivalenceBetweenBorelModelStructureAndSliceOverClassifyingComplex} is the model category theoretic avatar of the slice characterization of $\infty$-group actions in \eqref{CyclicicationAsRightDerivedBaseChange}:
\begin{proposition}[Quillen equivalence between Borel model structure and the slice over
classifying complex]
\label{QuillenEquivalenceBetweenBorelModelStructureAndSliceOverClassifyingComplex}
For
any $\mathcal{G} \in \Groups\big( \SimplicialSets\big)$
there is a simplicial adjunction

\vspace{-3mm}
\begin{equation}
  \label{BorelConstructionAdjunction}
  \overset{
    \mathclap{
    \raisebox{3pt}{
      \tiny
      \color{darkblue}
      \bf
      homotopy fiber
    }
    }
  }{
    (-) \times_{\overline{W}\mathcal{G}} W \mathcal{G}
  }
  \;\;
  \dashv
  \;\;
  \overset{
    \mathclap{
    \raisebox{3pt}{
      \tiny
      \color{darkblue}
      \bf
      Borel construction
    }
  }
  }{
    \big(
      (-) \times W \mathcal{G}
    \big)/\mathcal{G}
  }
\end{equation}
\vspace{-5mm}

\noindent
between the Borel model structure
\eqref{BorelModelStructure}
and the slice model structure
of $\SimplicialSets_{\mathrm{Qu}}$ \eqref{ClassicalModelStuctureOnSimplicialSets}
over the simplicial classifying complex $\overline{W}\mathcal{G}$ \eqref{StandardSimplicialDeloopingAsQuotient},
hence a natural isomorphism of hom-complexes
\vspace{-2mm}
\begin{equation}
  \label{BorelSliceAdjunctionAsNaturalIsoOfSimplicialHomComplexes}
  \hspace{-3mm}
  \begin{aligned}
  \Actions{\mathcal{G}}
  (\SimplicialSets)
  \big(
    (-) \times_{\overline{W}\mathcal{G}} W \mathcal{G},
    \,
    (-)
  \big)
  \;\simeq\;
  &
  \SimplicialSets_{/\overline{W}\mathcal{G}}
  \Big(
    (-),
    \,
    \big(
      (-) \times W \mathcal{G}
    \big)/\mathcal{G}
  \Big)
  \end{aligned}
  \!\!\!\!\!\!\!
  \in
  \;
  \SimplicialSets
  \,,
\end{equation}

  \vspace{-2mm}
\noindent
which is a Quillen equivalence:
\vspace{-4mm}
\begin{equation}
  \label{QuillenEquivalenceBetweenBorelModelStructureAndSliceOverSimplicialClassifyingComplex}
  \begin{tikzcd}[column sep=large]
    \Actions{\mathcal{G}}
    \big(
      \SimplicialSets
    \big)_{\mathrm{proj}}
    \ar[
      rr,
      shift right=7pt,
      "{
        (
          (-) \times W \mathcal{G}
        )/ \mathcal{G}
      }"{below}
    ]
    \ar[
      rr,
      phantom,
      "{\simeq_{\mathrlap{\mathrm{Qu}}}}"
    ]
    &&
    \big(
      \SimplicialSets_{\mathrm{Qu}}
    \big)_{/\overline{W}\mathcal{G}}
    \mathrlap{\,.}
    \ar[
      ll,
      shift right=7pt,
      "{
        (-) \times_{\overline{W}\mathcal{G}} W\mathcal{G}
      }"{above}
    ]
  \end{tikzcd}
\end{equation}
\end{proposition}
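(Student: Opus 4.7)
The plan is to construct the adjunction \eqref{BorelConstructionAdjunction} from the fact that $W\mathcal{G} \to \overline{W}\mathcal{G}$ is a principal $\mathcal{G}$-bundle (i.e.\ a free $\mathcal{G}$-action with quotient $\overline{W}\mathcal{G}$, cf.\ \eqref{StandardSimplicialDeloopingAsQuotient}), then promote it to a simplicial adjunction, then verify the Quillen equivalence using that $W\mathcal{G}$ is contractible (Prop.\ \ref{BasicPropertiesOfStandardSimplicialPrincipalComplex}(i)).

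First I would establish the hom-isomorphism \eqref{BorelSliceAdjunctionAsNaturalIsoOfSimplicialHomComplexes} on underlying sets in a single line: given $\big(X \xrightarrow{p} \overline{W}\mathcal{G}\big) \in \SimplicialSets_{/\overline{W}\mathcal{G}}$ and $Y \in \Actions{\mathcal{G}}(\SimplicialSets)$, a $\mathcal{G}$-equivariant map $X \times_{\overline{W}\mathcal{G}} W\mathcal{G} \to Y$ is the same datum as its image under $((-) \times W\mathcal{G})/\mathcal{G}$ postcomposed with the counit (i.e.\ the map $\big(X \times_{\overline{W}\mathcal{G}} W\mathcal{G}\big) \times W\mathcal{G}/\mathcal{G} \to (Y \times W\mathcal{G})/\mathcal{G}$ over $\overline{W}\mathcal{G}$), using the canonical isomorphism $\big(X \times_{\overline{W}\mathcal{G}} W\mathcal{G}\big) \times W\mathcal{G}/\mathcal{G} \cong X \times_{\overline{W}\mathcal{G}} \big( W\mathcal{G} \times W\mathcal{G} /\mathcal{G}\big)$ composed with the shear isomorphism for the free $\mathcal{G}$-action on $W\mathcal{G}$. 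Enriching to simplicial hom-complexes follows from the same construction applied internally (both functors commute with products with simplicial sets, and all the relevant (co)limits are computed degreewise).

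Next I would verify the Quillen adjunction by showing that the left adjoint $(-) \times_{\overline{W}\mathcal{G}} W\mathcal{G}$ preserves (acyclic) cofibrations. Cofibrations in the slice $(\SimplicialSets_{\mathrm{Qu}})_{/\overline{W}\mathcal{G}}$ are monomorphisms of underlying simplicial sets, which are preserved by pullback. To check they are cofibrations in $\Actions{\mathcal{G}}(\SimplicialSets)_{\mathrm{proj}}$, invoke Lem.\ \ref{CofibrationsOfSimplicialGroupActions}: since $W\mathcal{G}$ has free $\mathcal{G}$-action by construction \eqref{LeftActionOnUniversalSimplicialPrincipalSpace}, every pullback $X \times_{\overline{W}\mathcal{G}} W\mathcal{G}$ has free $\mathcal{G}$-action, whence freeness on the complement of the image is automatic. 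For acyclic cofibrations, since weak equivalences in both model categories are underlying simplicial weak equivalences, and $W\mathcal{G} \to \overline{W}\mathcal{G}$ is a Kan fibration (by Ex.\ \ref{CoprojectionsOutOfBorelConstructionAreKanFibrations} as referenced in the proof of Prop.\ \ref{BasicPropertiesOfStandardSimplicialPrincipalComplex}), pullback along it preserves weak equivalences by right properness of $\SimplicialSets_{\mathrm{Qu}}$.

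Finally, to upgrade to a Quillen equivalence I would check that the derived unit and counit are weak equivalences. The counit at $Y \in \Actions{\mathcal{G}}(\SimplicialSets)$ is the evident projection
\[
  \big( (Y \times W\mathcal{G})/\mathcal{G}\big)
    \times_{\overline{W}\mathcal{G}}
  W\mathcal{G}
  \;\cong\;
  Y \times W\mathcal{G}
  \;\longrightarrow\;
  Y
\]
(the isomorphism uses that pulling back the principal $\mathcal{G}$-bundle along its own Borel quotient trivializes it), and this projection is a weak equivalence because $W\mathcal{G}$ is contractible (Prop.\ \ref{BasicPropertiesOfStandardSimplicialPrincipalComplex}(i)). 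For cofibrant $\big(X \to \overline{W}\mathcal{G}\big)$, the unit is the evident section
\[
  X
  \;\cong\;
  (X \times_{\overline{W}\mathcal{G}} W\mathcal{G})/\mathcal{G}
  \;\longrightarrow\;
  \big( (X \times_{\overline{W}\mathcal{G}} W\mathcal{G}) \times W\mathcal{G}\big)/\mathcal{G}
  \,,
\]
where the first identification uses that $W\mathcal{G}/\mathcal{G} = \overline{W}\mathcal{G}$; this is a weak equivalence again because $W\mathcal{G}$ is contractible and the pulled-back bundle $X \times_{\overline{W}\mathcal{G}} W\mathcal{G} \to X$ is free, so the Borel construction of this free action is weakly equivalent to its strict quotient $X$. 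The main obstacle I anticipate is verifying the second identification cleanly, i.e.\ that for the free $\mathcal{G}$-action on $E := X \times_{\overline{W}\mathcal{G}} W\mathcal{G}$, the projection $(E \times W\mathcal{G})/\mathcal{G} \to E/\mathcal{G}$ is a weak equivalence (not merely a map out of a contractible fiber), which requires either a Bousfield--Kan style argument using that $W\mathcal{G}$ is globally contractible and the $\mathcal{G}$-action is principal, or equivalently invoking that free actions model homotopy quotients in $\Actions{\mathcal{G}}(\SimplicialSets)_{\mathrm{proj}}$.
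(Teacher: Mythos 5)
Your proposal is correct in outline, but it sets out to prove considerably more than the paper does: the paper's own proof of Prop.~\ref{QuillenEquivalenceBetweenBorelModelStructureAndSliceOverClassifyingComplex} simply cites \cite[Prop.~2.3]{DDK80} for the plain Quillen equivalence \eqref{QuillenEquivalenceBetweenBorelModelStructureAndSliceOverSimplicialClassifyingComplex}, and spends its actual argument only on the simplicial enrichment \eqref{BorelSliceAdjunctionAsNaturalIsoOfSimplicialHomComplexes}, which it deduces from the natural isomorphism $\big((-)\times_{\overline{W}\mathcal{G}} W\mathcal{G}\big)\times\Delta[k]\,\simeq\,\big((-)\times\Delta[k]\big)\times_{\overline{W}\mathcal{G}} W\mathcal{G}$, itself an instance of the pasting law (Fact~\ref{PastingLaw}). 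Your route is instead a self-contained verification: the principality/shear argument for the hom-bijection, Lem.~\ref{CofibrationsOfSimplicialGroupActions} together with right properness of $\SimplicialSets_{\mathrm{Qu}}$ and the Kan fibration $W\mathcal{G}\to\overline{W}\mathcal{G}$ (Ex.~\ref{CoprojectionsOutOfBorelConstructionAreKanFibrations}) for the Quillen adjunction, and contractibility of $W\mathcal{G}$ (Prop.~\ref{BasicPropertiesOfStandardSimplicialPrincipalComplex}) plus the fact that degreewise free actions present homotopy quotients for the equivalence. What your approach buys is independence from \cite{DDK80}; what it costs is length, and note that the one step you wave through (``both functors commute with products with simplicial sets'') is precisely the content the paper does write out, so in a finished version it deserves the explicit pasting-law argument rather than an assertion.

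Two points to tighten. First, in the equivalence step you check that the \emph{underived} unit $X\to\big((X\times_{\overline{W}\mathcal{G}} W\mathcal{G})\times W\mathcal{G}\big)/\mathcal{G}$ is a weak equivalence; the standard criterion requires the \emph{derived} unit $X\to R\big((LX)^{\mathrm{fib}}\big)$, and since $LX=X\times_{\overline{W}\mathcal{G}} W\mathcal{G}$ need not be Kan, Ken Brown's lemma does not by itself let you discard the fibrant replacement. The fix is exactly the fact you invoke at the end: the right adjoint $Y\mapsto (Y\times W\mathcal{G})/\mathcal{G}$ preserves \emph{all} weak equivalences, because $Y\times W\mathcal{G}$ carries a degreewise free action, degreewise free objects are the cofibrant objects of $\Actions{\mathcal{G}}(\SimplicialSets)_{\mathrm{proj}}$ (Lem.~\ref{CofibrationsOfSimplicialGroupActions}), and $(-)/\mathcal{G}$ is left Quillen against the trivial-action functor; with this, the underived unit suffices (the counit side is unproblematic since every slice object is cofibrant). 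Second, your description of the hom-bijection is phrased slightly backwards (what you describe is the image of the map under the Borel construction precomposed with the unit, not a counit), but the underlying argument via freeness of the $\mathcal{G}$-action on $W\mathcal{G}$ and the shear isomorphism, equivalently the elementwise formula $x\mapsto [f(x,w),w]$ for any lift $w$ of the base point of $x$, is the correct and standard one, and is in any case what the cited result of \cite{DDK80} supplies.
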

\begin{proof}
  The plain adjunction constituting a Quillen equivalence \eqref{QuillenEquivalenceBetweenBorelModelStructureAndSliceOverSimplicialClassifyingComplex}
  is \cite[Prop. 2.3]{DDK80}. The simplicial enrichment \eqref{BorelSliceAdjunctionAsNaturalIsoOfSimplicialHomComplexes},
  hence the natural bijections
  \vspace{-2mm}
  $$
    \mathrm{Hom}
    \Big(
      \big(
        (-) \times_{\overline{W}\mathcal{G}}
      \big)
      \times \Delta[k]
      ,\,
      (-)
    \Big)
    \;\simeq\;
    \mathrm{Hom}
    \Big(
      (-) \times \Delta[k]
      ,\,
      \big(
        (-) \times W \mathcal{G}
      \big)/\mathcal{G}
    \Big)
    \;\;\;
    \in
    \;
    \Sets
    \,,
  $$

  \vspace{-2mm}
  \noindent
  are left somewhat implicit in \cite[Prop. 2.4]{DDK80}, but it follows
  readily from the plain adjunction via the natural isomorphism
  $$
    \big(
      (-) \times_{\overline{W}\mathcal{G}} W G
    \big) \times \Delta[k]
    \;\simeq\;
    \big(
      (-) \times \Delta[k]
    \big) \times_{\overline{W}\mathcal{G}} W \mathcal{G}
    \,,
  $$
  which, in turn, follows from the pasting law (Fact \ref{PastingLaw}):
  \vspace{-2mm}
  $$
  \hspace{-1cm}
  \begin{tikzcd}[column sep=large, row sep=small]
    { (X \times_{\overline{W}G} W G) \times \Delta[k] }
\hspace{-1.2cm}
&
    { \mathllap{\simeq \;} (X \times \Delta[k]) \times_{\overline{W}G} W G  }
    \ar[r]
  \ar[d]
  \ar[dr,phantom,"\mbox{\tiny\rm(pb)}"]
  &
  X \times \Delta[k]
  \ar[
    d,
    "\;\mathrm{pr}_1"
  ]
  \\
 & X \times_{\overline{W}G} W G
  \ar[r]
  \ar[d]
  \ar[dr,phantom,"\mbox{\tiny\rm(pb)}"]
  &
  X
  \ar[
    d
  ]
  \\
 & W G
  \ar[r]
  &
  \overline{W}G
  \mathrlap{\,.}
\end{tikzcd}
  $$

  \vspace{-7mm}
\end{proof}

\begin{example}[Coprojections out of Borel construction are Kan fibrations]
  \label{CoprojectionsOutOfBorelConstructionAreKanFibrations}
  For $\mathcal{G} \acts \, X \,\in\, \Actions{\mathcal{G}}(\SimplicialSets)$
  such that the underlying simplicial set $X$ being a Kan complex, hence such that
  \vspace{-2mm}
  $$
    \mathcal{G} \acts \, X
    \xrightarrow{ \;\in\; \mathrm{Fib} }
    \ast
    \;\;\;
    \in
    \;
    \Actions{\mathcal{G}}(\SimplicialSets)_{\mathrm{proj}}
    \,,
  $$

  \vspace{-2mm}
  \noindent
  the projection from the Borel construction
  \eqref{BorelConstructionAdjunction}
  to the simplicial classifying space
  (Def. \ref{StandardModelOfUniversalSimplicialPrincipalComplex})
  is a Kan fibration, due to the right Quillen functor property \eqref{QuillenEquivalenceBetweenBorelModelStructureAndSliceOverSimplicialClassifyingComplex}:
   \vspace{-2mm}
  $$
    \begin{tikzcd}
      X
      \ar[
        r,
        "{\mathrm{fib}(q)}"
      ]
      &
      (W\mathcal{G} \times X)/\mathcal{G}
      \ar[
        d,
        "{\in \mathrm{Fib}}"{right},
        "q"{left}
      ]
      \\
      &
      \overline{W}\mathcal{G}
    \end{tikzcd}
    \;\;\;
    =
    \;\;\;
    \left(
    W\mathcal{G}
    \times
    \left(\!\!\!
      \def\arraystretch{.6}
      \begin{array}{c}
        X
        \\
        \downarrow
        \\
        \ast
      \end{array}
\!\!\!    \right)
   \!\! \right)\big/ \mathcal{G}
    \,.
  $$

   \vspace{-2mm}
\noindent  The fiber of this fibration, hence the {\it homotopy fiber}
  is clearly $X$.

  In the special case where $\mathcal{G} \acts \, X = \mathcal{G} \acts \, \mathcal{G}_L$
  is the multiplication action
  of the simplicial group on itself, this construction reduces
  to the {\it universal principal simplicial bundle} \eqref{StandardSimplicialDeloopingAsQuotient}
   \vspace{-2mm}
  \begin{equation}
    \label{TheUniversalPrincipalSimplicialBundle}
    \begin{tikzcd}
      \mathcal{G}
      \ar[r]
      &
      W\mathcal{G}
      \ar[
        d,
        "q"{left},
        "\in \mathrm{Fib}"{right}
      ]
      \\
      &
      \overline{W}\mathcal{G}
    \end{tikzcd}
    {\phantom{AAA}}
    =
    {\phantom{AAA}}
    \left(
    W\mathcal{G}
    \times
    \left(\!\!\!
      \def\arraystretch{.6}
      \begin{array}{c}
        \mathcal{G}_L
        \\
        \downarrow
        \\
        \ast
      \end{array}
   \!\!\! \right)
    \!\! \right) \big/ \mathcal{G}
  \end{equation}
\end{example}

\begin{notation}[Homotopy quotient of simplicial group actions]
  \label{HomotopyQuotientOfSimplicialGroupActions}
  For $\mathcal{G} \in \Groups(\SimplicialSets)$,
  we denote the right derived functor of the
  {\it Borel construction} right Quillen functor
  \eqref{QuillenEquivalenceBetweenBorelModelStructureAndSliceOverSimplicialClassifyingComplex}
  by
   \vspace{-2mm}
  $$
    (-) \!\sslash\! \mathcal{G}
    \;\coloneqq\;
    \mathbb{R}
    \big(\!
      \left(
        (-) \times W\mathcal{G}
      \right)\big/\mathcal{G}
    \big)
    \;\;\;
      :
    \;\;\;
    \mathrm{Ho}
    \big(
      \Actions{\mathcal{G}}(\SimplicialSets)_{\mathrm{proj}}
    \big)
    \xrightarrow{\qquad}
    \mathrm{Ho}
    \big(
      \SimplicialSets_{\mathrm{Qu}}
    \big)
    \,.
  $$

   \vspace{-2mm}
\noindent  Applied to the point $\mathcal{G} \acts \, \ast$ we also write
  \begin{equation}
    \label{DeloopingAsHomotopyQuotientOfThePoint}
    \mathbf{B}\mathcal{G}
    \;\;
    \coloneqq
    \;\;
    \ast \!\sslash\! \mathcal{G}
    \,.
  \end{equation}
\end{notation}

As an example:
\begin{proposition}[$G$-Sets in the homotopy theory over $B G$]
  \label{GSetsInTheHomotopyTheoryOverBG}
  For $G \in \Groups(\Sets) \xhookrightarrow{\;} \Groups(\SimplicialSets)$,
  we have an equivalence between $G$-actions on sets and 0-truncated
  objects in the homotopy theory slice over $B G$:

  \vspace{-.4cm}
  $$
    \begin{tikzcd}[row sep=-4pt]
      \Actions{G}(\Sets)
      \ar[r, phantom, "\simeq"]
      &
      \left(
        (\InfinityGroupoids \right)_{/B G}
      )_{\leq 0}
      \ar[r, hook]
      &
      \big(\InfinityGroupoids\big)_{/B G}
      \\
    \scalebox{0.7}{$     G \acts \, X $}
      &\longmapsto&
     \scalebox{0.7}{$    \big( X \!\sslash\! G \xrightarrow{\;} B G  \big) $}
    \end{tikzcd}
  $$
  \vspace{-.4cm}

\end{proposition}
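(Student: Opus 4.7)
The strategy is to obtain the claimed equivalence as the restriction to 0-truncated objects of the Quillen equivalence from Prop.~\ref{QuillenEquivalenceBetweenBorelModelStructureAndSliceOverClassifyingComplex}. Namely, for $G \in \Groups(\Sets) \hookrightarrow \Groups(\SimplicialSets)$, that proposition gives an equivalence of homotopy categories
\[
  \mathrm{Ho}\big(\Actions{G}(\SimplicialSets)_{\mathrm{proj}}\big)
  \;\simeq\;
  \mathrm{Ho}\big((\SimplicialSets_{\mathrm{Qu}})_{/\overline{W}G}\big)
  \;\simeq\;
  (\InfinityGroupoids)_{/BG},
\]
implemented by the Borel construction $G \acts X \mapsto ((X\times WG)/G \to \overline{W}G)$, which is a model for $(X\sslash G \to BG)$ by Ntn.~\ref{HomotopyQuotientOfSimplicialGroupActions}. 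It therefore suffices to match 0-truncated objects on both sides.

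On the right, an object $(Y \to BG)$ in the slice is 0-truncated precisely when its homotopy fibers over every point of $BG$ are sets (0-truncated $\infty$-groupoids). On the left, weak equivalences in the Borel model structure are detected on underlying simplicial sets (Ntn.~\ref{ProjectiveModelStructureOnActionsOfSimplicialGroups}), so a $G$-simplicial set is 0-truncated in $\mathrm{Ho}(\Actions{G}(\SimplicialSets)_{\mathrm{proj}})$ iff its underlying simplicial set is weakly equivalent to a set; but any simplicial set weakly equivalent to a set admits, after replacing it by its $\pi_0$, a canonical $G$-set structure compatible with the original $G$-action up to equivalence. This identifies the full subcategory of 0-truncated objects with $\Actions{G}(\Sets)$. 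Compatibility of the two truncation conditions under the equivalence is then immediate from the fact that the Borel construction $(X\times WG)/G \to \overline{W}G$ has homotopy fiber $X$ (by Ex.~\ref{CoprojectionsOutOfBorelConstructionAreKanFibrations}), which is a set iff $X$ is a set.

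Finally, one checks that the functor $G\acts X \mapsto (X\sslash G \to BG)$ is the explicit realization of this matched restriction, with inverse given by forming the homotopy fiber $Y \times^h_{BG} \ast$ (which inherits a $G$-action via the loop group $\Omega BG \simeq G$, implemented by the left base-change adjunction in \eqref{BorelConstructionAdjunction}). Fully faithfulness and essential surjectivity on 0-truncated objects then follow from the underlying Quillen equivalence together with the two identifications above. The only subtle step is verifying that the 0-truncated objects on the Borel side really are modeled by honest $G$-sets (not merely by $G$-simplicial sets whose underlying complex is a set up to weak equivalence); this is the main obstacle, but it is handled by noting that the inclusion $\Actions{G}(\Sets) \hookrightarrow \Actions{G}(\SimplicialSets)_{\mathrm{proj}}$ lands in fibrant-cofibrant objects (every $G$-set has free action on degenerate simplices, trivially, and is a Kan complex) and is reflective onto the 0-truncated part via $\pi_0$.
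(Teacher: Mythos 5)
Your route is essentially the paper's: the paper also deduces the statement from the Quillen equivalence of Prop.~\ref{QuillenEquivalenceBetweenBorelModelStructureAndSliceOverClassifyingComplex} together with the observation that the nerve of $\big(X\times G \rightrightarrows X\big)\to\big(G\rightrightarrows \ast\big)$ is a Kan fibration with fibre $X$, so that $X$ is the homotopy fibre of $X\!\sslash\!G\to B G$ and $0$-truncatedness matches on both sides. Your more explicit matching of truncated objects via $\pi_0$ is a fleshed-out version of the same argument.

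One parenthetical in your last step is wrong, though: a $G$-set is in general \emph{not} cofibrant in $\Actions{G}(\SimplicialSets)_{\mathrm{proj}}$. By Lemma~\ref{CofibrationsOfSimplicialGroupActions} the cofibrant objects of the Borel model structure are exactly those with (levelwise) \emph{free} $G$-action, so e.g.\ the point with trivial $G$-action is fibrant but not cofibrant; ``free action on degenerate simplices'' is not the relevant condition. This does not sink the argument: what you actually need is only that every homotopy-discrete $G$-simplicial set receives a Borel weak equivalence $X\to\pi_0 X$ onto an honest $G$-set (weak equivalences being detected on underlying simplicial sets), and that derived homs between $G$-sets recover equivariant maps -- for the latter either pass through the slice side of the Quillen equivalence, where $0$-truncated objects form a $1$-category, or compute with the genuinely cofibrant replacement $X\times W G$ (diagonal action), whose $G$-maps to a discrete $Y$ have $\pi_0$ equal to $\mathrm{Hom}_G(X,Y)$. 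With that repair your proof is correct and coincides in substance with the paper's.
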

\begin{proof}
  Since $\big( X \times G \rightrightarrows X  \big) \xrightarrow{\;} (G \rightrightarrows \ast)$
  is clearly a Kan fibration with fiber $X$, the latter is the
  homotopy fiber of $X \!\sslash\! G \xrightarrow B G$.
  With this, the statement follows by Prop. \ref{QuillenEquivalenceBetweenBorelModelStructureAndSliceOverClassifyingComplex}.
\end{proof}

\medskip

\noindent
{\bf Free loop space of classifying spaces.}

\begin{proposition}[Free loop space of simplicial classifying space]
  \label{FreeLoopSpaceOfSimplicialClassifyingSpace}
  For $\mathcal{G} \,\in\, \Groups(\SimplicialSets)$, the
  function complex from the simplicial classifying space
  (Def. \ref{StandardModelOfUniversalSimplicialPrincipalComplex})
  of the additive group of integers to that of $\mathcal{G}$
  is equivalent to the homotopy quotient (Nota. \ref{HomotopyQuotientOfSimplicialGroupActions})
  of the conjugation action of $\mathcal{G}$ on itself
  (Ex. \ref{SimplicialGroupCanonicallyActingOnItself})
  \vspace{-2mm}
  $$
    \Maps{\big}
      { \overline{W} \mathbb{Z} }
      { \overline{W}\mathcal{G} }
    \;\simeq\;
    \mathcal{G}
      \sslash_{\!\!\mathrm{ad}}
    \mathcal{G}
    \;\;\;
    \in
    \;
    \mathrm{Ho}
    \big(
      \SimplicialSets_{\mathrm{Qu}}
    \big)
    \,.
  $$
\end{proposition}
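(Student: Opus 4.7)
The plan is to present $\Maps{\big}{\overline{W}\mathbb{Z}}{\overline{W}\mathcal{G}}$ as a homotopy pullback and then transport this pullback into the Borel model structure on $(\mathcal{G}\times\mathcal{G})$-actions, where it will manifestly become the Borel construction of the adjoint action.

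First, I would invoke Ex.~\ref{MinimalModelOfFreeLoopSpace}, which presents $\overline{W}\mathbb{Z}$ as a Kan-fibrant replacement of the homotopy pushout $\ast\cup^h_{\ast\sqcup\ast}\ast$. Since $\overline{W}\mathcal{G}$ is a Kan complex (Prop.~\ref{BasicPropertiesOfStandardSimplicialPrincipalComplex}), applying $\Maps{}{-}{\overline{W}\mathcal{G}}$ converts this into the homotopy pullback
\begin{equation*}
  \Maps{\big}{\overline{W}\mathbb{Z}}{\overline{W}\mathcal{G}}
  \;\simeq\;
  \overline{W}\mathcal{G}
  \;\times^h_{\overline{W}\mathcal{G}\times\overline{W}\mathcal{G}}\;
  \overline{W}\mathcal{G}
\end{equation*}
taken along two copies of the diagonal $\mathrm{diag}_{\overline{W}\mathcal{G}}$.

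Next, I would use the natural identification $\overline{W}(\mathcal{G}\times\mathcal{G})\simeq \overline{W}\mathcal{G}\times\overline{W}\mathcal{G}$ together with the Quillen equivalence of Prop.~\ref{QuillenEquivalenceBetweenBorelModelStructureAndSliceOverClassifyingComplex} applied to $\mathcal{G}\times\mathcal{G}$ in order to identify the diagonal $\mathrm{diag}_{\overline{W}\mathcal{G}}$ with the structure map of the Borel construction of the $(\mathcal{G}\times\mathcal{G})$-biaction on $\mathcal{G}$ given by $(h_1,h_2)\cdot g \coloneqq h_1\cdot g\cdot h_2^{-1}$. This biaction is transitive with stabilizer at $\mathrm{e}\in \mathcal{G}$ equal to the diagonal subgroup $\mathrm{diag}_\mathcal{G}\hookrightarrow \mathcal{G}\times\mathcal{G}$, so its Borel construction is weakly equivalent to $W(\mathcal{G}\times\mathcal{G})/\mathrm{diag}_\mathcal{G}\simeq \overline{W}\mathcal{G}$, which provides the required identification of the $\mathrm{diag}_{\overline{W}\mathcal{G}}$ leg.

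Then I would compute the homotopy pullback via this biaction presentation. By naturality of the Quillen equivalence under the group homomorphism $\mathrm{diag}_\mathcal{G} : \mathcal{G}\hookrightarrow \mathcal{G}\times\mathcal{G}$, pulling back $\mathrm{diag}_{\overline{W}\mathcal{G}}$ along itself corresponds to restricting the $(\mathcal{G}\times\mathcal{G})$-biaction on $\mathcal{G}$ along $\mathrm{diag}_\mathcal{G}$, which yields the $\mathcal{G}$-action $h\cdot g = hgh^{-1}$---precisely the adjoint action of Ex.~\ref{SimplicialGroupCanonicallyActingOnItself}. The Borel construction of this restricted action is by definition $(W\mathcal{G}\times\mathcal{G})/\mathcal{G} = \mathcal{G}\sslash_{\!\!\mathrm{ad}}\mathcal{G}$, completing the desired identification.

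The main technical obstacle will be the second step: carefully verifying that the biaction presentation is indeed a fibrant model of $\mathrm{diag}_{\overline{W}\mathcal{G}}$ in $\big(\SimplicialSets_{\mathrm{Qu}}\big)_{/\overline{W}(\mathcal{G}\times\mathcal{G})}$. This amounts to checking freeness of the biaction (via Lem.~\ref{CofibrationsOfSimplicialGroupActions}) and comparing the two candidate $(\mathcal{G}\times\mathcal{G})$-equivariant models of the pullback $\overline{W}\mathcal{G}\times_{\overline{W}(\mathcal{G}\times\mathcal{G})} W(\mathcal{G}\times\mathcal{G})$, one of which uses the equivariant equivalence $W(\mathcal{G}\times\mathcal{G})\simeq W\mathcal{G}\times W\mathcal{G}$ implicit in Lem.~\ref{EquivariantEquivalenceOfSimplicialUniversalPrincipalComplexes} to reduce the computation to a manifest identification of $\mathcal{G}$ with the quotient $(\mathcal{G}\times\mathcal{G})/\mathrm{diag}_\mathcal{G}$.
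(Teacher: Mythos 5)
Your proposal is correct and follows essentially the same route as the paper's proof: both identify $\Maps{}{\overline{W}\mathbb{Z}}{\overline{W}\mathcal{G}}$ with the homotopy pullback of the diagonal against itself (via Ex.~\ref{MinimalModelOfFreeLoopSpace}), replace one leg by the Borel-construction fibration of the $(\mathcal{G}\times\mathcal{G})$-biaction $(h_1,h_2)\cdot g = h_1\, g\, h_2^{-1}$ on $\mathcal{G}$, and recognize the resulting pullback along the other diagonal as the Borel construction of the restricted (conjugation) action. The only difference is one of packaging: what you defer to the ``main technical obstacle'' and propose to handle via the Quillen equivalence of Prop.~\ref{QuillenEquivalenceBetweenBorelModelStructureAndSliceOverClassifyingComplex} and the orbit identification $\mathcal{G}\cong(\mathcal{G}\times\mathcal{G})/\mathrm{diag}_{\mathcal{G}}$ is exactly what the paper verifies by the explicit point-set diagram \eqref{HomotopyPullbackComputingFreeLoopSpaceOfSimplicialClassifyingSpace}, including the section $[\vec g]\mapsto[\vec g,(\vec g,\mathrm{e})]$ exhibiting $\overline{W}\mathcal{G}$ as weakly equivalent, over $\overline{W}\mathcal{G}\times\overline{W}\mathcal{G}$, to that Borel construction.
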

For topological groups this statement is folklore but rarely argued in detail.
A point-set topology argument is spelled out in \cite[\S A]{Gruher07},
and the idea of the following abstract argument is in \cite[Lem. 9.1]{KleinSchochetSmith09},
which we adapt to simplicial groups:
\begin{proof}
Consider the following commuting diagram in $\SimplicialSets$:
\vspace{-5mm}
\begin{equation}
  \label{HomotopyPullbackComputingFreeLoopSpaceOfSimplicialClassifyingSpace}
  \begin{tikzcd}[column sep=70pt]
    &&
    &[+15pt]
    \frac{W \mathcal{G}}{\mathcal{G}}
    \times W\mathcal{G}
    \ar[
      dr,
      shorten <=-3pt,
      "\;\;\;\mathrm{pr}_1 \in \mathrm{W}"{right}
    ]
    &[-30pt]
    \\
    \frac{
      W \mathcal{G} \times \mathcal{G}_{\!\scalebox{.4}{ad}}
    }{
      \mathcal{G}
    }
    \ar[dd]
    \ar[rr]
    \ar[
      ddrr,
      phantom,
      "\mbox{\tiny\rm (pb)}"{pos=.14}
    ]
    &&
    \frac{
      W \mathcal{G} \times W\mathcal{G} \times \mathcal{G}_{\!\scalebox{.4}{ad}}
    }
    {
      \mathcal{G} \times \mathcal{G}
    }
    \ar[
      ur,
      shorten <=-3pt,
      shorten >=-4pt,
      "\sim"{sloped, below},
      "{
        \scalebox{.8}{$
        \!\!\!\!
        [
          \vec g, (\vec g',h)
        ]
        \mapsto
        \left(
          [\vec g],\,
          \left(
            \vec g^{-1}
            \cdot
            (h \cdot \vec g')
          \right)
        \right)
        $}
      }"{sloped, yshift=2pt}
    ]
    \ar[
      dd,
      "\in \mathrm{Fib}"{right},
      "
       \frac{
         W(\mathcal{G} \times \mathcal{G})
         \times
         \left(
           \!\!\!\!\!\!
           \def\arraystretch{.45}
           \begin{array}{c}
             \mathcal{G}_{\!\scalebox{.4}{ad}}
             \\
             \downarrow
             \\
             \ast
           \end{array}
           \!\!\!\!\!\!
         \right)
       }{
         \mathclap{\phantom{\vert^{a}}}
         \mathcal{G} \times \mathcal{G}
       }
      "{left}
    ]
    &
    &
    \frac{
      W\mathcal{G}
    }
    {
     \mathcal{G}
    }
    \ar[
      ddll,
      "\mathrm{diag}"{right, xshift=4pt, yshift=-4pt}
    ]
    \ar[
      ll,
      "\in \mathrm{W}"{below},
      "{
        [
          \vec g,
          (\vec g, \mathrm{e})
        ]
        \,\mapsfrom\,
        [
          \vec g
        ]
      }"{above}
    ]
    \\
    \\
    \frac{
      W\mathcal{G}
    }{
      \mathcal{G}
    }
    \ar[
      rr,
      "\mathrm{diag}"{below}
    ]
    &&
    \frac{
      W \mathcal{G} \times W\mathcal{G}
    }{
      \mathcal{G} \times \mathcal{G}
    }
  \end{tikzcd}
\end{equation}

 \vspace{-2mm}
\noindent
where
$$
  \mathcal{G}_{\mathrm{ad}}
  \;\in\;
  \Actions{
    (\mathcal{G} \times \mathcal{G})
  }(\SimplicialSets)
  \xrightarrow{ \;\mathrm{res}_{\mathrm{diag}_{\mathcal{G}}} \;}
  \Actions{\mathcal{G}}(\SimplicialSets)
$$
denotes the conjugation action on $\mathcal{G} = \underlying(\mathcal{G}_{\mathrm{ad}})$
\vspace{-3mm}
$$
  \begin{tikzcd}[row sep=-5pt]
    \mathcal{G}_{\mathrm{ad}}
    \times
    \big(
      \mathcal{G} \times \mathcal{G}
    \big)
    \ar[
      rr
    ]
    &&
    \mathcal{G}_{\mathrm{ad}}
    \\
  \scalebox{0.7}{$     \big(
      g_k, (h'_k,h_k)
    \big)
    $}
    &\longmapsto&
  \scalebox{0.7}{$     h'_k \cdot g_k \cdot h_k^{-1} $}
  \end{tikzcd}
$$

\vspace{-3mm}
\noindent and its restriction along the diagonal to $h' = h$.
Here
\begin{itemize}

  \item
  the right vertical morphism is a Kan fibration by
  Lemma \ref{UnderlyingSimplicialSetOfASimplicialGroupIsKanComplex} and
  Prop. \ref{QuillenEquivalenceBetweenBorelModelStructureAndSliceOverClassifyingComplex}
  (see Ex. \ref{CoprojectionsOutOfBorelConstructionAreKanFibrations});

  \item
  all objects are fibrant (are Kan complexes),
  by Prop. \ref{BasicPropertiesOfStandardSimplicialPrincipalComplex}
  and by the previous item;

  \item
  the horizontal morphism on the top right is a weak equivalence,
  by 2-out-of-3, as it is the left inverse to the triangular composite
  of morphisms, which is a weak equivalence by
  Prop. \ref{BasicPropertiesOfStandardSimplicialPrincipalComplex}.
\end{itemize}
Therefore (as recalled in \cite[Def. A.24]{FSS20CharacterMap}),
the pullback diagram in \eqref{HomotopyPullbackComputingFreeLoopSpaceOfSimplicialClassifyingSpace}
represents the homotopy pullback
that characterizes
$\big[ \overline{W}\mathbb{Z}, \overline{W}\mathcal{G}\big]$,
according to Example \ref{MinimalModelOfFreeLoopSpace}.
\end{proof}
As an immediate consequence:

\begin{proposition}[Free loop space of classifying space of simplicial abelian group]
  \label{FreeLoopSpaceOfClassifyingSpaceOfSimplicialAbelianGroup}
  $\,$

\noindent  Let $\mathcal{A} \in \AbelianGroups(\SimplicialSets) \xhookrightarrow{\Discrete}
  \Groups(\SmoothInfinityGroupoids)$
  be a simplicial abelian group.
  Then
   \vspace{-2mm}
  $$
    \Maps{}
      { \mathbf{B}\mathbb{Z} }
      { \mathbf{B}\mathcal{A} }
    \;\simeq\;
    \mathcal{A} \times \mathbf{B}\mathcal{A}
    \qquad
    \mbox{and}
    \qquad
    \mathrm{Cyc}_{\scalebox{.7}{$\shape \CohesiveCircle$}}
    \big(
      \mathbf{B}\mathcal{A}
    \big)
    \;\simeq\;
    \mathbf{B}^2 \mathbb{Z}
    \times
    \mathcal{A}
    \times
    \mathbf{B}\mathcal{A}
    \,.
  $$
\end{proposition}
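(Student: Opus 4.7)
The plan is to reduce both claims to the free-loop formula of Prop.~\ref{FreeLoopSpaceOfSimplicialClassifyingSpace} by exploiting abelianness twice: first to trivialize the adjoint action, and then to trivialize the loop-rotation action.

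First I would apply Prop.~\ref{FreeLoopSpaceOfSimplicialClassifyingSpace} with $\mathcal{G}:=\mathcal{A}$, yielding
$$
  \Maps{\big}{\mathbf{B}\mathbb{Z}}{\mathbf{B}\mathcal{A}}
   \;\simeq\;
  \Maps{\big}{\overline{W}\mathbb{Z}}{\overline{W}\mathcal{A}}
   \;\simeq\;
  \mathcal{A}\!\sslash_{\!\mathrm{ad}}\!\mathcal{A}.
$$
Since $\mathcal{A}$ is a \emph{simplicial abelian} group, the conjugation action in Ex.~\ref{SimplicialGroupCanonicallyActingOnItself} coincides with the trivial action. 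A homotopy quotient by a trivial $\mathcal{G}$-action is the Borel construction applied to the trivial bundle, which (via Prop.~\ref{QuillenEquivalenceBetweenBorelModelStructureAndSliceOverClassifyingComplex} and Ex.~\ref{CoprojectionsOutOfBorelConstructionAreKanFibrations}) is simply the product with $\mathbf{B}\mathcal{G}$. Hence
$\mathcal{A}\!\sslash_{\!\mathrm{ad}}\!\mathcal{A}\simeq\mathcal{A}\times\mathbf{B}\mathcal{A}$,
proving the first equivalence.

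For the second equivalence, unfolding the definition \eqref{SInertiaInfinityStacks} with $\shape\CohesiveCircle\simeq\mathbf{B}\mathbb{Z}$ gives
$$
  \mathrm{Cyc}_{\scalebox{.6}{$\shape\CohesiveCircle$}}\!\big(\mathbf{B}\mathcal{A}\big)
  \;\simeq\;
  \Maps{\big}{\mathbf{B}\mathbb{Z}}{\mathbf{B}\mathcal{A}}\!\sslash\!\mathbf{B}\mathbb{Z}
  \;\simeq\;
  \big(\mathcal{A}\times\mathbf{B}\mathcal{A}\big)\!\sslash\!\mathbf{B}\mathbb{Z}\,,
$$
where $\mathbf{B}\mathbb{Z}$ acts by precomposition with its left-translation action on itself (Prop.~\ref{InducedSimplicialActions}, cf.\ Lem.~\ref{CanonicalShapeS1ActionOnInertiaOrbifold}). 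The crucial step is to show that this induced action is homotopically trivial. My plan is to argue this via the abelian group-object structure: because $\mathcal{A}$ is abelian, $\mathbf{B}\mathcal{A}$ is itself an (abelian) $\infty$-group object, so $\Maps{}{-}{\mathbf{B}\mathcal{A}}$ factors through abelian $\infty$-groups and the $\mathbf{B}\mathbb{Z}$-precomposition action factors through \emph{inner} automorphisms of that group, which are trivial. Equivalently, under Dold--Kan translate $\mathbf{B}\mathcal{A}$ to the shifted chain complex $N_\bullet\mathcal{A}[1]$, so that $\Maps{}{\mathbf{B}\mathbb{Z}}{\mathbf{B}\mathcal{A}}$ is computed as the internal hom of chain complexes out of $N_\bullet\mathbb{Z}[\overline{W}\mathbb{Z}]$; the $\mathbf{B}\mathbb{Z}$-action corresponds to the chain-level action of integral shifts, which on $N_\bullet(\overline{W}\mathbb{Z})\simeq \mathbb{Z}\oplus\mathbb{Z}[1]$ is homotopic to the identity.

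Granting triviality of the action, the homotopy quotient is the product with $\mathbf{B}(\mathbf{B}\mathbb{Z})\simeq\mathbf{B}^{2}\mathbb{Z}$, giving
$$
  \mathrm{Cyc}_{\scalebox{.6}{$\shape\CohesiveCircle$}}\!\big(\mathbf{B}\mathcal{A}\big)
  \;\simeq\;
  \big(\mathcal{A}\times\mathbf{B}\mathcal{A}\big)\times\mathbf{B}^{2}\mathbb{Z}
  \;\simeq\;
  \mathbf{B}^{2}\mathbb{Z}\times\mathcal{A}\times\mathbf{B}\mathcal{A},
$$
as claimed. The main obstacle is the triviality of the loop-rotation action at the point-set level; the cleanest route I foresee is the Dold--Kan argument above, which converts the geometric statement about rotating loops into the elementary statement that the shift map on $N_\bullet(\overline{W}\mathbb{Z})$ is chain-homotopic to the identity.
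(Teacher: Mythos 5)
Your proof of the first equivalence is the same as the paper's: Prop.~\ref{FreeLoopSpaceOfSimplicialClassifyingSpace} identifies $\Maps{}{\mathbf{B}\mathbb{Z}}{\mathbf{B}\mathcal{A}}$ with $\mathcal{A}\sslash_{\!\mathrm{ad}}\mathcal{A}$, degreewise abelianness turns the adjoint action into the trivial one, and the Borel construction of a trivial action splits as $\mathcal{A}\times\overline{W}\mathcal{A}\simeq\mathcal{A}\times\mathbf{B}\mathcal{A}$. (Note that the paper's displayed proof in fact only covers this first equivalence.)

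For the second equivalence there is a genuine gap: the loop-rotation action of $\shape\CohesiveCircle\simeq\mathbf{B}\mathbb{Z}$ on $\Maps{}{\mathbf{B}\mathbb{Z}}{\mathbf{B}\mathcal{A}}\simeq\mathcal{A}\times\mathbf{B}\mathcal{A}$ is \emph{not} homotopically trivial, and neither of your two arguments shows that it is. The rotation action is not an inner/conjugation action, so abelianness of $\mathbf{B}\mathcal{A}$ does not kill it the way it kills the adjoint action in the first step. And since $\mathbf{B}\mathbb{Z}$ is connected, every individual rotation is automatically homotopic to the identity---this is all your Dold--Kan computation establishes---but that is far weaker than trivializing the action as a coherent $\mathbf{B}\mathbb{Z}$-action: the coherence homotopies are exactly what twist the Borel quotient. (Compare: in the translation action of $\mathbf{B}\mathbb{Z}$ on itself every element also acts homotopically to the identity, yet the homotopy quotient is $\ast$, not $\mathbf{B}\mathbb{Z}\times\mathbf{B}^2\mathbb{Z}$.) Concretely, by the paper's own Lemma~\ref{CanonicalShapeS1ActionOnInertiaOrbifold}, on the component of a loop $a$ (take $\mathcal{A}=A$ a discrete abelian group) the $\mathbf{B}\mathbb{Z}$-action on $\mathbf{B}A$ is translation along $\mathbf{B}(1\mapsto a)\colon\mathbf{B}\mathbb{Z}\to\mathbf{B}A$, so the homotopy quotient of that component is the homotopy fiber of $\mathbf{B}^2\mathbb{Z}\to\mathbf{B}^2A$; for $A=\mathbb{Z}$ and $a=1$ this is a point, not $\mathbf{B}^2\mathbb{Z}\times\mathbf{B}\mathbb{Z}$. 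The same non-triviality is recorded by the GRH isotropy groups $\Lambda_g$ in \eqref{SkeletonOfHuanInertiaStackOfGoodOrbifold} and by the twisted Sullivan model \eqref{SullivanModelForCycRetraction}, where $\mathrm{d}\,c_{n+1}=c_n\wedge\omega_2\neq 0$. So the trivialization your argument requires is genuinely unavailable; and since the paper's own proof stops after the first equivalence, there is no hidden argument there to recover---the product formula for $\mathrm{Cyc}_{\shape\CohesiveCircle}(\mathbf{B}\mathcal{A})$ cannot be obtained along the route you propose, and in view of the examples above it should itself be treated with caution.
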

\begin{proof}
This is given by the following sequence of equivalences:
 \vspace{-2mm}
$$
  \def\arraystretch{1.2}
  \begin{array}{lll}
    \Maps{\big}
      { \mathbf{B}\mathbb{Z} }
      { \mathbf{B}\mathcal{A} }
    &
    \;\simeq\;
    \frac{
      \mathcal{A}_{\mathrm{ad}}
        \times
      W\mathcal{A}
    }{ \mathcal{A} }
    &
    \proofstep{by Prop. \ref{FreeLoopSpaceOfSimplicialClassifyingSpace}}
    \\
    & \;\simeq\;
    \frac{
      \mathcal{A}_{\mathrm{triv}}
      \times
      W\mathcal{A}
    }{ \mathcal{A} }
    &
    \proofstep{since each $\mathcal{A}_n$ is abelian}
    \\
    & \;\simeq\;
    \mathcal{A}
      \times
    \frac{
      W\mathcal{A}
    }{ \mathcal{A} }
    \\
    & \;\simeq\;
    \mathcal{A}
    \times
    \overline{W}\mathcal{A}
    &
    \proofstep{by \eqref{StandardSimplicialDeloopingAsQuotient}}
    \\
    & \;\simeq\;
    \mathcal{A} \times \mathbf{B}\mathcal{A}
    &
    \proofstep{by \eqref{DeloopingAsHomotopyQuotientOfThePoint}}.
  \end{array}
$$

\vspace{-7mm}
\end{proof}

\begin{example}[Free loop space of Eilenberg-MacLane space]
  \label{FreeLoopSpaceOfEilenbergMacLaneSpace}
  For $A \in \AbelianGroups(\Sets)$
  with $B^n A \,\simeq\, K(A,n)$ for $n \in \mathbb{N}$
  (Ex. \ref{ShiftedAbelianGroups}),
  Prop. \ref{FreeLoopSpaceOfClassifyingSpaceOfSimplicialAbelianGroup}
  yields:
   \vspace{-1mm}
  $$
    \Maps{}
      { \mathbf{B}\mathbb{Z} }
      { \mathbf{B}^{n+1}A }
    \;\simeq\;
    \mathbf{B}^n A
      \,\times\,
    \mathbf{B}^{n+1}A
    \,.
  $$
\end{example}

\noindent
{\bf Free and co-free simplicial actions.}

\begin{proposition}[Free and co-free simplicial actions]
  \label{InducedSimplicialActions}
  For $\mathcal{G} \,\in\, \Groups(\SimplicialSets)$,
  the forgetful functor that sends $\mathcal{G}$-actions \eqref{CategoryOfSimplicialGroupActions}
  to their underlying simplicial set has both a left adjoint
  (``free action'') and a right adjoint (``co-free action''), both of
  which are Quillen adjunctions with respect to the classical model structure
  on $\SimplicialSets$ and the projective model structure on $\Actions{\mathcal{G}}(\SimplicialSets)$
  \eqref{BorelModelStructure}:
  \vspace{-2mm}
  $$
    \begin{tikzcd}
      \SimplicialSets_{\mathrm{Qu}}
      \;\;
      \ar[
        rr,
        shift left=15pt,
        "{\mathcal{G} \times (-)}"{above}
      ]
      \ar[
        rr,
        shift right=15pt,
        "{[\mathcal{G},-]}"{below}
      ]
      &&
    \;\;  \Actions{\mathcal{G}}(\SimplicialSets_{\mathrm{Qu}})_{\mathrm{proj}} \;.
      \ar[
        ll,
        "\underlying"{description}
      ]
      \ar[
        ll,
        phantom,
        shift left=9pt,
        "\scalebox{.7}{$\bot_{\mathrlap{\mathrm{Qu}}}$}"
      ]
      \ar[
        ll,
        phantom,
        shift right=9pt,
        "\scalebox{.7}{$\bot_{\mathrlap{\mathrm{Qu}}}$}"
      ]
    \end{tikzcd}
  $$

  \vspace{-2mm}
  \noindent
  Here the underlying object of the co-free action on
  $X \,\in\, \SimplicialSets$ is
  the simplicial function complex \eqref{ComponentsOfSimplicialMappingComplex}
  \vspace{-2mm}
  $$
    \Maps{}{\mathcal{G}}{X}
    \;=\;
    \Homs{}
     {
       \mathcal{G} \times \Delta[\bullet]
    }
    {
      X
    }
    \big)
    \;\;\;
    \in
    \;
    \SimplicialSets
  $$

  \vspace{-2mm}
  \noindent
  and its $\mathcal{G}$-action
  \vspace{-2mm}
  $$
    \mathcal{G}
      \times
    \Maps{}{\mathcal{G}}{X}
    \xrightarrow{
      \;\; (-) \cdot (-) \;\;
    }
    \mathcal{G}
  $$

  \vspace{-1mm}
\noindent
   in degree $m \in \mathbb{N}$ is the function
  $
    \mathrm{Hom}
    \big(
      \Delta[n],
      \,
      \mathcal{G}
    \big)
    \times
    \mathrm{Hom}
    \big(
      \mathcal{G} \times \Delta[n],
      \,
      X
    \big)
    \xrightarrow{\;\;\;}
    \mathrm{Hom}
    \big(
      \mathcal{G} \times \Delta[n],
      \,
      X
    \big)
  $
  given by
  \vspace{-2mm}
  \begin{equation}
   \label{CofreeSimplicialActionInComponents}
   \hspace{-4mm}
  \small
    \begin{aligned}
      \Big(
        \Delta[n]
        \xrightarrow{ g_n }
        \mathcal{G},
        \,
        \mathcal{G}
        \times
        \Delta[n]
        \xrightarrow{\phi}
        X
      \Big)
      \mapsto
      \Big(
        \mathcal{G} \times \Delta[n]
        \xrightarrow{ \mathrm{id} \times \mathrm{diag} }
        \mathcal{G} \times \Delta[n] \times \Delta[n]
        \xrightarrow{ \mathrm{id} \times g_n \times \mathrm{id} }
        \mathcal{G} \times \mathcal{G} \times \Delta[n]
        \xrightarrow{ (-) \cdot (-) \times \mathrm{id} }
        \mathcal{G} \times \Delta[n]
        \xrightarrow{ \phi }
        X
      \Big)
      \,.
    \end{aligned}
  \end{equation}
\end{proposition}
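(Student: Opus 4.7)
The plan is to recognize both adjunctions as instances of enriched Kan extension along the object-selecting functor $i \colon \ast \hookrightarrow \mathbf{B}\mathcal{G}$. Since \eqref{CategoryOfSimplicialGroupActions} identifies $\Actions{\mathcal{G}}(\SimplicialSets)$ with the simplicially enriched functor category $\SimplicialFunctors(\mathbf{B}\mathcal{G}, \SimplicialSets)$, the forgetful functor $\underlying$ is precisely the restriction $i^\ast$ that evaluates at the unique object of $\mathbf{B}\mathcal{G}$. The general existence theorem for enriched Kan extensions, applicable thanks to the self-enrichment of $\SimplicialSets$ over itself (Rem. \ref{SelfEnrichmentOfSimplicialSets}) together with its bi-completeness, then produces the left adjoint $i_! = \mathcal{G} \times (-)$ via the pointwise coend and the right adjoint $i_\ast = \Maps{}{\mathcal{G}}{-}$ via the pointwise end; in both cases the integral over a one-object category reduces to the indicated (co)power by $\mathbf{B}\mathcal{G}(\ast,\ast) = \mathcal{G}$.

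To extract the concrete formula \eqref{CofreeSimplicialActionInComponents} for the $\mathcal{G}$-action on $\Maps{}{\mathcal{G}}{X}$, I would characterize it by the adjoint hom-bijection $\Homs{}{Y}{\Maps{}{\mathcal{G}}{X}} \xleftrightarrow{\sim} \Homs{}{\mathcal{G} \times Y}{X}$ and demand that equivariance on the right-hand side, with respect to the free left-multiplication action on the $\mathcal{G}$-factor, be matched by equivariance on the left-hand side. Unwinding this requirement at the Yoneda-representing objects $Y = \Delta[n]$, using \eqref{ComponentsOfSimplicialMappingComplex} to identify $n$-simplices of $\Maps{}{\mathcal{G}}{X}$ with maps $\mathcal{G} \times \Delta[n] \to X$, and bearing in mind the left-action convention of Rem. \ref{SimplicialGroupActionsAreFromTheLeft}, produces exactly the composite displayed in \eqref{CofreeSimplicialActionInComponents}: diagonalize $\Delta[n]$, feed one copy through $g_n$ into a second $\mathcal{G}$-factor, multiply the two $\mathcal{G}$-factors, and then apply the given simplex $\phi$.

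For the Quillen property, both statements are immediate from the definition \eqref{BorelModelStructure} of the projective Borel model structure together with Lem. \ref{CofibrationsOfSimplicialGroupActions}. By construction $\underlying$ creates fibrations and weak equivalences, so it is a right Quillen functor and hence $\mathcal{G} \times (-)$ is left Quillen. Conversely, Lem. \ref{CofibrationsOfSimplicialGroupActions} says that every projective cofibration of $\mathcal{G}$-actions is in particular a monomorphism of underlying simplicial sets, hence a cofibration in $\SimplicialSets_{\mathrm{Qu}}$; combined with preservation of weak equivalences this yields preservation of trivial cofibrations, so that $\underlying$ is also a left Quillen functor and therefore $\Maps{}{\mathcal{G}}{-}$ is right Quillen.

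The main obstacle is bookkeeping rather than conceptual: checking that the composite displayed in \eqref{CofreeSimplicialActionInComponents} genuinely defines a strictly associative and unital left simplicial action, and that its level-$n$ components assemble into a map of simplicial sets in the external variable. Both checks reduce to naturality diagrams that commute once the adjoint characterization is fixed; no genuinely homotopical input beyond what is already recorded in the paper is needed.
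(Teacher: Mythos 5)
Your proposal is correct, and your Quillen argument is exactly the paper's: $\underlying$ preserves fibrations and weak equivalences by the very definition of the projective structure \eqref{BorelModelStructure}, and preserves cofibrations by Lemma \ref{CofibrationsOfSimplicialGroupActions}, so it is simultaneously right and left Quillen once the two adjoints exist. Where you genuinely diverge is in how you obtain the adjoints. The paper proceeds elementarily: it writes down the formula \eqref{CofreeSimplicialActionInComponents} by hand, observes it is a simplicial homomorphism, and then verifies right-adjointness directly by constructing the adjunct of an equivariant map $\phi_{(-)} \colon P \to [\mathcal{G},X]$ via evaluation at $(\mathrm{e}_n,\iota_n)$ and checking, using the unit law, the explicit action formula and equivariance, that this assignment is a natural bijection. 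You instead identify $\underlying$ with restriction $i^\ast$ along $i \colon \ast \hookrightarrow \mathbf{B}\mathcal{G}$ under \eqref{CategoryOfSimplicialGroupActions} and invoke enriched Kan extensions, so that $i_! = \mathcal{G}\times(-)$ and $i_\ast = \Maps{}{\mathcal{G}}{-}$ come for free from the (co)tensoring of $\SimplicialSets$ over itself, the one-object (co)end collapsing to the hom-object $\mathcal{G}$. This buys you existence, and also the action itself, without any verification: the $\mathcal{G}$-action on the right Kan extension is precisely precomposition with the contravariant hom-functor $\mathbf{B}\mathcal{G}(-,i(\ast))$, which under the composition convention \eqref{CompositionInDeloopedSimplicialGroup} is right multiplication of $\mathcal{G}$ on itself, and unwinding this at level $n$ is literally \eqref{CofreeSimplicialActionInComponents}; so the "bookkeeping" you flag at the end (associativity, unitality, simpliciality of the action) is already guaranteed by functoriality of the Kan extension and need not be rechecked. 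The only place where your write-up is looser than it needs to be is the middle paragraph: deriving the formula by "matching equivariance" across the plain internal-hom bijection \eqref{InternalHomAdjunction} is workable but is essentially a re-derivation of what the Kan-extension description hands you directly; if you phrase it via the hom-functor structure you avoid exactly the convention-chasing (Rem.~\ref{SimplicialGroupActionsAreFromTheLeft}) that makes this step delicate. In exchange for its abstraction, your route hides the explicit adjunct $p_n \mapsto \phi_{p_n}(\mathrm{e}_n,\iota_n)$, which the paper's hands-on proof makes visible and which is convenient when the adjunction is used concretely elsewhere; conversely your argument generalizes verbatim to any bicomplete self-enriched base and to restriction along arbitrary simplicial functors, which the paper's component check does not.
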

\begin{proof}
  The $\underlying$-functor preserves fibrations and weak equivalences
  by definition of the projective model structure \eqref{BorelModelStructure},
  and it preserves
  cofibrations by Lemma \ref{CofibrationsOfSimplicialGroupActions}.
  Therefore it is both a left and a right Quillen adjoint as soon
  as it is a left and a right adjoint functor at all.

  The left adjoint free action is straightforward.
  Also the right adjoint cofree action follows the
  same idea as that of the cofree action on topological $G$-spaces, only that
  for simplicial sets one cannot argue point-wise as in point-set topology,
  but needs the more abstract formula \eqref{CofreeSimplicialActionInComponents}.
  The structure of this formula manifestly gives a simplicial homomorphism,
  and by focusing on its
  images of the unique non-degenerate top-degree cell,
  $
    \iota_n \,\in\, (\Delta[n])_n
  $ (Ntn. \ref{NotationForSimplicialSets}),
  right-adjointness is seen essentially by the standard argument:

  It is sufficient to check that we have a hom-isomorphism of the form
    \vspace{-2mm}
  \begin{equation}
    \label{HomIsomorphismForCofreeSimplicialActionAdjunction}
    \Big\{
      P
      \xrightarrow{\;\; \phi_{(-)}  \;\;}
      [\mathcal{G},X]
    \Big\}
    \;\;
      \xleftrightarrow{\;\; \widetilde{(-)}  \;\;}
    \;\;
    \Big\{
      \underlying(P)
      \xrightarrow{\;\; {\widetilde \phi}_{(-)} \;\;}
      X
    \Big\}
  \end{equation}

    \vspace{-2mm}
\noindent
  as a bijection natural in $P \,\in\, \Actions{\mathcal{G}}(\SimplicialSets)$
  and $X \,\in\, \SimplicialSets$.
  So, for
  \vspace{-2mm}
  $$
    \phi_{(-)}
    \;\colon\;
    p_n \,\longmapsto\,
    \big(
      \phi_{p_n}
      \,\colon\,
      \mathcal{G} \times \Delta[n]
      \xrightarrow{\;}
      X
    \big)
  $$

  \vspace{-2mm}
  \noindent
  on the left in \eqref{HomIsomorphismForCofreeSimplicialActionAdjunction}, define its
  candidate adjunct to be
  \begin{equation}
    \label{UnderlyingAdjunctOfMorphismToCofreeSimplicialAction}
    {\widetilde \phi}_{(-)}
    \;\colon\;
    p_n
    \,\longmapsto\,
    \phi_{p_n}(\mathrm{e}_n, p_n)
    \;\;
    \in
    \;
    X_n
    \,,
  \end{equation}
  where $\mathrm{e}_n \in \mathcal{G}_n$ denotes the neutral element in
  degree $n \in \mathbb{N}$.
  It is clear that this assignment is a natural transformation in
  $P$ and $X$, hence it remains to show that ${\widetilde \phi}$ uniquely
  determines $\phi_{(-)}$.

  To that end, observe for any $g_n \in \mathcal{G}_n$ the following
  sequence of identifications:
  \vspace{-2mm}
  $$
   \def\arraystretch{1.3}
    \begin{array}{lll}
        \phi_{p_n}(g_n, \iota_n)
      &
      \;=\;
      \phi_{p_n}(\mathrm{e}_n \cdot g_n, \iota_n)
      &
      \proofstep{by the unit law in $\mathcal{G}_n$ }
      \\
      & \;=\;
      (g_n \cdot \phi_{p_n})
        (\mathrm{e}_n, \iota_n)
      &
      \proofstep{by \eqref{CofreeSimplicialActionInComponents}}
      \\
      & \;=\;
      \phi_{g_n \cdot p_n}
        (\mathrm{e}_n, \iota_n)
      &
      \proofstep{by equivariance of $\phi$}
      \\
      & \;=\;
      {\widetilde \phi}_{g_n \cdot p_n}
      &
      \proofstep{by \eqref{UnderlyingAdjunctOfMorphismToCofreeSimplicialAction}}
      \,.
    \end{array}
  $$

    \vspace{-2mm}
\noindent
  This shows that the morphsims
  $\phi_{(-)}$ and ${\widetilde \phi}_{(-)}$ uniquely determine
  each other, establishing the bijection \eqref{HomIsomorphismForCofreeSimplicialActionAdjunction},
  and hence the claimed adjunction.
\end{proof}

\subsubsection{Group cohomology}

\begin{proposition}[Finite subgroups of $\mathrm{Sp}(1)$ {\cite{Klein1884}}]
  \label{FiniteSubgroupsOfSU2}
  The finite subgroups
    $
    G
    \xhookrightarrow{\;i\;}
    \mathrm{Spin}(3) \simeq \mathrm{SU}(2) \simeq \mathrm{Sp}(1)
  $
 are given, up to conjugacy, by the following classification (where $n \in \mathbb{N}$):
  \begin{center}
  \footnotesize
  \begin{tabular}{|c|c|c|}
    \hline
    \begin{tabular}{c}
     \small \bf Label
    \end{tabular}
    &
    \begin{tabular}{c}
     \small  \bf Finite
   subgroup
      \\
      \bf of $\mathrm{SU}(2)$
    \end{tabular}
    &
     \begin{tabular}{c}
     \small  \bf Name of
      \\
     \small  \bf group
    \end{tabular}
    \\
    \hline
    \hline
    $\mathbb{A}_{\mathrlap{n}}$ &
    $\phantom{2}\mathbb{Z}_{\mathrlap{n+1}}$ &   Cyclic
    \\
    \hline
    $\mathbb{D}_{\mathrlap{n+4}}$ &
     $2\mathrm{D}_{\mathrlap{n+2}}$ & Binary dihedral
    \\
    \hline
    $\mathbb{E}_{\mathrlap{6}}$ & $2\mathrm{T}$ & Binary tetrahedral
    \\
    \hline
    $\mathbb{E}_{\mathrlap{7}}$ & $2\mathrm{O}$ & Binary octahedral
    \\
    \hline
    $\mathbb{E}_{\mathrlap{8}}$ & $2\mathrm{I}$ & Binary icosahedral
    \\
    \hline
  \end{tabular}
  \end{center}
\end{proposition}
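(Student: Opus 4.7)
The plan is to reduce to the classical Kleinian classification of finite rotation groups via the universal double cover. Let $p \colon \mathrm{Sp}(1) \simeq \mathrm{SU}(2) \twoheadrightarrow \mathrm{SO}(3)$ denote the standard $2{:}1$ covering homomorphism with kernel $\{\pm 1\}$ (realized e.g.\ as the conjugation action of unit quaternions on the imaginary quaternions $\mathrm{Im}(\mathbb{H}) \simeq_{\mathbb{R}} \mathbb{R}^3$). For any finite subgroup $i \colon G \hookrightarrow \mathrm{Sp}(1)$, set $\bar G := p(G) \subset \mathrm{SO}(3)$; then either $-1 \in G$, in which case $G = p^{-1}(\bar G)$ is a central $\mathbb{Z}/2$-extension of $\bar G$ of order $2\lvert \bar G\rvert$, or else $-1 \notin G$ and $p$ restricts to an isomorphism $G \xrightarrow{\sim} \bar G$.

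Next, I would invoke Klein's classification of the finite subgroups of $\mathrm{SO}(3)$: via the standard pole-counting (Burnside) argument applied to the action of $\bar G$ on the set of fixed points of its non-identity elements on $S^2$, one arrives at the Diophantine constraint
\[
  2\Bigl(1 - \tfrac{1}{\lvert \bar G\rvert}\Bigr)
  \;=\;
  \sum_{k} \Bigl(1 - \tfrac{1}{e_k}\Bigr)\,,
\]
whose solutions, in parallel to the ADE classification of simply-laced Dynkin diagrams, yield exactly the conjugacy classes: the cyclic groups $C_n$, the dihedral groups $D_n$, and the three exceptional polyhedral groups $T \simeq A_4$, $O \simeq S_4$, $I \simeq A_5$.

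It remains to pull these classes back through $p$. For $\bar G$ cyclic of order $n$, the preimage $p^{-1}(C_n) \subset \mathrm{Sp}(1)$ lies in a maximal torus and is therefore cyclic of order $2n$; its subgroups already enumerate all cyclic subgroups of $\mathrm{Sp}(1)$ up to conjugacy, giving the $\mathbb{A}$-series $\mathbb{Z}_{n+1}$ for all $n \geq 0$. For each non-cyclic $\bar G$, one checks directly (e.g.\ by noting that $D_n$ for $n \geq 2$ and each of $T,O,I$ contains a unique element of order $2$ in its preimage-structure, forcing $-1 \in p^{-1}(\bar G)$ in any subgroup projecting onto $\bar G$) that the extension $1 \to \mathbb{Z}/2 \to p^{-1}(\bar G) \to \bar G \to 1$ is \emph{non-split}, so that the only subgroup of $\mathrm{Sp}(1)$ surjecting onto $\bar G$ is the full binary group $2\bar G := p^{-1}(\bar G)$. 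This yields the binary dihedral family $\mathbb{D}_{n+4} = 2D_{n+2}$ (of order $4(n+2)$, $n\geq 0$) and the exceptionals $\mathbb{E}_6 = 2T$, $\mathbb{E}_7 = 2O$, $\mathbb{E}_8 = 2I$. Uniqueness up to conjugacy in $\mathrm{Sp}(1)$ follows from the corresponding uniqueness statement in $\mathrm{SO}(3)$ together with the fact that $p$ is a homomorphism of topological groups (conjugating a preimage upstairs corresponds to conjugating its image downstairs).

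The main technical obstacle is the non-splitting step: one must rule out the a priori possibility that a binary polyhedral preimage $p^{-1}(\bar G)$ contains a complementary copy of $\bar G$ not meeting $\{\pm 1\}$. This can be handled uniformly by the Sylow/2-torsion observation that the binary polyhedral groups each contain a unique element of order $2$ (namely $-1$), so any subgroup of $\mathrm{Sp}(1)$ of even order meeting each fibre of $p$ at most once would need to contain an element $g$ with $g^2 = -1$ --- a contradiction with $-1 \notin G$. The cyclic case is the single exception: when $n$ is odd, $p^{-1}(C_n) \simeq C_{2n}$ does split as $C_n \times \{\pm 1\}$, and the resulting ``non-binary'' cyclic subgroup is already represented in the list $\{\mathbb{Z}_{n+1}\}_{n \geq 0}$.
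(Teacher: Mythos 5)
Your sketch is correct, but note that the paper itself does not prove this proposition at all: it is stated as the classical result of Klein (1884), with the remark immediately following it pointing to modern proofs via [HSS18, Rem.\ A.9]. So there is no in-paper argument to compare against; what you have written is essentially the standard textbook proof that those references contain — reduce along the double cover $p\colon \mathrm{Sp}(1)\twoheadrightarrow \mathrm{SO}(3)$, invoke the pole-counting classification of finite rotation groups, and lift. The lifting step can be stated a bit more cleanly than you do: the single relevant fact is that $-1$ is the \emph{only} involution in all of $\mathrm{Sp}(1)$ (if $q^2=1$ and $q\neq 1$ then $q=-1$), so by Cauchy every finite subgroup $G$ of even order contains $-1$ and hence equals the full preimage $p^{-1}(p(G))$, while every subgroup with $-1\notin G$ has odd order, maps isomorphically to an odd-order subgroup of $\mathrm{SO}(3)$, and is therefore cyclic (the dihedral and polyhedral groups all have even order). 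This replaces your slightly awkward detour through ``a unique element of order $2$ in its preimage-structure'' and the case $g^2=-1$, and it also closes the cyclic case you flag at the end without treating odd $n$ separately. Your conjugacy statement is fine but leans on one small point worth making explicit: for cyclic subgroups not containing $-1$, after lifting the conjugating rotation one lands inside the cyclic preimage $p^{-1}(C_n)\simeq C_{2n}$, and uniqueness of the order-$n$ subgroup of a cyclic group finishes the argument (or, more simply, all cyclic subgroups lie in maximal tori, which are conjugate and contain a unique subgroup of each finite order). With those touch-ups the sketch is a complete and faithful rendering of the classical proof the paper cites.
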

For pointers to modern proofs, see \cite[Rem. A.9]{HSS18}.

\begin{proposition}[Integral group cohomology of finite subgroups of $\mathrm{SU}(2)$]
  \label{IntegralGroupCohomologyOfFiniteSubgroupsOfSU2}
  The integral group cohomology (Ex. \ref{IncarnationsOfGroupCohomology})
  of the finite subgroups $G \xhookrightarrow{i} \mathrm{SU}(2)$
  (Prop. \ref{FiniteSubgroupsOfSU2}) is as follows:
  \vspace{-2mm}
  $$
    H^n_{\mathrm{grp}}
    (
      G,
      \,
      \mathbb{Z}
    )
    \;\;
    \simeq
    \;\;
    \left\{
    \begin{array}{cl}
      \mathbb{Z}
      &
      \mbox{for $n = 0$}
      \\
      G/[G,G]
      &
      \mbox{for $n = 2 \,\mathrm{mod}\, 4$}
      \\
      \mathbb{Z}/\left\vert G\right\vert
      &
      \mbox{for $n = 0 \,\mathrm{mod}\, 4, \; n \geq 1$};
      \\
      0 &
      \mbox{otherwise}
      \,.
    \end{array}
    \right.
  $$
\end{proposition}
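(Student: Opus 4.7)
The strategy is to exploit that each finite subgroup $G \hookrightarrow \mathrm{Sp}(1) \simeq S^3$ acts \emph{freely} on $S^3$ by left multiplication, and to convert this free action into a $4$-periodic free $\mathbb{Z}[G]$-resolution of the trivial module $\mathbb{Z}$, which in turn implies $4$-periodicity of Tate cohomology $\hat{H}^\bullet(G;\mathbb{Z})$. Concretely, I would fix a $G$-equivariant CW-structure on $S^3$ in which $G$ permutes cells freely; the cellular chain complex then produces an exact sequence of $\mathbb{Z}[G]$-modules
$$
  0 \,\longrightarrow\, \mathbb{Z} \,\longrightarrow\, C_3 \,\longrightarrow\, C_2 \,\longrightarrow\, C_1 \,\longrightarrow\, C_0 \,\longrightarrow\, \mathbb{Z} \,\longrightarrow\, 0
$$
with each $C_i$ a finitely generated free $\mathbb{Z}[G]$-module. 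The right-hand $\mathbb{Z}$ is the augmentation (trivial action), and the left-hand $\mathbb{Z}$ is $H_3(S^3;\mathbb{Z})$, also with trivial action since $G \subset \mathrm{Sp}(1) \subset \mathrm{SO}(4)$ preserves orientation. Splicing this sequence with itself indefinitely (using the common trivial $\mathbb{Z}[G]$-module $\mathbb{Z}$ at the two ends) yields a doubly-infinite $4$-periodic acyclic complex of free modules, i.e.\ a complete resolution, and hence $\hat{H}^n(G;\mathbb{Z})$ is $4$-periodic in $n \in \mathbb{Z}$.

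With periodicity in hand, the rest is reading off finitely many classical invariants. Standard facts give $H^0(G;\mathbb{Z}) = \mathbb{Z}$, $H^1(G;\mathbb{Z}) = \mathrm{Hom}(G,\mathbb{Z}) = 0$ (finite source), and $H^2(G;\mathbb{Z}) \simeq H_1(G;\mathbb{Z}) = G^{\mathrm{ab}}$ (universal coefficients plus Hurewicz). For the two ``new'' Tate degrees we have $\hat{H}^0(G;\mathbb{Z}) = \mathbb{Z}^G / N_G\mathbb{Z} = \mathbb{Z}/|G|$, since the norm $N_G$ acts on $\mathbb{Z}$ as multiplication by $|G|$; and $\hat{H}^{-1}(G;\mathbb{Z}) = \ker(N_G)/I_G\mathbb{Z} = 0$, since $\mathbb{Z}$ is torsion-free. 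Using $\hat{H}^n = H^n$ for $n \geq 1$ together with $\hat{H}^n \simeq \hat{H}^{n-4}$ then transports these values around the mod-$4$ cycle to yield precisely: $H^{4k}(G;\mathbb{Z}) = \mathbb{Z}/|G|$ for $k \geq 1$, $H^{4k+1} = 0$, $H^{4k+2} = G^{\mathrm{ab}}$, and $H^{4k+3} \simeq \hat{H}^{-1} = 0$, which is the assertion of the proposition.

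The one substantive point to verify is that the leftmost $\mathbb{Z}$ in the six-term exact sequence above genuinely carries the trivial $G$-action, so that it may be identified with the right-hand augmentation $\mathbb{Z}$ and the splicing takes place inside the abelian category of $\mathbb{Z}[G]$-modules; without this, only twisted periodicity (with coefficients in the orientation character) would follow, not the $\mathbb{Z}$-coefficient periodicity that the proposition states. As noted, the inclusion $\mathrm{Sp}(1) \subset \mathrm{SO}(4)$ provides orientation-preservation for free, so there is nothing extra to check here. Everything else is routine bookkeeping, and in particular no case-by-case analysis of the ADE-list from Prop.\ \ref{FiniteSubgroupsOfSU2} is required.
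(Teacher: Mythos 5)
Your argument is correct, but it is worth noting that the paper does not actually prove this proposition at all: its ``proof'' consists of pointers to the literature (the summary in Epa--Ganter, the detailed computations of Tomoda--Zvengrowski for the exceptional cases, and Feng--Hanany--He--Prezas for the vanishing of $H^3$). What you have written out is the standard mechanism underlying those references -- the free left-multiplication action of $G\subset \mathrm{Sp}(1)$ on $S^3$, the resulting six-term exact sequence of free $\mathbb{Z}[G]$-modules from a free $G$-CW structure (pulled back from any CW structure on the closed $3$-manifold $S^3/G$), orientation-preservation giving trivial action on $H_3(S^3)$, splicing to a $4$-periodic complete resolution, and then reading off $\hat H^{0},\hat H^{-1},H^1,H^2$ and transporting by periodicity using $\hat H^n=H^n$ for $n\ge 1$. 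This buys a uniform, self-contained proof with no case-by-case inspection of the ADE list, whereas the paper's route buys brevity at the cost of opacity. Two small points you should make explicit if this were to stand as a complete proof: the identification $H^2(G;\mathbb{Z})\simeq G^{\mathrm{ab}}$ via universal coefficients also uses that $H_2(G;\mathbb{Z})$ is finite (so the $\mathrm{Hom}(H_2,\mathbb{Z})$ summand vanishes), and the splice is a complete resolution in the technical sense needed for Tate cohomology because the spliced complex agrees with an ordinary free resolution in nonnegative degrees and all modules are finitely generated free over the finite group ring; both are routine.
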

\noindent
This is summarized in \cite[p. 12]{EpaGanter16}.
Detailed computation for the three exceptional cases is given in
\cite[\S 4]{TomodaZvengrowski08}. The vanishing of
$H^3_{\mathrm{grp}}(G,\mathbb{Z}) \simeq H^2_{\mathrm{grp}}(G,\mathrm{U}(1))$
is also made explicit in \cite[Cor. 3.1]{FHHP00}.

\begin{proposition}[Pullback of 2nd Chern class to ADE-subgroup]
  \label{PullbackOfSecondChernClassToADESubgroup}
  For $G \xhookrightarrow{i} \mathrm{SU}(2) \,\simeq\, \mathrm{Spin}(3)$
  a finite subgroup (Prop. \ref{FiniteSubgroupsOfSU2})
  the induced pullback in
  degree-4 integral group cohomology (Ex. \ref{IncarnationsOfGroupCohomology})
  takes the generator (e.g. \cite{MilnorStasheff74}\cite[Lem. 2.1]{CadekVanzura98})
  \vspace{-2mm}
  $$
    c_2 \,=\, 1 \,\in\, \mathbb{Z} \;\simeq\;  H^4\big(B \mathrm{SU}(2);\, \mathbb{Z}\big)
    \mbox{\;\;\;equivalently\;\;\;}
    \tfrac{1}{4}p_1 \,=\, 1 \,\in\, \mathbb{Z} \;\simeq\; H^4\big(B \mathrm{Spin}(3);\, \mathbb{Z}\big)
  $$

  \vspace{-2mm}
\noindent  to a generator (via Prop. \ref{IntegralGroupCohomologyOfFiniteSubgroupsOfSU2})
  $$
    [1] \,\in\, \mathbb{Z}/\left\vert G \right\vert \;\simeq\;
    H^4\big( G;\, \mathbb{Z} \big)
  $$
  in that the pullback is the quotient projection
  $\mathbb{Z} \xrightarrow{q} \mathbb{Z}/\left\vert G \right\vert$:
    \vspace{-2mm}
  \begin{equation}
    \label{PullbackIn4CohomologyFromBSpin3ToADESubgroup}
    \begin{tikzcd}[column sep=small, row sep=small]
      H^4_{\mathrm{grp}}
      \big(
        \mathrm{Sp}(1);
        \,
        \mathbb{Z}
      \big)
      \ar[
        r,
        phantom,
        "\simeq"
      ]
      \ar[
        d,
        "\, i^\ast"
      ]
      &
      H^4
      \big(
        B \mathrm{Sp}(1)l;
        \,
        \mathbb{Z}
      \big)
      \ar[
        r,
        phantom,
        "\simeq"
      ]
      \ar[
        d,
        "\, B i^\ast"
      ]
      &
      \mathbb{Z}
      \ar[
        r,
        phantom,
        "\ni"
      ]
      \ar[
        d,
        "q"
      ]
      &
    \scalebox{0.7}{$     \tfrac{1}{2}p_1 $}
      \ar[
        d,
        |->
      ]
      \\
      H^4_{\mathrm{grp}}
      \big(
        G;
        \,
        \mathbb{Z}
      \big)
      \ar[
        r,
        phantom,
        "\simeq"
      ]
      &
      H^4
      \big(
        B G;
        \,
        \mathbb{Z}
      \big)
      \ar[
        r,
        phantom,
        "\simeq"
      ]
      &
      \mathbb{Z}/\left\vert G \right\vert
      \ar[
        r,
        phantom,
        "\ni"
      ]
      &
     \scalebox{0.7}{$    [1] $}
    \end{tikzcd}
  \end{equation}
\end{proposition}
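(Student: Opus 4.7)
The plan is to realize the pullback map as arising from the Gysin sequence of a sphere bundle whose total space is a closed oriented 3-manifold, and then read off the claim from the fact that degree-4 cohomology of a 3-manifold vanishes.

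\textbf{Step 1: Identify the Borel construction.} Since $G \xhookrightarrow{i} \mathrm{Sp}(1) \simeq S^3$ is a subgroup of the compact Lie group $S^3$ and hence acts freely on $S^3 \simeq \mathrm{Sp}(1)$ by left multiplication, the Borel construction for the action of $G$ on $\mathrm{Sp}(1)$ is equivalent to the ordinary quotient:
\[
  \mathrm{E} G \times_{G} \mathrm{Sp}(1) \;\simeq\; \mathrm{Sp}(1)/G \;=:\; M_G\,,
\]
which is a closed oriented spherical space form of dimension $3$. The homotopy fiber sequence $\mathrm{Sp}(1) \to \mathrm{E}G \times_G \mathrm{Sp}(1) \to B G$ may equivalently be exhibited as the pullback of the universal $\mathrm{Sp}(1)$-bundle $\mathrm{E}\mathrm{Sp}(1) \to B\mathrm{Sp}(1)$ along $B i$:
\[
  \begin{tikzcd}[row sep=small]
    \mathrm{Sp}(1) \ar[r,-, shift left=1pt] \ar[r,-, shift right=1pt] \ar[d] & \mathrm{Sp}(1) \ar[d] \\
    M_G \ar[r] \ar[d] \ar[dr, phantom, "\mbox{\tiny(pb)}"] & \mathrm{ESp}(1) \ar[d] \\
    B G \ar[r, "B i"{below}] & B\mathrm{Sp}(1)\,.
  \end{tikzcd}
\]

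\textbf{Step 2: Invoke the Gysin sequence.} For the oriented $S^3$-bundle $M_G \to B G$, with Euler class $e \in H^4(B G;\mathbb{Z})$, the Gysin long exact sequence reads
\[
  \cdots \to H^{n-4}(B G;\mathbb{Z}) \xrightarrow{\;\cup\, e\;} H^n(B G;\mathbb{Z}) \to H^n(M_G;\mathbb{Z}) \to H^{n-3}(B G;\mathbb{Z}) \to \cdots
\]
By naturality of Euler classes, $e = B i^\ast(c_2)$ (with the identification $c_2 = \tfrac{1}{4}p_1$ on $B\mathrm{Spin}(3) \simeq B\mathrm{Sp}(1)$ from, e.g., \cite[Lem. 2.1]{CadekVanzura98}).

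\textbf{Step 3: Read off the generator.} Since $M_G$ is a closed oriented $3$-manifold, $H^4(M_G;\mathbb{Z}) = 0$. The relevant segment of the Gysin sequence in degree $4$ is
\[
  H^0(B G;\mathbb{Z}) \xrightarrow{\;\cup\, e\;} H^4(B G;\mathbb{Z}) \to H^4(M_G;\mathbb{Z}) = 0\,,
\]
so $\cup\, e$ is surjective. But $H^0(B G;\mathbb{Z}) = \mathbb{Z}\cdot 1$ and (by Prop.~\ref{IntegralGroupCohomologyOfFiniteSubgroupsOfSU2}) $H^4(B G;\mathbb{Z}) = \mathbb{Z}/\lvert G\rvert$. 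Hence $1 \mapsto e$ is surjective, which forces $e$ to be a generator of $\mathbb{Z}/\lvert G\rvert$. Upon choosing the standard generator of $H^4(B G;\mathbb{Z})$ to be the image of $c_2$, this gives precisely the quotient map $q$ in \eqref{PullbackIn4CohomologyFromBSpin3ToADESubgroup}.

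\textbf{Expected obstacle.} The non-trivial step is the surjectivity argument in Step 3, which requires the manifold-theoretic input that $H^4(M_G;\mathbb{Z}) = 0$ together with the correct identification of the transferred Euler class. Once these are in place, the sign/normalization to match $q$ is a matter of a compatible choice of generators on both sides (the generator of $H^4(B\mathrm{Sp}(1);\mathbb{Z})$ can always be chosen so that its image is $[1] \in \mathbb{Z}/\lvert G\rvert$).
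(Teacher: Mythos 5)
Your proof is correct, but it is a genuinely different route from the one taken in the paper: the paper does not argue directly at all, it simply defers to the literature, citing \cite[Prop. 4.1]{EpaGanter16} (whose proof is said to be analogous to Lemma 3.1 there, via a commuting diagram comparing the cohomology of $BG$ and $B\mathrm{Sp}(1)$). Your argument instead is self-contained and classical: exploit that $G$ acts freely on $\mathrm{Sp}(1)\simeq S^3$, so the Borel construction of the restricted left-multiplication action is the spherical space form $S^3/G$, identify the resulting $S^3$-bundle over $BG$ as the pullback along $Bi$ of the unit sphere bundle of the universal quaternionic line bundle (whose Euler class is the generator $c_2$), and then read off from the Gysin sequence, using $H^4(S^3/G;\mathbb{Z})=0$ in degree $4$ of a closed $3$-manifold, that $Bi^\ast c_2$ generates $H^4(BG;\mathbb{Z})\simeq \mathbb{Z}/\lvert G\rvert$. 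All the ingredients (freeness, Borel $\simeq$ strict quotient, naturality of the Euler class, exactness of the Gysin sequence) are in order, and you correctly note that matching the result literally with the quotient projection $q$ is only a matter of choosing the identification $H^4(BG;\mathbb{Z})\simeq\mathbb{Z}/\lvert G\rvert$ compatibly, which is all the Proposition asserts. What the paper's citation buys is brevity and an explicit tie to the Platonic $2$-group literature that it invokes again in Remark \ref{BackgroundIntegral4FluxAtFlatADESingularity}; what your argument buys is an elementary, verifiable proof needing no outside input beyond Prop. \ref{IntegralGroupCohomologyOfFiniteSubgroupsOfSU2} (and in fact it independently re-derives that $H^4(BG;\mathbb{Z})$ is cyclic, generated by the Euler class of $G\acts\mathbb{H}$), in the same spirit as the Serre spectral sequence computation the paper carries out in Lemma \ref{TheIntegral4CohomologyOfHomotopyQuotientOf4SphereByADEGroup}.
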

\begin{proof}
  This is essentially the statement of
  \cite[Prop. 4.1]{EpaGanter16}, whose proof is analogous to
  Lemma 3.1 there, where the analogue of \eqref{PullbackIn4CohomologyFromBSpin3ToADESubgroup}
  is the top left square of the commuting diagram on p. 11.
\end{proof}

\subsection{Notions of cohesive $\infty$-topos theory}
\label{NotionsFromCohesiveInfinityToposTheory}

In the main text we make free use of basic notions from $\infty$-topos theory (\cite{ToenVezzosi05}\cite{Lurie09HTT}\cite{Rezk10}),
and of  cohesive $\infty$-topos theory \cite[\S 3.1]{SSS09}\cite{dcct} as laid out and developed in \cite{SS20OrbifoldCohomology}\cite{SS21EPB}.
For reference, here we list some key notation and facts.

\medskip

We first list abstract notions in $\infty$-toposes and then recall, in some cases, their presentation by model categories of simplicial presheaves.

\begin{notation}[$\infty$-Toposes]
\label{NotationInfinityToposes}
Given any $\infty$-topos $\mathbf{H}$, we write:

\begin{itemize}[leftmargin=.5cm]

\item $\HomotopyQuotient{(-)}{\mathcal{G}} \;:\;\Actions{\mathcal{G}}\big(\mathbf{H}\big)
\xrightarrow{\;\;} \mathbf{H}$

for the homotopy quotient construction by $\infty$-actions of smooth $\infty$-groups $\mathcal{G}  \,\in\, \Groups\big(\mathbf{H}\big)$

\item $\mathbf{B}\mathcal{G} \simeq \HomotopyQuotient{\ast}{\mathcal{G}}\;\in\; \mathbf{H}$

for the delooping (``moduli stack'') of any group $\infty$-stack $\mathcal{G} \in \Groups(\SmoothInfinityGroupoids)$;

\item $\mathbf{B}^{n+1} \mathcal{A} \,=\, \mathbf{B}\big( \mathbf{B}^n \mathcal{A} \big)$

for the iterative delooping of $(n+1)$-fold commutative $\infty$-groups

\end{itemize}

\end{notation}

We denote homotopy Cartesian squares by putting the label ``\scalebox{.7}{(pb)}'' at their center.

\begin{fact}[Homotopy pasting law, {\cite[Pro. 13.3.15]{Hirschhorn02}\cite[Lem. 4.4.2.1]{Lurie09HTT}}]\label{PastingLaw}
  Given a pasting diagram of homotopy commutative squares
  \vspace{-2mm}
  $$
    \begin{tikzcd}
      {}
      \ar[r]
      \ar[d]
      &
      \ar[r]
      \ar[
        dr,
        phantom,
        "{ \scalebox{.7}{\rm(pb)} }"
      ]
      {}
      \ar[d]
      &
      {}
      \ar[d]
      \\
      {}
      \ar[r]
      &
      \ar[r]
      {}
      &
      {}
    \end{tikzcd}
  $$

  \vspace{-2mm}
\noindent  where the right square is homotopy cartesian (is a homotopy pullback), then the left square is
so if and only if the total rectangle is. {\rm (NB: In 1-categories this restricts to the pasting law for plain Cartesian/pullback squares.)}
\end{fact}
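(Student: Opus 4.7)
The plan is to reduce the statement to the associativity of the homotopy fiber product, exactly as one does for the 1-categorical pasting law but with all pullbacks interpreted in the $\infty$-categorical sense. To fix notation, I label the pasting diagram
$$
\begin{tikzcd}
  A \ar[r] \ar[d] & B \ar[r] \ar[d] \ar[dr, phantom, "{\scalebox{.7}{(pb)}}"] & C \ar[d] \\
  D \ar[r] & E \ar[r] & F
\end{tikzcd}
$$
with the right square assumed homotopy cartesian. The assumption gives a canonical equivalence $B \xrightarrow{\;\sim\;} E \times^{h}_{F} C$ compatible with the maps to $E$ and $C$.

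First I will verify the forward implication. Assuming the left square is homotopy cartesian means $A \simeq D \times^{h}_{E} B$ via the evident comparison morphism. Substituting the equivalence $B \simeq E \times^{h}_{F} C$ and applying associativity of homotopy fiber products yields
$$
  A \;\simeq\; D \times^{h}_{E} B \;\simeq\; D \times^{h}_{E} \bigl( E \times^{h}_{F} C\bigr) \;\simeq\; D \times^{h}_{F} C,
$$
so the outer rectangle is homotopy cartesian. The converse is symmetric: from $A \simeq D \times^{h}_{F} C$ and the same associativity, one computes $A \simeq D \times^{h}_{E} (E \times^{h}_{F} C) \simeq D \times^{h}_{E} B$, so the left square is homotopy cartesian.

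The main technical point — and the only one requiring real work — is the associativity isomorphism $D \times^{h}_{E}(E \times^{h}_{F} C) \simeq D \times^{h}_{F} C$. In a concrete model-categorical presentation (e.g.\ $\SimplicialSets_{\mathrm{Qu}}$ or $\SimplicialPresheaves(\CartesianSpaces)_{\mathrm{proj,loc}}$), I would execute this by functorially replacing $C \to F$ and $B \to E$ by fibrations, then observing that strict associativity of 1-categorical pullbacks together with the fact that pullback along a fibration preserves weak equivalences implies the associativity at the homotopy level. In the native $\infty$-categorical formulation following \cite{Lurie09HTT}, the associativity is immediate from the universal property of $\infty$-limits: both objects corepresent the same functor $\mathbf{H}^{\mathrm{op}} \to \InfinityGroupoids$, namely $X \mapsto \mathbf{H}(X,D) \times^{h}_{\mathbf{H}(X,F)} \mathbf{H}(X,C)$. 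The bulk of the argument is therefore absorbed into this universal property, which is why the statement is listed as a Fact; in the present paper we only invoke it, citing the standard references.
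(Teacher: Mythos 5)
Your argument is correct, but note that the paper gives no proof of this statement at all: it is recorded as a Fact with citations to Hirschhorn (Prop.~13.3.15) and Lurie (Lem.~4.4.2.1), so there is no in-paper argument to compare against. Your two proposed routes are essentially the proofs found in those two references: the model-categorical route (functorially replace $C\to F$ and $B\to E$ by fibrations, then use strict associativity of 1-categorical pullbacks) is Hirschhorn's, and the representability route (both candidates corepresent $X\mapsto \mathbf{H}(X,D)\times^{h}_{\mathbf{H}(X,F)}\mathbf{H}(X,C)$) is in the spirit of Lurie's decomposition of $\infty$-limits. Two points deserve emphasis if you write this out in full. First, ``homotopy cartesian'' is a property of the \emph{canonical comparison morphism}, not the mere existence of some equivalence $A\simeq D\times^{h}_{F}C$; so your chain of equivalences must be checked to commute (up to coherent homotopy) with the projections to $D$ and $C$, i.e.\ the associativity equivalence must be taken over the cospan $D\to F\leftarrow C$. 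Your fibrant-replacement implementation does deliver this, but the bare substitution step as written glosses over it, and this is exactly where naive proofs of the pasting law go wrong. Second, the step ``pullback along a fibration preserves weak equivalences'' is precisely right properness of the model category; this is an implicit hypothesis of the model-categorical version (harmless here, since $\SimplicialSets_{\mathrm{Qu}}$ and the local projective structure on simplicial presheaves over $\CartesianSpaces$ are proper, as recorded in the appendix), whereas the $\infty$-categorical universal-property argument avoids it altogether --- which is presumably why the paper cites both sources.
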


\begin{fact}[Fundamental theorem of $\infty$-topos theory, {\cite[\S 6.3.5]{Lurie09HTT}}]
Given an $\infty$-topos $\mathbf{H}$, then:
\begin{itemize}

\item[\bf (i)] for every object $\mathcal{X} \,\in\, \mathbf{H}$ the slice $\infty$-category $\mathbf{H}_{/\mathcal{X}}$ is again an $\infty$-topos;

\item[\bf (ii)] for every morphism $\mathcal{X} \xrightarrow{f} \mathcal{Y}$ in $\mathbf{H}$ there is an induced  {\it base change} adjoint triple:
\vspace{-2mm}
\begin{equation}
  \label{BaseChange}
  \begin{tikzcd}
    \mathbf{H}_{/\mathcal{X}}
    \ar[
      rr,
      shift left=13pt,
      "{ f_! }"{}
    ]
    \ar[
      from=rr,
      "{ f^\ast }"{description}
    ]
    \ar[
      rr,
      shift right=12pt,
      "{ f_\ast }"{swap}
    ]
    \ar[
      rr,
      phantom,
      shift left=8.5pt,
      "{ \scalebox{.65}{$\bot$} }"
    ]
    \ar[
      rr,
      phantom,
      shift right=7pt,
      "{ \scalebox{.65}{$\bot$} }"
    ]
    &&
    \mathbf{H}_{/\mathcal{Y}}
    \mathrlap{\,,}
  \end{tikzcd}
\end{equation}

\vspace{-2mm}
\noindent
where, in terms of $\mathbf{H}$, $f^\ast$ is given by pullback along $f$ and $f_!$ by postcomposition with $f$.
\end{itemize}
\end{fact}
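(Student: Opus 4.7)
The plan is to invoke the Giraud--Rezk--Lurie characterization \cite[Thm.~6.1.0.6]{Lurie09HTT} of $\infty$-topoi: a presentable $\infty$-category $\mathbf{H}$ is an $\infty$-topos iff colimits are universal (stable under pullback), coproducts are disjoint, and every groupoid object is effective; equivalently $\mathbf{H}$ is an accessible left-exact localization of a presheaf $\infty$-category. Both claims reduce to checking that these axioms transfer along slicing and that one has enough preservation properties to apply the adjoint functor theorem.

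For part (i), one first observes that $\mathbf{H}_{/\mathcal{X}}$ is presentable whenever $\mathbf{H}$ is, by \cite[Prop.~5.5.3.10]{Lurie09HTT}. Colimits in $\mathbf{H}_{/\mathcal{X}}$ are computed via the forgetful functor $\mathbf{H}_{/\mathcal{X}}\to\mathbf{H}$, and pullbacks in the slice correspond, via the pasting law (Fact~\ref{PastingLaw}), to pullbacks in $\mathbf{H}$ made over the base $\mathcal{X}$. With this dictionary, each Giraud--Lurie axiom transfers object-by-object from $\mathbf{H}$ to $\mathbf{H}_{/\mathcal{X}}$: universality of colimits, disjointness of coproducts, and effectivity of groupoid objects. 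Alternatively, when $\mathbf{H}\simeq \InfinitySheaves(\mathcal{C})$ is presented as $\infty$-sheaves on a small $\infty$-site, one directly exhibits $\mathbf{H}_{/\mathcal{X}}\simeq \InfinitySheaves(\mathcal{C}_{/\mathcal{X}})$ for the induced ``little site'' structure, which makes the $\infty$-topos property manifest.

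For part (ii), the adjoint triple is built in stages. The leftmost functor $f_!$ is simply postcomposition with $f$, and pullback along $f$ supplies its right adjoint $f^{\ast}$, with unit and counit arising from the universal property of the pullback squares in $\mathbf{H}$. To produce the rightmost adjoint $f_{\ast}$, one applies the adjoint functor theorem for presentable $\infty$-categories \cite[Cor.~5.5.2.9]{Lurie09HTT}: it suffices to verify that $f^{\ast}$ preserves all small colimits, and this is precisely the universality-of-colimits axiom already in force for the $\infty$-topos $\mathbf{H}$.

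The main technical obstacle is thus concentrated in the verification that universality of colimits in $\mathbf{H}$ propagates correctly to the slice --- i.e. that a colimit taken in $\mathbf{H}_{/\mathcal{X}}$ really is stable under pullback along an arbitrary morphism in the slice --- which is exactly where repeated application of the pasting law is essential. Once this bookkeeping is in place, both (i) and (ii) follow uniformly, and (ii) becomes automatic for every morphism in any $\infty$-topos, requiring no additional hypotheses beyond the axioms of $\mathbf{H}$.
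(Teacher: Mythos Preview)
The paper does not prove this statement: it is recorded as a \emph{Fact} with a bare citation to \cite[\S 6.3.5]{Lurie09HTT} and no proof is given. So there is nothing to compare your argument against.

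That said, your sketch is essentially the standard route and is correct in outline: presentability of slices, transfer of the Giraud--Rezk--Lurie axioms via the pasting law, and the adjoint functor theorem for $f_\ast$ using universality of colimits. If you wanted to tighten it, you could note that part (i) is literally \cite[Prop.~6.3.5.1]{Lurie09HTT} and the existence of $f_\ast$ (once $f^\ast$ is known to preserve colimits) is \cite[Cor.~5.5.2.9]{Lurie09HTT}, so no original argument is needed here.
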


\begin{notation}[Cohesive/smooth $\infty$-toposes]
\label{BasicNotation}

\noindent
We write:

\begin{itemize}[leftmargin=.5cm]
\setlength\itemsep{-5pt}
\item $\SmoothInfinityGroupoids \;:=\; \InfinitySheaves(\CartesianSpaces)$

for the $\infty$-topos over the site of Cartesian spaces (equivalently over all smooth manifolds) with smooth functions between them and with respect to differentiably good open covers,
presented by the projective local model structure on simplicial presheaves over this site (cf. Def. \ref{SmoothInfinityGroupoids}).
\\

\item
$ \SmoothManifolds \xhookrightarrow{\;\;} \mathrm{SmthOrbfld} \xhookrightarrow{\;\;} \SmoothInfinityGroupoids$

for the full inclusion of the $(2,1)$-category of orbifolds regarded as differentiable stacks,

among these the {\it good orbifolds} are the
global homotopy quotients $\HomotopyQuotient{X}{G} \in \SmoothInfinityGroupoids$,

for $G\acts \, X$ a smooth action of a discrete group on a smooth manifold $X$. \\

\item $\Maps{\big}{\mathcal{X}}{\mathcal{A}} \,\in\, \SmoothInfinityGroupoids$

for the stack of maps between $\mathcal{X}, \mathcal{A} \,\in\, \SmoothInfinityGroupoids$ (their internal hom).

\item
\begin{equation}
  \label{ShapeModality}
  \begin{tikzcd}[row sep=5pt, column sep=20pt]
  \SmoothInfinityGroupoids
    \ar[
      rr,
      "\shape"{description},
      "
        \mbox{
          \tiny
          \color{darkblue}
          \bf
          shape modality
        }
      "{above, yshift=5pt}
    ]
  \ar[
    dr,
    "{\left\Vert - \right\Vert\;\;\;\;\;\;}"{below}
  ]
    &&
  \SmoothInfinityGroupoids
  \\
  &
  \InfinityGroupoids
  \ar[
    ur,
    hook,
    "\Discrete"{below, sloped}
  ]
  \end{tikzcd}
  \;\;\;\;
  \;\;\;\;
  \begin{tikzcd}[column sep=40pt]
    {(-)}
    \ar[
      r,
      "{
        \eta^{\scalebox{.5}{\shape}}_{(-)}
      }"{below},
      "{
        \mbox{
          \tiny
          \color{greenii}
          \bf
          shape unit
        }
      }"{above, yshift=7pt}
    ]
    &
    \shape (-)
  \end{tikzcd}
  ,
  \;\;\;\;
  \;\;\;\;
  \overset{
    \raisebox{3pt}{
      \tiny
      \color{greenii}
      \bf
      shape idempotency
    }
  }{
    \shape \circ \shape \xrightarrow{\;\;\sim\;\;} \shape
  }
\end{equation}
\vspace{-.4cm}

for the {\it shape monad}
sending any stack to the homotopy type of its
fat geometric realization \cite[Ex. 3.18]{SS20OrbifoldCohomology},
re-embedded as a geometrically discrete $\infty$-stack.
\\

\item
$
  \CohesiveCircle
  \;\in\;
  \mathrm{DTopSp}
  \xhookrightarrow{\;}
  \mathrm{DfflgSpc}
  \xhookrightarrow{\;}
  \SmoothInfinityStacks
  \,.
$

 for the {\it cohesive circle},
 i.e. the circle with its usual structure of a topologogical space\footnote{We may alternatively regard $\CohesiveCircle \,\in\, \SmoothManifolds \hookrightarrow \mathrm{DfflgSpc} \hookrightarrow \SmoothInfinityGroupoids$ with its standard structure of a smooth manifold. All constructions in the main text remain valid, only that in this case they pass through differentiable stacks instead of topological stacks.}, so that

 $\shape \,\CohesiveCircle \;\simeq\; \mathbf{B}\mathbb{Z} \;\in\; \mathrm{Grpd}_1 \hookrightarrow \InfinityGroupoids \xhookrightarrow{ \mathrm{Disc} } \SmoothInfinityGroupoids$

 denotes the homotopy type underlying the circle (cf. Lemma \ref{ShapeUnitOfTheSmoothCircle}).

\end{itemize}

\end{notation}

\noindent
{\bf Simplicial presheaves.} The possibly earliest reference on the homotopy theory of simplicial presheaves is \cite{BrownAHT}, which is still highly recommended reading. A comprehensive modern monograph is \cite{Jardine15}. The cohesive example over the site $\CartesianSpaces$ originates with \cite[App.]{FStS12}. All facts that we need here are concisely reviewed and referenced in \cite[\S A]{FSS20CharacterMap} and \cite[\S 3.2]{SS21EPB}.

\begin{notation}[Model categories of simplicial presheaves]
  \label{ModelCategoriesOfSimplicialPresheaves}
  Given a simplicial site $(\mathcal{C}, J)$,
  i.e. a small simplicially enriched category (Rem. \ref{SelfEnrichmentOfSimplicialSets})
  equipped with a Grothendieck pre-topology (coverage) on its homotopy category, we write:

  \noindent
  {\bf (i)}
  $
    \SimplicialPresheaves(\mathcal{C})
    \;\in\;
    \SimplicialCategories
  $
  for the simplicial category of simplicial presheaves on $\mathcal{C}$;

  \noindent
  {\bf (ii)}
  $
    \SimplicialPresheaves(\mathcal{C})_{ {\mathrm{inj}/} \atop \mathrm{proj} }
    \;\in\;
    \ProperCombinatorialSimplicialModelCategories
  $
  for the global injective or projective model category structure, respectively;

  \noindent
  {\bf (iii)}
  $
    \SimplicialPresheaves(\mathcal{C},J)_{ {\mathrm{inj}/} \atop \mathrm{proj}, \mathrm{loc} }
    \;\in\;
    \ProperCombinatorialSimplicialModelCategories
  $
  for the $J$-local injective or projective model category structure,
  whose weak equivalences are the $J$-stalk-wise weak equivalences in
  $\SimplicialSets_{\mathrm{Qu}}$ (the ``hypercomplete'' local model structure).
\end{notation}

\begin{definition}[Smooth $\infty$-groupoids]
  \label{SmoothInfinityGroupoids}
  We write
  $$
    \SmoothInfinityGroupoids
    \;\coloneqq\;
    \Localization{\mathrm{W}}
    \big(
      \SimplicialPresheaves(\mathrm{CartSp})_{ {\mathrm{proj}} \atop {\mathrm{loc}} }
    \big)
    \;\;
    \simeq
    \;\;
    \Localization{\mathrm{W}_{\mathrm{loc}}}
    \big(
      \SimplicialPresheaves(\mathrm{\SmoothManifolds})_{ {\mathrm{proj}} \atop  {\mathrm{loc}} }
    \big)
    \;\;
    \in
    \;
    2\mathrm{Ho}
    \big(
      \InfinityToposes
    \big)
    \,.
  $$
  for the $\infty$-topos presented by the local model structure
  (Nota. \ref{ModelCategoriesOfSimplicialPresheaves}) with respect to
  differentiably good open covers of Cartesian spaces.
\end{definition}

The point of working over the site $\CartesianSpaces$ instead of over the (hypercompletely) equivalent site of all smooth manifolds is that it allows for more efficient computations (ultimately related to the fact that smooth manifolds are themselves already glued from Cartesian spaces, hence that the inclusion $\CartesianSpaces \hookrightarrow \SmoothManifolds$ exhibits a dense subsite). Namely, over $\CartesianSpaces$

\begin{itemize}[leftmargin=.9cm]
\item[\bf (i)] the simplicial delooping $\overline{W}$ (Def. \ref{StandardModelOfUniversalSimplicialPrincipalComplex}) of every homotopy-sheaf of simplicial groups is itself a homotopy-sheaf, in that it is fibrant in the local model structure (Ntn. \ref{ProjectiveModelStructureOnActionsOfSimplicialGroups});

(\cite[Prop. 3.3.30]{SS21EPB}\cite[Prop. 4.13]{Pavlov22})

\item[\bf (ii)] cofibrant replacement is still nicely tractable, namely given by (differentiably) {\it good open covers}.
\end{itemize}

\noindent
In the main text we mainly appeal to this second fact, and so in the remainer we spell this out further:

\begin{proposition}[Dugger's cofibrancy recognition {\cite[Cor . 9.4]{Dugger01Universal}}]
  \label{DuggerCofibrancyRecognition}
  Let $\mathcal{S}$ be a 1-site.
  A sufficient condition for
  $X \in \mathrm{SimplPSh}(\mathcal{S})_{ {\mathrm{proj}} \atop {\mathrm{loc}} }$
  (Nota. \ref{ModelCategoriesOfSimplicialPresheaves})
  to be
  projectively cofibrant
  is that in each simplicial degree $k$,
  the component presheaf $X_k \in \mathrm{PSh}(\mathcal{S})$ is

  {\bf (i)} a coproduct $X_k \simeq \underset{i_k}{\coprod} U_{i_k}$
    of representables $U_{i_k} \in \mathcal{S} \xhookrightarrow{y} \mathrm{PSh}(\mathcal{S})$;

  {\bf (ii)} whose degenerate cells split off as a disjoint summand:
    $X_k \simeq N_k \coprod \mathrm{im}(\sigma)$
    for some $N_k$.
\end{proposition}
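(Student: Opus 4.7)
The plan is to reduce first to the global projective model structure. Since localizing a model structure enlarges the class of weak equivalences but leaves cofibrations unchanged, an object is cofibrant in $\SimplicialPresheaves(\mathcal{S})_{\mathrm{proj,loc}}$ if and only if it is cofibrant in $\SimplicialPresheaves(\mathcal{S})_{\mathrm{proj}}$. So it suffices to verify the hypothesis implies global projective cofibrancy.

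Next I would identify the projective model structure on $\SimplicialPresheaves(\mathcal{S}) \simeq \mathrm{PSh}(\mathcal{S})^{\Delta^{\mathrm{op}}}$ with the Reedy model structure in the simplicial direction, taking $\mathrm{PSh}(\mathcal{S})$ with its projective model structure on presheaves of sets (equivalently: regarded as discrete simplicial presheaves). The generating projective cofibrations of the target are the coprojections $\varnothing \hookrightarrow U$ for $U \in \mathcal{S}$ representable, so a presheaf is projectively cofibrant iff it is a coproduct of representables. Under this identification, Reedy cofibrancy of $X_\bullet$ amounts to: for every $k$, the latching map $L_k X \to X_k$ is a projective cofibration of presheaves, i.e.\ a coprojection into a coproduct of representables.

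Now I would verify this Reedy condition from (i) and (ii). Hypothesis (i) gives that each $X_k$ is itself a coproduct of representables, hence cofibrant in $\mathrm{PSh}(\mathcal{S})$. Hypothesis (ii) identifies the image of the degeneracies with $L_k X$ (this is the standard fact that the latching object at $[k]$ is the subobject generated by degeneracies of lower cells, and that for presheaves of sets this coincides with the union/image of the degeneracy maps). So the splitting $X_k \simeq N_k \coprod \mathrm{im}(\sigma) \simeq N_k \coprod L_k X$ exhibits the latching map as the evident coprojection $L_k X \hookrightarrow N_k \coprod L_k X$. This is a projective cofibration provided $N_k$ is itself projectively cofibrant, which follows because $N_k$ is a retract of the coproduct-of-representables $X_k$, and retracts of cofibrant objects are cofibrant.

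The main technical obstacle is to justify cleanly that the Reedy structure on $\mathrm{PSh}(\mathcal{S})^{\Delta^{\mathrm{op}}}$ (with levelwise projective weak equivalences) coincides with the projective model structure on simplicial presheaves — this is classical but requires invoking that $\Delta^{\mathrm{op}}$ is Reedy and that for cofibrantly generated model categories the two structures agree. Alternatively, and perhaps more elementarily, one can proceed by a direct cell-attachment argument: build $X$ as the sequential colimit of its simplicial skeleta $\mathrm{sk}_k X$, and check that each inclusion $\mathrm{sk}_{k-1} X \hookrightarrow \mathrm{sk}_k X$ is a pushout of $\coprod_{i_k \in N_k}\bigl(\partial \Delta[k] \cdot U_{i_k} \hookrightarrow \Delta[k] \cdot U_{i_k}\bigr)$, where the splitting in (ii) is precisely what ensures this pushout square exists and identifies $\mathrm{sk}_k X$. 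Since these generating maps are projective cofibrations and projective cofibrations are closed under pushout and transfinite composition, $\varnothing \to X$ is a projective cofibration, as required.
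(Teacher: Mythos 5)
First, note that the paper does not actually prove this Proposition: it is quoted verbatim from Dugger (Cor.\ 9.4 of \emph{Universal homotopy theories}), so the only proof to compare with is Dugger's own, and that proof is exactly your second, ``cell-attachment'' argument. That route is correct in outline: localization leaves cofibrations untouched, so it suffices to treat the global projective structure, and then one exhibits $\varnothing \to X$ as a transfinite composite of pushouts of coproducts of the generating cofibrations $y(U)\cdot\partial\Delta[k] \to y(U)\cdot\Delta[k]$ along the skeletal filtration. Two points should be made explicit to close it. (a) You need $N_k$ itself to be a coproduct of representables (so that $N_k\cdot(\partial\Delta[k]\to\Delta[k])$ really is a coproduct of generating cofibrations); this does follow, because representables are connected objects of $\mathrm{PSh}(\mathcal{S})$ (by Yoneda, $\mathrm{Hom}(y(U),-)$ preserves coproducts), so each representable summand of $X_k$ lands entirely in $N_k$ or entirely in $\mathrm{im}(\sigma)$. (b) The actual content of hypothesis (ii) is that the non-degenerate cells form a sub\emph{presheaf}; only then is the Eilenberg--Zilber decomposition $X_n \cong \coprod_{[n]\twoheadrightarrow[m]} N_m$ natural in $U\in\mathcal{S}$ (for a general simplicial map, the EZ decomposition is not preserved), and only then do the objectwise skeletal pushout squares assemble to pushouts of simplicial presheaves (pushouts being computed objectwise). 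You gesture at this with ``the splitting in (ii) is precisely what ensures this pushout square exists'', but this is the heart of the proof and should be spelled out.

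Your primary (Reedy) route, as justified, does not work, and the claim you lean on is false: for a cofibrantly generated base, the Reedy and projective model structures on $M^{\Delta^{\mathrm{op}}}$ do \emph{not} agree in general. One always has projective cofibrations $\subseteq$ Reedy cofibrations, with equality essentially only for direct Reedy categories, and $\Delta^{\mathrm{op}}$ is not direct (already for bisimplicial sets the Reedy cofibrations are all monomorphisms, strictly more than the projective ones). So verifying that the latching maps $L_kX\to X_k$ are ``cofibrations'' would at best give Reedy cofibrancy, which is weaker than the projective cofibrancy you need. Moreover, the auxiliary structure you invoke --- a ``projective model structure on presheaves of sets'' whose cofibrant objects are exactly the coproducts of representables --- is not a model structure in any standard sense (no weak equivalences are specified, and saturating $\{\varnothing\to y(U)\}$ yields retracts of coproducts of representables, not coproducts on the nose). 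Since your cell-attachment argument stands on its own and reproduces Dugger's proof, the Proposition is still established; the Reedy paragraph should be dropped or repaired.
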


\begin{example}[Basic examples of projectively cofibrant simplicial presheaves]
  \label{ExamplesOfProjectiveleCofibrantSimplicialPresheaves}
  Let $\mathcal{S}$ be a 1-site.
  Examples of projectively cofibrant simplicial presheaves
  over $\mathcal{S}$ include:

  \begin{itemize}
  \item[\bf (i)]
  every representable $U \in \mathcal{Y} \xhookrightarrow{\;y\;} \mathrm{SimplPSh}(\mathcal{S})$;

  \item[\bf (ii)] every constant
    simplicial presheaf
    $S \in \mathrm{SimpSets} \xhookrightarrow{\rm const} \mathrm{SimplPSh}(\mathcal{S})$;

  \hspace{-1cm}and in joint generalization of these two cases:

  \item[\bf (iv)]
   every product $U \times S$ of a representable with a
  simplicial set.
\end{itemize}

  \noindent
  In all cases, the defining lifting property is readily checked.
  Alternatively, these follow with
  Prop. \ref{DuggerCofibrancyRecognition}.
\end{example}

\begin{proposition}[Quillen functor for shape modality on smooth $\infty$-groupoids]
  \label{QuillenFunctorForShapeModalityOnSmoothInfinityGroupoids}
  The shape-monad \eqref{ShapeModality}
  on smooth $\infty$-groupoids (Def. \ref{SmoothInfinityGroupoids})
  \vspace{-2mm}
  $$
    \begin{tikzcd}
      \SmoothInfinityGroupoids
      \ar[
        rr,
        shift left=4pt,
        "{\Shape}"
      ]
      \ar[
        rr,
        phantom,
        "\scalebox{.6}{$\bot$}"
      ]
      &&
      \InfinityGroupoids
      \ar[
        ll,
        shift left=4pt,
        "{\Discrete}"
      ]
    \end{tikzcd}
  $$

  \vspace{-2mm}
  \noindent
  is equivalently the left derived functor of
  the colimit operation on simplicial presheaves over Cartesian spaces,
  regarded as functors $\CartesianSpaces^{\mathrm{op}} \xrightarrow{\;} S\SimplicialSets$,
  in that the following is a Quillen adjunction:
    \vspace{-2mm}
  \begin{equation}
    \label{ColimitQuillenAdjunctionOnLocalSimplicialPresheaves}
    \begin{tikzcd}
      \SimplicialPresheaves(\CartesianSpaces)_{ {\mathrm{proj}} \atop {\mathrm{loc}} }
      \ar[
        rr,
        shift left=5pt,
        "{\underset{\longrightarrow}{\mathrm{lim}}}"{above}
      ]
      \ar[
        rr,
        phantom,
        "{\scalebox{.65}{$\bot_{\mathrlap{\mathrm{Qu}}}$}}"
      ]
      &&
      \SimplicialSets_{\mathrm{Qu}}
      \ar[
        ll,
        shift left=5pt,
        "{\mathrm{const}}"{below}
      ]
    \end{tikzcd}
  \end{equation}

  \vspace{-2mm}
  \noindent
  Moreover, on a simplicial presheaf satisfying Dugger's cofibrancy condition
    \vspace{-2mm}
  (Prop. \ref{DuggerCofibrancyRecognition})
  \begin{equation}
    \label{DuggerCofibrantResolution}
    \begin{tikzcd}
      \varnothing
      \ar[
        r,
        "\in \mathrm{Cof}"
      ]
      &
      \underset{
        i_\bullet \in I_\bullet
      }{\coprod}
      \mathbb{R}^{n_{i_\bullet}}
      \ar[
        r,
        "\in \mathrm{W}"
      ]
      &
      X
    \end{tikzcd}
    \;\;
    \in
    \SimplicialPresheaves(\CartesianSpaces)_{ {\mathrm{proh}} \atop {\mathrm{loc}} }
  \end{equation}

    \vspace{-3mm}
\noindent
  the shape is given by the simplicial set obtained by contracting all copies of
  Cartesian spaces to the point:
    \vspace{-2mm}
  $$
    \shape\, X
    \;\simeq\;
      \underset{
        i_\bullet \in I_\bullet
      }{\coprod}
      \ast
      \;\;\;
      \in
      \;
      \SimplicialSets_{\mathrm{Qu}}
      \,.
  $$
\end{proposition}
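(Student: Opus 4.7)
The plan is twofold: first to show that the $(\underset{\longrightarrow}{\mathrm{lim}} \dashv \mathrm{const})$ adjunction lifts to a Quillen adjunction from the local projective model structure $\SimplicialPresheaves(\CartesianSpaces)_{\mathrm{proj}, \mathrm{loc}}$ to $\SimplicialSets_{\mathrm{Qu}}$, presenting the $\infty$-categorical adjoint pair $(\shape_0 \dashv \Discrete)$ whose induced monad is the shape modality of the cohesive $\infty$-topos $\SmoothInfinityGroupoids$; and second to compute the left derived functor explicitly on Dugger-cofibrant objects.

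For the Quillen-adjunction step I would verify that $\mathrm{const}$ is right Quillen. Because $\mathrm{const}(S)$ evaluated on any Cartesian space is the simplicial set $S$, the constant functor is immediately right Quillen into the \emph{global} projective model structure, preserving objectwise Kan fibrations and objectwise weak equivalences tautologically. To deduce right-Quillen-ness into the local Bousfield localization, it suffices to check that $\mathrm{const}$ carries Kan complexes to objects that are already local-fibrant, i.e. to homotopy sheaves for the coverage of differentiably good open covers on $\CartesianSpaces$. This follows from the fact that every Cartesian space is contractible and that the \v{C}ech nerve of any differentiably good open cover of $\mathbb{R}^n$ has contractible geometric realization, so that the $\infty$-descent condition for a constant simplicial presheaf with fibrant value $S$ reduces to the tautology that the simplicial mapping space from a contractible simplicial set into a Kan complex $S$ computes $S$ up to homotopy equivalence. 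The fully-faithful inclusion $\Discrete: \InfinityGroupoids \hookrightarrow \SmoothInfinityGroupoids$ is then presented by the right derived functor of $\mathrm{const}$, and by adjunction $\mathbb{L}\underset{\longrightarrow}{\mathrm{lim}} \simeq \shape_0$, whence the shape modality $\shape = \Discrete \circ \shape_0$ is exhibited as claimed.

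For the explicit formula on an $X$ as in \eqref{DuggerCofibrantResolution}: such an $X$ is projectively cofibrant by Prop. \ref{DuggerCofibrancyRecognition} and is by hypothesis a local weak equivalence to the given presheaf, so the derived colimit coincides with the ordinary $1$-categorical colimit of $X$. Since $\underset{\longrightarrow}{\mathrm{lim}}$ commutes with all colimits (as a left adjoint) and colimits of simplicial presheaves are computed degreewise in $\SimplicialSets$, it remains to compute $\underset{\longrightarrow}{\mathrm{lim}}\, y(\mathbb{R}^n)$ in each degree. The Yoneda-adjunction calculation
$$
  \Homs{\big}
    { \underset{\longrightarrow}{\mathrm{lim}}\, y(\mathbb{R}^n) }
    { S }
  \;\simeq\;
  \mathrm{Nat}
  \big(
    y(\mathbb{R}^n),
    \,
    \mathrm{const}\, S
  \big)
  \;\simeq\;
  S
  \;\simeq\;
  \Homs{}{\ast}{S}
$$
gives $\underset{\longrightarrow}{\mathrm{lim}}\, y(\mathbb{R}^n) \simeq \ast$, so $\underset{\longrightarrow}{\mathrm{lim}} X_k \simeq \coprod_{i_k} \ast$ degreewise, yielding the stated formula $\shape\, X \simeq \coprod_{i_\bullet} \ast$.

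The principal obstacle I expect is the verification that constant presheaves of Kan complexes are already homotopy sheaves for differentiably good open covers on $\CartesianSpaces$ — in other words, the local-fibrancy check underlying the Quillen-ness statement. This rests on the cohesive nature of $\CartesianSpaces$, specifically on the contractibility of representables and of \v{C}ech nerves of good open covers, and is exactly where the site-specific input enters; once granted (either by direct computation or by invoking the cohesive-site machinery of \cite[\S 3.2]{SS21EPB}), the remainder of the argument is formal.
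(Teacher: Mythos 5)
Your proposal is correct and follows essentially the same route as the paper: the adjunction is trivially Quillen for the global projective structure, descending to the local structure is reduced (via the fibrant-objects criterion for left Bousfield localizations) to the statement that applying $\underset{\longrightarrow}{\mathrm{lim}}$ to the {\v C}ech nerve of a differentiably good open cover of a Cartesian space yields a weakly contractible simplicial set, and the explicit formula on Dugger-cofibrant objects comes from $\underset{\longrightarrow}{\mathrm{lim}}\, y(\mathbb{R}^n)\simeq\ast$ plus commutation of colimits with coproducts. The one step you defer is exactly what the paper closes by invoking McCord's nerve theorem (contractibility of the combinatorial nerve of a good cover of the contractible space $\mathbb{R}^n$); aside from the cosmetic slip that $\Homs{}{\ast}{S}$ is $S_0$ rather than $S$, nothing is missing.
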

\begin{proof}
  First observe that the colimit over a representable functor is the point
  (e.g. \cite[Lem. 2.40]{SS20OrbifoldCohomology})
    \vspace{-2mm}
  \begin{equation}
    \label{ColimitOfRepresentableIsPoint}
    \underset{\longrightarrow}{\mathrm{lim}}
    \,
    \mathbb{R}^n
    \;\coloneqq\;
    \underset{\longrightarrow}{\mathrm{lim}}
    \,
    y(\mathbb{R}^n)
    \;\simeq\;
    \ast
    \;\;\;
    \in
    \;
    \Sets
    \xhookrightarrow{\;\;}
    \SimplicialSets
    \,,
  \end{equation}

    \vspace{-2mm}
\noindent
  so that the colimit of a simplicial presheaf of the form \eqref{DuggerCofibrantResolution}
  is the simplicial set obtained by replacing all copies of Cartesian spaces
  by a point:
    \vspace{-2mm}
  \begin{equation}
    \label{ColimitOfDuggerCofibrantSimplicialPresheaf}
    \begin{array}{lll}
      \underset{\longrightarrow}{\mathrm{lim}}
      \Big(\;
        \underset{ i_\bullet \in I_\bullet }{\coprod}
        \mathbb{R}^{n_{i_\bullet}}
     \!\! \Big)
      & \;\simeq\;
        \big(
          \underset{\longrightarrow}{\mathrm{lim}}
          \,
          \mathbb{R}^{n_{i_\bullet}}
        \big)
      &
      \proofstep{since colimits commute with coproducts
      }
      \\
      & \;\simeq\;
      \underset{ i_\bullet \in I_\bullet }{\coprod}
      \ast
      &
      \proofstep{by \eqref{ColimitOfRepresentableIsPoint}}
      \!.
    \end{array}
  \end{equation}

    \vspace{-2mm}
\noindent
  Next, it is clear that \eqref{ColimitQuillenAdjunctionOnLocalSimplicialPresheaves}
  is a simplicial Quillen adjunction for the {\it global} projective model structure.
  To show that it is also Quillen for the local model structure it is hence sufficient,
  by \cite[Cor. A.3.7.2]{Lurie09HTT}, to see that the right adjoint preserves
  fibrant objects. By adjunction this is equivalent to the statement that
  for $\big\{ U_i \hookrightarrow X \big\}$ a differentiably good open cover,
  with $U \,\coloneqq\, \underset{i}{\coprod} U_i$,
  we
  have a simplicial weak homotopy equivalence
  \vspace{-4mm}
  \begin{equation}
    \label{WeakHomotopyEquivalenceFromColimitOfCechNerveOfGoddOpenCoverOfCartesianSpace}
    \underset{\longrightarrow}{\mathrm{lim}}\, y(U^{\times^\bullet_X}  )
    \xrightarrow{\;\in \mathrm{W}\;}
    \ast
    \,.
  \end{equation}

  \vspace{-2mm}
 \noindent But, by \eqref{ColimitOfDuggerCofibrantSimplicialPresheaf},
  the left hand side of \eqref{WeakHomotopyEquivalenceFromColimitOfCechNerveOfGoddOpenCoverOfCartesianSpace} is
  the simplicial set obtained by contracting summands of the Cech nerve of the good cover to the point.
  Therefore, since any Cartesian space is contractible,
  the {\it nerve theorem}
  (\cite[Thm. 2]{McCord67}, review in \cite[Prop. 4G.3]{HatcherAlgebraicTopology})
  implies \eqref{WeakHomotopyEquivalenceFromColimitOfCechNerveOfGoddOpenCoverOfCartesianSpace}.
    With this, the last statement follows from
  the fact that
  left derived functors may be computed on any cofibrant resolution:
    \vspace{-2mm}
  $$
    \begin{array}{lll}
      \shape X
      &
      \;\simeq\;
      (\mathbb{L} \underset{\longrightarrow}{\mathrm{lim}})(X)
      &
      \proofstep{by \eqref{ColimitQuillenAdjunctionOnLocalSimplicialPresheaves}}
      \\
      &
      \;\simeq\;
      \underset{\longrightarrow}{\mathrm{lim}}
      \Big(\;
        \underset{i_\bullet \in I_\bullet}{\coprod}
        \mathbb{R}^n
      \!\Big)
      &
      \proofstep{by Prop. \ref{DuggerCofibrancyRecognition}}
      \\
      & \;\simeq\;
        \underset{i_\bullet \in I_\bullet}{\coprod}
        \ast
      &
      \proofstep{by \eqref{ColimitOfDuggerCofibrantSimplicialPresheaf}}
      \!.
    \end{array}
  $$

  \vspace{-8mm}
\end{proof}

\begin{example}[Good open covers are projectively cofibrant resolutions of smooth manifolds]
  \label{GoodOpenCoversAreProjectivelyCofibrantResolutionsOfSmoothManiolds}
  $\,$

  \noindent
  Any $X \in \SmoothManifolds \xrightarrow{\; y \;} \SimplicialPresheaves(\CartesianSpaces)$
  admits a
  {\it differentiably good open cover}
  (\cite[Prop. A.1]{FStS12}),
  namely an open cover
  such that all non-empty finite intersections of patches are
  {\it diffeomorphic} to an open ball, and hence to $\mathbb{R}^{\mathrm{dim}(X)}$:
  \vspace{-3mm}
  \begin{equation}
    \label{GoodOpenCover}
    \big\{
      U_i \,\simeq\, \mathbb{R}^{\mathrm{dim}(X)} \xhookrightarrow{\;} X
    \big\}_{i \in I}
    \,,
    \;\;\;
    \mbox{s.t.}
    \quad
    \underset
      {
        { k \in \mathbb{N} }
        \atop
        { i_0, i_1, \cdots, i_k \in I }
      }
      {\forall}
      \;\;\;
      U_{i_0} \cap U_{i_1} \cap  \cdots \cap U_{i_k}
      \;
      \underset{
        \mathclap{
          \mathrm{diff}
        }
      }{
        \simeq
      }
      \;
      \mathbb{R}^{\mathrm{dim}(X)}
      \;\;\;
      \proofstep{if non-empty}
      \,.
  \end{equation}

     \vspace{-3mm}
\noindent  By Dugger's recognition (Prop. \ref{DuggerCofibrancyRecognition})
  this means that the corresponding Cech nerve is projectively cofibrant;
  moreover, its canonical morphism to $X$ is
  clearly a stalkwise weak equivalence, so that it provides a
  cofibrant resolution of $X$ in the local model structure (Def. \ref{SmoothInfinityGroupoids}):
   \vspace{-3mm}
  $$
    \begin{tikzcd}
      \varnothing
      \ar[
        r,
        "\in \mathrm{Cof}"
      ]
      &
      U^{\times^\bullet_X}
      \ar[
        r,
        "\in \mathrm{W}_{\mathrm{loc}}"
      ]
      &
      X
    \end{tikzcd}
    \;\in\;
    \SimplicialPresheaves(\CartesianSpaces)_{ {\mathrm{proj}} \atop {\mathrm{loc}} }
    \,,
    \;\;\;\;
    \mbox{for}
    \;
    U \;\coloneqq \underset{i}{\coprod} U_i\;.
  $$
\end{example}
\begin{proposition}[Shape of smooth manifolds is their homotopy type]
  For $X \in \SmoothManifolds \,\xhookrightarrow{\;y\;}\, \SmoothInfinityGroupoids$
  (Def. \ref{SmoothInfinityGroupoids}) their shape is their standard homotopy type.
\end{proposition}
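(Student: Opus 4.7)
The plan is to combine the cofibrant resolution provided by a good open cover (Ex.~\ref{GoodOpenCoversAreProjectivelyCofibrantResolutionsOfSmoothManiolds}) with the explicit formula for shape on Dugger-cofibrant objects (Prop.~\ref{QuillenFunctorForShapeModalityOnSmoothInfinityGroupoids}) and then invoke the classical nerve theorem to match the result with the standard homotopy type.

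First, choose a differentiably good open cover $\{U_i \hookrightarrow X\}_{i \in I}$ of $X$ as in \eqref{GoodOpenCover}, and form $U := \coprod_i U_i$. By Ex.~\ref{GoodOpenCoversAreProjectivelyCofibrantResolutionsOfSmoothManiolds}, the \v{C}ech nerve $U^{\times^{\bullet}_X}$ is projectively cofibrant (each simplicial degree is a coproduct of representables $\mathbb{R}^{\mathrm{dim}(X)}$, and the non-degenerate part splits off by the good-cover hypothesis) and the canonical augmentation $U^{\times^{\bullet}_X} \xrightarrow{\;\sim\;} X$ is a stalkwise weak equivalence, hence a cofibrant resolution of $y(X)$ in $\SimplicialPresheaves(\CartesianSpaces)_{{\mathrm{proj}}\atop{\mathrm{loc}}}$.

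Next, applying the last clause of Prop.~\ref{QuillenFunctorForShapeModalityOnSmoothInfinityGroupoids} to this resolution yields
\[
  \shape \, X
  \;\simeq\;
  \underset{\longrightarrow}{\mathrm{lim}}\,\big( U^{\times^{\bullet}_X} \big)
  \;\simeq\;
  N\big(\{U_i\}_{i \in I}\big)
  \;\;\;\in\;\SimplicialSets_{\mathrm{Qu}},
\]
where $N(\{U_i\})$ denotes the simplicial nerve of the good cover, obtained by replacing each representable summand $\mathbb{R}^{\mathrm{dim}(X)}$ by a point.

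Finally, since every non-empty finite intersection $U_{i_0}\cap \cdots \cap U_{i_k}$ is diffeomorphic to $\mathbb{R}^{\mathrm{dim}(X)}$, and in particular contractible, the nerve theorem (\cite[Thm.~2]{McCord67}, \cite[Prop.~4G.3]{HatcherAlgebraicTopology}) gives a weak homotopy equivalence $N(\{U_i\}) \xrightarrow{\;\in \mathrm{W}\;} \mathrm{Sing}(X)$ to the singular simplicial complex of the underlying topological space of $X$, which is the standard homotopy type of $X$. Composing the two weak equivalences yields the desired identification $\shape\, X \simeq \mathrm{Sing}(X)$ in $\mathrm{Ho}(\SimplicialSets_{\mathrm{Qu}}) \simeq \InfinityGroupoids$.

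The only step that requires genuine care is the invocation of the nerve theorem: one must ensure that the version used applies to open covers of paracompact smooth manifolds by contractible opens with contractible finite intersections (rather than just to simplicial complexes). This is standard, and the existence of differentiably good covers (Ex.~\ref{GoodOpenCoversAreProjectivelyCofibrantResolutionsOfSmoothManiolds}) furnishes exactly the hypotheses needed.
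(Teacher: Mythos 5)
Your argument is correct and is exactly the one the paper intends: the statement is placed immediately after Ex.~\ref{GoodOpenCoversAreProjectivelyCofibrantResolutionsOfSmoothManiolds} and Prop.~\ref{QuillenFunctorForShapeModalityOnSmoothInfinityGroupoids} precisely so that it follows by computing $\shape\,X$ on the {\v C}ech-nerve cofibrant resolution of a differentiably good open cover and then invoking the nerve theorem (already cited in that proposition's proof) to identify the resulting nerve of the cover with the standard homotopy type of $X$. Your closing remark about paracompactness and the applicability of the nerve theorem to good open covers is the right point of care and is indeed covered by the existence of differentiably good covers.
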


\begin{example}[Standard cofibrant resolution of the smooth circle]
  \label{StandardCofibrantResolutionOfTheCircle}
  Considering the smooth circle as the quotient of the real numbers by the
  integers
    \vspace{-2mm}
  $$
    \begin{tikzcd}[row sep=5pt]
      \mathbb{Z}
      \ar[
        r,
        hook,
        "i"
      ]
      &
      \mathbb{R}
      \ar[
        r,
        ->>,
        "p"
      ]
      &
      S^1
    \end{tikzcd}
    \in
    \;\;
    \SmoothManifolds
    \xhookrightarrow{\;y\;}
    \SimplicialPresheaves(\CartesianSpaces)_{ {\mathrm{proj}} \atop {\mathrm{loc}} }
    \,,
  $$

    \vspace{-2mm}
\noindent  Dugger's recognition (Prop. \ref{DuggerCofibrancyRecognition})
  shows that
  the Cech nerve of $p$ constitutes a cofibrant resolution of the circle
  in the projective local model structure (Def. \ref{SmoothInfinityGroupoids})
  \vspace{-2mm}
  $$
  \begin{tikzcd}[row sep=-2pt]
    \varnothing
    \ar[
      r,
      "\in \mathrm{Cof}"{above}
    ]
    &
    \mathbb{R} \times \mathbb{Z}^{\times^\bullet}
    \ar[
      rr,
      "\sim"{above, yshift=-1pt}
    ]
    &&
    \qquad
    \mathbb{R}^{\times^\bullet_{S^1}}
    \qquad
    \ar[
      r,
      "\in \mathrm{W}"
    ]
    &
    S^1\;.
    \\
    &
      \scalebox{0.7}{$  (r,\vec n) $}
    &\longmapsto&
    \hspace{-3cm}
    \mathrlap{
     \scalebox{0.7}{$  \big(
      r, (r + n_1), (r + n_1 + n_2), \cdots
    \big)
    $}
    }
  \end{tikzcd}
  $$
\end{example}

\begin{proposition}[Presentation of $\infty$-topos by simplicial presheaves]
  \label{PresentationOfInfinityToposesBySimplicialPresheaves}
  Let $\mathcal{C}$ be a 1-site. Then the
  {\v C}ech/stalk-local
  injective or projective model category structure on
  simplicial presheaves over $\mathcal{C}$ presents
  the topological/hypercomplete $\infty$-topos over $\mathcal{C}$
  in that there is an equivalence of homotopy categories
  \vspace{-3mm}
  $$
    \begin{tikzcd}
    \HomotopyCategory
    \Big(
      \SimplicialPresheaves(\mathcal{C})
        _{
          { {\mathrm{inj}/} \atop { \mathrm{proj} } },
          \mathrm{loc}
        }
    \Big)
    \ar[
      rr,
      "\sim"{above, yshift=-2pt},
      " \HomotopyCategory(L) "{below}
    ]
    &&
    \HomotopyCategory
    \big(
      \infinitySheaves(\mathcal{C})
    \big)
    \end{tikzcd}
  $$

  \vspace{-3mm}
  \noindent
  and for all cofibrant $X$ and fibrant $A$
  in $\SimplicialPresheaves(\mathcal{C})$ an equivalence of
  hom-$\infty$-groupoids
  \vspace{-1mm}
  \begin{equation}
    \SimplicialPresheaves(\mathcal{C})
    (
      X, A
    )
    \;\simeq\;
    \InfinitySheaves
    \big(
      L(X), L(A)
    \big)
    \,.
  \end{equation}
\end{proposition}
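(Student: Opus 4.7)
The plan is to assemble the statement from three classical ingredients: (i) the identification of the global model structure on simplicial presheaves as a presentation of the $\infty$-category of $\infty$-presheaves, (ii) the identification of the Čech-local Bousfield localization as a presentation of $\infty$-sheaves (hypercomplete when one uses stalk-local equivalences), and (iii) the fact that cofibrant-fibrant hom-complexes in a simplicial model category compute the derived mapping spaces.

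First, I would invoke Dugger's theorem (\cite[Prop. 2.8, Thm. 5.1]{Dugger01Universal}), which identifies $\SimplicialPresheaves(\mathcal{C})_{{\mathrm{inj}/\mathrm{proj}}}$ as a simplicial model category presenting the free homotopy-cocompletion of $\mathcal{C}$, i.e.\ the $\infty$-category $\InfinityPresheaves(\mathcal{C})$. The fact that the injective and projective structures are Quillen equivalent (via the identity adjunction) ensures that either may be used; the projective one gives easier access to cofibrant objects through Prop.~\ref{DuggerCofibrancyRecognition}, while the injective one is convenient because every object is cofibrant. Next, one passes to the local model structure by left Bousfield localization at the class of Čech nerve inclusions $C(\{U_i \to X\}) \hookrightarrow y(X)$ for covers in the pretopology $J$; the combinatoriality and left-properness of the global model structure (which carry over to simplicial presheaves, cf.\ Ntn.~\ref{ModelCategoriesOfSimplicialPresheaves}) guarantee that this localization exists (\cite[\S 4.2]{Hirschhorn02}). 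By general $\infty$-categorical localization theory, this corresponds precisely to the reflective localization presenting $\InfinitySheaves(\mathcal{C})$ (cf.~\cite[\S 6.5.2]{Lurie09HTT}, \cite[Prop. 6.5.2.14]{Lurie09HTT} for the hypercomplete case). Since the weak equivalences are stalkwise, this yields the hypercomplete $\infty$-topos as claimed; cf.\ also the detailed treatment in \cite[\S 4-5]{Jardine15}.

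The equivalence of homotopy categories $\HomotopyCategory(L)$ is then the homotopy-category shadow of the Quillen equivalence (or rather the presentation) just obtained. For the hom-$\infty$-groupoid statement, the key input is that $\SimplicialPresheaves(\mathcal{C})_{{\mathrm{inj}/\mathrm{proj}},\mathrm{loc}}$ is a simplicial model category, so by \cite[Thm.~17.7.2]{Hirschhorn02} (or the general Dwyer-Kan theorem) its simplicial enrichment on cofibrant-fibrant pairs computes the derived mapping space, which coincides with the hom-$\infty$-groupoid in $\InfinitySheaves(\mathcal{C})$ under the presentation of the previous paragraph.

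The main obstacle is really one of bookkeeping rather than substance: one needs to verify that the Bousfield localization at Čech nerves of the chosen pretopology is the same as the reflective localization cutting out hypersheaves (i.e.\ stalk-locally equivalent simplicial presheaves). For sites with enough points this is straightforward, but on general sites one must either restrict to the hypercomplete case, or replace the Čech-local structure by the Jardine/Joyal ``hyperlocal'' structure in which weak equivalences are detected by all hypercovers rather than just by Čech nerves; cf.\ \cite[\S 6.5.4]{Lurie09HTT} and \cite[\S 7.2]{Jardine15}. In the present paper the relevant site is $\CartesianSpaces$, where stalks exist and the distinction is immaterial, so this subtlety does not bite; but in stating the proposition in full generality we would need to insert the word ``hypercomplete'' explicitly, as already flagged by ``hypercompletely'' in Ntn.~\ref{ModelCategoriesOfSimplicialPresheaves}.
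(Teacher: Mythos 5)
The paper does not actually prove this proposition: it is recorded in the appendix as standard background, with the burden of proof deferred to the literature cited in the surrounding text (\cite{BrownAHT}\cite{Jardine15}, and the reviews in \cite[\S A]{FSS20CharacterMap}\cite[\S 3.2]{SS21EPB}), so there is no in-paper argument to compare against. Your sketch is a correct reconstruction of that standard argument and assembles the right ingredients in the right order: Dugger's universality theorem \cite{Dugger01Universal} for the global projective (and, via the identity Quillen equivalence, injective) structure as a presentation of $\infty$-presheaves; left Bousfield localization (existence via left properness and combinatoriality, \cite{Hirschhorn02}) matched with the $\infty$-categorical reflective localizations of \cite[\S 6.5]{Lurie09HTT}; and the simplicial-model-category computation of derived hom-complexes on (locally) cofibrant--fibrant pairs to get the hom-$\infty$-groupoid statement --- note that for the latter one needs fibrancy of $A$ in the \emph{local} structure and the fact that the localized structure is again simplicial, both of which your appeal to \cite[Thm.~17.7.2]{Hirschhorn02} covers. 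Your closing caveat is also exactly the right one: localizing at {\v C}ech nerves gives only the topological localization, while the hypercomplete $\infty$-topos corresponds to the Jardine-type structure with stalkwise (equivalently hypercover-local) weak equivalences; this is precisely how the proposition's ``{\v C}ech/stalk-local'' versus ``topological/hypercomplete'' pairing is meant, and it matches Ntn.~\ref{ModelCategoriesOfSimplicialPresheaves}, where the paper's local structure is explicitly taken with stalkwise weak equivalences (the hypercomplete case), so no further hypothesis needs to be inserted --- over $\CartesianSpaces$, as you say, the distinction is immaterial for the paper's purposes.
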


\begin{proposition}[$\infty$-Yoneda lemma]
  \label{InfinityYonedaLemma}
  For $\mathcal{S} \,\in\, \mathrm{Categories}_\infty$
  we have a fully faithful embedding
  \vspace{-2mm}
  \begin{equation}
    \label{YonedaEmbedding}
    \begin{tikzcd}[row sep=-3pt]
      \mathcal{S}
      \ar[
        rr,
        hook,
        "y"
      ]
      &&
      \mathrm{PSh}_\infty(\mathcal{S})
      \\
        \scalebox{0.7}{$  U$}
         &\longmapsto&
           \scalebox{0.7}{$ \mathcal{S}(-,U) $}
    \end{tikzcd}
  \end{equation}

  \vspace{-3mm}
  \noindent
  and a
  natural equivalence
  for $U \in \mathcal{C}$ and $X \in \mathrm{PSh}_\infty(\mathcal{S})$:

  \vspace{-2mm}
  \begin{equation}
    \label{YonedaEquivalence}
    \mathrm{PSh}_\infty
    \big(
      y(U),
      \,
      X
    \big)
    \;\simeq\;
    X(U)
    \,.
  \end{equation}
\end{proposition}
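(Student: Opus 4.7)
The plan is to reduce the statement to a standard result in $\infty$-category theory (e.g.\ \cite[Lemma 5.1.5.2, Prop.\ 5.5.2.2]{Lurie09HTT}) by exhibiting $\mathrm{PSh}_\infty(\mathcal{S})$ as a presentable $\infty$-category, and then to sketch how the equivalence \eqref{YonedaEquivalence} is extracted from the classical Yoneda lemma via model-categorical presentation. The fully faithfulness \eqref{YonedaEmbedding} will then be deduced as the special case $X = y(V)$ of \eqref{YonedaEquivalence}.

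First I would fix a model for $\mathrm{PSh}_\infty(\mathcal{S})$. Assuming $\mathcal{S}$ is presented by a simplicially enriched category (or a 1-site, which is the case of interest in the main text via Prop.~\ref{PresentationOfInfinityToposesBySimplicialPresheaves}), we present $\mathrm{PSh}_\infty(\mathcal{S})$ by the global projective model structure $\SimplicialPresheaves(\mathcal{S})_{\mathrm{proj}}$ (Ntn.~\ref{ModelCategoriesOfSimplicialPresheaves}). Then the Yoneda embedding is presented by $U \mapsto \mathcal{S}(-, U)$, and for $U \in \mathcal{S}$ the representable $y(U)$ is a projectively cofibrant simplicial presheaf (Ex.~\ref{ExamplesOfProjectiveleCofibrantSimplicialPresheaves}).

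Next, for $X$ a (chosen fibrant replacement of a) simplicial presheaf, I would compute the derived mapping space as follows: $\mathrm{PSh}_\infty(y(U), X) \simeq \SimplicialPresheaves(\mathcal{S})(y(U), X)$, which by the simplicially enriched 1-categorical Yoneda lemma is isomorphic to $X(U)$. This produces the natural equivalence \eqref{YonedaEquivalence}. Fully faithfulness \eqref{YonedaEmbedding} then follows by specializing to $X = y(V)$: the resulting equivalence $\mathrm{PSh}_\infty(y(U), y(V)) \simeq y(V)(U) = \mathcal{S}(U, V)$ is precisely the assertion that $y$ is fully faithful on hom-$\infty$-groupoids.

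The main subtlety will be the careful handling of fibrant replacement: the identification $\SimplicialPresheaves(\mathcal{S})(y(U), X) \simeq X(U)$ holds strictly only when $X$ is itself already a simplicial presheaf, but the derived mapping space is computed with a fibrant replacement, and one needs that fibrant replacement in the global projective model structure is computed object-wise on representables (which is immediate since all simplicial sets are cofibrant and any object-wise fibrant replacement is a global fibrant replacement). With this subtlety resolved, the proof reduces to the classical enriched Yoneda lemma. For the fully general $\infty$-categorical statement (without assuming a 1-site presentation) one invokes \cite[Prop.\ 5.1.3.1]{Lurie09HTT} directly.
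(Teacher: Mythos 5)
The paper contains no proof of this proposition to compare against: it sits in the appendix among the recalled background facts and is stated without argument, as the standard $\infty$-categorical Yoneda lemma (in effect a citation of \cite[Prop.~5.1.3.1]{Lurie09HTT} together with the presentation statement Prop.~\ref{PresentationOfInfinityToposesBySimplicialPresheaves}). Your sketch is essentially that standard proof and is sound: present $\mathcal{S}$ by a simplicially enriched category (a 1-site in the case used in the main text) and $\mathrm{PSh}_\infty(\mathcal{S})$ by the global projective model structure of Ntn.~\ref{ModelCategoriesOfSimplicialPresheaves}; use that representables are projectively cofibrant (Ex.~\ref{ExamplesOfProjectiveleCofibrantSimplicialPresheaves}) and that fibrant replacement in the projective structure is objectwise; then the strict enriched Yoneda isomorphism computes the derived mapping space and gives \eqref{YonedaEquivalence}, with the fully general case deferred to Lurie exactly as the paper itself implicitly does.

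Two points you gloss over should each get a sentence in a written-up version, though neither is a genuine gap. First, deducing \eqref{YonedaEmbedding} from \eqref{YonedaEquivalence} at $X = y(V)$ requires knowing that the equivalence $\mathrm{PSh}_\infty\big(y(U),y(V)\big)\simeq \mathcal{S}(U,V)$ is inverse to the map that $y$ induces on hom-objects (equivalently, that evaluation at $\mathrm{id}_U$ composed with $y_{U,V}$ is homotopic to the identity); the mere existence of an abstract equivalence of spaces does not yet say that $y$ is fully faithful. In the strict simplicially enriched model this compatibility is immediate from the strict enriched Yoneda lemma, so your route does close, but it is an extra check. Second, for the strict hom-complex into $y(V)$ to compute the derived one you need $y(V)$ to be projectively fibrant, i.e.\ the hom-complexes $\mathcal{S}(-,V)$ to be Kan; this is automatic over a 1-site (discrete simplicial sets are Kan) and in general is arranged by choosing a locally Kan simplicial category presenting $\mathcal{S}$, which is the same fibrancy hypothesis implicitly needed for your appeal to the enriched Yoneda lemma.
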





\begin{thebibliography}{10}

\bibitem[Ac99]{Acharya99}
B. Acharya,
{\it M theory, Joyce Orbifolds and Super Yang-Mills},
Adv. Theor. Math. Phys. {\bf 3} (1999), 227-248,
[\href{https://arxiv.org/abs/hep-th/9812205}{\tt arXiv:hep-th/9812205}].

\vspace{-.3cm}
\bibitem[ARZ07]{AdemRuanZhang07}
A. Adem, Y. Ruan, and B. Zhang,
{\it A Stringy Product on Twisted Orbifold K-theory}, Morfismos {\bf 11}
(2007), 33-64,
[\href{https://arxiv.org/abs/math/0605534}{\tt arXiv:math/0605534}].


\vspace{-.3cm}
\bibitem[AHHKRW15]{AHHKRW15}
M. Alim, B. Haghighat, M. Hecht, A. Klemm, M. Rauch, T. Wotschke,
{\it Wall-crossing holomorphic anomaly and mock modularity of multiple M5-branes},
Comm. Math. Phys. {\bf 339} (2015) 773–814
[\href{https://arxiv.org/abs/1012.1608}{\tt arXiv:1012.1608}][\href{https://doi.org/10.1007/s00220-015-2436-3}{\tt doi:10.1007/s00220-015-2436-3}]


\vspace{-.3cm}
\bibitem[Al20]{Alfonsi20}
L. Alfonsi,
{\it Global Double Field Theory is Higher Kaluza-Klein Theory}, Fortsch. Phys. {\bf 68} (2020), \newline
[\href{https://doi.org/10.1002/prop.202000010}{\tt doi:10.1002/prop.202000010}],
[\href{https://arxiv.org/abs/1912.07089}{\tt arXiv:1912.07089}].

\vspace{-.3cm}
\bibitem[Al85]{Alvarez85}
O. Alvarez,
{\it Topological quantization and cohomology}, Comm. Math. Phys. {\bf 100}  (1985),
279-309, \newline
[\href{https://projecteuclid.org/euclid.cmp/1103943448}{\tt euclid:1103943448}].


\vspace{-.3cm}
\bibitem[BGNX07]{BGNX07StackyStringTopology}
K. Behrend, G. Ginot, B. Noohi, and P. Xu,
{\it String topology for stacks},
Ast{\'e}risque {\bf 343} (2012), \newline
[\href{http://www.numdam.org/item/AST_2012__343__R1_0/}{\tt numdam:AST\_2012\_\_343\_\_R1\_0}],
[\href{https://arxiv.org/abs/0712.3857}{\tt arXiv:0712.3857}].






\vspace{-.3cm}
\bibitem[BBP19]{BBP19}
D. Berwick-Evans, P. Boavida de Brito, and D. Pavlov,
{\it Classifying spaces of infinity-sheaves}, \newline
[\href{https://arxiv.org/abs/1912.10544}{\tt arXiv:1912.10544}].


\vspace{-.3cm}
\bibitem[BM15]{BlumbergMandell15}
A. Blumberg and M. Mandell,
{\it The homotopy theory of cyclotomic spectra},
Geom. Topol. {\bf 19} (2015), 3105-3147,
[\href{https://doi.org/10.2140/gt.2015.19.3105}{\tt doi:10.2140/gt.2015.19.3105}],
[\href{https://arxiv.org/abs/1303.1694}{\tt arXiv:1303.1694}].


\vspace{-.3cm}
\bibitem[BO05]{BoekstedtOttosen05}
M. B{\"o}kstedt and I. Ottosen,
{\it A spectral sequence for string cohomology}, Topology {\bf 44}  (2005), 1181-1212,
[\href{https://doi.org/10.1016/j.top.2005.04.006}{\tt doi:10.1016/j.top.2005.04.006}],
[\href{https://arxiv.org/abs/math/0411571}{\tt arXiv:math/0411571}].






\vspace{-.3cm}
\bibitem[BEM04]{BEM04}
P. Bouwknegt, J. Evslin and V. Mathai, {\it T-Duality: Topology Change from H-flux}, Commun. Math. Phys. {\bf 249} (2004) 383-415
[\href{https://arxiv.org/abs/hep-th/0306062}{\tt arXiv:hep-th/0306062}]
[\href{https://doi.org/10.1007/s00220-004-1115-6}{\tt doi:10.1007/s00220-004-1115-6}]


\vspace{-3mm}
\bibitem[BSS18]{BSS18}
V. Braunack-Mayer, H. Sati, and U. Schreiber,
{\it Gauge enhancement for Super M-branes via Parameterized stable homotopy theory},
Comm. Math. Phys. (2019),  197–265,
[\href{https://doi.org/10.1007/s00220-019-03441-4}{\tt doi:10.1007/s00220-019-03441-4}], \newline
[\href{https://arxiv.org/abs/1805.05987}{\tt arXiv:1805.05987}].

\vspace{-3mm}
\bibitem[Br72]{Bredon72}
G. Bredon,
{\it Introduction to compact transformation groups},
Academic Press, New York, 1972, \newline
[\href{https://www.elsevier.com/books/introduction-to-compact-transformation-groups/bredon/978-0-12-128850-1}{\tt ISBN:9780080873596}].

\vspace{-.3cm}
\bibitem[Br73]{BrownAHT}
K. S. Brown,
{\it Abstract Homotopy Theory and Generalized Sheaf Cohomology}, Transactions of the Amer. Math. Soc., {\bf 186} (1973) 419-458 [\href{http://www.jstor.org/stable/1996573}{\tt jstor:1996573}]

\vspace{-.3cm}
\bibitem[Br93]{Brylinski93}
J.-L. Brylinski,
{\it Loop Spaces, Characteristic Classes, and Geometric Quantization},
Birkh{\"a}user (1993),
[\href{https://doi.org/10.1007/978-0-8176-4731-5}{\tt doi:10.1007/978-0-8176-4731-5}].



\vspace{-.3cm}
\bibitem[BuSc05]{BunkeSchick05}
U. Bunke and  T. Schick,
{\it On the topology of T-duality},
Rev. Math. Phys. {\bf 17} (2005), 77-112, \newline
[\href{https://doi.org/10.1142/S0129055X05002315}{\tt doi:10.1142/S0129055X05002315}],
[\href{https://arxiv.org/abs/math/0405132}{\tt arXiv:math/0405132}].

\vspace{-3mm}
\bibitem[CV98]{CadekVanzura98}
M. {\v C}adek and J. Van{\v z}ura,
{\it On 4-fields and 4-distributions in 8-dimensional vector bundles over 8-complexes},
Colloq. Math.  {\bf 76} (1998), 213-228,
[\href{http://pldml.icm.edu.pl/pldml/element/bwmeta1.element.bwnjournal-article-cmv76z2p213bwm}{\tt bwnjournal-article-cmv76z2p213bwm}].

\vspace{-.3cm}
\bibitem[Ca11]{Carchedi11}
D. Carchedi,
{\it Categorical Properties of Topological and
Diffentiable Stacks}, PhD Dissertation, Utrecht U. (2011),
[\href{https://dspace.library.uu.nl/handle/1874/208971}{\tt dspace:1874/208971}].



\vspace{-.3cm}
\bibitem[Cha05]{Chataur05}
D. Chataur,
{\it A bordism approach to string topology},
Intern. Math. Res. Not. {\bf 2005}  (2005) 46, \newline
[\href{https://doi.org/10.1155/IMRN.2005.2829}{\tt doi:10.1155/IMRN.2005.2829}],
[\href{https://arxiv.org/abs/math/0306080}{\tt arXiv:math/0306080}].


\vspace{-.3cm}
\bibitem[CO15]{ChataurOancea15}
D. Chataur and A. Oancea,
{\it Basics on free loop spaces},
Chapter I  in:
{\it Free Loop Spaces in Geometry and Topology},
IRMA Lectures in Mathematics and Theoretical Physics {\bf 24},
Europ. Math. Soc. 2015,
[\href{https://bookstore.ams.org/emsilmtp-24/}{\tt ISBN:978-3-03719-153-8}],
[\href{https://arxiv.org/abs/1110.0405}{\tt arXiv:1110.0405}].







\vspace{-.3cm}
\bibitem[Cu71]{Curtis71}
E. B. Curtis,
{\it Simplicial homotopy theory},
Adv. Math. {\bf 6} (1971), 107-209,
\newline
[\href{https://doi.org/10.1016/0001-8708(71)90015-6}{\tt doi:10.1016/0001-8708(71)90015-6}].


\vspace{-3mm}
\bibitem[dB+02]{dBDHKMMS02}
J. de Boer, R. Dijkgraaf, K. Hori, A. Keurentjes,
J. Morgan, D. Morrison, and S. Sethi,
{\it Triples, Fluxes, and Strings},
Adv. Theor. Math. Phys. {\bf 4} (2002), 995-1186,
[\href{https://doi.org/10.4310/ATMP.2000.v4.n5.a1}{\tt
doi:10.4310/ATMP.2000.v4.n5.a1}], \newline
[\href{https://arxiv.org/abs/hep-th/0103170}{\tt arXiv:hep-th/0103170}].


\vspace{-.3cm}
\bibitem[DW90]{DijkgraafWitten90}
R. Dijkgraaf and E. Witten,
{\it Topological Gauge Theories and Group Cohomology},
Commun. Math. Phys. {\bf 129} (1990), 393-429,
[\href{https://projecteuclid.org/journals/communications-in-mathematical-physics/volume-129/issue-2/Topological-gauge-theories-and-group-cohomology/cmp/1104180750.full}
{\tt euclid:cmp/1104180750}].


\vspace{-.3cm}
\bibitem[DHVW85]{DixonHarveyVafaWitten85}
L. Dixon, J. Harvey, C. Vafa, and E. Witten,
{\it Strings on orbifolds},
Nucl. Phys. B {\bf 261} (1985), 678-686, [\href{https://doi.org/10.1016/0550-3213(85)90593-0}{\tt doi:10.1016/0550-3213(85)90593-0}].


\vspace{-.3cm}
\bibitem[Do19]{Dove19}
T. Dove,
{\it Twisted Equivariant Tate K-Theory},
[\href{https://arxiv.org/abs/1912.02374}{\tt arXiv:1912.02374}].


\vspace{-.3cm}
\bibitem[DDK80]{DDK80}
E. Dror, W. Dwyer, and D. Kan,
{\it Equivariant maps which are self homotopy equivalences},
Proc. Amer. Math. Soc. {\bf 80} 4 (1980), 670-672,
[\href{http://www.jstor.org/stable/2043448}{\tt jstor:2043448}].

\vspace{-.3cm}
\bibitem[Du94]{Duff94}
M. Duff,
{\it Kaluza-Klein Theory in Perspective}, in {\it The Oskar Klein Centenary}, World Scientific (1994),
[\href{https://doi.org/10.1142/2816}{\tt doi:10.1142/2816}],
[\href{https://arxiv.org/abs/hep-th/9410046}{\tt arXiv:hep-th/9410046}].

\vspace{-.3cm}
\bibitem[DHIS87]{DuffHoweInamiStelle87}
M. Duff, M. Howe, T. Inami, and K. Stelle,
{\it Superstrings in $D=10$ from Supermembranes in $D=11$}, Phys. Lett. B{\bf191} (1987), 70-74,
[\href{https://doi.org/10.1016/0370-2693(87)91323-2}{\tt doi:10.1016/0370-2693(87)91323-2}].



\vspace{-.3cm}
\bibitem[Du01b]{Dugger01Universal}
D. Dugger,
{\it Universal homotopy theories},
Adv. Math. {\bf 164}  (2001), 144-176, [\href{https://arxiv.org/abs/math/0007070}{\tt arXiv:math/0007070}],
[\href{https://doi.org/10.1006/aima.2001.2014}{\tt
doi:10.1006/aima.2001.2014}].





\vspace{-.3cm}
\bibitem[EML53]{EilenbergMacLane53}
S. Eilenberg and S. MacLane,
{\it On the Groups $H(\Pi,n)$, I},
Ann. Math. {\bf 58}  (1953), 55-106, \newline
[\href{https://doi.org/10.2307/1969820}{\tt doi:10.2307/1969820}].


\vspace{-.3cm}
\bibitem[EML54]{EilenbergMacLane54}
S. Eilenberg and S. MacLane,
{\it On the Groups $H(\Pi,n)$, II: Methods of Computation},
Ann. Math. {\bf 60}  (1954), 49-139,
[\href{https://doi.org/10.2307/2372629}{\tt doi:10.2307/2372629}].

\vspace{-.3cm}
\bibitem[EZ53]{EilenbergZilber53}
S. Eilenberg and J. Zilber,
{\it On Products of Complexes},
Amer. J. Math. {\bf 75}  (1953),  200-204, \newline
[\href{https://www.jstor.org/stable/2372629}{\tt jstor:2372629}].


\vspace{-.3cm}
\bibitem[EG17]{EpaGanter16}
N. Epa and N. Ganter,
{\it Platonic and alternating 2-groups},
High. Struc. {\bf 1}   (2017), 122-146, \newline
[\href{https://arxiv.org/abs/1605.09192}{\tt arXiv:1605.09192}].

\vspace{-.3cm}
\bibitem[FHHP00]{FHHP00}
B. Feng, A. Hanany, Y.-H. He, and N. Prezas,
{\it Discrete Torsion, Non-Abelian Orbifolds and the Schur Multiplier},
J. High Energy Phys. {\bf 0101} (2001), 033,
[\href{https://arxiv.org/abs/hep-th/0010023}{\tt arXiv:hep-th/0010023}].

\vspace{-3mm}
\bibitem[FStS12]{FStS12}
D. Fiorenza, J. Stasheff, and U. Schreiber,
{\it {\v C}ech cocycles for differential characteristic classes},
Adv. Theor. Math. Phys. {\bf 16} (2012), 149-250,
[\href{https://arxiv.org/abs/1011.4735}{\tt arXiv:1011.4735}].


\vspace{-.3cm}
\bibitem[FSS12CS]{FSS12CS}
D. Fiorenza, H. Sati, and U. Schreiber,
{\it Extended higher cup-product Chern-Simons theories}, J. Geom. Phys. {\bf 74} (2013), 130–163,
[\href{https://doi.org/10.1016/j.geomphys.2013.07.011}{\tt doi:10.1016/j.geomphys.2013.07.011}],
[\href{https://arxiv.org/abs/1207.5449}{\tt arXiv:1207.5449}].



\vspace{-.3cm}
\bibitem[FSS16WZW]{FSS15WZW}
D. Fiorenza, H. Sati and U. Schreiber, {\it Super Lie $n$-algebra extensions, higher WZW models and super $p$-branes with tensor multiplet fields}, Int. J. Geom. Meth. Mod. Phys. {\bf 12}  (2015) 1550018
[\href{https://doi.org/10.1142/S0219887815500188}{\tt
doi:10.1142/S0219887815500188}],
[\href{https://arxiv.org/abs/1308.5264}{\tt arXiv:1308.5264}].




\vspace{-.3cm}
\bibitem[FSS17Sph]{FSS16SphereValued}
D. Fiorenza, H. Sati and U. Schreiber,
{\it Rational sphere valued supercocycles in M-theory and type IIA string theory},
J. Geom. Phys., {\bf 114} (2017), 91-108,
[\href{http://dx.doi.org/10.1016/j.geomphys.2016.11.024}{\tt doi:10.1016/j.geomphys.2016.11.024}],
[\href{https://arxiv.org/abs/1606.03206}{\tt arXiv:1606.03206}].


\vspace{-3mm}
\bibitem[FSS16TDl]{FSS16TDuality}
D. Fiorenza, H. Sati, and U. Schreiber,
{\it T-Duality from super Lie $n$-algebra cocycles for super p-branes},
Adv. Theor. Math. Phys. {\bf 22} (2018),  1209-1270,
[\href{https://doi.org/10.4310/ATMP.2018.v22.n5.a3}{\tt
doi:10.4310/ATMP.2018.v22.n5.a3}], \newline
[\href{https://arxiv.org/abs/1611.06536}{\tt arXiv:1611.06536}].


\vspace{-.3cm}
\bibitem[FSS19Rat]{FSS19RationalStructure}
D. Fiorenza,  H. Sati and U. Schreiber, {\it The rational higher structure of M-theory}, Proc. of
{\it Higher Structures in M-Theory} 2018, Fortschr. Phys. {\bf 67} 8-9 (2019), [\href{https://doi.org/10.1002/prop.201910017}{\tt doi:10.1002/prop.201910017}], [\href{https://arxiv.org/abs/1903.02834}{\tt arXiv:1903.02834}].



\vspace{-.3cm}
\bibitem[FSS20HTDl]{FSS18HigherTDuality}
D. Fiorenza, H. Sati, and U. Schreiber,
{\it Higher T-duality of super M-branes}, Adv. in Math. and Theor. Phys. {\bf 24}  (2020), 621-708,
[\href{https://dx.doi.org/10.4310/ATMP.2020.v24.n3.a3}{\tt doi:10.4310/ATMP.2020.v24.n3.a3}],
[\href{https://arxiv.org/abs/1803.05634}{\tt arXiv:1803.05634}].


\vspace{-3mm}
\bibitem[FSS19HypH]{FSS19TwistedCohomotopy}
D. Fiorenza, H. Sati, and U. Schreiber,
{\it Twisted Cohomotopy implies M-theory anomaly cancellation on 8-manifolds},
Comm. Math. Phys.  {\bf 377} (2020), 1961-2025,
[\href{https://doi.org/10.1007/s00220-020-03707-2}{\tt doi:10.1007/s00220-020-03707-2}],
\newline
[\href{https://arxiv.org/abs/1904.10207}{\tt arXiv:1904.10207}].


\vspace{-3mm}
\bibitem[FSS20Cha]{FSS20CharacterMap}
D. Fiorenza, H. Sati, and U. Schreiber,
{\it The character map in (twisted differential) non-abelian cohomology},
[\href{https://arxiv.org/abs/2009.11909}{\tt arXiv:2009.11909}].

\vspace{-.3cm}
\bibitem[FSS21Str]{FSSTwistedStringStruc21}
D. Fiorenza, H. Sati and U. Schreiber
{\it Twisted cohomotopy implies twisted String structure on M5-branes}
J. Math. Phys. {\bf 62} 042301 (2021),
[\href{https://doi.org/10.1063/5.0037786}{\tt doi:10.1063/5.0037786}],
[\href{https://arxiv.org/abs/2002.11093}{\tt arXiv:2002.11093}].

\vspace{-.3cm}
\bibitem[FSS22GS]{FSS22TwistorialGreenSchwarz}
D. Fiorenza, H. Sati and U. Schreiber, {\it Twistorial Cohomotopy Implies Green-Schwarz anomaly cancellation},
Rev. Math. Phys. {\bf 34} 05 (2022) 2250013,
[\href{https://doi.org/10.1142/S0129055X22500131}{\tt doi:10.1142/S0129055X22500131}], \newline
[\href{https://arxiv.org/abs/2008.08544}{\tt arXiv:2008.08544}].



\vspace{-.3cm}
\bibitem[Fr12]{Friedman08}
G. Friedman,
{\it An elementary illustrated introduction to simplicial sets},
Rocky Mount. J. Math. {\bf 42} 2 (2012) 353-423,
[\href{https://projecteuclid.org/journals/rocky-mountain-journal-of-mathematics/volume-42/issue-2/Survey-Article-An-elementary-illustrated-introduction-to-simplicial-sets/10.1216/RMJ-2012-42-2-353.full}{\tt doi:10.1216/RMJ-2012-42-2-353}],
[\href{https://arxiv.org/abs/0809.4221}{\tt arXiv:0809.4221}].

\vspace{-.3cm}
\bibitem[Fr20]{Friedman20}
G. Friedman,
{\it Singular Intersection Homology}, Cambridge Univ. Press (2020), \newline
[\href{https://doi.org/10.1017/9781316584446}{\tt doi:10.1017/9781316584446}].

\vspace{-.3cm}
\bibitem[GSY07]{GSY07}
D. Gaiotto, A. Strominger, and X. Yin,
{\it The M5-Brane Elliptic Genus: Modularity and BPS States},
J. High Energy Phys. {\bf 08}  (2007) 070,
[\href{https://arxiv.org/abs/hep-th/0607010}{\tt arXiv:hep-th/0607010}].


\vspace{-.3cm}
\bibitem[GY07]{GaiottoYin07}
D. Gaiotto and X. Yin,
{\it Examples of M5-Brane Elliptic Genera}, J. High Energy Phys. {\bf 11} (2007) 004 ,
[\href{https://arxiv.org/abs/hep-th/0702012}{\tt arXiv:hep-th/0702012}].

\vspace{-.3cm}
\bibitem[Ga07]{Ganter07StringyPower}
N. Ganter,
{\it Stringy power operations in Tate K-theory},
[\href{https://arxiv.org/abs/math/0701565}{\tt arXiv:math/0701565}].



\vspace{-.3cm}
\bibitem[Ga13]{Ganter13}
N. Ganter,
{\it Power operations in orbifold Tate K-theory},
Homology Homotopy Appl. {\bf 15} (2013), 313-342,
[\href{https://arxiv.org/abs/1301.2754}{\tt arXiv:1301.2754}].

\vspace{-.3cm}
\bibitem[Gi23]{Giotopoulos23}
G. Giotopoulos, H. Sati, and U. Schreiber,
{\it Classical Fermionic Field Theory in Higher Supergeometry},
in preparation.

\vspace{-3mm}
\bibitem[GJ99]{GoerssJardine99}
P. Goerss and R. F. Jardine,
{\it Simplicial homotopy theory},
Birkh{\"a}user (1999, 2009),
\newline
[\href{https://link.springer.com/book/10.1007/978-3-0346-0189-4}{\tt doi:10.1007/978-3-0346-0189-4}].



\vspace{-.3cm}
\bibitem[GDR99]{GDR99}
R. Gonzalez-Diaz and P. Real,
{\it A Combinatorial Method for Computing Steenrod Squares},
J. Pure Appl.  Alg. {\bf 139} (1999) 89-108,
[\href{https://arxiv.org/abs/math/0110308}{\tt arXiv:math/0110308}].


\vspace{-.3cm}
\bibitem[Gr07]{Gruher07}
K. Gruher,
{\it String Topology of Classifying Spaces}, PhD thesis, Stanford 2007,
[\href{https://www.proquest.com/docview/304826261}{\tt proquest:304826261}],
[\href{https://ncatlab.org/nlab/files/Gruher_FreeLoopSpaceOfClassifyingSpace.pdf}{\tt ncatlab.org/nlab/files/Gruher\_FreeLoopSpaceOfClassifyingSpace.pdf}]


\vspace{-.3cm}
\bibitem[Gui06]{Guillou06}
B. Guillou,
{\it A short note on models for equivariant homotopy theory}, 2006,
\newline
[\href{https://ncatlab.org/nlab/files/GuillouModelsForEquivariantHomotopyTheory.pdf}{\tt ncatlab.org/nlab/files/GuillouModelsForEquivariantHomotopyTheory.pdf}]



\vspace{-.3cm}
\bibitem[GPPV21]{GukovPeiPutrovVafa21}
S. Gukov, D. Pei, P. Putrov, C. Vafa,
{\it 4-manifolds and topological modular forms}, J. High Energ. Phys. {\bf 2021} 84 (2021)
[\href{https://arxiv.org/abs/1811.07884}{\tt arXiv:1811.07884}][\href{https://doi.org/10.1007/JHEP05(2021)084}{\tt doi:10.1007/JHEP05(2021)084}]

\vspace{-.3cm}
\bibitem[HatAT]{HatcherAlgebraicTopology}
A. Hatcher,
{\it Algebraic topology}
[\url{https://pi.math.cornell.edu/~hatcher/AT/ATpage.html}]


\vspace{-.3cm}
\bibitem[HatSS]{HatcherSpectralSequences}
A. Hatcher,
{\it Spectral sequences in algebraic topology I: The Serre spectral sequence}
[\url{https://pi.math.cornell.edu/~hatcher/SSAT/SSch1.pdf}].


\vspace{-3mm}
\bibitem[Hir02]{Hirschhorn02}
P. Hirschhorn,
{\it Model Categories and Their Localizations},
Math. Surveys and Monographs {bf 99}
Amer. Math. Soc. (2002)
[\href{https://bookstore.ams.org/surv-99-s/}{\tt ISBN:978-0-8218-4917-0}].

\vspace{-.3cm}
\bibitem[Ho08]{Hollander08}
S. Hollander,
{\it A homotopy theory for stacks},
Israel J. Math. {\bf 163}  (2008), 93-124,
[\href{https://arxiv.org/abs/math/0110247}{\tt arXiv:math/0110247}],
[\href{https://doi.org/10.1007/s11856-008-0006-5}{\tt doi:10.1007/s11856-008-0006-5}].

\vspace{-.3cm}
\bibitem[Hua18a]{Huan18QuasiEllipticI}
Z. Huan,
{\it Quasi-Elliptic Cohomology I},
Adv. Math.
{\bf 337} (2018), 107-138,
[\href{https://arxiv.org/abs/1805.06305}{\tt arXiv:1805.06305}], \newline
[\href{https://doi.org/10.1016/j.aim.2018.08.007}{\tt doi:10.1016/j.aim.2018.08.007}].


\vspace{-.3cm}
\bibitem[Hua18b]{Huan18QuasiTheories}
Z. Huan,
{\it Quasi-theories},
[\href{https://arxiv.org/abs/1809.06651}{\tt arXiv:1809.06651}].

\vspace{-.3cm}
\bibitem[HS20]{HuanSpong20TwistedQuasiEllipic}
Z. Huan and M. Spong,
{\it Twisted Quasi-elliptic cohomology and twisted equivariant elliptic cohomology},
[\href{https://arxiv.org/abs/2006.00554}{\tt arXiv:2006.00554}].

\vspace{-.3cm}
\bibitem[HY22]{HuanYoung22}
Z. Huan and M. B. Young,
{\it Twisted Real quasi-elliptic cohomology},
[\href{https://arxiv.org/abs/2210.07511}{\tt arXiv:2210.07511}].


\vspace{-3mm}
\bibitem[HSS18]{HSS18}
J. Huerta, H. Sati, and U. Schreiber,
{\it Real ADE-equivariant (co)homotopy of super M-branes},
Commun. Math. Phys. {\bf 371} (2019), 425-524,
[\href{https://link.springer.com/article/10.1007/s00220-019-03442-3}{\tt doi:10.1007/s00220-019-03442-3}],
[\href{https://arxiv.org/abs/1805.05987}{\tt arXiv:1805.05987}].






\vspace{-.3cm}
\bibitem[Ja15]{Jardine15}
J. F. Jardine
{\it Local homotopy theory}
Monographs in Mathematics, Springer, New York  (2015), \newline
[\href{https://doi.org/10.1007/978-1-4939-2300-7}{\tt doi:10.1007/978-1-4939-2300-7}].


\vspace{-.3cm}
\bibitem[Jo87]{Jones87}
J. D. S. Jones,
{\it Cyclic homology and equivariant homology},
Invent. Math. {\bf 87} (1987), 403-423, \newline
[\href{https://doi.org/10.1007/BF01389424}{\tt doi:10.1007/BF01389424}].






\vspace{-.3cm}
\bibitem[Kan58]{Kan58}
D. Kan,
{\it On homotopy theory and c.s.s. groups},
Ann. Math. {\bf 68} (1958), 38-53,
[\href{https://www.jstor.org/stable/1970042}{\tt jstor:1970042}].


\vspace{-.3cm}
\bibitem[Ke82]{Kelly82}
M. Kelly,
{\it Basic concepts of enriched category theory},
Lon. Math. Soc. Lec. Note Series {\bf 64},
Cambridge Univ. Press, 1982,
[\href{https://www.cambridge.org/de/academic/subjects/mathematics/logic-categories-and-sets/basic-concepts-enriched-category-theory?format=PB&isbn=9780521287029}
{\tt ISBN:9780521287029}]; reprinted in Th. App. of Cat. {\bf 10} (2005), 1-136,
[\href{http://www.tac.mta.ca/tac/reprints/articles/10/tr10abs.html}{\tt tac:tr10}].

\vspace{-.3cm}
\bibitem[KM04]{KitchlooMorava04}
N. Kitchloo and J.Morava,
{\it Thom Prospectra for Loopgroup representations},
[\href{https://arxiv.org/abs/math/0404541}{\tt arXiv:math/0404541}].

\vspace{-.3cm}
\bibitem[Klein1884]{Klein1884}
F. Klein,
{\it Vorlesungen uber das Ikosaeder und die Aufl{\"o}sung der Gleichungen vom f{\"u}nften Grade},
1884,
translated as {\it Lectures on the Icosahedron and the Resolution of Equations of Degree Five}
by George Morrice 1888,
[\href{https://archive.org/details/cu31924059413439}{\tt archive.org/details/cu31924059413439}].



\vspace{-.3cm}
\bibitem[KSS09]{KleinSchochetSmith09}
J. R. Klein, C. Schochet, and S. B. Smith,
{\it Continuous trace $C^\ast$-algebras, gauge groups and rationalization},
J. Top. Anal. {\bf 01}  (2009), 261-288,
[\href{https://doi.org/10.1142/S179352530900014X}{\tt
doi:10.1142/S179352530900014X}],
[\href{https://arxiv.org/abs/0811.0771}{\tt arXiv:0811.0771}].

\vspace{-.3cm}
\bibitem[KS04]{KrizSati04}
I. Kriz and H. Sati,
{\it M Theory, Type IIA Superstrings, and Elliptic Cohomology},
Adv. Theor. Math. Phys. {\bf 8} (2004), 345-395,
[\href{https://doi.org/10.4310/ATMP.2004.v8.n2.a3}{\tt
doi:10.4310/ATMP.2004.v8.n2.a3}],
[\href{https://arxiv.org/abs/hep-th/0404013}{\tt arXiv:hep-th/0404013}].

\vspace{-.3cm}
\bibitem[KS05]{KrizSati05}
I. Kriz and H. Sati,
{\it Type IIB String Theory, S-Duality, and Generalized Cohomology},
Nucl.Phys. B{\bf 715} (2005) 639-664,
[\href{https://doi.org/10.1016/j.nuclphysb.2005.02.016}{\tt
doi:10.1016/j.nuclphysb.2005.02.016}],
[\href{https://arxiv.org/abs/hep-th/0410293}{\tt arXiv:hep-th/0410293}].




\vspace{-.3cm}
\bibitem[Le10]{Lerman10}
E. Lerman,
{\it Orbifolds as stacks?},
Enseign. Math. {\bf 56} 3-4 (2010), 315-363,
[\href{http://dx.doi.org/10.4171/LEM/56-3-4}{\tt doi:10.4171/LEM/56-3-4}],
[\href{https://arxiv.org/abs/0806.4160}{\tt arXiv:0806.4160}].



\vspace{-.3cm}
\bibitem[Lo92]{Loday92}
J.-L. Loday,
{\it Cyclic Homology},
Grundlehren {\bf 301} Springer, Berlin, 1992, \newline
[\href{https://link.springer.com/book/10.1007/978-3-662-21739-9}{\tt doi:10.1007/978-3-662-21739-9}].

\vspace{-.3cm}
\bibitem[Lo15]{Loday15}
J.-L. Loday,
{\it Free loop space and homology},
in:
{\it Free Loop Spaces in Geometry and Topology},
IRMA Lectures in Mathematics and Theoretical Physics {\bf 24},
EMS 2015
[\href{https://arxiv.org/abs/1110.0405}{\tt arXiv:1110.0405}], \newline
[\href{https://bookstore.ams.org/emsilmtp-24/}{\tt ISBN:978-3-03719-153-8}].


\vspace{-.3cm}
\bibitem[LPSS95]{LPSS95}
H. Lu, C. N. Pope, E. Sezgin, and K. S. Stelle, {\it Stainless super $p$-branes}, Nucl. Phys. B{\bf456} (1995),
669-698, [\href{https://doi.org/10.1016/0550-3213(95)00524-4}{\tt doi:10.1016/0550-3213(95)00524-4}],
[\href{https://arxiv.org/abs/hep-th/9508042}{\tt arXiv:hep-th/9508042}].


\vspace{-.3cm}
\bibitem[LU02]{LupercioUribe02LoopGroupoids}
E. Lupercio, and B. Uribe,
{\it Loop groupoids, gerbes, and twisted sectors on orbifolds},
in: A. Adem, J. Morava and Y. Ruan (eds.),
{\it Orbifolds in Mathematics and Physics}, Madison, WI, 2001,
in: Contemp. Math. {\bf 310}, Amer. Math. Soc., Providence, RI (2002), 163–184,
[\href{https://bookstore.ams.org/conm-310}{\tt ams:conm-310}],
[\href{https://arxiv.org/abs/math/0110207}{\tt arXiv:math/0110207}].



\vspace{-.3cm}
\bibitem[LU04]{LupercioUribe04}
E. Lupercio and B. Uribe,
{\it Inertia orbifolds, configuration spaces and the ghost loop space},
Quart. J. Math. {\bf 55} (2004), 185-201,
[\href{https://doi.org/10.1093/qmath/hag053}{\tt doi:10.1093/qmath/hag053}],
[\href{https://arxiv.org/abs/math/0210222}{\tt arXiv:math/0210222}].


\vspace{-.3cm}
\bibitem[LU06]{Luperciouribe06Holonomy}
E. Lupercio and B. Uribe,
{\it Holonomy for Gerbes over Orbifolds},
J. Geom.Phys. {\bf 56} (2006), 1534-1560,
[\href{https://doi.org/10.1016/j.geomphys.2005.08.006}{\tt doi:10.1016/j.geomphys.2005.08.006}],
[\href{https://arxiv.org/abs/math/0307114}{\tt arXiv:math/0307114}].


\vspace{-3mm}
\bibitem[Lu09]{Lurie09HTT}
J. Lurie,
{\it Higher Topos Theory},
Annals of Mathematics Studies 170,
Princeton University Press, 2009,
[\href{https://press.princeton.edu/titles/8957.html}{\tt pup:8957}].




\vspace{-.3cm}
\bibitem[MP04]{MP04}
W. Marzantowicz and C. Prieto,
{\it The unstable equivariant fixed point index and the equivariant degree},
J. Lond. Math. Soc. {\bf 69}  (2004), 214-230,
[\href{https://doi.org/10.1112/S0024610703004721}{\tt doi:10.1112/S0024610703004721}].

\vspace{-.3cm}
\bibitem[MaSa04]{MathaiSati04}
V. Mathai and H. Sati,
{\it Some Relations between Twisted K-theory and $E_8$ Gauge Theory},
J. High Energ. Phys. {\bf 2004} 03 (2004) 016,
[\href{https://doi.org/10.1088/1126-6708/2004/03/016}{\tt doi:10.1088/1126-6708/2004/03/016}],
[\href{https://arxiv.org/abs/hep-th/0312033}{\tt arXiv:hep-th/0312033}].

\vspace{-.3cm}
\bibitem[May67]{May67}
P. May,
{\it Simplicial Objects in Algebraic Topology},
Univ. Chicago Press, 1967,
[\href{https://press.uchicago.edu/ucp/books/book/chicago/S/bo5956688.html}{\tt ISBN:9780226511818}],
[\href{https://ncatlab.org/nlab/files/May_SimplicialObjectsInAlgebraicTopology.pdf}{\tt ncatlab.org/nlab/files/May\_SimplicialObjectsInAlgebraicTopology.pdf}]


\vspace{-3mm}
\bibitem[May72]{May72}
P. May,
{\it The Geometry of Iterated Loop Spaces},
Springer, Berlin, 1972,
[\href{https://doi.org/10.1007/BFb0067491}{\tt doi:10.1007/BFb0067491}].




\vspace{-.3cm}
\bibitem[MC67]{McCord67}
M. C. McCord,
{\it Homotopy type comparison of a space with complexes associated with its open covers},
Proc. Amer. Math. Soc. 18 (1967), 705–708,
[\href{https://www.jstor.org/stable/2035443}{\tt jstor:2035443}].

\vspace{-.3cm}
\bibitem[Me15]{Menichi15}
L. Menichi,
{\it Rational homotopy -- Sullivan models},
in: {\it Free Loop Spaces in Geometry and Topology}, IRMA Lect. Math. Theor. Phys.,
Europ. Math. Soc., (2015), [\href{https://doi.org/10.4171/153}{\tt doi:10.4171/153}],
[\href{https://arxiv.org/abs/1308.6685}{\tt arXiv:1308.6685}].




\vspace{-.3cm}
\bibitem[MiSt74]{MilnorStasheff74}
J. Milnor and J. D. Stasheff,
{\it Characteristic Classes},
Ann. Math. Stud. {\bf 76},
Princeton University Press (1974),
[\href{https://press.princeton.edu/books/paperback/9780691081229/characteristic-classes-am-76-volume-76}{\tt ISBN:9780691081229}].

\vspace{-.3cm}
\bibitem[Mo54]{Moore54}
J. Moore,
{\it Homotopie des complexes monoideaux, I},
Sem. Henri Cartan, 1954-55, \newline
[\href{http://www.numdam.org/item?id=SHC_1954-1955__7_2_A8_0}{\tt numdam:SHC\_1954-1955\_\_7\_2\_A8\_0}].




\vspace{-.3cm}
\bibitem[NS18]{NikolausScholze18}
T. Nikolaus and  P. Scholze,
{\it On topological cyclic homology},
Acta Math. {\bf 221} (2018), 203-409, \newline
[\href{https://doi.org/10.2140/gt.2015.19.3105}{\tt doi:10.2140/gt.2015.19.3105}],
[\href{https://arxiv.org/abs/1707.01799}{\tt arXiv:1707.01799}].

\vspace{-3mm}
\bibitem[NSS12a]{NSS12a}
T. Nikolaus, U. Schreiber, and D. Stevenson,
{\it Principal $\infty$-bundles -- General theory},
J. Homotopy Rel. Struc.
{\bf 10} 4 (2015),  749--801,
[\href{https://doi.org/10.1007/s40062-014-0083-6}{\tt doi:10.1007/s40062-014-0083-6}],
[\href{https://arxiv.org/abs/1207.0248}{\tt arXiv:1207.0248}].




\vspace{-.3cm}
\bibitem[Pa22]{Pavlov22}
D. Pavlov,
{\it Numerable open covers and representability of topological stacks},  	Topology Appl. {\bf 318} 108203 (2022) 1-28 [\href{https://arxiv.org/abs/2203.03120}{\tt arXiv:2203.03120}][\href{https://doi.org/10.1016/j.topol.2022.108203}{\tt doi:10.1016/j.topol.2022.108203}]



\vspace{-3mm}
\bibitem[Qu67]{Quillen67}
D. Quillen,
{\it Homotopical Algebra},
Lecture Notes in Mathematics {\bf 43}, Springer, Berlin, 1967, \newline
[\href{https://doi.org/10.1007/BFb0097438}{\tt doi:10.1007/BFb0097438}].








\vspace{-3mm}
\bibitem[Re10]{Rezk10}
C. Rezk,
{\it Toposes and homotopy toposes}, lecture notes (2010),
\newline
[\url{https://faculty.math.illinois.edu/~rezk/homotopy-topos-sketch.pdf}]


\vspace{-.3cm}
\bibitem[Re15]{Rezk15}
C. Rezk,
{\it Elliptic cohomology and elliptic curves}, Felix Klein Lectures, Bonn (2015), \newline
[\href{http://www.hcm.uni-bonn.de/fkl-rezk}{\tt www.hcm.uni-bonn.de/fkl-rezk}]

\vspace{-.3cm}
\bibitem[Re22]{Rezk22}
C. Rezk,
{\it Introduction to quasicategories},
lecture notes (2022), \newline
[\href{https://ncatlab.org/nlab/files/Rezk-IntroToQuasicategories.pdf}{\tt ncatlab.org/nlab/files/Rezk-IntroToQuasicategories.pdf}]









\vspace{-.3cm}
\bibitem[RV20]{RiehlVerity16}
E. Riehl and D. Verity,
{\it Infinity category theory from scratch},
Higher Structures {\bf 4}  (2020) 1,  \newline
[\href{https://arxiv.org/abs/1608.05314}{\tt arXiv:1608.05314}].





\vspace{-.3cm}
\bibitem[Ro09]{Rosenberg09}
J. Rosenberg,
{\it Topology, $C^\ast$-algebras, and string duality},
Reg. Conf. Ser. Math. {\bf 111},
Amer. Math. Soc. (2009),
[\href{https://doi.org/10.1090/cbms/111}{\tt doi:10.1090/cbms/111}].




\vspace{-3mm}
\bibitem[SS20Tad]{SS19Tad}
H. Sati and U. Schreiber,
{\it Equivariant Cohomotopy implies orientifold tadpole cancellation},
J. Geom. Phys. {\bf 156} (2020) 103775,
[\href{https://doi.org/10.1016/j.geomphys.2020.103775}{\tt doi:10.1016/j.geomphys.2020.103775}],
[\href{https://arxiv.org/abs/1909.12277}{\tt arXiv:1909.12277}].



\vspace{-3mm}
\bibitem[SS20Orb]{SS20OrbifoldCohomology}
H. Sati and U. Schreiber,
\href{https://ncatlab.org/schreiber/show/Proper+Orbifold+Cohomology}{\it Proper Orbifold Cohomology},
[\href{https://arxiv.org/abs/2008.01101}{\tt arXiv:2008.01101}].


\vspace{-.3cm}
\bibitem[SS21EPB]{SS21EPB}
H. Sati and U. Schreiber,
\href{https://ncatlab.org/schreiber/show/Equivariant+principal+infinity-bundles}{\it Equivariant principal $\infty$-bundles},
[\href{https://arxiv.org/abs/2112.13654}{\tt arXiv:2112.13654}].

\vspace{-.3cm}
\bibitem[SS21MF]{SS21MF}
H. Sati and U. Schreiber,
{\it M/F-Theory as $Mf$-theory},
[\href{https://arxiv.org/abs/2103.01877}{\tt arXiv:2103.01877}].


\vspace{-.3cm}
\bibitem[SS22Conf]{SS19Conf}
H. Sati and U. Schreiber,
{\it Differential Cohomotopy implies intersecting brane observables via configuration spaces and chord diagrams},
Adv. Theor. Math. Phys. {\bf 26} 4 (2022),
[\href{https://www.intlpress.com/site/pub/pages/journals/items/atmp/_home/acceptedpapers/index.php}{\tt atmp/home/acceptedpapers}].


\vspace{-3mm}
\bibitem[SSS12]{SSS09}
H. Sati, U. Schreiber, and J. Stasheff,
{\it Twisted differential string and fivebrane structures},
Commun. Math. Phys. {\bf 315} (2012), 169-213,
[\href{https://doi.org/10.1007/s00220-012-1510-3}{\tt
doi:10.1007/s00220-012-1510-3}],
[\href{https://arxiv.org/abs/0910.4001}{\tt arXiv:0910.4001}].


\vspace{-.3cm}
\bibitem[SV21]{SatiVoronov21}
H. Sati and A. A. Voronov,
{\it Mysterious triality and rational homotopy theory},
[\href{https://arxiv.org/abs/2111.14810}{\tt arXiv:2111.14810}].


\vspace{-3mm}
\bibitem[Sc13]{dcct}
U. Schreiber,
{\it Differential cohomology in a cohesive infinity-topos},
[\href{https://arxiv.org/abs/1310.7930}{\tt arXiv:1310.7930}].

\vspace{-3mm}
\bibitem[Sc16]{SuperBraneLectures}
U. Schreiber,
{\it Super Lie $n$-algebra of Super $p$-branes},
lecture notes (2016-2017),
\newline
[\href{https://ncatlab.org/schreiber/show/Super+Lie+n-algebra+of+Super+p-branes}{\tt ncatlab.org/schreiber/show/Super+Lie+n-algebra+of+Super+p-branes}].

\vspace{-3mm}
\bibitem[Sc18]{SupergeometryLectures}
U. Schreiber,
{\it Introduction to Higher Supergeometry},
lecture at {\it Higher Structures in M-Theory} (2018),
\newline
[\href{https://ncatlab.org/schreiber/show/Introduction+to+Higher+Supergeometry}{\tt ncatlab.org/schreiber/show/Introduction+to+Higher+Supergeometry}].

\vspace{-.3cm}
\bibitem[SS03]{SchwedeShipley03}
S. Schwede and  B. Shipley, {\it Equivalences of monoidal model categories}, Algebr. Geom. Topol. {\bf 3} (2003), 287-334,
[\href{https://projecteuclid.org/euclid.agt/1513882376}{\tt euclid.agt/1513882376}],
[\href{https://arxiv.org/abs/math/0209342}{\tt arXiv:math/0209342}].

\vspace{-.3cm}
\bibitem[Se01]{Seki01}
S. Seki,
{\it Discrete Torsion and Branes in M-theory from Mathematical Viewpoint},
Nucl. Phys. {\bf B606} (2001) 689-698,
[\href{https://doi.org/10.1016/S0550-3213(01)00245-0}{\tt
doi:10.1016/S0550-3213(01)00245-0}],
[\href{https://arxiv.org/abs/hep-th/0103117}{\tt arXiv:hep-th/0103117}].


\vspace{-.3cm}
\bibitem[Sh03]{Sharpe00}
E. Sharpe,
{\it Analogues of Discrete Torsion for the M-Theory Three-Form},
Phys. Rev. {\bf D68} (2003) 126004,
[\href{https://doi.org/10.1103/PhysRevD.68.126004}{\tt doi:10.1103/PhysRevD.68.126004}],
[\href{https://arxiv.org/abs/hep-th/0008170}{\tt arXiv:hep-th/0008170}].










\vspace{-.3cm}
\bibitem[St15]{Stapleton13}
N. Stapleton,
{\it Transchromatic twisted character maps},
J. Hom. Relat. Struct. {\bf 10} (2015), 29–61, \newline
[\href{https://doi.org/10.1007/s40062-013-0040-9}{\tt
doi:10.1007/s40062-013-0040-9}],
[\href{https://arxiv.org/abs/1304.5194}{\tt arXiv:1304.5194}].

\vspace{-.3cm}
\bibitem[St67]{Steenrod67}
N. Steenrod,
{\it A convenient category of topological spaces}, Michigan Math. J. {\bf 14} (1967), 133–152, \newline
[\href{http://projecteuclid.org/euclid.mmj/1028999711}{\tt euclid:mmj/1028999711}].




\vspace{-.3cm}
\bibitem[Sto19]{Stoffel}
A. Stoffel,
{\it Dimensional reduction and the equivariant Chern character},
 Algebr. Geom. Topol. {\bf 19} (2019), 109-150,
[\href{https://doi.org/10.2140/agt.2019.19.109}{\tt doi:10.2140/agt.2019.19.109}],
[\href{https://arxiv.org/abs/1703.00314}{\tt
arXiv:1703.00314}].




\vspace{-3mm}
\bibitem[TV05]{ToenVezzosi05}
B. To{\"e}n and G. Vezzosi,
{\it Homotopical Algebraic Geometry I: Topos theory},
Adv. Math. {\bf 193} (2005), 257-372,
[\href{https://doi.org/10.1016/j.aim.2004.05.004}{\tt
doi:10.1016/j.aim.2004.05.004}],
[\href{https://arxiv.org/abs/math/0207028}{\tt arXiv:math/0207028}].


\vspace{-3mm}
\bibitem[tD79]{tomDieck79}
T. tom Dieck,
{\it Transformation Groups and Representation Theory},
Lecture Notes in Mathematics {\bf 766},
Springer, 1979,
[\href{https://link.springer.com/book/10.1007/BFb0085965}{\tt doi:10.1007/BFb0085965}].


\vspace{-.3cm}
\bibitem[tD87]{tomDieck87}
T. tom Dieck,
{\it Transformation Groups},
de Gruyter, Berlin, 1987,
[\href{https://doi.org/10.1515/9783110858372}{\tt doi:10.1515/9783110858372}].


\vspace{-.3cm}
\bibitem[TZ08]{TomodaZvengrowski08}
S. Tomoda and P. Zvengrowski,
{\it Remarks on the cohomology of finite fundamental groups of 3-manifolds},
Geom. Topol. Monogr. {\bf 14} (2008), 519-556,
[\href{https://doi.org/10.2140/gtm.2008.14.519}{\tt
doi:10.2140/gtm.2008.14.519}],
[\href{https://arxiv.org/abs/0904.1876}{\tt arXiv:0904.1876}].



\vspace{-.3cm}
\bibitem[VPB85]{VPB85}
M. Vigu{\'e}-Poirrier, D. Burghelea,
{\it A model for cyclic homology and algebraic K-theory of 1-connected topological spaces}, J. Diff. Geom.
{\bf 22}  (1985), 243-253,
[\href{https://projecteuclid.org/euclid.jdg/1214439821}{\tt euclid:1214439821}].

\vspace{-.3cm}
\bibitem[Wa22]{Waldorf22}
K. Waldorf,
{\it Geometric T-duality: Buscher rules in general topology}
[\href{https://arxiv.org/abs/2207.11799}{\tt arXiv:2207.11799}]




\vspace{-.3cm}
\bibitem[Wi08]{Willerton08}
S. Willerton,
{\it The twisted Drinfeld double of a finite group via gerbes and finite groupoids},
Algebr. Geom. Topol. {\bf 8} (2008), 1419-1457,
[\href{https://doi.org/10.2140/agt.2008.8.1419}
{\tt doi:10.2140/agt.2008.8.1419}],
[\href{https://arxiv.org/abs/math/0503266}{\tt arXiv:math/0503266}].

\vspace{-.3cm}
\bibitem[Wit88]{Witten1988}
E. Witten,
{\it The index of the Dirac operator in loop space}, in {\it Elliptic curves and modular forms in algebraic topology},  Lecture Notes in Math. {\bf 1326} (1988), 161–181, Springer, Berlin,  [\href{https://link.springer.com/chapter/10.1007/BFb0078045}{\tt doi:10.1007/BFb0078045}].

\vspace{-.3cm}
\bibitem[Wi95]{Witten95}
E. Witten,
{\it String Theory Dynamics In Various Dimensions}, Nucl. Phys. B{\bf 443} (1995), 85-126, \newline
[\href{https://doi.org/10.1016/0550-3213(95)00158-O}{\tt doi:10.1016/0550-3213(95)00158-O}],
[\href{https://arxiv.org/abs/hep-th/9503124}{\tt arXiv:hep-th/9503124}].

\end{thebibliography}
\end{document}